\documentclass[a4paper]{article}
\usepackage{hyperref}
\usepackage[english]{babel}
\usepackage[utf8]{inputenc}
\usepackage{bm}
\usepackage{amsmath}
\usepackage{amsfonts}
\usepackage{amsthm}
\usepackage{chngcntr}
\usepackage{apptools}
\usepackage{enumitem}
\AtAppendix{\counterwithin{lem}{section}}
\AtAppendix{\counterwithin{cor}{section}}
\usepackage{graphicx}
\usepackage{subcaption}
\usepackage[square,sort,comma,numbers]{natbib}
\usepackage{color}
\usepackage{textcomp}
\begin{document}
\newtheorem{cex}{Counter Example}
\newtheorem{conj}{Conjecture}
\newtheorem{cor}{Corollary}
\newtheorem{lem}{Lemma}
\newtheorem{rem}{Remark}
\newtheorem{defn}{Definition}
\newtheorem{model}{Random Graph Model}
\newtheorem{prob}{Problem}
\newtheorem{thm}{Theorem}
\newtheorem{examp}{Example}
\newtheorem{duplicate}{Appendix Theorem}
\newcommand*{\LargerCdot}{\raisebox{-0.25ex}{\scalebox{1.5}{$\cdot$}}}
\newenvironment{psmallmatrix}
  {\left(\begin{smallmatrix}}
  {\end{smallmatrix}\right)}

\title{Asymptotics of the spectral radius for directed Chung-Lu random graphs with community structure}
\author{David Burstein}
\maketitle
\begin{abstract}
The spectral radius of the adjacency matrix can impact both algorithmic efficiency as well as the stability of solutions to an underlying dynamical process.  Although much research has considered the distribution of the spectral radius for undirected random graph models, as symmetric adjacency matrices are amenable to spectral analysis, very little work has focused on directed graphs.  Consequently, we provide novel concentration results for the spectral radius of the directed Chung-Lu random graph model.  We emphasize that our concentration results are applicable both asymptotically and to networks of finite size.  Subsequently, we extend our concentration results to a generalization of the directed Chung-Lu model that allows for community structure.  
\end{abstract}

\textbf{Keywords:} Chung-Lu random graphs, community structure, spectral graph theory, directed graphs, extended planted partition model, stochastic block model
\newline\newline
\indent \textbf{MSC:} 05C20, 05C50, 05C80

\section{Introduction}
In an effort to understand how the architecture of real world networks impacts the underlying dynamics, we want to prove results pertaining to a broader class of plausible networks.  Random graph models help us address this challenge.  Realizations from an appropriately chosen random graph model can emulate many of the properties found in real world networks.  Consequently, using these random graph models, we would like to identify families of graphs that behave similarly under a dynamical process.  To this end, prior work has demonstrated how the spectral radius of the adjacency matrix can promote various dynamical behaviors on genetic, epidemiological and biological neural networks \cite{restrepo2006emergence,pomerance2009effect,zhao2011synchronization,pastor2015epidemic,
ferreira2012epidemic,pastor2002epidemic,ganesh2005effect}.  And eventhough the existing literature has primarily focused on the distribution of the spectral radius for undirected random graph models \cite{castellano2010thresholds,chung2003eigenvalues,chung2011spectra,le2015concentration,lu2012spectra,zhao2012consistency, zhang2014spectra, tran2013sparse,oliveira2009concentration}, many real world networks are indeed directed graphs \cite{malliaros2013clustering}.  

To address this gap, we provide concentration inequalities and asymptotics for the distribution of the spectral radius for the {\em directed} Chung-Lu random graph model, where realizations possess degree heterogeneity much like real world networks \cite{chakrabarti2006graph}.  We then demonstrate how to extend our proof technique to a more general graph model that enables community structure within the graph.  Subsequently, we conclude this work by providing simulations of an epidemiolgical SIS process, where we illustrate how the spectral radius (and community structure) can influence the stability of the healthy state.

Our interest in the spectral radius of random directed graphs extends beyond modeling dynamical processes on networks.  For example, we often measure the performance of an algorithm, not based on the worst case performance, but instead by evaluating the algorithm's efficiency on realizations from random graph models that emulate real world networks \cite{bader2008snap,edmonds2010space,vineet2009fast}.  As such, the Chung-Lu random graph model, among others, has been proposed as a benchmark to test algorithmic efficiency \cite{pinar2011similarity,chakrabarti2004r,leskovec2007scalable,leskovec2010kronecker}.    In a prior work \cite{burstein2016k}, we exploited the distribution of the spectral radius for undirected Chung-Lu random graphs to justify the efficiency for an almost shortest path algorithm.   Analogously, knowledge about the distribution of the spectral radius for directed Chung-Lu random graphs would allow us to evaluate the algorithmic efficiency for real world directed networks as well.    

In addition, spectral analysis for directed networks could also lead to new techniques for community detection on directed graphs.   Succinctly, many community detection algorithms classify nodes into hidden communities by extrapolating community membership using statistical anomalies from a null model, such as the Chung-Lu random graph model.    In context to our work, the techniques employed could be helpful in analyzing the spectral radius for asymmetric adjacency and laplacian matrices commonly encountered in the community detection problem for directed graphs \cite{fortunato2010community,malliaros2013clustering}.  Furthermore, our results illustrating how the spectral radius varies with community structure could be helpful for identifying communities in the network.     All of the aforementioned applications underscore the importance of developing concentration bounds for the spectral radius on directed graphs.  

At this juncture, we provide an outline of our proof strategy to bound the spectral radius for Chung-Lu random graphs, starting with a definition for the directed Chung-Lu Model.  

\begin{defn}[Directed Chung-Lu Model \cite{chung2002connected}]
Let $N$ be the number of nodes in a directed graph and let $\mathbf{d}=(\mathbf{a},\mathbf{b})\in\mathbb{Z}^{N\times 2}$ be the expected degree sequence, where $a_{i}$ ($b_{i}$) corresponds to the expected in (out) degree of node $i$. Denote $S = \sum_{a_i\in\mathbf{a}} a_i= \sum_{b_i\in\mathbf{b}} b_i$ and suppose that $\max_{i,j} a_ib_j \leq S$.  We then model edges in the graph as independent Bernoulli random variables, where we denote the probability a directed edge from $i$ to $j$ exists as $p_{ij}$; in particular, $p_{ij}=\frac{b_{i}a_{j}}{S}$.  
\end{defn}

As mentioned earlier, we want to identify the distribution of the spectral radius for these random Chung-Lu graphs.  To this end, Restrepo, Ott and Hunt \cite{restrepo2007approximating} provide a novel technique for deriving asymptotics for the spectral radius, but do not conclusively address the validity for using such an approximation.    Consequently, we seek rigorous results illustrating when the following conjecture holds.
\begin{conj}\label{conj:1}
Consider a sequence of realizations from the Directed Chung-Lu model, where the expected number of edges, $S$ becomes arbitrarily large in the limit. Denote the absolute value of the dominating eigenvalue of $A$ by $\rho(A)$.
Then almost surely, $$\lim_{N \rightarrow \infty}\frac{\rho(A)}{\frac{\mathbf{a}\cdot\mathbf{b}}{S}}=1$$
\end{conj}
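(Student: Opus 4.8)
The plan is to recognize that the quantity $\frac{\mathbf{a}\cdot\mathbf{b}}{S}$ in the denominator is precisely the spectral radius of the \emph{expected} adjacency matrix, and then to prove that $\rho(A)$ concentrates around that deterministic value. Indeed, writing $P := \mathbb{E}[A]$, we have $P_{ij} = \frac{b_i a_j}{S}$, so $P = \frac{1}{S}\mathbf{b}\mathbf{a}^{T}$ is a rank-one matrix whose unique nonzero eigenvalue is $\frac{\mathbf{a}^{T}\mathbf{b}}{S} = \frac{\mathbf{a}\cdot\mathbf{b}}{S}$, with right Perron eigenvector $\mathbf{b}$ (since $P\mathbf{b} = \frac{\mathbf{a}\cdot\mathbf{b}}{S}\mathbf{b}$) and left Perron eigenvector $\mathbf{a}$. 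Thus the conjecture reduces to showing that the mean-zero perturbation $A-P$ does not appreciably move the dominant eigenvalue.

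The key advantage of working with the Perron structure, rather than with operator norms or singular values, is that it sidesteps the asymmetry of $A$: although $A$ is not symmetric, it is nonnegative, so the Collatz--Wielandt bounds apply. Taking the deterministic positive test vector $\mathbf{x}=\mathbf{b}$, one has, for any nonnegative matrix,
\[
\min_{i:\,b_i>0}\frac{(A\mathbf{b})_i}{b_i}\;\le\;\rho(A)\;\le\;\max_{i:\,b_i>0}\frac{(A\mathbf{b})_i}{b_i}.
\]
The point of choosing $\mathbf{x}=\mathbf{b}$ is that each coordinate $(A\mathbf{b})_i = \sum_j A_{ij}b_j$ is a sum of independent bounded random variables with
\[
\mathbb{E}\big[(A\mathbf{b})_i\big] = \sum_j p_{ij}b_j = \frac{b_i}{S}\sum_j a_j b_j = b_i\,\frac{\mathbf{a}\cdot\mathbf{b}}{S},
\]
so that $\mathbb{E}\big[(A\mathbf{b})_i/b_i\big] = \frac{\mathbf{a}\cdot\mathbf{b}}{S} = \rho(P)$ exactly, for every $i$. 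Hence both the minimum and the maximum above have the \emph{same} target, and it suffices to show that every ratio $(A\mathbf{b})_i/b_i$ is close to $\rho(P)$.

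To control these ratios I would apply a Bernstein- or Chernoff-type inequality to each $(A\mathbf{b})_i$, using the variance bound $\mathrm{Var}[(A\mathbf{b})_i] \le \sum_j b_j^2 p_{ij} = \frac{b_i}{S}\sum_j a_j b_j^2$ together with the boundedness of the summands by $\max_j b_j$, and then union bound over the at most $N$ coordinates. This yields $\big|\rho(A)/\rho(P)-1\big|\le\epsilon$ with a failure probability that I would show is summable along the sequence of graphs; Borel--Cantelli then upgrades the high-probability estimate to the almost sure convergence claimed in the conjecture.

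The main obstacle is obtaining the concentration \emph{uniformly} over all coordinates, because the relative fluctuation of $(A\mathbf{b})_i/b_i$ is governed by $\mathrm{Var}[(A\mathbf{b})_i]/(b_i\rho(P))^2 \approx \frac{1}{b_i}\cdot\frac{\sum_j a_j b_j^2}{(\mathbf{a}\cdot\mathbf{b})^2/S}$, which is worst for the low-out-degree nodes; the upper (maximum) bound is the more delicate direction, since a single small-$b_i$ vertex whose out-neighborhood is atypically dense can push $\max_i (A\mathbf{b})_i/b_i$ well above $\rho(P)$. Making the union bound succeed therefore forces quantitative hypotheses on the degree sequence---controlling the minimum out-degree and the second-moment ratio $\big(\sum_j a_j b_j^2\big)\big/\big((\mathbf{a}\cdot\mathbf{b})^2/S\big)$ relative to $S$---which delimits exactly the regime in which the conjecture should hold, and explains why it cannot be true unconditionally. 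Finally, vertices with $b_i=0$ must be excised (or the test vector perturbed) so that $\mathbf{b}$ serves as a legitimate strictly positive test vector for the upper bound, a minor but necessary bookkeeping step.
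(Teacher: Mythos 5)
Your reduction to the rank-one mean matrix $P=\frac{1}{S}\mathbf{b}\mathbf{a}^{T}$ and the Collatz--Wielandt bounds is sound as far as it goes, but the one-step bound with test vector $\mathbf{b}$ cannot prove the conjecture in the regimes where the paper actually establishes it, and this is a genuine gap rather than mere bookkeeping. In the sparse regime of Section 3 ($p_{max}\rightarrow 0$ with $\frac{\mathbf{a}\cdot\mathbf{b}}{S}$ bounded, e.g.\ $a_i=b_i=d$ constant), each ratio $(A\mathbf{b})_i/b_i$ is just the out-degree of node $i$, essentially Binomial$(N,d/N)$; its maximum over the $N$ coordinates grows like $\log N/\log\log N$, so $\max_i (A\mathbf{b})_i/b_i\rightarrow\infty$ even though $\rho(A)\rightarrow d$, and your upper bound is vacuous. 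The minimum side fails symmetrically: whenever there are many vertices of bounded expected out-degree, some vertex has no out-edges with high probability, so $\min_{i:b_i>0}(A\mathbf{b})_i/b_i=0$ and the lower bound gives nothing. Even in the dense regime, your Bernstein-plus-union-bound requires \emph{every} coordinate to concentrate, which forces roughly $\min_i b_i\cdot\frac{\mathbf{a}\cdot\mathbf{b}}{S}\gg b_{max}\log N$ --- a minimum-out-degree hypothesis the paper never imposes (Theorem 2 needs only $\frac{\mathbf{a}\cdot\mathbf{b}}{S}\gg\log N$ and tolerates arbitrarily many low-degree vertices, which are the whole point of the Chung--Lu model). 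Consequently your closing claim that the failure of the union bound ``delimits exactly the regime in which the conjecture should hold'' is incorrect: $\rho(A)$ concentrates around $\frac{\mathbf{a}\cdot\mathbf{b}}{S}$ in regimes where the individual one-step ratios demonstrably do not.

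The paper's method shows why. It works with $\mathbf{1}^{T}A^{r}\mathbf{1}$ and $trace(A^{r})$ for $r\gg\log N$ (Lemma 1), bounding the expected number of paths (Theorem 3) and applying Markov's inequality for the upper bound, and bounding the variance of the cycle count (Theorem 5, Janson's inequality) for the lower bound, with the cycle-inducing-edge machinery of Section 3 covering $p_{max}\rightarrow 0$. This is in effect your Perron-type argument applied to $A^{r}$ instead of $A$: averaging over length-$r$ paths smooths out single-vertex fluctuations, and taking $r$th roots with $r\gg\log N$ annihilates any factor polynomial in $N$ --- exactly the mechanism your single-step argument lacks. A repaired version of your proposal would replace the test vector $\mathbf{b}$ by $A^{r-1}\mathbf{b}$ or pass directly to $\mathbf{1}^{T}A^{r}\mathbf{1}$, at which point you reconstruct the paper's approach, including its combinatorial accounting for paths that reuse edges.
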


Unfortunately when constructing bounds on the dominating maximum eigenvalue of an asymmetric (directed) matrix, we do not have quite as many tools to construct concentration results compared to the symmetric (undirected) case.  Our main theoretical tool will be the following lemma, but first we introduce the following definition for clarity.

\begin{defn}
We define the spectral radius of a matrix $A\in\mathbb{R}^{N\times N}$ to be the absolute value of the maximal magnitude eigenvalue.  More precisely, $$\rho(A) = max\{|\lambda|: Ax = \lambda x \hspace{3pt} and \hspace{3pt} x \neq \mathbf{0}\}$$
\end{defn}

We now present our Lemma that will enable us to compute the spectral radius corresponding to a directed graph.

\begin{lem}\label{lem:theory} 
Let $A\in\mathbb{R}^{N\times N}$ be an entrywise non-negative matrix.    For simplicity, assume that the maximal magnitude eigenvalue of $A$ is real. 
\newline\newline
 Define $\lambda_{max}$ as the maximum of the eigenvalues of $A$.  It then follows that  $\rho(A) = \lambda_{max}$.
\newline\newline
Furthermore, we have for every positive integer $r$,

\begin{equation}\label{eq:lemma} \frac{trace(A^{r})}{N}\leq \rho(A^{r})=\rho(A)^{r}\leq \mathbf{1}^{T}A^{r}\mathbf{1} 
\end{equation}\newline where $\mathbf{1}$ is the vector of one's.  
\end{lem}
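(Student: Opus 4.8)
The plan is to build everything on the Perron--Frobenius theorem for entrywise non-negative matrices. First I would recall that since $A \geq 0$ entrywise, $\rho(A)$ is itself an eigenvalue of $A$ --- the Perron root --- and it admits an entrywise non-negative eigenvector $v \geq 0$ with $Av = \rho(A)v$; crucially, this holds even without assuming irreducibility (it follows from the irreducible case by a continuity argument applied to $A + \epsilon J$). Because any real eigenvalue $\lambda$ satisfies $\lambda \leq |\lambda| \leq \rho(A)$, and $\rho(A)$ is itself a real (non-negative) eigenvalue, it follows that $\rho(A)$ is the largest real eigenvalue, i.e. $\lambda_{max} = \rho(A)$. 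The hypothesis that the maximal-magnitude eigenvalue is real is what guarantees that $\lambda_{max}$ is genuinely the dominating eigenvalue rather than merely tied in magnitude by a complex conjugate pair.

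For the central identity $\rho(A^{r}) = \rho(A)^{r}$ I would invoke the spectral mapping theorem: the eigenvalues of $A^{r}$ are exactly $\{\lambda_i^{r}\}$ as $\lambda_i$ ranges over the eigenvalues of $A$ (with multiplicity), so $\rho(A^{r}) = \max_i |\lambda_i|^{r} = (\max_i |\lambda_i|)^{r} = \rho(A)^{r}$. For the lower bound I would use that $trace(A^{r}) = \sum_i \lambda_i^{r}$ is a \emph{real} number, because $A^{r}$ is a real matrix; hence $trace(A^{r}) = \mathrm{Re}\bigl(\sum_i \lambda_i^{r}\bigr) \leq \sum_i |\lambda_i|^{r} \leq N\rho(A)^{r}$, and dividing by $N$ yields $trace(A^{r})/N \leq \rho(A)^{r} = \rho(A^{r})$.

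For the upper bound I would again use the Perron eigenvector, now of the non-negative matrix $B = A^{r}$, whose Perron root is $\rho(A^{r}) = \rho(A)^{r}$. Normalizing the non-negative eigenvector $v$ so that its largest coordinate equals $1$, say $v_k = 1 = \max_i v_i$, the eigenvalue equation in coordinate $k$ gives $\rho(A)^{r} = \rho(A)^{r} v_k = (Bv)_k = \sum_j B_{kj} v_j \leq \sum_j B_{kj} \leq \sum_{i,j} B_{ij} = \mathbf{1}^{T}A^{r}\mathbf{1}$, where the first inequality uses $0 \leq v_j \leq 1$ and the second uses non-negativity of all entries of $A^{r}$. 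This closes the chain. The one step demanding genuine care is the lower bound, where complex eigenvalues of $A$ can occur: the argument survives only because the trace is real, letting me pass to real parts and then bound each term by $\rho(A)^{r}$; every other step is a direct consequence of non-negativity together with Perron--Frobenius.
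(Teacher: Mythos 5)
Your proof is correct, and in fact the paper never supplies a proof of this lemma at all --- it is stated in the introduction as a standing tool (with only a combinatorial gloss on $trace(A^{r})$ and $\mathbf{1}^{T}A^{r}\mathbf{1}$ as cycle and path counts), so your argument fills a gap the paper leaves to standard theory. Each of your steps is the canonical one: Perron--Frobenius for (possibly reducible) non-negative matrices via the $A+\epsilon J$ limit gives that $\rho(A)$ is a real eigenvalue with a non-negative eigenvector, whence $\lambda_{max}=\rho(A)$; the spectral mapping theorem gives $\rho(A^{r})=\rho(A)^{r}$; and your normalized-eigenvector computation for $B=A^{r}$ correctly yields $\rho(A)^{r}\leq \sum_{j}B_{kj}\leq \mathbf{1}^{T}A^{r}\mathbf{1}$. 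Your care on the lower bound is warranted and the real-part argument works, though it can be made even more elementary and entirely free of complex eigenvalues: for a non-negative matrix $B$ one has $B_{ii}\leq \rho(B)$ for every $i$ (the diagonal entry is the Perron root of a $1\times 1$ principal submatrix, and Perron roots are monotone under passing to principal submatrices of non-negative matrices), so $trace(A^{r})/N \leq \max_{i}(A^{r})_{ii}\leq \rho(A^{r})$ directly. Either way the chain in the lemma is established, and nothing in your write-up conflicts with how the lemma is used later in the paper.
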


We are primarily interested in applications of Lemma \ref{lem:theory} in studying the spectral radius of an adjacency matrix for random graphs.  In this case, if $A$ corresponds to a directed graph, the quantities that bound $\rho(A)$, $\mathbf{1}^{T}A^{r}\mathbf{1}$ and $trace(A^{r})/N$ both have combinatorial interpretations \cite{bapat2010graphs}.  More specifically,  $trace(A^{r})=\sum_{j=1}^{N}\mathbf{e}_j^{T}A^{r}\mathbf{e}_j$ (where $\mathbf{e}_j$ is the standard unit vector) is the number of cycles of length $r$.  Similarly,$\mathbf{1}^{T}A^{r}\mathbf{1}$ is the number of paths of length $r$.  

To see this consider $\mathbf{e}_{j}^{T}A\mathbf{1}$.  Now, $ \mathbf{e}_j^{T}A$ is a vector whose kth entry is $1$ if there is an edge from node $j$ to node $k$.  Consequently, $\mathbf{e}_{j}^{T}A\mathbf{1}$ is the number of paths starting at node $j$ of length $1$ (and analogously $\mathbf{1}^{T}A\mathbf{1}$ is the number of paths of length $1$). One can then proceed inductively, to show that  $\mathbf{e}_{j}^{T}A^{2}\mathbf{1}$ is the number of paths starting at node $j$ of length $2$ and so on.

The basic idea behind our proofs is as follows.  We compute bounds on the expected value and variance for the number cycles/paths of length $r$.  Then we can use concentration inequalities, like Markov's Inequality or Chebyshev's Inequality, to show that with high probability the number of cycles  of length $r$ must be close to the quantity, $C (\frac{\mathbf{a}\cdot\mathbf{b}}{S})^{r}$.  Furthermore, $C$ satisfies the constraint that $\frac{1}{N^{3}}\leq C\leq 1$.   By Lemma \ref{lem:theory}, it follows that  $C (\frac{\mathbf{a}\cdot\mathbf{b}}{S})^{r}\leq \rho(A)^{r}$ and hence $C^{\frac{1}{r}} (\frac{\mathbf{a}\cdot\mathbf{b}}{S})\leq \rho(A)$. Consequently,  if we choose $r = O(log(N)^{1+\delta})$, where $\delta > 0$, we are guaranteed that the $$\limsup_{N\rightarrow \infty}\frac{C^{\frac{1}{r}} (\frac{\mathbf{a}\cdot\mathbf{b}}{S})}{\rho(A)}= \limsup_{N\rightarrow \infty}\frac{(\frac{\mathbf{a}\cdot\mathbf{b}}{S})}{\rho(A)}\leq 1.$$  We then repeat an analogous argument for the number of paths of length $r$ to bound $\rho(A)$ from above.  The following list highlights the main contributions in this work.

\begin{itemize}
\item In Section 2, we illustrate how path counting can yield concentration inequalities and asymptotics on the spectral radius for the Chung-Lu model  when  $\frac{\mathbf{a}\cdot\mathbf{b}}{S}\rightarrow \infty$. In particular, to compute bounds on the moments for the number of paths and cycles, we prove Lemma \ref{lem:probpath}, which enables us to efficiently compute the probability a path exists even if we revisit edges in the path multiple times.  
\item Subsequently in Section 3, we  extend our results in the case where 
$p_{max} = max_{i,j}\frac{a_{i}b_{j}}{S} \rightarrow 0$.  To derive the desired asymptotic result, we prove that for such sparse graphs, there are restrictions on how we can revisit edges in a given path; in particular, from Lemma \ref{lem:cyclelength} two cycles of {\em modest} length cannot be close together in distance.  
\item In Section 4 we introduce the Partitioned Chung-Lu model, which allows for community structure in the network.  While introducing communities results in a more flexible model, requiring that the probability that two nodes share an edge depends on the community membership of the two nodes also makes bookeeping (and in turn the analysis) more challenging.  To address this issue, we illustrate in  Lemma \ref{lem:generalized}  how to express the sum of the number of paths as a norm of a matrix vector product and provide extensions of our results in Sections 2 and 3.
\item We then conclude this work with Section 5, where we consider an application of our spectral radius results by simulating an susceptible-infected-susceptible process, illustrating how community structure (and the spectral radius) can influence the stability of the healthy state in our network.
\end{itemize}

\section{Spectral Concentration Bounds, $\frac{\mathbf{\lowercase{a}}\cdot \mathbf{\lowercase{b}}}{S}\rightarrow \infty$}
With Lemma \ref{lem:theory} in mind, we initiate our discussion on bounding the spectral radius for the Chung-Lu model by bounding the expected number of paths of length $r$.  We will consider two cases seperately, where either $\frac{\mathbf{a}\cdot\mathbf{b}}{S} \rightarrow \infty$ (this section) or $p_{max} = \max_{i,j} p_{ij} \rightarrow 0$ (next section).   Though the results below hold in considerable generality, many of the initially stated results will only be asymptotically useful for the case where  $\frac{\mathbf{a}\cdot\mathbf{b}}{S} \rightarrow \infty$.  We start by identifying a lower bound on the expectation of $\mathbf{e}_{j}^{T}A^{r}\mathbf{e}_{i}$ as our first step for constructing spectral radius concentration results.

\begin{lem}
\label{lem:lower}
Consider a realization of the Directed Chung-Lu random graph model with expected degree sequence $\mathbf{d} = (\mathbf{a},\mathbf{b})$, where $\sum a_{i} = \sum b_{i} = S$.  Then the expected number of paths from node $y$ of length $r$ is bounded below by $$ b_{y}[\frac{\mathbf{a}\cdot\mathbf{b}}{S}]^{r-1}$$ Furthermore, the expected number of paths of length $r$ is bounded below by $$S[\frac{\mathbf{a}\cdot\mathbf{b}}{S}]^{r-1}$$
\end{lem}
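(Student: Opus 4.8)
The plan is to compute the expected number of paths starting from a fixed node $y$ directly by summing the path-existence probabilities over all possible vertex sequences, and then exploit the product structure of the Chung-Lu edge probabilities to factor the sum. Let me first set up notation: a path of length $r$ starting at $y$ is a sequence of vertices $(y = v_0, v_1, \dots, v_r)$, and the indicator that all $r$ directed edges $v_0 \to v_1 \to \cdots \to v_r$ are present has expectation equal to the product $\prod_{k=0}^{r-1} p_{v_k v_{k+1}}$ by independence of edges, provided the vertices are distinct (or more carefully, provided no edge is repeated). Since $\mathbf{e}_y^T A^r \mathbf{1}$ counts all such sequences including those that revisit vertices and edges, I will obtain a \emph{lower} bound simply by summing the product of probabilities over \emph{all} sequences, because for a genuine probability (an indicator of a conjunction of independent Bernoulli events) the expectation is exactly the product over \emph{distinct} edges, and including repeated-edge terms with the naive product $\prod p_{v_k v_{k+1}}$ can only differ by lower-order corrections. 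To keep the bound clean and rigorous as a lower bound, the safest route is to restrict the sum to sequences with distinct edges, or to note that the full sum $\sum \prod p_{v_k v_{k+1}}$ over all sequences is itself the exact target quantity $b_y [\frac{\mathbf{a}\cdot\mathbf{b}}{S}]^{r-1}$ and dominates the expectation from below once repeated-edge overcounting is controlled; I will make this precise below.

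The key computation is the telescoping factorization. Substituting $p_{ij} = \frac{b_i a_j}{S}$, the summand for a sequence $(y, v_1, \dots, v_r)$ becomes
\begin{equation}
\prod_{k=0}^{r-1} \frac{b_{v_k} a_{v_{k+1}}}{S} = \frac{1}{S^r} b_y \Big(\prod_{k=1}^{r-1} a_{v_k} b_{v_k}\Big) a_{v_r}.
\end{equation}
Here each intermediate vertex $v_k$ (for $1 \le k \le r-1$) contributes a factor $a_{v_k} b_{v_k}$ because it serves as the head of one edge and the tail of the next, while the endpoints $y$ and $v_r$ contribute only $b_y$ and $a_{v_r}$ respectively. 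Summing freely over each intermediate index $v_k \in \{1,\dots,N\}$ and over $v_r$, the sum factors into a product of independent single-vertex sums:
\begin{equation}
\frac{b_y}{S^r} \Big(\sum_{v=1}^{N} a_v b_v\Big)^{r-1} \Big(\sum_{v_r=1}^{N} a_{v_r}\Big) = \frac{b_y}{S^r} (\mathbf{a}\cdot\mathbf{b})^{r-1}\, S = b_y \Big[\frac{\mathbf{a}\cdot\mathbf{b}}{S}\Big]^{r-1},
\end{equation}
using $\sum_v a_v = S$. This yields exactly the claimed per-node bound. The second statement then follows by summing over the starting node $y$: since $\sum_y b_y = S$, summing the per-node lower bound gives $S [\frac{\mathbf{a}\cdot\mathbf{b}}{S}]^{r-1}$, and $\sum_y \mathbf{e}_y^T A^r \mathbf{1} = \mathbf{1}^T A^r \mathbf{1}$ is the total number of paths of length $r$.

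The main obstacle, and the only genuinely delicate point, is justifying that this free (unrestricted) sum over vertex sequences is a valid \emph{lower} bound on the expectation rather than an exact value or an overcount. When a path sequence revisits a vertex, it may reuse an edge, and the expectation of the indicator is then the product over the \emph{distinct} edges only (each Bernoulli appears once), not the naive product $\prod_k p_{v_k v_{k+1}}$ which would raise a repeated $p_{ij}$ to a power. Since $p_{ij} \le 1$, raising to a power \emph{decreases} the naive product relative to the true expectation, so the naive product \emph{underestimates} the contribution of repeated-edge sequences; hence the free sum is genuinely a lower bound on the true expected count. I would state this monotonicity explicitly: for independent Bernoulli events the expectation of a product of indicators equals the product over distinct events, and $\prod_{\text{distinct}} p \ge \prod_{\text{with multiplicity}} p$ because each $p\le 1$. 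This is precisely the observation that motivates the forthcoming Lemma~\ref{lem:probpath} on computing path probabilities with repeated edges, so here I only need the one-sided inequality, which is immediate from $0 \le p_{ij} \le 1$.
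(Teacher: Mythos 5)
Your proposal is correct and takes essentially the same approach as the paper's proof: both sum the naive product $\prod_k p_{v_k v_{k+1}}$ over all vertex sequences, justify it as a lower bound via the observation that $0\leq p_{ij}\leq 1$ makes the product-with-multiplicity underestimate the true probability on repeated-edge sequences, and then factor the sum using $\sum_v a_v b_v = \mathbf{a}\cdot\mathbf{b}$ and $\sum_v a_v = \sum_y b_y = S$. The only differences are presentational (your hedging in the first paragraph is resolved by exactly the one-sided inequality the paper uses), so there is nothing to add.
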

\begin{proof}
First, we define the indicator random variable $\mathbf{1}_{(y,i_{1},...,i_{r})}$ to equal $1$ if the path $(y,i_{1},...,i_{r})$ exists and $0$ otherwise.  Then it follows that  the expected number of paths from node y of length r is $$\sum_{i_{1}=1,...,i_{r}=1}^{N}E(\mathbf{1}_{(y,i_{1},...,i_{r})}),$$
where we sum over all possible choices of nodes; for example, $i_{1}$, the second node in the path, can equal any of the $N$ nodes in the graph, etc.

Since the probability that a collection of distinct edges exist are independent events, if no edge repeats in the path $(y,i_{1},...,i_{r})$, then $$E(\mathbf{1}_{(y,i_{1},...,i_{r})})=p_{yi_{1}}\Pi_{k=1}^{r-1}p_{i_{k}i_{k+1}}.$$  

If however an edge does repeat in the path, since each $0\leq p_{ij}\leq 1$ for all $i,j$, it follows that for such a path $$E(\mathbf{1}_{(y,i_{1},...,i_{r})})\geq p_{yi_{1}}\Pi_{k=1}^{r-1}p_{i_{k}i_{k+1}}.$$

Consequently, we conclude that the $$\sum_{i_{1}=1,...,i_{r}=1}^{N}E(\mathbf{1}_{(y,i_{1},...,i_{r})})\geq \sum_{i_{1}=1,...,i_{r}=1}^{N}p_{yi_{1}}\Pi_{k=1}^{r-1}p_{i_{k}i_{k+1}}.$$

Let $a_{x},b_{x}$ denote the expected in-degree/out-degree of node $x$.  
Now applying the definition for Chung-Lu random graphs, we simplify $$\sum_{i_{1}=1,...,i_{r}=1}^{N}p_{yi_{1}}\Pi_{k=1}^{r-1}p_{i_{k}i_{k+1}}=\sum_{i_{1}=1,...,i_{r}=1}^{N}\frac{b_{y}a_{i_{1}}}{S}\Pi_{k=1}^{r-1}\frac{b_{i_{k}}a_{i_{k+1}}}{S}=\sum_{i_{1}=1,...,i_{r}=1}^{N}\frac{b_{y}a_{i_{r}}}{S}\Pi_{k=1}^{r-1}\frac{b_{i_{k}}a_{i_{k}}}{S},$$

where the last equality follows from rearranging the terms.  We then conclude that
\begin{equation} \label{eq:basiclower}
\sum_{i_{1}=1,...,i_{r}=1}^{N}E(\mathbf{1}_{(y,i_{1},...,i_{r})})\geq b_{y}(\frac{\mathbf{a}\cdot\mathbf{b}}{S})^{r-1}.
\end{equation}

It then follows that we can construct a lower bound for the expected {\em total} number of paths of length $r$ by using (\ref{eq:basiclower}), where we invoke the lower bound for the expected number of paths of length $r$ starting from node $y$ and sum over all possible initial node choices.  Consequently, $\sum_{i} b_{i}(\frac{\mathbf{a}\cdot\mathbf{b}}{S})^{r-1} = S(\frac{\mathbf{a}\cdot\mathbf{b}}{S})^{r-1}$, the proof is complete.
\end{proof}

Since computing the probability that a path exists is quite challenging when a path revisits the same edge more than once, we introduce the following definitions to help us address this problem.

\begin{defn}
	Consider an edge in a given path.  If the edge has not appeared before, that edge is a \textbf{new edge}.  Alternatively, if the edge has appeared before, that edge is a \textbf{repeating edge}.  Furthermore, a list of consecutive repeating edges of maximal size in a path is called a \textbf{repeating edge block}.  In addition, the \textbf{length} of a repeating edge block is the number of edges that appear in the edge block.  Define the \textbf{new edge interior} to be a list of nodes that includes the $mth$ node in the path if the incoming edge to the $mth$ node and the outgoing edge from the $mth$ node are both new edges. 
\end{defn}  

\noindent \textbf{Remark:} The new edge interior can contain multiple copies of the same node.  Equivalently, in a path of length $r$, where $2 \leq m \leq r$, we say that the $mth$ node in the path belongs to the new edge interior if the $m-1$st and $mth$ edges are new edges. 

\noindent \textbf{Example 1:}  Consider the path $(1,2,3,4,1,2,3,7,8)$, which consists of $8$ edges.  The first four edges and the last two edges in the path are new edges.  The fifth and sixth edges in the path $\{(1,2),(2,3)\}$ are repeating edges.  Since the repeating edges $(1,2)$ and $(2,3)$ appear consecutively in the path (and there are no other repeating edges that appear next to these edges), they form a repeating edge block.  Additionally, $\{2,3,4\}$ consists of the new edge interior, as the second, third and fourth nodes in the path are part of new incoming and new outgoing edges.

Before we can introduce the following lemma, we will need a bit of notation.  Consider a function $f$ that maps elements in a list to the real numbers.   As a list $\mathbf{N}$ can contain multiple copies of the same node, we define $\Pi_{i\in\mathbf{N}} f(i) = \Pi_{i=1}^{|\mathbf{N}|}f(X_{i})$, where $|\mathbf{N}|$ is the number of entries in the list and $X_{i}$ is the $ith$ entry in the list.  In particular if $\mathbf{N} = \{1,2,1\}$, then $\Pi_{i\in\mathbf{N}} f(i) = f(1)\cdot f(2) \cdot f(1)$.  We now provide the desired result, which will help us compute the probability that a path exists.

\begin{lem}\label{lem:probpath}
	Define $\mathbf{1}_{(i,j)}$ as an indicator random variable that equals $1$ if the edge $(i,j)$ exists and $0$ otherwise.  Consequently, $\Pi_{k=1}^{r}\mathbf{1}_{(i_{k},i_{k+1})} = \mathbf{1}_{(i_{1},...,i_{r+1})}$ is an indicator random variable that equals $1$ if there is a path $(i_{1},...,i_{r+1})$.  Let $\mathbf{N}$ be the new edge interior and let $\mathbf{R}$ be a list of pairs of the first and last nodes  for each repeating edge block. If the first and last edges are new edges, then
	
	\begin{equation}\label{eq:prob}
		Pr(\Pi_{k=1}^{r}\mathbf{1}_{(i_{k},i_{k+1})} = 1) = \frac{b_{i_{1}}a_{i_{r+1}}}{S}\Pi_{i\in\mathbf{N}}\frac{a_{i}b_{i}}{S}\Pi_{(j,k)\in \mathbf{R}}\frac{a_{j}b_{k}}{S}.
	\end{equation}
	
	Furthermore, if $k_{i}$ is the number of repeating edge blocks of length $i$, then the number of nodes in $\mathbf{N}$, \begin{equation}\label{eq:count} |\mathbf{N}| = r - 1 - \sum_{i=1}^{r-2}(i+1)k_{i}. 
	\end{equation}
\end{lem}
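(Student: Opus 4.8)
The plan is to reduce the probability to a product over the distinct edges of the path and then reorganize that product node by node, so that the $a$ and $b$ factors regroup into the three types appearing on the right of (\ref{eq:prob}). Since distinct edges are independent Bernoulli variables while a repeated traversal of an already-present edge imposes no further constraint, the probability factorizes over the distinct edges, which are exactly the new edges; hence
$$Pr\left(\Pi_{k=1}^{r}\mathbf{1}_{(i_k,i_{k+1})}=1\right) = \prod_{k:\,(i_k,i_{k+1})\text{ new}} \frac{b_{i_k} a_{i_{k+1}}}{S}.$$

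Next I would rearrange this product by assigning the outgoing factor $b_{i_k}$ and the incoming factor $a_{i_{k+1}}$ of each new edge to node positions $k$ and $k+1$ respectively, so the bookkeeping reduces to classifying each position $m$ by whether its incoming edge (edge $m-1$) and outgoing edge (edge $m$) are new. Position $1$ then contributes $b_{i_1}$ and position $r+1$ contributes $a_{i_{r+1}}$, both by the hypothesis on the first and last edges; an interior position with both adjacent edges new contributes $a_{i_m}b_{i_m}$ and is by definition a member of $\mathbf{N}$; a position with both adjacent edges repeating contributes nothing. The key observation is that a position with exactly one new adjacent edge occurs precisely at an end of a repeating edge block: the first node $j$ of a block contributes $a_j$ (incoming edge new, outgoing repeating) and the last node $k$ contributes $b_k$ (incoming repeating, outgoing new), so each block yields $a_j b_k$ with $(j,k)\in\mathbf{R}$. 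Collecting the surviving factors gives the $a,b$-part of (\ref{eq:prob}).

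To pin down the power of $S$ and to obtain the second claim, I would establish (\ref{eq:count}) as a direct count. The interior $\mathbf{N}$ is drawn from the $r-1$ positions $2,\ldots,r$, and I would show that each repeating edge block of length $i$ deletes exactly $i+1$ of them, disjointly across blocks. A block of length $i$ on edges $s,\ldots,s+i-1$ touches the $i+1$ positions $s,\ldots,s+i$, none of which lies in $\mathbf{N}$ since each has a repeating adjacent edge; maximality together with the first/last-edge hypothesis forces $2\le s$ and $s+i\le r$, so all touched positions are interior, while maximality also separates distinct blocks by a new edge and hence makes their touched intervals disjoint. This yields $|\mathbf{N}| = r-1-\sum_{i=1}^{r-2}(i+1)k_i$. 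Since the number of new edges is $r-\sum_i i k_i = 1+|\mathbf{N}|+\sum_i k_i = 1+|\mathbf{N}|+|\mathbf{R}|$, the product above carries exactly the $1+|\mathbf{N}|+|\mathbf{R}|$ factors of $1/S$ demanded by (\ref{eq:prob}).

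The step I expect to be most delicate is the rearrangement: one must check that the classification of node positions is exhaustive and that the stray $a$ and $b$ factors pair up correctly as $a_j b_k$ over each block without any double counting, which is exactly where the maximality of repeating edge blocks and the hypothesis that the path begins and ends with new edges are needed.
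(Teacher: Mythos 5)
Your proposal is correct and takes essentially the same route as the paper: the paper likewise factors the probability over the new edges, regroups the $a$ and $b$ factors node by node (interior nodes of $\mathbf{N}$ contributing $a_ib_i$, the path endpoints contributing $b_{i_1}$ and $a_{i_{r+1}}$, and each repeating edge block contributing $a_jb_k$ through its first and last nodes), and obtains (\ref{eq:count}) by noting each block of length $i$ accounts for $i+1$ node positions excluded from $\mathbf{N}$. Your write-up is in fact somewhat more careful than the paper's informal, example-driven argument, since you explicitly verify the disjointness of the positions removed by distinct blocks and check that the number of $1/S$ factors, $1+|\mathbf{N}|+|\mathbf{R}|$, equals the number of new edges $r-\sum_i ik_i$.
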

\begin{proof}

\begin{figure}
\centering
\includegraphics[scale=.4]{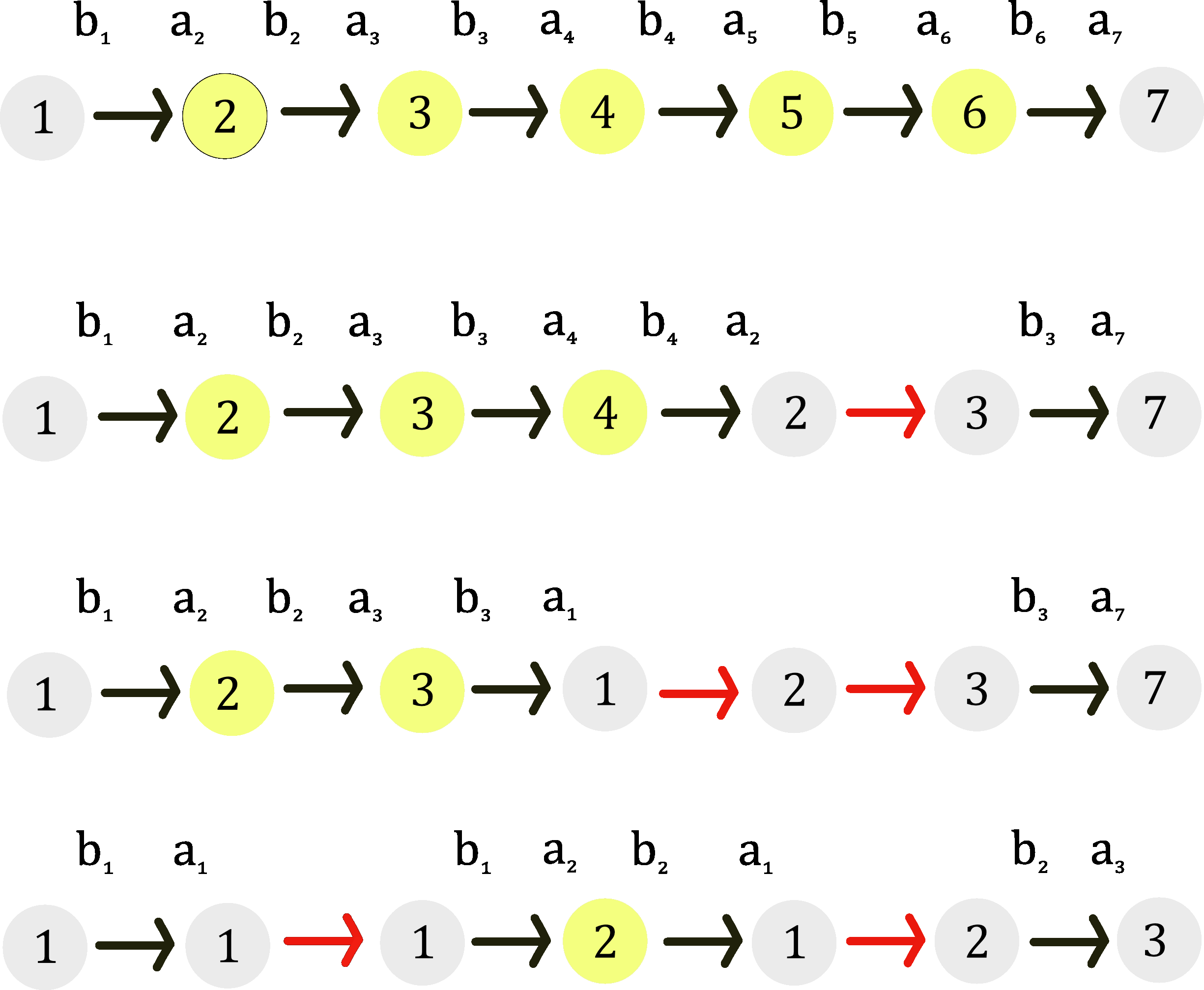}
\caption[Computing the probability that a path exists]{Pictured are four paths.  Yellow nodes are in the new edge interior, while red edges indicate that the edge is a repeating edge.  For each new edge the corresponding nodes' expected in-degree and out-degree are listed.  The product of all of those terms in a path is proportional to the probability that a path exists.}
\label{fig:probpath}
\end{figure}  
To develop the intuition behind (\ref{eq:prob}), we first consider some examples listed in Figure \ref{fig:probpath}.  	Consider the first path, where except for the first and last nodes in the path, all of the nodes in the path are in the new edge interior (highlighted in yellow).  Note that the nodes in the new edge interior supply an incoming edge and an outgoing edge.  In particular $\mathbf{N} = \{2,3,4,5,6\}$ and the probability that the first path exists is precisely,
$$\frac{b_{1}a_{7}}{S}\Pi_{i=2}^{6}\frac{a_{i}b_{i}}{S}=\frac{b_{1}a_{7}}{S}\Pi_{i\in\mathbf{N}}\frac{a_{i}b_{i}}{S}.$$

Of course there may be repeating edges, as listed in Figure \ref{fig:probpath}, which we highlight in red.  In particular if we consider the probability the second path exists in the figure, we identify the nodes in the new edge interior $\mathbf{N} = \{2,3,4\}$ and then express the pairs of the first and last node in each repeating edge block $\mathbf{R} = \{(2,3)\}$.  It then follows that the probability this path exists is,
$$\frac{b_{1}a_{7}}{S}\frac{a_{2}b_{3}}{S}\Pi_{i=2}^{4}\frac{a_{i}b_{i}}{S}=\frac{b_{1}a_{7}}{S}\Pi_{(j,k)\in \mathbf{R}}\frac{a_{j}b_{k}}{S}\Pi_{i\in\mathbf{N}}\frac{a_{i}b_{i}}{S}.$$

As illustrated in the third and fourth paths, we can have many repeating edge blocks of various lengths.  Now to derive the general formula, the basic idea is that if a node appears in the new edge interior, then that node corresponds to both a new incoming edge and a new outgoing edge.  If however a node is not in the new edge interior, then it is either in a repeating edge block or is the first or last node in the path.  The first and last nodes in the path only correspond to one new outcoming edge or one new incoming edge respectively, so we only express their expected out-degree (in-degree) once in the product.  Similarly, nodes at the start or end of a repeating edge block only correspond to one new incoming edge or one new outgoing edge.  Hence we have that the probability that the path exists is,

	\begin{equation}
		\frac{b_{i_{1}}a_{i_{r+1}}}{S}\Pi_{i\in\mathbf{N}}\frac{a_{i}b_{i}}{S}\Pi_{(j,k)\in \mathbf{R}}\frac{a_{j}b_{k}}{S}
	\end{equation}

	To verify $$|\mathbf{N}| = r - 1 - \sum_{i=1}^{r-2}(i+1)k_{i},$$
	we consider the list of the nodes that are not in the new edge interior, $|\mathbf{N}^{c}|$.  Alternatively, $|\mathbf{N}^{c}|$ counts the number of times a node appears in a repeating edge block in addition to the first and last nodes of the path.  This quantity is precisely $2 + \sum_{i}(i+1)k_{i}$, where $k_{i}$ is the number of repeating edge blocks of length $i$ and $i+1$ are the number of nodes in a repeating edge block of length $i$.  Consequently, since there are $r + 1$ nodes in a path of length $r$, $r + 1 - 2 - \sum_{i\geq 1}(i+1)k_{i}$ is precisely the right hand side of equation (\ref{eq:count}).  (We derive the upperlimit in the summation from the assumption that the first and last edges are new edges and that the path has length $r$, which implies that a maximal of length of a repeating edge block could be at most $r-2$.) 
\end{proof}

 To calculate the number of paths in the graph, we employ Hoare-Ramshaw notation for a closed set of integers, namely $$[a..b] = \{x\in \mathbb{Z}: a\leq x \leq b\}.$$ We now provide an upper bound for the expected number of paths of length r from a given node y.  

\begin{thm}
\label{thm:upper}   
Denote $p_{max} = \max_{i,j} p_{ij}$.
Assume $\frac{S}{\mathbf{a}\cdot \mathbf{b}}<\frac{1}{2}$ and that $r < \frac{\mathbf{a}\cdot\mathbf{b}}{S} $, then we have the following upperbound for the expected number of paths from any node $y$ of length $r$,

$$\sum_{i_{1},...,i_{r}}Pr[\mathbf{1}_{(y,i_{1},...,i_{r})}=1]\leq 2b_{y}(\frac{\mathbf{a}\cdot\mathbf{b}}{S})^{r-1}exp(p_{max}\frac{r^{2}(\frac{S}{\mathbf{a}\cdot\mathbf{b}})^{2}}{1-r\frac{S}{\mathbf{a}\cdot\mathbf{b}}}).$$ 
\end{thm}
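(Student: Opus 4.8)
The plan is to start from the exact per-path probability furnished by Lemma~\ref{lem:probpath} and then organize the sum over all paths $(y,i_{1},\dots,i_{r})$ according to their repeating-edge structure. Writing $q=\frac{S}{\mathbf{a}\cdot\mathbf{b}}$, so that the hypotheses read $q<\tfrac12$ and $rq<1$, I would first dispose of the assumption in Lemma~\ref{lem:probpath} that the first and last edges are new: the first edge $(y,i_{1})$ is always new, and the paths whose last edge repeats form a lower-order family that I expect to absorb into the overall factor of $2$ (treated at the end). For a path whose first and last edges are new, Lemma~\ref{lem:probpath} gives its probability as $\frac{b_{y}a_{i_{r}}}{S}\prod_{i\in\mathbf{N}}\frac{a_{i}b_{i}}{S}\prod_{(j,k)\in\mathbf{R}}\frac{a_{j}b_{k}}{S}$, and I would group paths by their \emph{block profile} $(k_{1},k_{2},\dots)$, where $k_{i}$ counts the repeating edge blocks of length $i$.

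For a fixed profile, I would carry out the summation in two stages. The nodes of the new edge interior $\mathbf{N}$ are free summation variables, and each contributes $\sum_{v}\frac{a_{v}b_{v}}{S}=\frac{\mathbf{a}\cdot\mathbf{b}}{S}$; together with the free last node $i_{r}$, which yields $\frac{b_{y}}{S}\sum_{i_{r}}a_{i_{r}}=b_{y}$, this produces $b_{y}(\frac{\mathbf{a}\cdot\mathbf{b}}{S})^{|\mathbf{N}|}$. Each repeating-block factor $\frac{a_{j}b_{k}}{S}$ is bounded crudely by $p_{max}$, giving $p_{max}^{\sum_{i}k_{i}}$. Invoking the node count (\ref{eq:count}), $|\mathbf{N}|=r-1-\sum_{i}(i+1)k_{i}$, I factor out the leading $b_{y}(\frac{\mathbf{a}\cdot\mathbf{b}}{S})^{r-1}$ of Lemma~\ref{lem:lower} and am left, per profile, with the product $\prod_{i}\big(p_{max}\,q^{\,i+1}\big)^{k_{i}}$ from the probabilities, multiplied by a purely combinatorial count of how many paths realize that profile.

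The combinatorial count is the crux. For a block of length $i$ I would bound the number of admissible configurations by $r^{i+1}$: its $\le r$ starting positions, together with the $\le r$ choices, at each of its $i$ successive edges, of which earlier edge it coincides with (each such choice determines the next node, so the block is a walk through previously seen edges). Accounting for the placement of the whole collection of $t=\sum_{i}k_{i}$ blocks among the $r$ edge slots and for the assignment of block lengths contributes a factor $\prod_{i}\frac{1}{k_{i}!}$ after the standard $\binom{r}{t}\le r^{t}/t!$ estimate. Assembling these, the contribution of a profile is at most $b_{y}(\frac{\mathbf{a}\cdot\mathbf{b}}{S})^{r-1}\prod_{i}\frac{1}{k_{i}!}\big(p_{max}(rq)^{i+1}\big)^{k_{i}}$. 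Summing over \emph{all} profiles (an upper bound, since all terms are nonnegative) factors as an infinite product of exponential series, $\prod_{i\ge1}\exp\!\big(p_{max}(rq)^{i+1}\big)=\exp\!\big(p_{max}\sum_{i\ge1}(rq)^{i+1}\big)$, and the geometric sum $\sum_{i\ge1}(rq)^{i+1}=\frac{(rq)^{2}}{1-rq}=\frac{r^{2}q^{2}}{1-rq}$ (valid since $rq<1$) reproduces exactly the exponent in the statement.

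The remaining factor of $2$ I would extract at the end by returning to the boundary cases set aside at the start — paths whose last edge is a repeating edge, for which $i_{r}$ is no longer a free summation variable — and showing their total is dominated by the same exponential bound; the hypothesis $q<\tfrac12$ gives the slack (e.g. $\frac{1}{1-q}\le2$) needed to fold them in. I expect the main obstacle to be making the combinatorial count in the third paragraph rigorous: precisely enumerating the admissible ways a run of consecutive repeating edges can match earlier edges while respecting maximality of the blocks and the bookkeeping of $\mathbf{N}$ and $\mathbf{R}$, so that no path is undercounted and the per-block factor stays at $r^{i+1}$.
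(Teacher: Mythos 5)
Your main-term computation (paths whose last edge is new) is in substance identical to the paper's: the same grouping by block profile, the same per-block charge $p_{max}(rq)^{i+1}$ with $q = \frac{S}{\mathbf{a}\cdot\mathbf{b}}$, and the same summation over all profiles into $\exp\big(p_{max}\frac{(rq)^{2}}{1-rq}\big)$ (the paper packages that last step as Lemma \ref{lem:expineq}); the bookkeeping you flag as the main obstacle is exactly what is carried out in (\ref{eq:E1})--(\ref{eq:E5}) and is not where the difficulty lies. The genuine gap is the step you deferred to the end: paths whose last edge is repeating. Quantify it: such a path ends in a maximal trailing run of $j\geq 1$ repeating edges; its probability equals that of its length-$(r-j)$ prefix (trailing repeats cost nothing), while the number of ways to fill the trailing run is bounded only by $r^{j}$ (each repeated edge may match any of up to $r$ previously observed edges leaving the current node, and nothing forces fewer choices). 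Relative to the main term $b_{y}(\frac{\mathbf{a}\cdot\mathbf{b}}{S})^{r-1}$, this family therefore contributes up to $\sum_{j\geq 1}(rq)^{j}=\frac{rq}{1-rq}$, and the hypotheses give only $rq<1$, so this quantity is unbounded as $rq\rightarrow 1$: it cannot be absorbed into a factor of $2$, and the slack $\frac{1}{1-q}\leq 2$ you point to is spent elsewhere in the paper's argument.

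The paper's device that eliminates this problem is an observation-reordering trick: it reveals the first edge, then the \emph{last} edge, then the middle edges in order. Under this convention the last edge can repeat only the first edge, so a repeating last edge forces $i_{r-1}=y$ and $i_{r}=i_{1}$; the path is then a closed walk of length $r-1$ from $y$ with a single forced extension, i.e. exactly one choice rather than $r$. This gives the exact decomposition $E(P_{r}(y)) = E(P_{r}^{L}(y)) + E(C_{r-1}(y)) \leq E(P_{r}^{L}(y)) + E(P_{r-1}(y))$, which unrolls to $E(P_{r}(y))\leq \sum_{m=1}^{r}E(P_{m}^{L}(y))$ with no extraneous powers of $r$, and the geometric sum $\sum_{m=1}^{r}q^{r-m}\leq \frac{1}{1-q}\leq 2$ is what produces the factor $2$ --- the hypothesis $q<\frac{1}{2}$ is consumed there, not on your boundary family. (Paths whose last edge coincides with a \emph{middle} edge are, under the reordered convention, counted in $P_{r}^{L}$ with that middle edge charged as the repeat, so Lemma \ref{lem:probpath} still applies to everything in the main family.) To repair your proof you need this reordering or an equivalent device that caps the trailing-run multiplicity at one choice per step; the crude $r$-choices-per-trailing-edge count fails precisely in the regime $rq$ near $1$ that the theorem statement allows.
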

\begin{proof}
As demonstrated in Lemma \ref{lem:lower} when evaluating the likelihood that a particular path exists, it is helpful to identify the edges that repeat multiple times throughout the path.   We want a method that identifies 'repeating' edges that will help simplify our calculations.  
\newline\newline
 Informally, a repeating edge is an edge that has been {\em observed} before.  So consider the probability that the path $(i_0,i_{1},...,i_{r})$ exists, that is,
 
 \begin{multline} Pr[\mathbf{1}_{(i_0,i_{1},...,i_{r})}=1] = Pr(\mathbf{1}_{(i_0,i_{1})}=1)Pr(\mathbf{1}_{(i_{r-1},i_{r})}=1|\mathbf{1}_{(y,i_{1})}=1)\LargerCdot \\ \Pi_{j=1}^{r-2}Pr(\mathbf{1}_{(i_j,i_{j+1})}=1|\Pi_{k=0}^{j-1} \mathbf{1}_{(i_k,i_{k+1})}\mathbf{1}_{(i_{r-1},i_{r})}=1 ),
 \end{multline}
 
 where we observe the first edge in the path first, then we observe the { \em last} edge in the path.  After that, we observe the second edge in the path, followed by the third edge in the path, etc.   

Now we condition on the possibility that the last edge could be a repeating edge.  If the last edge is indeed a repeating edge, then we also know that the last edge has to equal the first edge.  Furthermore, we know that the first edge starts at node $y$, similarly the last edge must also start at node $y$. We can rewrite  this path of length $r$ as  a cycle starting and ending at node $y$ (of length $r-1$) where we append this repeated last edge to the cycle.  Since this last edge must be identical to the first edge, we conclude that the expected number of paths of length $r$ starting at node $y$, where the last edge  is determined by the first edge in the cycle, equals the expected number of cycles of length $r-1$ starting and ending at node $y$. 
\newline\newline
Now define $P_{r}(y)$ to be the number of paths of length $r$ starting at node $y$ and $C_{r}(y)$ to be the number of cycles of length $r$ starting at node $y$. Furthermore, denote $P_{r}^{L}(y)$ to be the number of paths of length $r$ where the last edge is a new edge. We have the following decomposition.

\begin{equation}\label{eq:recurrence} 
E(P_{r}(y)) = E(P_{r}^{L}(y))+E(C_{r-1}(y)) \leq E(P_{r}^{L}(y)) + E(P_{r-1}(y)).
\end{equation}

Applying  inequality (\ref{eq:recurrence}) over again, we have that $E(P_{r}(y)) \leq E(P_{r}^{L}(y)) + E(P_{r-1}^{L}(y))  + E(P_{r-2}(y))$.
Repeating this trick inductively will yield that

\begin{equation}  \label{eq:recurrencegoal}
E(P_{r}(y)) \leq \sum_{m=2}^{r}E(P_{m}^{L}(y)) + E(P_{1}(y)) = \sum_{m=1}^{r}E(P_{m}^{L}(y)),
\end{equation}

where the last equality follows from the fact that a path of length $1$ can never have a repeating edge.  

By inequality (\ref{eq:recurrencegoal}), to construct a meaningful bound for $E(P_{r}(y))$, it will suffice to consider a bound for $E(P_{r}^{L}(y))$, \textbf{where we are only interested in cases where the last edge (and the first edge) cannot be a repeating edge.}  

Since the first and last edges cannot be repeating edges, we can now invoke Lemma \ref{lem:probpath} to compute the probability that a given path exists.

Define $k_{0}$ to be the number of new edges.  For $i\geq 1$, let $k_{i}$ be the number of repeating edge blocks of length $i$.  So to compute $E[P_{r}^{L}(y)]$, we will fix (integer) values for $k_{i}$, consider all possible arrangements for each of the $k_{i}$, repeating edge blocks and then consider all possible choices of last node in the path $z$, choices for the  lists $\mathbf{N}$, consisting of nodes in the new edge interior, and $\mathbf{R}_{*}$, consisting of nodes in the repeating edge blocks with their corresponding positions in each repeating edge block. (Note that $\mathbf{R}_{*}$ uniquely determines $\mathbf{R}$, the list of pairs of nodes at the beginning and end of a repeating edge block.) It then follows that we have the following upperbound,

\begin{multline} \label{eq:E1}
E[P_{r}^{L}(y)] \leq \\ \sum_{\substack{k_{0}+\sum_{i=1}^{r-2}i(k_{i})=r \\ \forall i\in[0..r-2], k_{i} \in [0..r]}}\binom{\sum_{i=0}^{r-2}k_{i}}{k_{0},k_{1},...,k_{r-2}} [\sum_{z=1}^{N}\sum_{\mathbf{N},\mathbf{R}_*} \frac{b_{y}a_{z}}{S}\Pi_{l\in\mathbf{N}}\frac{a_{l}b_{l}}{S}\Pi_{(j,k)\in\mathbf{R}}\frac{a_{j}b_{k}}{S}].
\end{multline}

We can then construct an upperbound to (\ref{eq:E1}) by identifying the nodes in $\mathbf{R}$ that must equal other nodes in the summation and bound the contrubition of that node's expected in (out) degree by $a_{max}$ ($b_{max}$). Recalling that we defined $p_{max}=a_{max}b_{max}/S$ yields the following,

\begin{multline} \label{eq:E2}
E[P_{r}^{L}(y)] \leq \sum_{\substack{k_{0}+\sum_{i=1}^{r-2}i(k_{i})=r \\ \forall i\in[0..r-2], k_{i} \in [0..r]}}\binom{\sum_{i=0}^{r-2}k_{i}}{k_{0},k_{1},...,k_{r-2}}[\sum_{z=1}^{N}\sum_{\mathbf{N},\mathbf{R}_*} \frac{b_{y}a_{z}}{S}p_{max}^{|\mathbf{R}|}\Pi_{l\in\mathbf{N}}\frac{a_{l}b_{l}}{S}].
\end{multline}

Furthermore, since $\sum_{z=1}^{N} a_{z}=S$, (\ref{eq:E2}) simplies further to,

\begin{multline} \label{eq:E3}
E[P_{r}^{L}(y)] \leq \sum_{\substack{k_{0}+\sum_{i=1}^{r-2}i(k_{i})=r \\ \forall i\in[0..r-2], k_{i} \in [0..r]}}\binom{\sum_{i=0}^{r-2}k_{i}}{k_{0},k_{1},...,k_{r-2}} [\sum_{\mathbf{N},\mathbf{R}_*} b_{y}p_{max}^{|\mathbf{R}|}\Pi_{l\in\mathbf{N}}\frac{a_{l}b_{l}}{S}].
\end{multline}

Now $|\mathbf{R}|$ is precisely the number of repeating edge blocks, $\sum_{i=1}^{r-2}k_{i}$ and the number of possible choices for nodes in a repeating edge block $|\mathbf{R}_{*}|$, is at most $k_{0}^{\sum_{i=1}^{r-2}ik_{i}}$ as any edge in a repeating edge block must equal one of the ($k_{0}$) new edges in the path.  This results in the bound, 

\begin{multline} \label{eq:E4}
E[P_{r}^{L}(y)] \leq b_{y}\sum_{\substack{k_{0}+\sum_{i=1}^{r-2}i(k_{i})=r \\ \forall i\in[0..r-2], k_{i} \in [0..r]}}\binom{\sum_{i=0}^{r-2}k_{i}}{k_{0},k_{1},...,k_{r-2}} p_{max}^{\sum_{i=1}^{r-2}k_{i}}k_{0}^{\sum_{i=1}^{r-2}ik_{i}}[\sum_{\mathbf{N}} \Pi_{l\in\mathbf{N}}\frac{a_{l}b_{l}}{S}].
\end{multline}

For fixed $k_{0},...,k_{r-2}$, this uniquely determines the number of nodes that appear in $\mathbf{N}$ by Lemma \ref{lem:probpath}.  That is, $|\mathbf{N}| = r - 1 - \sum_{i=1}^{r-2}(i+1)k_{i}$. Furthermore, since we are summing over all possible choices of nodes that could be in the new edge interior, we get that

\begin{multline} \label{eq:E5}
E[P_{r}^{L}(y)] \leq \\ b_{y}
(\frac{\mathbf{a}\cdot\mathbf{b}}{S})^{r-1}\sum_{\substack{k_{0}+\sum_{i=1}^{r-2}i(k_{i})=r \\ \forall i\in[0..r-2], k_{i} \in [0..r]}}\binom{\sum_{i=0}^{r-2}k_{i}}{k_{0},k_{1},...,k_{r-2}} p_{max}^{\sum_{i=1}^{r-2}k_{i}}r^{\sum_{i=1}^{r-2}ik_{i}}[\frac{\mathbf{a}\cdot\mathbf{b}}{S}]^{- \sum_{i=1}^{r-2}(i+1)k_{i}},
\end{multline}

where we used the fact that $k_{0} \leq r$. Lemma \ref{lem:expineq}, proved in the appendix, asserts that in general for $l,m,r\in \mathbb{N}$ and $\alpha,\beta \in \mathbb{R}$, where $\beta < 1$, that 

\begin{equation} 
\sum_{\substack{k_{0}+\sum_{i=1}^{m}ik_{i} = r \\ \forall i \in [0..m], k_{i}\in [0..r]}}\binom{\sum_{i=0}^{m}k_{i}}{k_{0},...,k_{m}}\Pi_{i=1}^{l}\alpha^{k_{i}}\Pi_{i=l+1}^{m}\alpha^{k_{i}}\beta^{(i-l)k_{i}} \leq exp(\frac{lr\alpha}{1-\beta}).
\end{equation}

From the above inequality and (\ref{eq:E5}), choosing $l = 1$, $m=r-2$, $\alpha = p_{max}r(\frac{S}{\mathbf{a}\cdot\mathbf{b}})^{2}$ and $\beta = \frac{rS}{\mathbf{a}\cdot\mathbf{b}}$ gives us that,

\begin{equation} \label{eq:E8}
E[P_{r}^{L}(y)] \leq  b_{y}(\frac{\mathbf{a}\cdot\mathbf{b}}{S})^{r-1} exp(\frac{p_{max}(\frac{rS}{\mathbf{a}\cdot\mathbf{b}})^{2}}{1-\frac{rS}{\mathbf{a}\cdot\mathbf{b}}}).
\end{equation}

Now from (\ref{eq:E8}) and (\ref{eq:recurrencegoal}) we conclude that  $$E(P_{r}(y))\leq \sum_{m=1}^{r}b_{y}(\frac{\mathbf{a}\cdot\mathbf{b}}{S})^{m-1}exp(p_{max}\frac{m^{2}(\frac{S}{\mathbf{a}\cdot\mathbf{b}})^{2}}{1-m\frac{S}{\mathbf{a}\cdot\mathbf{b}}})\leq $$

\begin{equation}\label{eq:upperalmostdone} b_{y}(\frac{\mathbf{a}\cdot\mathbf{b}}{S})^{r-1}exp(p_{max}\frac{r^{2}(\frac{S}{\mathbf{a}\cdot\mathbf{b}})^{2}}{1-r\frac{S}{\mathbf{a}\cdot\mathbf{b}}})\sum_{m=1}^{r}(\frac{S}{\mathbf{a}\cdot\mathbf{b}})^{r-m},
\end{equation}

where we factored out an $(\frac{\mathbf{a}\cdot\mathbf{b}}{S})^{r-1}$ and bounded the each of the $exp(p_{max}\frac{m^{2}(\frac{S}{\mathbf{a}\cdot\mathbf{b}})^{2}}{1-m\frac{S}{\mathbf{a}\cdot\mathbf{b}}})$ by $exp(p_{max}\frac{r^{2}(\frac{S}{\mathbf{a}\cdot\mathbf{b}})^{2}}{1-r\frac{S}{\mathbf{a}\cdot\mathbf{b}}})$, as $exp(p_{max}\frac{m^{2}(\frac{S}{\mathbf{a}\cdot\mathbf{b}})^{2}}{1-m\frac{S}{\mathbf{a}\cdot\mathbf{b}}})$ is an increasing function in $m$.  

We can then proceed bounding (\ref{eq:upperalmostdone}) by,
$$b_{y}(\frac{\mathbf{a}\cdot\mathbf{b}}{S})^{r-1}exp(p_{max}\frac{r^{2}(\frac{S}{\mathbf{a}\cdot\mathbf{b}})^{2}}{1-r\frac{S}{\mathbf{a}\cdot\mathbf{b}}})\frac{1}{1-\frac{S}{\mathbf{a}\cdot\mathbf{b}}}\leq$$

$$2b_{y}(\frac{\mathbf{a}\cdot\mathbf{b}}{S})^{r-1}exp(p_{max}\frac{r^{2}(\frac{S}{\mathbf{a}\cdot\mathbf{b}})^{2}}{1-r\frac{S}{\mathbf{a}\cdot\mathbf{b}}}),$$

where the last inequality follows from the assumption that $\frac{\mathbf{a}\cdot\mathbf{b}}{S}>2$.

\end{proof}

Now that we have results regarding the expectation of the number of paths of length r , we now 
seek concentration results regarding the dominating eigenvalue of the adjacency matrix.

\begin{thm}
\label{thm:Markov}
Denote $A$ as a realization of a random Chung-Lu graph with expected degree sequence $\mathbf{d}=(\mathbf{a},\mathbf{b})\in\mathbb{Z}^{N\times 2}$.  Furthermore let $p_{max} =\frac{a_{max}b_{max}}{S}$ and $S=\sum_i a_i =\sum_i b_i$.   Then for every $\epsilon\in (0,\frac{1}{2})$ there exists  $N_1,N_2,N_3 \in \mathbb{N}$ such that if there exists an $r\in\mathbb{N}$ such that $r\frac{S}{\mathbf{a}\cdot\mathbf{b}}<\frac{1}{2}$, $r(\frac{S}{\mathbf{a}\cdot\mathbf{b}})^{2}<\frac{1}{N_1}$, $\frac{\log N}{r}<\frac{1}{N_2}$ and $N > N_{3}$, then $$Pr(\rho(A)\leq(1+\epsilon)\frac{\mathbf{a}\cdot\mathbf{b}}{S})\geq 1-\epsilon$$
\end{thm}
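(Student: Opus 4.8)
The plan is to bound the spectral radius from above through the path count $\mathbf{1}^{T}A^{r}\mathbf{1}$ and then apply Markov's inequality, feeding in the expected-path estimate of Theorem \ref{thm:upper}. By Lemma \ref{lem:theory} we have $\rho(A)^{r}\leq \mathbf{1}^{T}A^{r}\mathbf{1}$, so the event $\{\rho(A) > (1+\epsilon)\frac{\mathbf{a}\cdot\mathbf{b}}{S}\}$ is contained in $\{\mathbf{1}^{T}A^{r}\mathbf{1} > (1+\epsilon)^{r}(\frac{\mathbf{a}\cdot\mathbf{b}}{S})^{r}\}$. First I would apply Markov's inequality to get
$$Pr\Bigl(\rho(A) > (1+\epsilon)\tfrac{\mathbf{a}\cdot\mathbf{b}}{S}\Bigr) \leq \frac{E[\mathbf{1}^{T}A^{r}\mathbf{1}]}{(1+\epsilon)^{r}(\frac{\mathbf{a}\cdot\mathbf{b}}{S})^{r}}.$$
Since $\mathbf{1}^{T}A^{r}\mathbf{1}=\sum_{y}P_{r}(y)$, summing the bound of Theorem \ref{thm:upper} over all starting nodes $y$ and using $\sum_{y} b_{y}=S$ yields
$$E[\mathbf{1}^{T}A^{r}\mathbf{1}] \leq 2S\Bigl(\tfrac{\mathbf{a}\cdot\mathbf{b}}{S}\Bigr)^{r-1}\exp\Bigl(p_{max}\tfrac{r^{2}(\frac{S}{\mathbf{a}\cdot\mathbf{b}})^{2}}{1-r\frac{S}{\mathbf{a}\cdot\mathbf{b}}}\Bigr).$$
Here the hypotheses of Theorem \ref{thm:upper} are inherited from the stated conditions: $r\frac{S}{\mathbf{a}\cdot\mathbf{b}}<\frac{1}{2}$ gives both $\frac{S}{\mathbf{a}\cdot\mathbf{b}}<\frac{1}{2}$ and $r<\frac{\mathbf{a}\cdot\mathbf{b}}{S}$. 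Combining the two displays and cancelling one factor of $\frac{\mathbf{a}\cdot\mathbf{b}}{S}$ leaves
$$Pr\Bigl(\rho(A) > (1+\epsilon)\tfrac{\mathbf{a}\cdot\mathbf{b}}{S}\Bigr) \leq \frac{2S^{2}}{\mathbf{a}\cdot\mathbf{b}}\cdot\frac{\exp\bigl(p_{max}\frac{r^{2}(S/\mathbf{a}\cdot\mathbf{b})^{2}}{1-r S/\mathbf{a}\cdot\mathbf{b}}\bigr)}{(1+\epsilon)^{r}}.$$

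The remaining work is to force the right-hand side below $\epsilon$ by choosing $N_{1},N_{2},N_{3}$, and this separates cleanly into taming the exponential factor, bounding the polynomial prefactor, and exploiting the decay $(1+\epsilon)^{-r}$. For the exponential factor I would use $r\frac{S}{\mathbf{a}\cdot\mathbf{b}}<\frac{1}{2}$ to get $\frac{1}{1-rS/\mathbf{a}\cdot\mathbf{b}}<2$, and since $p_{max}\leq 1$ the exponent is at most $2r\bigl(r(\frac{S}{\mathbf{a}\cdot\mathbf{b}})^{2}\bigr) < \frac{2r}{N_{1}}$ by the hypothesis $r(\frac{S}{\mathbf{a}\cdot\mathbf{b}})^{2}<\frac{1}{N_{1}}$. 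Choosing $N_{1}$ (depending only on $\epsilon$) so that $\frac{2}{N_{1}}\leq\frac{1}{2}\log(1+\epsilon)$ bounds this exponential by $(1+\epsilon)^{r/2}$, so the quotient $\exp(\cdots)/(1+\epsilon)^{r}$ is at most $(1+\epsilon)^{-r/2}$.

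For the prefactor, the condition $\frac{S}{\mathbf{a}\cdot\mathbf{b}}<\frac{1}{2}$ gives $\mathbf{a}\cdot\mathbf{b}>2S$, hence $\frac{2S^{2}}{\mathbf{a}\cdot\mathbf{b}}<S$, and since each expected in-degree obeys $a_{i}=\sum_{j}p_{ji}\leq N$ we have $S=\sum_{i}a_{i}\leq N^{2}$. Thus it suffices to guarantee $N^{2}(1+\epsilon)^{-r/2}\leq\epsilon$, i.e. $\frac{r}{2}\log(1+\epsilon)\geq 2\log N+\log(1/\epsilon)$. The hypothesis $\frac{\log N}{r}<\frac{1}{N_{2}}$ gives $r>N_{2}\log N$; picking $N_{2}$ large (depending only on $\epsilon$) so that $\frac{N_{2}}{2}\log(1+\epsilon)\geq 3$ and then taking $N>N_{3}$ large enough that $\log(1/\epsilon)\leq\log N$ closes the estimate, since then $\frac{r}{2}\log(1+\epsilon)>\frac{N_{2}}{2}(\log N)\log(1+\epsilon)\geq 3\log N\geq 2\log N+\log(1/\epsilon)$. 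Taking complements gives the claim $Pr(\rho(A)\leq(1+\epsilon)\frac{\mathbf{a}\cdot\mathbf{b}}{S})\geq1-\epsilon$.

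I expect the only real obstacle to be bookkeeping rather than any new idea: the three constants must be selected in the correct order ($N_{1}$ from $\epsilon$ to absorb the exponential correction, then $N_{2}$ from $\epsilon$ to win the exponential-versus-polynomial race, and finally $N_{3}$ to swallow the lower-order $\log(1/\epsilon)$ term), and one must confirm that every hypothesis of Theorem \ref{thm:upper} is implied by the conditions imposed on $r$. The structural reason the argument succeeds is that the Markov prefactor $\frac{2S^{2}}{\mathbf{a}\cdot\mathbf{b}}$ is only polynomial in $N$, while the threshold contributes the genuinely exponential factor $(1+\epsilon)^{r}$ with $r\gtrsim\log N$, so the polynomial is overwhelmed.
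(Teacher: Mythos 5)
Your proposal is correct and follows essentially the same route as the paper: Lemma \ref{lem:theory} to reduce the spectral radius to the path count $\mathbf{1}^{T}A^{r}\mathbf{1}$, the expectation bound of Theorem \ref{thm:upper} summed over starting nodes, and Markov's inequality, with the same polynomial-versus-exponential race settled via $S\leq N^{2}$. The only difference is bookkeeping: the paper applies Markov at the threshold $N\cdot E(P_{r})$, obtaining failure probability $\frac{1}{N}$ and then absorbing $(2N^{3})^{\frac{1}{r}}$ and the exponential correction into the factor $(1+\epsilon)$ after taking $r$th roots, whereas you place the threshold directly at $(1+\epsilon)^{r}(\frac{\mathbf{a}\cdot\mathbf{b}}{S})^{r}$ and let $(1+\epsilon)^{-r/2}$ overwhelm the $N^{2}$ prefactor --- an equivalent allocation of the $\epsilon$-budget.
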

\begin{proof}
First, suppose that 
\begin{equation}\label{eq:epsilon} \frac{\log N}{r}<\frac{\epsilon}{20} \hspace{3pt}, \frac{1}{N}<\epsilon \hspace{3pt} and \hspace{3pt} r(\frac{S}{\mathbf{a}\cdot\mathbf{b}})^{2}<\frac{\epsilon}{20},
\end{equation}
where we assume without loss of generality that $\epsilon < \frac{1}{2}$.  Consider the expected number of paths of length $r$, $E(P_{r})$.  By Theorem \ref{thm:upper},
$$E(P_{r})\leq 2S(\frac{\mathbf{a}\cdot\mathbf{b}}{S})^{r-1}exp(p_{max}\frac{r^{2}(\frac{S}{\mathbf{a}\cdot\mathbf{b}})^{2}}{1-r\frac{S}{\mathbf{a}\cdot\mathbf{b}}}).$$

Then from Markov's Inequality we have that,

$$Pr(P_{r}>Z)\leq \frac{E(P_{r})}{Z}.$$

Now if $Z = N\cdot E(P_{r})$, then, 

$$Pr(P_{r}>Z)\leq \frac{1}{N}.$$

Consequently, with probability at least $1-\frac{1}{N}$ from Lemma \ref{lem:theory},

\begin{equation} \rho(A)^{r}=\rho(A^{r})\leq 2NS(\frac{\mathbf{a}\cdot\mathbf{b}}{S})^{r}exp(p_{max}\frac{r^{2}(\frac{S}{\mathbf{a}\cdot\mathbf{b}})^{2}}{1-r\frac{S}{\mathbf{a}\cdot\mathbf{b}}}).
\end{equation}

Taking the $rth$ root on both sides yields that

\begin{equation}\label{eq:Lmax}\rho(A) \leq (2NS)^{\frac{1}{r}}\frac{\mathbf{a}\cdot\mathbf{b}}{S}exp(p_{max}\frac{r(\frac{S}{\mathbf{a}\cdot\mathbf{b}})^{2}}{1-r\frac{S}{\mathbf{a}\cdot\mathbf{b}}}).
\end{equation}

We first consider the term from (\ref{eq:Lmax}), \begin{equation} \label{eq:part1} 
exp(p_{max}\frac{r(\frac{S}{\mathbf{a}\cdot\mathbf{b}})^{2}}{1-r\frac{S}{\mathbf{a}\cdot\mathbf{b}}})
 \end{equation}

By assumption (\ref{eq:epsilon}) and the facts that $p_{max}\leq 1$, $r\frac{S}{\mathbf{a}\cdot\mathbf{b}}<\frac{1}{2}$, we have that (\ref{eq:part1}) is bounded above by

\begin{equation}\label{eq:answer2}
exp(\frac{2\epsilon}{20})\leq (1+\frac{3\epsilon}{10}),
\end{equation}

where we also used the fact from (\ref{eq:epsilon}) that $exp(x)\leq 1 + 3x$ for $x<1$ (since $\epsilon < \frac{1}{2}$).  Now we consider the other coefficient from (\ref{eq:Lmax}),

\begin{equation}  \label{eq:part2}
(2NS)^{\frac{1}{r}}\leq (2N^{3})^{\frac{1}{r}},
\end{equation}

where the right hand side comes from the fact that the total number of edges must be bounded above by $N^{2}$.  We consider the log of the right hand side of (\ref{eq:part2}).

\begin{equation}  
log(2N^{3})^{\frac{1}{r}} =\frac{3log(N) + log(2)}{r}\leq
\end{equation}

\begin{equation}  
\frac{4log(N)}{r} < \frac{2\epsilon}{10}.
\end{equation}

So we conclude that (\ref{eq:part2}) is bounded above by

\begin{equation}\label{eq:answer1}
exp(\frac{2\epsilon}{10}) \leq (1 + \frac{6\epsilon}{10}).
\end{equation}

since for $x < 1$, $exp(x) \leq 1 + 3x$.  

Using (\ref{eq:answer2}) and (\ref{eq:answer1}) to bound (\ref{eq:Lmax}), we get that,

\begin{equation}\rho(A) \leq \frac{\mathbf{a}\cdot\mathbf{b}}{S}(1 + \frac{3\epsilon}{10})(1+\frac{6\epsilon}{10}) \leq \frac{\mathbf{a}\cdot\mathbf{b}}{S}(1 + \frac{9\epsilon}{10}+\frac{9\epsilon}{100}) \leq \frac{\mathbf{a}\cdot\mathbf{b}}{S}(1 + \epsilon)
\end{equation}

, where in the second to last inequality we invoked the fact that  $\epsilon < \frac{1}{2}$.  

Note that this bound is valid for at least probability $1 - \frac{1}{N}$ and from (\ref{eq:epsilon}), $ 1 - \epsilon \leq 1 - \frac{1}{N}$ and the proof is complete.

\end{proof}

\begin{rem}
Note that from Theorem \ref{thm:Markov}, we have asymptotic convergence as long as $r \gg \log N$ and $r\frac{S}{\mathbf{a}\cdot\mathbf{b}} < \frac{1}{2}$.  In particular from the last inequality if $\frac{\log N}{r} \leq \epsilon \implies r(\frac{S}{\mathbf{a}\cdot\mathbf{b}})^{2}\leq \epsilon$.   So if $r = O(\frac{\mathbf{a}\cdot\mathbf{b}}{S})$, this suggests that we can choose $r$ such that $\epsilon = O({(\log N)}/{\frac{\mathbf{a}\cdot\mathbf{b}}{S}})$. 
\end{rem}

To prove the lower bound, we want to evaluate  $\frac{trace(A^{r})}{N}$, the average of the number of cycles of length $r$ in the network.

\begin{cor}
\label{cor:bounds}
Given an expected bidegree sequence $\mathbf{d}$, for a given realization $A$ it follows that 
\begin{equation}
(\frac{\mathbf{a}\cdot\mathbf{b}}{S})^{r}\leq E(trace(A^{r})).
\end{equation}
\end{cor}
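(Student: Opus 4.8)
The plan is to mirror the lower-bound argument of Lemma \ref{lem:lower}, now applied to closed walks rather than open paths. Recall from the combinatorial interpretation following Lemma \ref{lem:theory} that $trace(A^{r}) = \sum_{j=1}^{N}\mathbf{e}_{j}^{T}A^{r}\mathbf{e}_{j}$ counts the closed walks (cycles) of length $r$ in the realization. Writing $\mathbf{1}_{(i_{0},i_{1},\ldots,i_{r-1},i_{0})}$ for the indicator that the closed walk $(i_{0},i_{1},\ldots,i_{r-1},i_{0})$ is present, I would first express
$$E(trace(A^{r})) = \sum_{i_{0},i_{1},\ldots,i_{r-1}} E\left(\mathbf{1}_{(i_{0},i_{1},\ldots,i_{r-1},i_{0})}\right),$$
where each index ranges freely over $[1..N]$.

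Next, exactly as in Lemma \ref{lem:lower}, I would lower-bound each term. If no edge repeats along the walk, independence gives $E(\mathbf{1}) = \Pi_{k=0}^{r-1} p_{i_{k}i_{k+1}}$ (with the convention $i_{r} := i_{0}$); if some edge does repeat, then since each $0 \leq p_{ij} \leq 1$ the true probability is only larger, so in every case $E(\mathbf{1}_{(i_{0},\ldots,i_{r-1},i_{0})}) \geq \Pi_{k=0}^{r-1} p_{i_{k}i_{k+1}}$. Summing and substituting the Chung-Lu probabilities $p_{ij} = \frac{b_{i}a_{j}}{S}$ yields
$$E(trace(A^{r})) \geq \sum_{i_{0},\ldots,i_{r-1}} \Pi_{k=0}^{r-1} \frac{b_{i_{k}} a_{i_{k+1}}}{S}.$$

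The key step --- and the only one requiring any care --- is the rearrangement that exploits the closed-walk constraint $i_{r} = i_{0}$. Because the walk is cyclic, the out-degree factors $\{b_{i_{k}}\}_{k=0}^{r-1}$ and the in-degree factors $\{a_{i_{k+1}}\}_{k=0}^{r-1} = \{a_{i_{k}}\}_{k=0}^{r-1}$ are indexed by the same multiset of nodes, so each node $i_{k}$ contributes exactly one $a$-factor and one $b$-factor:
$$\Pi_{k=0}^{r-1} \frac{b_{i_{k}} a_{i_{k+1}}}{S} = \Pi_{k=0}^{r-1} \frac{a_{i_{k}} b_{i_{k}}}{S}.$$
This alignment is precisely what distinguishes the cycle count from the path count of Lemma \ref{lem:lower} and is why the bound comes out as a clean $r$-th power. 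Finally, since the indices are independent the sum factorizes, and $\sum_{i=1}^{N} \frac{a_{i} b_{i}}{S} = \frac{\mathbf{a}\cdot\mathbf{b}}{S}$, giving
$$E(trace(A^{r})) \geq \left(\sum_{i=1}^{N} \frac{a_{i} b_{i}}{S}\right)^{r} = \left(\frac{\mathbf{a}\cdot\mathbf{b}}{S}\right)^{r},$$
as claimed. I expect no genuine obstacle here: the whole content is the cyclic index-shift, and unlike the upper-bound Theorem \ref{thm:upper} there is no need to control repeated-edge contributions, since repetitions only strengthen the inequality.
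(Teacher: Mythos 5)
Your proposal is correct and takes essentially the same approach as the paper: the paper's own proof of Corollary \ref{cor:bounds} is only a one-line remark that it extends Lemma \ref{lem:lower} from paths to cycles, and your argument---lower-bounding each closed-walk indicator by the product of edge probabilities (repeats only help, since $p_{ij}\leq 1$) and then using the cyclic shift $i_{r}=i_{0}$ so that each node contributes a factor $\frac{a_{i_{k}}b_{i_{k}}}{S}$---is exactly the computation the paper leaves implicit. The details you supply, including the factorization of the free sum into $\left(\frac{\mathbf{a}\cdot\mathbf{b}}{S}\right)^{r}$, are all sound.
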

\begin{proof}
This Corollary is essentially an extension of Lemma \ref{lem:lower}, where Lemma \ref{lem:lower} provides an lower bound for the expected number of paths of length $r$, Corollary \ref{cor:bounds} is a statement about the lower bound for the expected number of cycles of length $r$.
To (partially) explain the differences found between this Corollary \ref{cor:bounds} and Lemma \ref{lem:lower}, consider the expected number of cycles of length $1$.  This quantity equals $\sum_{i=1}^{N} \frac{a_ib_i}{S} = \frac{\mathbf{a}\cdot\mathbf{b}}{S}$.
\end{proof}

In order to bound $trace(A^{r})$ from below with high probability, we will compute the variance for the number of paths of length r that start and end at the same node.  We can express $trace(A^{r})$, as a summation of indicator random variables for each possible cycle that could be in a realization of a graph from the Chung-Lu random graph model.   Denote each of these indicator variables as $\mathbf{1}_z$.  It follows then that  $trace(A^r) = \sum\mathbf{1}_z$ and that $$Var(trace(A^{r}))=Var(\sum \mathbf{1}_z)=[\sum Var(\mathbf{1}_z) + \sum_{y\neq z} Cov(\mathbf{1}_y,\mathbf{1}_z)].$$  We are primarily interested in pairs $y,z$ such that $Cov(\mathbf{1}_y,\mathbf{1}_z)\neq 0$ (as we can trivially bound $Var(\mathbf{1}_z)=E(\mathbf{1}_z^{2})-E(\mathbf{1}_z)^{2}=E(\mathbf{1}_z)-E(\mathbf{1}_z)^{2}$ where we used the fact that $E(\mathbf{1}_z^{2})=E(\mathbf{1}_z)$ as $\mathbf{1}_z$ is a Bernoulli random variable and it follows that $Var(\mathbf{1}_z)\leq E(\mathbf{1}_z)$).

But before we bound the variance on the number of cycles of prescribed length, we consider a special case of Lemma \ref{lem:probpath} as it pertains to cycles to simplify the analysis.

\begin{lem} \label{lem:probpathcycle}
Given a cycle of length $r$, $(n_{1},...,n_{r},n_{1})$, we amend the definition of the new edge interior, $\mathbf{N}$ to include the first node in the cycle, if the first and last edges in the cycle are new edges.  If we use this definition for the new edge interior, we can simplify the formula found in Lemma \ref{lem:probpath} that the cycle exists to be that,

\begin{equation}
Pr(\mathbf{1}_{(n_{1},...,n_{r},n_{1})} = 1) = \Pi_{i\in\mathbf{N}}\frac{a_{i}b_{i}}{S}\Pi_{(j,k)\in\mathbf{R}}\frac{a_{j}b_{k}}{S}.
\end{equation}

Furthermore assuming that there are no repeating edge blocks of length $r$, 
$$|\mathbf{N}| = r - \sum_{i\geq 1} (i+1)k_{i},$$
where $k_{i}$ is the number of repeating edge blocks of length $i$.
\end{lem}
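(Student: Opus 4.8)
The plan is to treat the cycle $(n_{1},\dots,n_{r},n_{1})$ as the special path $(i_{1},\dots,i_{r+1})$ of length $r$ in which the terminal node coincides with the initial node, $i_{1}=i_{r+1}=n_{1}$, and then invoke Lemma \ref{lem:probpath} directly. Under the stated hypothesis that the first edge $(n_{1},n_{2})$ and the last edge $(n_{r},n_{1})$ are both new, Lemma \ref{lem:probpath} gives
\[
Pr(\mathbf{1}_{(n_{1},\dots,n_{r},n_{1})}=1)=\frac{b_{i_{1}}a_{i_{r+1}}}{S}\,\Pi_{i\in\mathbf{N}_{\mathrm{path}}}\tfrac{a_{i}b_{i}}{S}\,\Pi_{(j,k)\in\mathbf{R}}\tfrac{a_{j}b_{k}}{S},
\]
where $\mathbf{N}_{\mathrm{path}}$ is the ordinary (path) new edge interior, which by definition omits the endpoints. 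The only observation needed is that the prefactor equals $\frac{b_{n_{1}}a_{n_{1}}}{S}=\frac{a_{n_{1}}b_{n_{1}}}{S}$, i.e.\ it has exactly the form of a single new-edge-interior contribution for the node $n_{1}$. Since the first and last edges are new, $n_{1}$ simultaneously carries a new outgoing edge and a new incoming edge, so under the amended definition it belongs to $\mathbf{N}$; thus $\mathbf{N}=\mathbf{N}_{\mathrm{path}}\cup\{n_{1}\}$ and the prefactor is absorbed into $\Pi_{i\in\mathbf{N}}\frac{a_{i}b_{i}}{S}$, yielding the claimed probability formula.

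For the counting identity I would argue directly on the cyclic structure rather than through the path bookkeeping, since the amended definition is precisely the cyclic rule: a slot $m$ lies in $\mathbf{N}$ iff both of its cyclically adjacent edges are new, with $n_{1}$ governed by the last and first edges. Consequently $\mathbf{N}^{c}$ is exactly the set of slots incident to at least one repeating edge. Partitioning the repeating edges into their maximal consecutive runs (the repeating edge blocks), a block of length $i$ occupies $i$ consecutive edges and is therefore incident to exactly $i+1$ distinct slots: its $i-1$ interior slots each see two repeating edges, while its two endpoint slots each see one repeating edge and one new edge (new by maximality of the block). Summing over all blocks gives $|\mathbf{N}^{c}|=\sum_{i\ge 1}(i+1)k_{i}$, and hence $|\mathbf{N}|=r-\sum_{i\ge 1}(i+1)k_{i}$.

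The step that needs care — and the reason for the hypothesis excluding a repeating edge block of length $r$ — is justifying that the slot sets contributed by distinct blocks are disjoint, so that no slot is counted twice. Disjointness follows because consecutive blocks are separated by at least one new edge (again by maximality), so the first slot of the next block strictly postdates the last slot of the current one. The only way this fails is a single block wrapping entirely around the cycle, i.e.\ a block of length $r$, in which case no separating new edge remains, the block's start and end slots coincide, and the tally $(r+1)k_{r}=r+1$ overcounts $|\mathbf{N}^{c}|=r$ by one. Excluding that degenerate case — which in any event cannot arise for a genuine cycle, whose first edge is always new — the disjointness holds and the identity follows. I expect this wrap-around accounting to be the only genuinely delicate point; the probability formula itself is an immediate corollary of Lemma \ref{lem:probpath}.
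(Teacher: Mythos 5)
Your reduction to Lemma \ref{lem:probpath} and your slot-counting of $|\mathbf{N}^{c}|$ (including the wrap-around caveat motivating the exclusion of a length-$r$ repeating edge block) are correct and careful; since the paper itself omits this proof as ``analogous to Lemma \ref{lem:probpath},'' your treatment of the case where both the first and last edges are new matches the intended argument, and the absorption of the prefactor $\frac{b_{n_1}a_{n_1}}{S}$ into the amended new edge interior is exactly right. The genuine gap is one of coverage: you establish the probability formula only under the hypothesis that the first and last edges of the cycle are new, because you invoke Lemma \ref{lem:probpath} as a black box and that is precisely its hypothesis. But the remark immediately following the lemma states that the lemma does \emph{not} require the first and last edges to be new, and that generality is what gets used downstream: in Theorem \ref{thm:cov} and Lemma \ref{lem:lowercov} the formula (and its conditioned extension) is applied to cycles whose edges may repeat anywhere, so in particular the final edge may lie inside a repeating edge block, where your reduction is unavailable. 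Even for a standalone cycle, the first edge is automatically new but the last need not be: for the cycle $(1,2,1,2,1)$ with $r=4$, the last two edges repeat, $n_1\notin\mathbf{N}$, and Lemma \ref{lem:probpath} simply does not apply.

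Closing that case requires re-running the new-edge/repeating-edge accounting cyclically rather than reducing to the path lemma. When the final edge is repeating, the node $n_1$ contributes no in-degree factor at position $r+1$; instead its out-degree factor $b_{n_1}$ is supplied through the pair $(j,n_1)\in\mathbf{R}$ for the final repeating edge block, since the new outgoing edge that terminates that block is, cyclically, the first edge of the cycle. In the example above this yields $\frac{a_2 b_2}{S}\cdot\frac{a_1 b_1}{S}$, which is indeed $p_{12}p_{21}$, and the count $|\mathbf{N}| = 4 - 3k_2 = 1$ checks out. Note that your counting argument for $|\mathbf{N}|$ already works in this generality --- it never used the both-edges-new hypothesis except to rule out $k_r=1$ --- so only the probability formula needs this additional cyclic case analysis; with it, your proof is complete and is in substance the ``analogous'' argument the paper alludes to.
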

\noindent \textbf{Remark:} Note that Lemma \ref{lem:probpathcycle} does not require that the first and last edges in the cycle are new edges.  Additionally, since the first node in the cycle (which also equals the last node in the cycle) can be part of the new edge interior, we have a slightly different formula for $|\mathbf{N}|$ compared to Lemma \ref{lem:probpath}.
\begin{proof}
We omit the proof as it is analogous to the proof of Lemma \ref{lem:probpath}.
\end{proof}
With Lemma \ref{lem:probpathcycle} at hand, we can now compute the bound for the variance for the number of cycles of prescribed length with relative ease.
\begin{thm}\label{thm:cov}
As defined earlier, we denote $trace(A^{r})=\sum_{z=1}^{Z} \mathbf{1}_z$ where $\mathbf{1}_z$ is an indicator random variable denoting the existence (or lack thereof) of a specific cycle of length $r$. Suppose that $2r < \frac{\mathbf{a}\cdot\mathbf{b}}{S}$, then

\begin{equation}
\sum_{\substack{y \neq z}} Cov(\mathbf{1}_y,\mathbf{1}_z)\leq   E(trace(A^{r}))
\frac{\mathbf{a}\cdot\mathbf{b}}{S}^{r}[exp(\frac{(\frac{2rS}{\mathbf{a}\cdot\mathbf{b}})^{2}p_{max}}{1-\frac{2rS}{\mathbf{a}\cdot\mathbf{b}}}) - 1 + (\frac{2rS}{\mathbf{a}\cdot\mathbf{b}})^{r}].
\end{equation}

Furthermore it follows that
\begin{multline}
Var(trace(A^{r})) \leq  E(trace(A^{r}))+\\ E({trace(A^{r})})
\cdot \frac{\mathbf{a}\cdot\mathbf{b}}{S}^{r}[exp(\frac{(\frac{2rS}{\mathbf{a}\cdot\mathbf{b}})^{2}p_{max}}{1-\frac{2rS}{\mathbf{a}\cdot\mathbf{b}}}) - 1 + (\frac{2rS}{\mathbf{a}\cdot\mathbf{b}})^{r}].
\end{multline}
\end{thm}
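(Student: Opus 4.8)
The plan is to recycle the repeating-edge bookkeeping developed for Theorem \ref{thm:upper}, now applied to \emph{pairs} of cycles. First I would observe that $Cov(\mathbf{1}_y,\mathbf{1}_z)$ vanishes unless the cycles $y$ and $z$ share at least one edge, since edges are independent Bernoulli variables; hence the sum ranges only over overlapping pairs. Writing $E_y,E_z$ for the edge sets of the two cycles and using independence, $E(\mathbf{1}_y\mathbf{1}_z)=\prod_{e\in E_y\cup E_z}p_e=E(\mathbf{1}_y)\prod_{e\in E_z\setminus E_y}p_e$, so that $Cov(\mathbf{1}_y,\mathbf{1}_z)=E(\mathbf{1}_y)\prod_{e\in E_z\setminus E_y}p_e\bigl[1-\prod_{e\in E_z\cap E_y}p_e\bigr]\le E(\mathbf{1}_y)\prod_{e\in E_z\setminus E_y}p_e$, the last step discarding a bracketed factor in $[0,1]$. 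Fixing $y$ and summing over all $z$ overlapping $y$, the outer sum over $y$ then contributes exactly $\sum_y E(\mathbf{1}_y)=E(trace(A^{r}))$, which produces the advertised prefactor; the whole task reduces to bounding the inner sum $\sum_{z}\prod_{e\in E_z\setminus E_y}p_e$ by $(\frac{\mathbf{a}\cdot\mathbf{b}}{S})^{r}\bigl[\exp(\cdots)-1+(\frac{2rS}{\mathbf{a}\cdot\mathbf{b}})^{r}\bigr]$.

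For the inner sum I would think of the ordered pair $(y,z)$ as a single closed walk of length $2r$ obtained by traversing $y$ and then $z$ from a shared vertex, in which the edges $z$ reuses from $y$ play the role of \emph{repeating edges}. This places us exactly in the setting of Lemma \ref{lem:probpathcycle}: classify the combined walk by the profile $\{k_i\}$, where $k_i$ counts the repeating-edge blocks of length $i$, sum over all choices of new-edge-interior nodes to obtain a factor $(\frac{\mathbf{a}\cdot\mathbf{b}}{S})^{|\mathbf{N}|}$ with $|\mathbf{N}|=2r-\sum_i(i+1)k_i$, bound each block's endpoint degrees by a single $p_{max}$, and bound the number of placements of the repeating edges by a power of $2r$ (each repeated edge must coincide with one of the $\le 2r$ edges already traversed). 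After factoring out $(\frac{\mathbf{a}\cdot\mathbf{b}}{S})^{2r}$ and invoking Corollary \ref{cor:bounds} to write $(\frac{\mathbf{a}\cdot\mathbf{b}}{S})^{2r}\le E(trace(A^{r}))(\frac{\mathbf{a}\cdot\mathbf{b}}{S})^{r}$, this is precisely the multinomial sum controlled by Lemma \ref{lem:expineq}. The hypothesis $2r<\frac{\mathbf{a}\cdot\mathbf{b}}{S}$ guarantees the parameter $\beta=\frac{2rS}{\mathbf{a}\cdot\mathbf{b}}<1$ required there, and choosing parameters as in Theorem \ref{thm:upper} but with $r$ replaced by $2r$ yields the factor $\exp(\frac{(\frac{2rS}{\mathbf{a}\cdot\mathbf{b}})^{2}p_{max}}{1-\frac{2rS}{\mathbf{a}\cdot\mathbf{b}}})$.

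Two genuine differences from Theorem \ref{thm:upper} account for the remaining terms in the bracket. The constraint that $y$ and $z$ \emph{overlap} forces at least one repeating edge, so the all-new configuration $k_i\equiv 0$ must be deleted from the multinomial sum; since that configuration contributes the leading $1$, deleting it turns $\exp(\cdots)$ into $\exp(\cdots)-1$. The extra $(\frac{2rS}{\mathbf{a}\cdot\mathbf{b}})^{r}$ handles the boundary case excluded by Lemma \ref{lem:probpathcycle}, namely a repeating-edge block of the maximal length $r$, i.e.\ cycles $z$ all of whose edges are reused from $y$ (in particular the rotations of $y$); each such $z$ contributes an empty product $\prod_{e\in E_z\setminus E_y}p_e=1$, and the number of closed walks of length $r$ using only the edges of $y$ is bounded by $(2r)^{r}=(\frac{\mathbf{a}\cdot\mathbf{b}}{S})^{r}(\frac{2rS}{\mathbf{a}\cdot\mathbf{b}})^{r}$, supplying the final term. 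Combining the covariance bound with the elementary estimate $\sum_z Var(\mathbf{1}_z)\le\sum_z E(\mathbf{1}_z)=E(trace(A^{r}))$ noted before the statement then gives the variance bound.

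The main obstacle I anticipate is the bookkeeping for shared edges between two \emph{distinct} walks. Unlike Theorem \ref{thm:upper}, where a repeating edge had to match an earlier edge of the same walk, here a shared block must coincide with a consecutive segment of $y$, and one must verify that the number of admissible placements is governed by $r$ (equivalently $2r$) rather than by $N$, and that the in/out-degrees at the two ends of each shared block are correctly absorbed into a single $p_{max}$ factor. Fitting the fully-overlapping case into the framework — since Lemma \ref{lem:probpathcycle} explicitly excludes a length-$r$ repeating block — and confirming that its separate count is dominated by $(2r)^{r}$ is the most delicate point.
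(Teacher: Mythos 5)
Your proposal is correct and follows essentially the same route as the paper's proof: restrict to edge-sharing pairs, bound $Cov(\mathbf{1}_y,\mathbf{1}_z)\leq E(\mathbf{1}_y)\,Pr(\mathbf{1}_z=1\mid\mathbf{1}_y=1)$, classify the reused edges by repeating-edge-block profiles $\{k_i\}$ (treating the edges of $y$ as already observed), apply Lemma \ref{lem:probpathcycle} together with Lemma \ref{lem:expineq} at $\beta=\frac{2rS}{\mathbf{a}\cdot\mathbf{b}}$, delete the all-new configuration to produce $exp(\cdot)-1$, and treat the fully-overlapping case ($k_r=1$) separately with an $r^r\leq (2r)^r$ count. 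The only cosmetic difference is organizational: the paper keeps $\sum_y Pr(\mathbf{1}_y=1)=E(trace(A^{r}))$ exact and runs the block analysis on $z$ alone of length $r$ (the factor $2r$ entering through the placement count for repeating edges), whereas your combined length-$2r$ walk plus the conversion $(\frac{\mathbf{a}\cdot\mathbf{b}}{S})^{2r}\leq E(trace(A^{r}))(\frac{\mathbf{a}\cdot\mathbf{b}}{S})^{r}$ via Corollary \ref{cor:bounds} arrives at the identical bracket.
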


\begin{proof}
For each indicator random variable $\mathbf{1}_{z}$ that corresponds to the existence of a cycle, define a set $D(z)$, which includes all of the indices of the indicator random variables that are dependent with $\mathbf{1}_{z}$ except for $z$.

We then have that 

\begin{equation}\label{eq:covcrit}
\sum_{y\neq z} Cov(\mathbf{1}_y,\mathbf{1}_z) \leq \sum_{\substack{y \\ z \in D(y)}} E(\mathbf{1}_{y}\mathbf{1}_{z}) = \sum_{\substack{y\\  z\in D(y)}} Pr(\mathbf{1}_{z}=1|\mathbf{1}_{y}=1)Pr(\mathbf{1}_{y}=1).
\end{equation}

We slightly amend our definition of repeating edges to include any edge that we have already observed; in particular if we are conditioning that the cycle $y$ exists, then we have in effect already observed those edges.  We can invoke an extension of Lemma \ref{lem:probpathcycle} to compute the probability that the edges found in the cycle $z$ exist.  

At this juncture, the proof strategy is analogous to Theorem \ref{thm:upper}.   For $i\geq 1$, let $k_{i}$ be the number of repeating edge blocks of length $i$ and let $k_{0}$ be the number of new edges.  We define $\mathbf{R}_{*}$ to be the list of all nodes in a repeating edge block.   To analyze (\ref{eq:covcrit}), with the knowledge of the number of repeating edge blocks of various lengths, we will sum over all possible choices for $\mathbf{N}$ and $\mathbf{R}_*$  But to simplify the analysis, we will consider two cases, where either $k_{r}=0$ or $k_{r} = 1$.

\noindent \textbf{Case 1:} $k_{r} = 0$. \newline 
We are only interested in paths that have a non-trivial covariance.
It follows analogous to equation (\ref{eq:E1}) in Theorem \ref{thm:upper}, we have that 
\begin{multline}
\sum_{\substack{y \\ z \in D(y)}} Cov(\mathbf{1}_y,\mathbf{1}_z)\leq \\ \sum_{y}Pr(\mathbf{1}_{y}=1)\sum_{\substack{k_{0}+\sum_{i=1}^{r-1} ik_{i}=r\\k_{0}\in[0...r-1]}}\binom{\sum k_{i}}{k_{0},...,k_{r-1}}\sum_{\mathbf{N},\mathbf{R}_*}\Pi_{j\in\mathbf{N}}\frac{b_{j}a_{j}}{S}p_{max}^{|\mathbf{R}|},
\end{multline}

where $k_{0}$ cannot equal $r$ as we are summing over cycles with at least one repeating edge, $\mathbf{R}_*$ uniquely determines $\mathbf{R}$, consisting of the first and last nodes for each repeating edge block, where the first node appears in a new edge that precedes the repeating edge block and the last node appears in the new edge that succeeds the repeating edge block.  Now with our revised definition of $\mathbf{N}$, from Lemma \ref{lem:probpathcycle}, we know that $|\mathbf{N}| = (r - 1 - \sum_{i=1}^{r}(i+1)k_{i} + 1) = (r - \sum_{i=1}^{r}(i+1)k_{i})$, where we add $1$ since we can now have the first node in the cycle be part of the new edge interior.  So by summing over all possible choices for nodes in $\mathbf{N}$, we get that 

\begin{multline}\label{eq:cov1}
\sum_{\substack{y \\ z \in D(y)}} Cov(\mathbf{1}_y,\mathbf{1}_z)\leq \\ \sum_{y}Pr(\mathbf{1}_{y}=1)\sum_{\substack{k_{0}+\sum_{i=1}^{r-1} ik_{i}=r\\k_{0}\in[0...r-1]}}\binom{\sum k_{i}}{k_{0},...,k_{r-1}}(\frac{\mathbf{a}\cdot\mathbf{b}}{S})^{(r -\sum_{i=1}^{r}(i+1)k_{i})}\sum_{\mathbf{R}_*}p_{max}^{|\mathbf{R}|}.
\end{multline}

Noting that $|\mathbf{R}|=\sum_{i=1}^{r-1}k_{i}$ and summing over all possible choices for nodes in $\mathbf{R}_{*}$,  we can simplify (\ref{eq:cov1}) further.

\begin{multline}\label{eq:cov01}
\sum_{\substack{y \\ z \in D(i)}} Cov(\mathbf{1}_y,\mathbf{1}_z)\leq \\ \sum_{y}Pr(\mathbf{1}_{y}=1)\sum_{\substack{k_{0}+\sum_{i=1}^{r-1} ik_{i}=r\\k_{0}\in[0...r-1]}}\binom{\sum k_{i}}{k_{0},...,k_{r-1}}(\frac{\mathbf{a}\cdot\mathbf{b}}{S})^{(r-\sum_{i=1}^{r}(i+1)k_{i})}2r^{\sum_{i=1}^{r}ik_{i}} p_{max}^{\sum_{i=1}^{r-1}k_{i}},
\end{multline}
where there are at most $(2r)^{\sum_{i=1}^{r}ik_{i}}$ ways of choosing nodes to be in the list $\mathbf{R}_{*}$.  By factoring out $(\frac{\mathbf{a}\cdot\mathbf{b}}{S})^{r}$, we get that

\begin{multline}\label{eq:cov11}
\sum_{\substack{y \\ z \in D(y)}} Cov(\mathbf{1}_y,\mathbf{1}_z)\leq \\ \sum_{y}Pr(\mathbf{1}_{y}=1)(\frac{\mathbf{a}\cdot\mathbf{b}}{S})^{r}\sum_{\substack{k_{0}+\sum_{i=1}^{r-1} ik_{i}=r\\k_{0}\in[1...r-1]}}\binom{\sum k_{i}}{k_{0},...,k_{r-1}}(\frac{\mathbf{a}\cdot\mathbf{b}}{S})^{-\sum_{i=1}^{r}(i+1)k_{i}}2r^{\sum_{i=1}^{r-1}ik_{i}} p_{max}^{\sum_{i=1}^{r-1}k_{i}}=\\
\sum_{y}Pr(\mathbf{1}_{y}=1)(\frac{\mathbf{a}\cdot\mathbf{b}}{S})^{r}[(\sum_{\substack{k_{0}+\sum_{i=1}^{r-1} ik_{i}=r\\k_{0}\in[0...r]}}\binom{\sum k_{i}}{k_{0},...,k_{r-1}}(\frac{\mathbf{a}\cdot\mathbf{b}}{S})^{-\sum_{i=1}^{r}(i+1)k_{i}}2r^{\sum_{i=1}^{r-1}ik_{i}} p_{max}^{\sum_{i=1}^{r-1}k_{i}}) - 1].
\end{multline}
Note that in the last line we let $k_{0} = r$ and maintain the equality by subtracting off $1$.
Now invoking Lemma \ref{lem:expineq}, where $\alpha = 2rp_{max}\frac{S}{\mathbf{a}\cdot\mathbf{b}}$, $l = 1$, and $\beta = \frac{2rS}{\mathbf{a}\cdot\mathbf{b}}$, we have the upperbound,

\begin{equation}\label{eq:cov15}
 \sum_{y}Pr(\mathbf{1}_{y}=1)\frac{\mathbf{a}\cdot\mathbf{b}}{S}^{r}[exp(\frac{(\frac{2rS}{\mathbf{a}\cdot\mathbf{b}})^{2}p_{max}}{1-\frac{2rS}{\mathbf{a}\cdot\mathbf{b}}}) - 1].
\end{equation}

\noindent \textbf{Case 2:} $k_{r} = 1$ \newline
If $k_{r} = 1$, then the entire cycle $z$ is considered a repeating edge block that exists with probability $1$ if we know the cycle $y$ exists.  Since there are at most $(r)^{r}$ ways of choosing edges for the cycle $z$, the contribution is precisely $ \sum_{y}Pr(\mathbf{1}_{y}=1)(r)^{r}$.

Adding the two cases together, we have that, 

\begin{equation}
\sum_{\substack{y \\ z \in D(y)}} Cov(\mathbf{1}_y,\mathbf{1}_z)\leq   \sum_{y}Pr(\mathbf{1}_{y}=1)
\frac{\mathbf{a}\cdot\mathbf{b}}{S}^{r}[exp(\frac{(\frac{2rS}{\mathbf{a}\cdot\mathbf{b}})^{2}p_{max}}{1-\frac{2rS}{\mathbf{a}\cdot\mathbf{b}}}) - 1 + (\frac{rS}{\mathbf{a}\cdot\mathbf{b}})^{r}],
\end{equation}

which implies that 
\begin{equation}
\sum_{\substack{y \\ z \in D(y)}} Cov(\mathbf{1}_y,\mathbf{1}_z)\leq   E(trace(A^{r}))
\frac{\mathbf{a}\cdot\mathbf{b}}{S}^{r}[exp(\frac{(\frac{2rS}{\mathbf{a}\cdot\mathbf{b}})^{2}p_{max}}{1-\frac{2rS}{\mathbf{a}\cdot\mathbf{b}}}) - 1 + (\frac{rS}{\mathbf{a}\cdot\mathbf{b}})^{r}],
\end{equation}

as $\sum \mathbf{1}_y = trace(A^{r})$.  

And from the discussion preceding this theorem, it follows that

\begin{multline}
Var(trace(A^{r})) \leq  E(trace(A^{r}))+\\ E({trace(A^{r})})
\cdot \frac{\mathbf{a}\cdot\mathbf{b}}{S}^{r}[exp(\frac{(\frac{2rS}{\mathbf{a}\cdot\mathbf{b}})^{2}p_{max}}{1-\frac{2rS}{\mathbf{a}\cdot\mathbf{b}}}) - 1 + (\frac{rS}{\mathbf{a}\cdot\mathbf{b}})^{r}].
\end{multline}
\end{proof}

To construct the desired lowerbound on $\rho(A)$, we could appeal to Chebyshev's Inequality.  Instead we will use a more distribution specific approach.  We will state the result from Janson in full generality and then discuss the implications of their work in context to counting paths and cycles of prescribed length.

\begin{thm}\label{thm:Janson}[Janson 1990 \cite{janson1990poisson}]
Consider a set of independent random indicator variables $\{\mathbf{1}_i\}_{i\in Q}$ and a family 
$\{Q(\alpha)\}_{\alpha\in B}$ of subsets of index set $Q$.  
Define $\mathbf{1}_{\alpha} = \Pi _{i\in Q(\alpha)} \mathbf{1}_{i}$ and $T=\sum_{\alpha\in B} \mathbf{1}_{\alpha}$.  
Define $Cov = \sum_{\alpha_1\neq\alpha_2}Cov(\mathbf{1}_{\alpha_1},\mathbf{1}_{\alpha_2})$  
Then for $\beta \in [0,1]$,

\begin{equation} Pr((1-\beta)E[T]\leq T)\geq 1 - exp(-\frac{1}{2}\frac{(\beta*E[T])^{2}}{E[T]+Cov})
\end{equation}
\end{thm}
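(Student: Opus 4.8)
The statement is the lower-tail form of Janson's inequality, so the plan is to run the exponential-moment (Chernoff) method on $-T$, with the dependence among the $\mathbf{1}_\alpha$ absorbed by a correlation inequality. The first move is routine: for any $t \ge 0$, since $x\mapsto e^{-tx}$ is decreasing, Markov's inequality gives
\[
\Pr\big(T \le (1-\beta)E[T]\big) = \Pr\big(e^{-tT} \ge e^{-t(1-\beta)E[T]}\big) \le e^{t(1-\beta)E[T]}\,E[e^{-tT}].
\]
Thus everything reduces to producing a sharp upper bound on the Laplace transform $E[e^{-tT}]$ and then optimizing over $t$.

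The heart of the argument is bounding $E[e^{-tT}]$ despite the dependence, since $T$ is a sum of dependent indicators and its Laplace transform does not factor. Here I would use that each $\mathbf{1}_\alpha = \prod_{i\in Q(\alpha)}\mathbf{1}_i$ is a nondecreasing function of the independent base indicators $\{\mathbf{1}_i\}$. Enumerating $B = \{1,\dots,m\}$ and telescoping, write $s=1-e^{-t}\in[0,1)$ and use $e^{-t\mathbf{1}_{\alpha_\ell}} = 1 - s\,\mathbf{1}_{\alpha_\ell}$ to get
\[
E[e^{-tT}] = \prod_{\ell=1}^m E\big[\,1 - s\,\mathbf{1}_{\alpha_\ell} \mid \mathbf{1}_{\alpha_1},\dots,\mathbf{1}_{\alpha_{\ell-1}}\big],
\]
so that each factor is $1 - s\,\Pr(A_\ell \mid \text{past})$. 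Harris's inequality (FKG for product measures) controls these conditional probabilities: indices sharing no base variable with $\alpha_\ell$ are independent of $\mathbf{1}_{\alpha_\ell}$ and wash out, while the adjacent indices contribute a correction. Using $1-x\le e^{-x}$ and $e^{-x}\ge 1-x$ to expand to second order, the first-order terms reproduce the independent (Poisson-type) contribution $-s\,E[T]$, and the pairwise interactions collapse into the single sum $\sum_{\alpha_1\ne\alpha_2}\mathrm{Cov}(\mathbf{1}_{\alpha_1},\mathbf{1}_{\alpha_2})=\mathrm{Cov}$, yielding an estimate of the shape
\[
E[e^{-tT}] \le \exp\!\Big(-(1-e^{-t})E[T] + \tfrac12(1-e^{-t})^2\,\mathrm{Cov}\Big).
\]
I would also record that $\mathrm{Cov}\ge 0$ (again by FKG, since the $\mathbf{1}_\alpha$ are increasing), so the quantity $E[T]+\mathrm{Cov}$ is positive; in fact $E[T]+\mathrm{Cov}\ge \mathrm{Var}(T)$, so it plays the role of a variance proxy.

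Substituting into the Markov bound gives
\[
\Pr\big(T \le (1-\beta)E[T]\big) \le \exp\!\Big(t(1-\beta)E[T] - (1-e^{-t})E[T] + \tfrac12(1-e^{-t})^2(E[T]+\mathrm{Cov})\Big),
\]
where the diagonal part of the second-order expansion of $-(1-e^{-t})E[T]$ contributes the $E[T]$ in the variance proxy and $\mathrm{Cov}$ the off-diagonal part. I would finish by optimizing the exponent over $t\ge0$: the near-optimal choice is $1-e^{-t}\approx \beta E[T]/(E[T]+\mathrm{Cov})$, which by elementary estimates produces the exponent $-\tfrac12(\beta E[T])^2/(E[T]+\mathrm{Cov})$, and passing to the complementary event gives precisely the stated bound.

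The main obstacle is the middle step, the Laplace-transform estimate in the presence of dependence; a naive treatment loses control of the pairwise interactions. The resolution, and the reason the clean variance proxy $E[T]+\mathrm{Cov}$ survives rather than something cruder, is the monotonicity of the $\mathbf{1}_\alpha$, which lets Harris/FKG isolate all dependence into the one covariance sum. The delicate bookkeeping is charging only adjacent pairs and applying the two-sided exponential estimates in the correct direction; the rest is standard Chernoff optimization. In the context of this paper one takes $T = trace(A^{r})$ (or the path count), and $\mathrm{Cov}$ is exactly the quantity bounded in Theorem \ref{thm:cov}, so this inequality is the tool that upgrades the variance estimate into the exponential lower-tail concentration needed to bound $\rho(A)$ from below.
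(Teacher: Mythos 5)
Be aware that the paper does not prove this statement at all: Theorem \ref{thm:Janson} is imported verbatim from Janson's 1990 paper and used as a black box (the remark following it only explains how to instantiate $Q$, $Q(\alpha)$ and $B$ for cycle counting), so there is no internal proof to compare yours against; you are in effect reconstructing Janson's own argument. Your outline is the standard and essentially correct route --- Chernoff's method applied to $e^{-tT}$, with Harris/FKG absorbing the dependence --- and your side remarks are accurate: $\mathrm{Cov}\geq 0$ because the $\mathbf{1}_{\alpha}$ are increasing functions of independent variables, $E[T]+\mathrm{Cov}\geq \mathrm{Var}(T)$ since $\mathrm{Var}(\mathbf{1}_{\alpha})\leq E[\mathbf{1}_{\alpha}]$, and the identification of $T$ and $\mathrm{Cov}$ with the quantities controlled in Theorem \ref{thm:cov} and used in Theorem \ref{thm:SpectralLower} is exactly how the paper deploys the inequality.

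Two issues, one cosmetic and one substantive. Cosmetic: your displayed ``telescoping identity'' is not an identity as written --- the right-hand side is a product of conditional expectations and hence a random variable; the correct version telescopes the ratios $E[\Pi_{j\leq\ell}(1-s\mathbf{1}_{\alpha_j})]/E[\Pi_{j<\ell}(1-s\mathbf{1}_{\alpha_j})]$, or, more cleanly, one differentiates $\phi(t)=\log E[e^{-tT}]$ and bounds $-\phi'(t)=\sum_{\alpha}E[\mathbf{1}_{\alpha}e^{-tT}]/E[e^{-tT}]$ from below term by term. Substantive: the step where you assert that the pairwise interactions ``collapse into the single sum $\mathrm{Cov}$'' is precisely where the work lies, and the argument you sketch does not deliver it. Splitting $T=U_{\alpha}+V_{\alpha}$ into neighbors and non-neighbors of $\alpha$ and applying Harris gives $E[\mathbf{1}_{\alpha}e^{-tT}]\geq p_{\alpha}(1-tE[U_{\alpha}\mid\mathbf{1}_{\alpha}=1])E[e^{-tT}]$, whose interaction term is the \emph{joint-moment} sum $\Delta=\sum_{\alpha_1\neq\alpha_2}E[\mathbf{1}_{\alpha_1}\mathbf{1}_{\alpha_2}]$ over dependent pairs, not the covariance sum; integrating and optimizing then yields the exponent $-(\beta E[T])^{2}/(2(E[T]+\Delta))$. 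Since $\mathrm{Cov}\leq\Delta$, the statement as quoted (denominator $E[T]+\mathrm{Cov}$) is formally stronger than what your expansion produces, and extracting the covariance refinement requires a more delicate treatment of the conditional factors than the crude $1-x\leq e^{-x}$ step. Relatedly, your intermediate bound phrased in $s=1-e^{-t}$ does not optimize cleanly to the constant $\tfrac{1}{2}$: the mismatch between $t$ in the Markov factor and $1-e^{-t}$ in the Laplace bound leaks an extra term of order $t^{2}E[T]$, which the differential-inequality formulation (working directly in $t$) avoids. None of this endangers the paper's application: the proof of Theorem \ref{thm:cov} in fact bounds $\sum_{y\neq z}Cov(\mathbf{1}_y,\mathbf{1}_z)$ by the joint moments $\sum E(\mathbf{1}_{y}\mathbf{1}_{z})$ anyway, so the weaker $\Delta$-form that your argument actually proves would suffice everywhere the inequality is invoked.
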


\begin{rem}
In the language of Theorem \ref{thm:Janson} according to the Chung-Lu random graph model, the existence of a particular edge ($\mathbf{1}_{i}$) is independent with respect to the existence of another edge in the graph.  As such we define $Q$ to be a set that contains all of the possible $N^{2}$ edges that could exist in our graph of $N$ nodes.  Consider a particular cycle, and denote it by $\alpha$.  We can express the corresponding indicator random variable $\mathbf{1}_{\alpha}$ as a product of independent indicator variables $\Pi_{i\in Q(\alpha)} \mathbf{1}_{i}$, where the elements in the set $Q(\alpha)\subset Q$ identify the edges (the independent indicator variables $\mathbf{1}_{i}$) used to form the cycle $\alpha$.  Let $B$ correspond to all of the sets $Q(\alpha)$ formed by all of the possible cycles $\alpha$ of length $r$ that could appear in our graph.  Hence Theorem \ref{thm:Janson} provides us with a bound that can make it difficult for the sum of cycles (of  prescribed length $r$) to be too much smaller than the expected value.  This leads us to the desired result.
\end{rem}

\begin{thm}\label{thm:SpectralLower}
Denote A as a realization of a random Chung-Lu graph with expected degree sequence $\mathbf{d}=(\mathbf{a},\mathbf{b})\in\mathbb{Z}^{N\times 2}$.  Furthermore let $p_{max} =\frac{a_{max}b_{max}}{S}$ and $S=\sum_i a_i =\sum_i b_i$.   Then for every $\epsilon\in (0,1)$ there exists a $\delta_{1}, \delta_{2}$ such that if for some $r\in\mathbb{N}$,
$r\frac{S}{\mathbf{a}\cdot\mathbf{b}}<\delta_1$ and $\frac{\log N}{r}<\delta_{2}$  then 
$$Pr((1-\epsilon)\frac{\mathbf{a}\cdot\mathbf{b}}{S}\leq \rho(A))\geq 1-\epsilon.$$
\end{thm}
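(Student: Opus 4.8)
The plan is to mirror the upper-bound argument of Theorem \ref{thm:Markov}, but now working from the lower estimate $\frac{trace(A^{r})}{N}\le \rho(A)^{r}$ supplied by Lemma \ref{lem:theory}. The strategy is a second-moment/concentration argument: I would show that $trace(A^{r})$, the number of length-$r$ cycles, does not fall much below its expectation with high probability, then invoke the expectation lower bound $E(trace(A^{r}))\ge(\frac{\mathbf{a}\cdot\mathbf{b}}{S})^{r}$ from Corollary \ref{cor:bounds}, and finally take $r$-th roots. Concretely, writing $T=trace(A^{r})$ and $\mu=E(T)$, I would apply Janson's inequality (Theorem \ref{thm:Janson}) with the length-$r$ cycles playing the role of the events $\{\mathbf{1}_{\alpha}\}$, to get that for a chosen $\beta\in(0,1)$,
$$Pr((1-\beta)\mu\le T)\ge 1-\exp\left(-\frac{1}{2}\cdot\frac{(\beta\mu)^{2}}{\mu+Cov}\right),$$
where $Cov=\sum_{y\neq z}Cov(\mathbf{1}_{y},\mathbf{1}_{z})$ is exactly the quantity bounded in Theorem \ref{thm:cov}.

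On the event $(1-\beta)\mu\le T$, Lemma \ref{lem:theory} and Corollary \ref{cor:bounds} give $\rho(A)^{r}\ge\frac{T}{N}\ge\frac{1-\beta}{N}(\frac{\mathbf{a}\cdot\mathbf{b}}{S})^{r}$, so taking $r$-th roots yields $\rho(A)\ge(\frac{1-\beta}{N})^{1/r}\frac{\mathbf{a}\cdot\mathbf{b}}{S}$. The prefactor $(\frac{1-\beta}{N})^{1/r}=(1-\beta)^{1/r}\exp(-\frac{\log N}{r})$ tends to $1$: the hypothesis $\frac{\log N}{r}<\delta_{2}$ controls the $N^{-1/r}$ factor and $(1-\beta)^{1/r}\ge 1-\beta$. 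Thus choosing $\beta$ and $\delta_{2}$ small (e.g.\ $\beta=\epsilon/2$ and $\delta_{2}$ so that $\exp(-\delta_{2})\ge 1-\epsilon/2$) forces $(\frac{1-\beta}{N})^{1/r}\ge 1-\epsilon$, giving the desired deterministic bound $\rho(A)\ge(1-\epsilon)\frac{\mathbf{a}\cdot\mathbf{b}}{S}$ on that event.

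The crux is therefore to make the Janson failure probability $\exp(-\frac{1}{2}\frac{(\beta\mu)^{2}}{\mu+Cov})$ smaller than $\epsilon$, which is where Theorem \ref{thm:cov} enters and where I expect the main difficulty. Using Corollary \ref{cor:bounds} to replace the factor $(\frac{\mathbf{a}\cdot\mathbf{b}}{S})^{r}$ appearing in that covariance bound by $\mu$, I would write $Cov\le\mu^{2}\theta$, where $\theta$ denotes the bracketed term of Theorem \ref{thm:cov}. Since $p_{max}\le 1$ and $\frac{rS}{\mathbf{a}\cdot\mathbf{b}}<\delta_{1}<\frac{1}{2}$, a Taylor estimate shows $\theta=O(\delta_{1}^{2})$, so $\theta$ can be driven to $0$ with $\delta_{1}$. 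The exponent then satisfies $\frac{(\beta\mu)^{2}}{2(\mu+\mu^{2}\theta)}=\frac{\beta^{2}\mu}{2(1+\mu\theta)}$, and I would split into two regimes: when $\mu\theta\le 1$ this is at least $\frac{\beta^{2}\mu}{4}$, and when $\mu\theta>1$ it is at least $\frac{\beta^{2}}{4\theta}$. Both lower bounds blow up — the first because $\mu\ge(\frac{\mathbf{a}\cdot\mathbf{b}}{S})^{r}$ grows (note $r\frac{S}{\mathbf{a}\cdot\mathbf{b}}<\delta_{1}$ together with $r>\frac{\log N}{\delta_{2}}$ forces $\frac{\mathbf{a}\cdot\mathbf{b}}{S}>\frac{1}{\delta_{1}}$), the second because $\theta\to 0$ — so for $\delta_{1}$ small the failure probability drops below $\epsilon$.

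To finish I would assemble the constants: fix $\beta=\epsilon/2$, choose $\delta_{2}$ from the prefactor requirement above, and choose $\delta_{1}$ small enough that $\theta$, and hence the Janson failure probability, is controlled; the hypotheses $r\frac{S}{\mathbf{a}\cdot\mathbf{b}}<\delta_{1}$ and $\frac{\log N}{r}<\delta_{2}$ are exactly what the two estimates consume. The main obstacle is the covariance control: one must verify that the dependency sum $Cov$ from Theorem \ref{thm:cov} is genuinely of smaller order than $\mu^{2}$ \emph{uniformly}, so that the second-moment ratio $Cov/\mu^{2}\to 0$ and Janson's bound bites. The case analysis on the size of $\mu\theta$ is precisely what lets a single choice of $\delta_{1}$ succeed no matter how large $\mu$ becomes.
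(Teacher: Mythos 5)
Your proposal is correct and follows essentially the same route as the paper's proof: the paper likewise applies Janson's inequality (Theorem \ref{thm:Janson}) to the cycle count $C_r = trace(A^{r})$, controls the covariance term via Theorem \ref{thm:cov} combined with the expectation lower bound $(\frac{\mathbf{a}\cdot\mathbf{b}}{S})^{r}\leq E[C_r]$, and then takes $r$th roots, using the smallness of $\frac{\log(2N)}{r}$ to absorb the $(\frac{1}{2N})^{1/r}$ prefactor. The only differences are bookkeeping choices --- the paper fixes $\beta=\frac{1}{2}$ and makes the hypotheses quantitative (e.g.\ $r\frac{S}{\mathbf{a}\cdot\mathbf{b}}<\min(\frac{1}{10},\sqrt{-96/\log\epsilon})$) rather than your $\beta=\epsilon/2$ and case split on $\mu\theta$, which are immaterial.
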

\begin{proof}
First fix an arbitrary $\epsilon \in (0,1)$ and suppose that 
\begin{equation} \label{eq:as1}
r\frac{S}{\mathbf{a}\cdot\mathbf{b}}<min(\frac{1}{10},\sqrt{\frac{-96}{log(\epsilon)}}) \hspace{5pt} and 
\end{equation} 
\begin{equation}\label{eq:as2} \frac{log(2N)}{r} < \frac{\epsilon}{3}.
\end{equation} 

Consider the number of cycles of length $r$, which we denote by $C_r$.  Then we have from Theorems \ref{thm:cov}, \ref{thm:Janson} and Lemma \ref{lem:lower2} that 

 \begin{equation}\label{eq:lower} Pr(\frac{E[C_r]}{2}\leq C_r)\geq 1 - exp(-\frac{1}{8}\frac{E[C_r]^{2}}{E[C_r](1+r^{r} + (\frac{\mathbf{a}\cdot\mathbf{b}}{S})^{r}[exp(\frac{p_{max}(\frac{2rS}{\mathbf{a}\cdot\mathbf{b}})^{2}}{1-\frac{2rS}{\mathbf{a}\cdot\mathbf{b}}})-1])}). 
 \end{equation}
 
Recall that by assumption (\ref{eq:as1}), $r\frac{S}{\mathbf{a}\cdot\mathbf{b}}<\frac{1}{10}$ and get an upperbound that

 \begin{equation}\label{eq:lower22} Pr(\frac{E[C_r]}{2}\leq C_r)\geq 1 - exp(-\frac{1}{8}\frac{E[C_r]^{2}}{E[C_r](1+r^{r} + (\frac{\mathbf{a}\cdot\mathbf{b}}{S})^{r}[exp(5[\frac{rS}{\mathbf{a}\cdot\mathbf{b}}]^{2})-1]}). 
 \end{equation}
 
 We can simplify this further using the facts that $exp(x) \leq 1 + 2x$ for $0\leq x \leq \frac{1}{2}$ and $(\frac{\mathbf{a}\cdot\mathbf{b}}{S})^{r}\leq E[C_r]$ to say that

 \begin{equation}\label{eq:lower23} Pr(\frac{E[C_r]}{2}\leq C_r)\geq 1 - exp(-\frac{1}{96}\frac{(\frac{\mathbf{a}\cdot\mathbf{b}}{S})^{r}}{r^{2}(\frac{\mathbf{a}\cdot\mathbf{b}}{S})^{r-2}}). \implies
 \end{equation}
 
 \begin{equation}\label{eq:lower24} Pr(\frac{E[C_r]}{2}\leq C_r)\geq 1 - exp(-\frac{1}{96}\frac{1}{r^{2}(\frac{S}{\mathbf{a}\cdot\mathbf{b}})^{2}}).
 \end{equation}
 
 Using assumption (\ref{eq:as1}) yields,
 
  \begin{equation}\label{eq:lower25} Pr(\frac{E[C_r]}{2}\leq C_r)\geq 1 - \epsilon.
 \end{equation} 
 
Finally, with probability $1-\epsilon$ we have that
\begin{equation}  \label{eq:loweralmost}
(\frac{1}{2N})^{\frac{1}{r}}\frac{\mathbf{a}\cdot\mathbf{b}}{S}\leq (\frac{E(C_r)}{2N})^{\frac{1}{r}}  \leq (\frac{C_r}{N})^{r}\leq \rho(A), 
\end{equation}

where the first inequality comes from Lemma \ref{lem:lower} and the final inequality holds from Lemma \ref{lem:theory}.

Since assumption (\ref{eq:as2}) implies that \begin{equation} |log (\frac{1}{2N}^{\frac{1}{r}})|= |\frac{log(2N)}{r}| \leq \frac{\epsilon}{3}
\end{equation}

and $1 - 3x \leq exp(-x) \implies 1-\epsilon \leq exp(log(\frac{1}{2N})^{\frac{1}{r}})$.   Consequently  from (\ref{eq:loweralmost}) with probability at least $1 - \epsilon$,

 $$ (1-\epsilon) \frac{\mathbf{a}\cdot\mathbf{b}}{S}\leq \rho(A).   $$

\end{proof}

\begin{rem}
Note that Theorem \ref{thm:SpectralLower} provides an asymptotic lowerbound for the spectral radius if $\frac{\mathbf{a}\cdot\mathbf{b}}{S} \gg \log N$.  In particular, if we choose $$r \approx \frac{\mathbf{a}\cdot\mathbf{b}}{S}\cdot\sqrt{\frac{1}{\log(\frac{\mathbf{a}\cdot\mathbf{b}}{S}/ \log N)}},$$ it follows from Theorem \ref{thm:SpectralLower} that $\epsilon = O(\frac{\log N}{r})=O(\frac{(\log N)\sqrt{\log(\frac{\mathbf{a}\cdot\mathbf{b}}{S}/\log N)}}{\frac{\mathbf{a}\cdot\mathbf{b}}{S}})$.
\end{rem}

We conclude this section with a simulation (Figure \ref{fig:speed}) plotting the empirical distribution of the spectral radius from $100$ realizations of the Chung-Lu random graph model from a fixed expected (bi)-degree sequence where there the networks consist of $600$ nodes and $\frac{\mathbf{a}\cdot\mathbf{b}}{S}\approx 161$.   
More specifically, applying our bounds for the expected value and variance for the number of paths and cycles with length $r$ demonstrate that there is at least $95\%$ probability that the relative error between the empirically observed spectral radius and the asymptotic predictor $\frac{\mathbf{a}\cdot\mathbf{b}}{S}\approx 161$ is bounded by $4\%$.  In spite of the fact that the numeric simulations suggest that we should be able to improve our concentration bounds, as the empirical distribution of the spectral radius is tightly concentrated about the mean, our concentration bounds provide a reasonable method for estimating the spectral radius of a given realization without computing the spectral radius of a realization from the Chung-Lu random graph model.
\begin{figure}
\centering
\includegraphics[scale=.6]{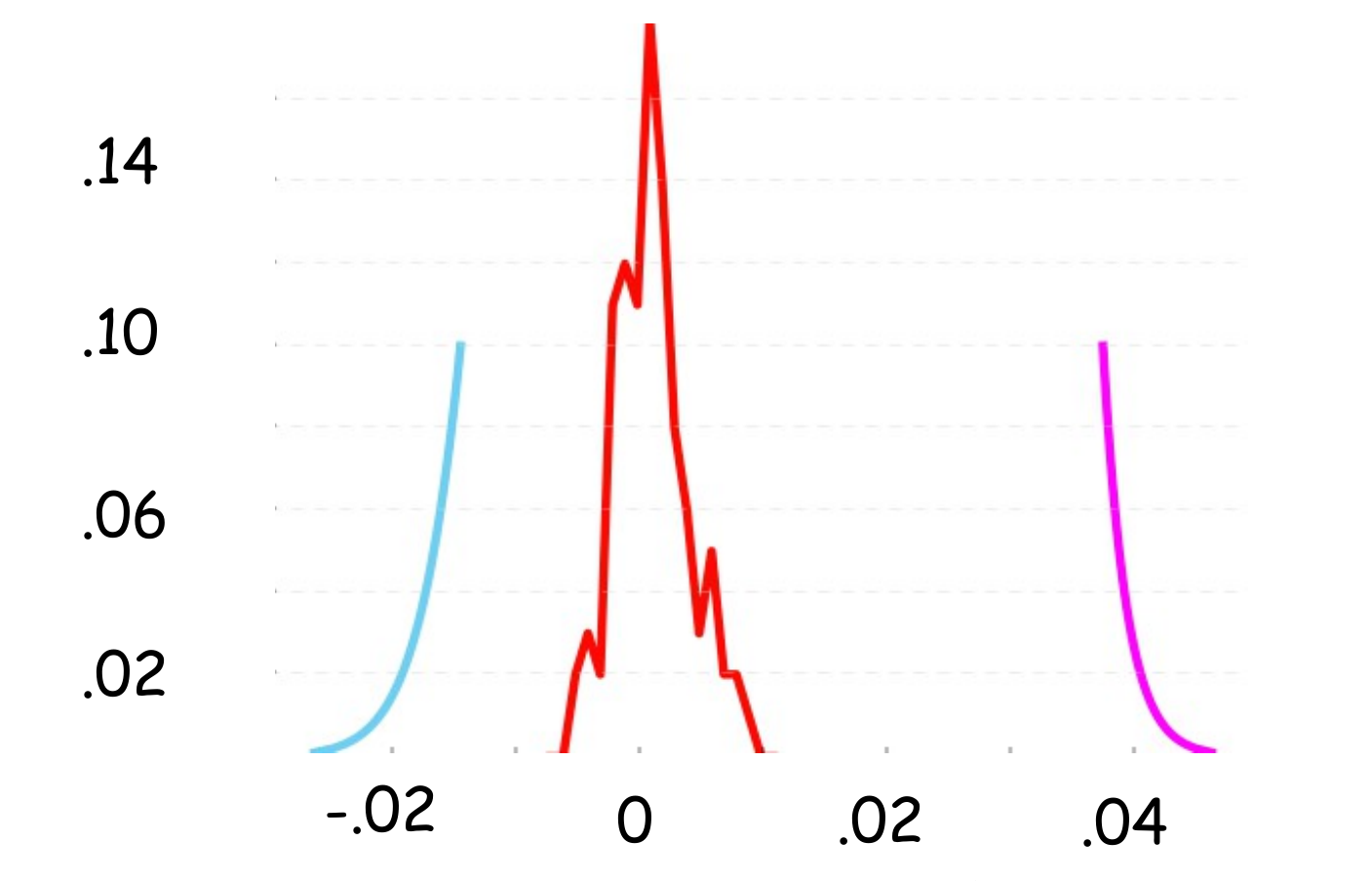}
\caption[Speed of the Convergence of $\rho(A)$ to $\frac{\mathbf{a}\cdot\mathbf{b}}{S}$ in the Chung-Lu Random Graph Model.]{Speed of the Convergence of $\rho(A)$ to $\frac{\mathbf{a}\cdot\mathbf{b}}{S}$ in the Chung-Lu random graph model.  The x-axis indicates the relative error $\epsilon$ of the empirically observed $\rho(A)$ such that $\rho(A) = (1+\epsilon)\frac{\mathbf{a}\cdot\mathbf{b}}{S}$. The $y$ axis marks probabilities, whose meanings vary based on the three curves.  The red curve is the empirical probability distribution function plotting the relative error of the dominating eigenvalue from $\frac{\mathbf{a}\cdot\mathbf{b}}{S}$, where we constructed $100$ realizations of Chung-Lu random graphs with a prescribed expected degree sequence $\mathbf{d}\in \mathbb{Z}^{600 \times 2}$ such that $\frac{\mathbf{a}\cdot\mathbf{b}}{S}\approx 161$.  The magenta curve is an application of the concentration result (Theorem \ref{thm:Markov}) regarding the distribution of the dominating eigenvalue $\rho(A)$ from realizations of Chung-Lu random graphs with the same prescribed expected degree sequence $\mathbf{d}$.  For this curve, the $y-axis$ provides an upperbound on the probability the spectral radius exceeds the relative error from $\frac{\mathbf{a}\cdot\mathbf{b}}{S}$ on the x-axis.  Similarly, the blue curve is an application of the concentration result from Theorem \ref{thm:SpectralLower}.}
\label{fig:speed}
\end{figure}

\section{Concentration Bounds when $\lowercase{p}_{max} \rightarrow 0$}
We now wish to extend our results to the case where $\mathbf{a}\cdot\mathbf{b}/S$ is (asymptotically) finite.  In order to prove results of this nature, we will require that  $\lowercase{p}_{max} \rightarrow 0$, that is the likelihood any two fixed nodes share an edge should vanish asymptotically.  We again stress, as suggested in Figure  \ref{fig:speed}, that our results not only provide asymptotic information regarding the concentration of the dominating eigenvalue for a sequence of realizations of Chung-Lu random graphs, but also computable  bounds that the dominating eigenvalue deviates from $\mathbf{a}\cdot\mathbf{b}/S$  for  randomly generated Chung-Lu graphs with a fixed number of nodes as well.  For this purpose, we introduce the following notation.

\begin{defn}
A simple cycle is a path that begins and ends at the same node where no other node is visited more than once.
Denote the number of simple cycles of length $r$ by $SC_r$.
\end{defn}

Since the number of simple cycles of length $r$ is a lower bound for the $trace(A^{r})$, it suffices to show that with high probability that the number of simple cycles of length $r$ is roughly $(\frac{\mathbf{a}\cdot\mathbf{b}}{S})^{r}$.

\begin{lem}
\label{lem:lower2}
Consider a realization of the Directed Chung-Lu random graph model with expected degree sequence $\mathbf{d} = (\mathbf{a},\mathbf{b})$, where $\sum a_{i} = \sum b_{i} = S$.  Then  
\begin{equation} \label{eq:mainlower2} (\frac{\mathbf{a}\cdot\mathbf{b}}{S})^{r} - \binom{r}{2}p_{max}(\frac{\mathbf{a}\cdot\mathbf{b}}{S})^{r-1}\leq E(SC_{r}).
\end{equation}
\end{lem}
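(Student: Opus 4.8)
The plan is to compute $E(SC_r)$ exactly as a sum of edge-probabilities and then control the deviation from $(\frac{\mathbf{a}\cdot\mathbf{b}}{S})^r$ by a union bound over repeated nodes. First I would write $SC_r = \sum \mathbf{1}_{(n_1,\dots,n_r,n_1)}$, where the sum ranges over all ordered tuples $(n_1,\dots,n_r)$ of \emph{distinct} nodes. The decisive observation is that when the $n_k$ are distinct, the traversed edges $(n_1,n_2),(n_2,n_3),\dots,(n_r,n_1)$ are automatically distinct, so no edge repeats and the existence events are independent. Hence, exactly as in the no-repeat case of Lemma \ref{lem:lower}, we get the clean product formula $\Pr[\mathbf{1}_{(n_1,\dots,n_r,n_1)}=1]=\prod_{k=1}^{r}\frac{b_{n_k}a_{n_{k+1}}}{S}=\prod_{k=1}^{r}\frac{a_{n_k}b_{n_k}}{S}$ (indices mod $r$), the last equality coming from rearranging factors so each node contributes $a_{n_k}b_{n_k}$. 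Taking expectations, $E(SC_r)=\sum_{(n_1,\dots,n_r)\ \mathrm{distinct}}\prod_{k=1}^{r}\frac{a_{n_k}b_{n_k}}{S}$.

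Next I would pass to the \emph{unrestricted} sum over all ordered tuples, which factorizes completely: $\sum_{(n_1,\dots,n_r)}\prod_{k=1}^{r}\frac{a_{n_k}b_{n_k}}{S}=\bigl(\sum_{n}\frac{a_n b_n}{S}\bigr)^{r}=(\frac{\mathbf{a}\cdot\mathbf{b}}{S})^{r}$, since $\sum_n \frac{a_n b_n}{S}=\frac{\mathbf{a}\cdot\mathbf{b}}{S}$. Because every summand is nonnegative, subtracting the distinct-tuple sum leaves exactly the contribution of tuples that are \emph{not} all distinct, so $E(SC_r)=(\frac{\mathbf{a}\cdot\mathbf{b}}{S})^{r}-\sum_{(n_1,\dots,n_r)\ \mathrm{not\ distinct}}\prod_{k=1}^{r}\frac{a_{n_k}b_{n_k}}{S}$. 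It therefore suffices to upper bound this collision sum by $\binom{r}{2}p_{max}(\frac{\mathbf{a}\cdot\mathbf{b}}{S})^{r-1}$.

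For this I would bound the event "not all distinct" by the union of the events $\{n_i=n_j\}$ over pairs $i<j$: the collision sum is at most $\sum_{i<j}\sum_{(n_1,\dots,n_r):\,n_i=n_j}\prod_{k}\frac{a_{n_k}b_{n_k}}{S}$. For a fixed pair the inner sum factorizes, the $r-2$ free coordinates each contributing $\frac{\mathbf{a}\cdot\mathbf{b}}{S}$ and the collapsed coordinate contributing $\sum_m\bigl(\frac{a_m b_m}{S}\bigr)^2$. Using $\frac{a_m b_m}{S}=p_{mm}\le p_{max}$ gives $\sum_m\bigl(\frac{a_m b_m}{S}\bigr)^2\le p_{max}\sum_m\frac{a_m b_m}{S}=p_{max}\frac{\mathbf{a}\cdot\mathbf{b}}{S}$, so each of the $\binom{r}{2}$ pairs contributes at most $p_{max}(\frac{\mathbf{a}\cdot\mathbf{b}}{S})^{r-1}$, yielding the claimed bound and hence the lemma.

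The only genuine inequality, rather than bookkeeping, is the collision estimate, and I expect it to be the main (mild) obstacle: one must notice that the union bound over pairs overcounts tuples with several coincidences, but since all summands are nonnegative this overcounting only weakens the upper bound and is harmless. Everything else is an exact evaluation, so no asymptotic assumptions on $r$, $S$, or $p_{max}$ are needed for the statement.
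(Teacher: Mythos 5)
Your proof is correct and follows essentially the same route as the paper: both compute $E(SC_r)$ exactly as a sum of clean products $\prod_k \frac{a_{n_k}b_{n_k}}{S}$ over distinct tuples (using independence, since distinct nodes force distinct edges), compare against the fully factorized sum $(\frac{\mathbf{a}\cdot\mathbf{b}}{S})^r$, and control the collision terms by a union bound over the $\binom{r}{2}$ pairs of coinciding indices with the collapsed factor bounded by $p_{max}$. Your explicit summation over pairs $i<j$ is just a slightly more spelled-out version of the paper's symmetry argument fixing $i_1=i_2$ and multiplying by $\binom{r}{2}$.
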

\begin{proof}
Denote the set $\mathbf{D}$ as a collection of all possible simple cycles allowing for cyclic permutations.  It then follows that the expected number of simple cycles of length $r$ (allowing for cyclic permulations) is,

$$E(SC_{r}) = \sum_{(i_1,i_2,...,i_r,i_1)\in \mathbf{D}}\Pi_{k=1}^{r} p_{i_{k}i_{k+1}} =\sum_{(i_1,i_2,...,i_r,i_1)\in \mathbf{D}}\Pi_{k=1}^{r} \frac{a_{i_{k}}b_{i_{k}}}{S}.$$

The first  equality follows from the fact that with simple cycles we do not have to worry about an edge repeating in a cycle (as in Lemma \ref{lem:lower}) and the second equality follows from invoking the definition of Chung-Lu and rearranging terms.

To derive the lower bound we use inclusion-exclusion, 
\begin{equation}\label{eq:lowersimple}
\sum_{(i_1,i_2,...,i_r,i_1)\in \mathbf{D}}\Pi_{k=1}^{r} \frac{a_{i_{k}}b_{i_{k}}}{S} \geq \sum_{i_1=1,...,i_r = 1}^{N}\Pi_{k=1}^{r} \frac{a_{i_{k}}b_{i_{k}}}{S} - \binom{r}{2}\sum_{\substack{i_1=i_2=1\\ i_3=1,...,i_r = 1}}^{N}\Pi_{k=1}^{r} \frac{a_{i_{k}}b_{i_{k}}}{S},
\end{equation}
where for the first term in the right hand side of (\ref{eq:lowersimple}), we sum over all possible cycles, not just simple cycles.  Subsequently to correct this, we subtract off the cases where two nodes equal each other.  By symmetry, the choice of which two nodes equal each other is arbitrary, so we select $i_1 = i_2$ and multiply the quantity by $\binom{r}{2}$, the number of ways of choosing two nodes to equal each other.  It then follows that, 

\begin{multline}
E(SC_{r}) = \sum_{(i_1,i_2,...,i_r,i_1)\in \mathbf{D}}\Pi_{k=1}^{r} \frac{a_{i_{k}}b_{i_{k}}}{S} \geq \\ (\frac{\mathbf{a}\cdot\mathbf{b}}{S})^{r} - \binom{r}{2}p_{max}\sum_{\substack{i_2=1\\ i_3=1,...,i_r = 1}}^{N}\Pi_{k=2}^{r} \frac{a_{i_{k}}b_{i_{k}}}{S}\geq (\frac{\mathbf{a}\cdot\mathbf{b}}{S})^{r} - \binom{r}{2}p_{max}(\frac{\mathbf{a}\cdot\mathbf{b}}{S})^{r-1},
\end{multline}

which completes the proof.
\end{proof}

We now provide a bound for the variance for the number of simple cycles of prescribed length. 

\begin{lem}  \label{lem:lowercov}
If $\frac{\mathbf{a}\cdot\mathbf{b}}{S} > 1$, we then have that \begin{equation} \label{eq:covpmax} Var(SC_{r}) \leq E(SC_{r})\cdot [r + (\frac{\mathbf{a}\cdot\mathbf{b}}{S})^{r}(exp(\frac{p_{max}(\frac{rS}{\mathbf{a}\cdot\mathbf{b}})^{2}}{1-\frac{S}{\mathbf{a}\cdot\mathbf{b}}}) - 1)].
\end{equation}

\end{lem}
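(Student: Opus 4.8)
The plan is to follow the variance decomposition used for $trace(A^{r})$ in Theorem \ref{thm:cov}, but to exploit the fact that simplicity forces the shared edges of two overlapping simple cycles to form contiguous arcs, which tightens the combinatorial count. Writing $SC_{r} = \sum_{z} \mathbf{1}_{z}$, where $z$ ranges over the simple cycles of length $r$ counted with cyclic rotations as in Lemma \ref{lem:lower2}, I would begin from
$$Var(SC_{r}) = \sum_{z} Var(\mathbf{1}_{z}) + \sum_{y \neq z} Cov(\mathbf{1}_{y}, \mathbf{1}_{z}).$$
Since each $\mathbf{1}_{z}$ is Bernoulli, $\sum_{z} Var(\mathbf{1}_{z}) \leq \sum_{z} E(\mathbf{1}_{z}) = E(SC_{r})$. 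A pair with $y \neq z$ contributes a nonzero covariance only when the two cycles share at least one edge, so the remaining task is to control $\sum_{z \in D(y)} Pr(\mathbf{1}_{z} = 1 \mid \mathbf{1}_{y} = 1)$ weighted by $Pr(\mathbf{1}_{y} = 1)$, exactly as in (\ref{eq:covcrit}).

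I would then separate the overlapping pairs into two families. First, if $z$ is a cyclic rotation of $y$ then $\mathbf{1}_{z} = \mathbf{1}_{y}$ as events, so $Cov(\mathbf{1}_{y}, \mathbf{1}_{z}) = Var(\mathbf{1}_{y}) \leq E(\mathbf{1}_{y})$; there are exactly $r-1$ such rotations for each $y$, contributing at most $(r-1)E(SC_{r})$. Together with the diagonal term this accounts for the summand $r \cdot E(SC_{r})$ in the stated bound. Because the edges are \emph{directed}, sharing all $r$ edges with $y$ already forces $z$ to be a rotation of $y$, so no separate full-overlap contribution (the $r^{r}$ term appearing in Theorem \ref{thm:cov}) survives here; this is precisely why the simple-cycle bound is cleaner.

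For the second family, where $z$ shares a proper, nonempty subset of $y$'s edges, I would condition on $\mathbf{1}_{y} = 1$, treat every edge already appearing in $y$ as a repeating edge, and apply Lemma \ref{lem:probpathcycle} to write $Pr(\mathbf{1}_{z} = 1 \mid \mathbf{1}_{y} = 1) = \Pi_{i\in\mathbf{N}} \frac{a_{i} b_{i}}{S} \Pi_{(j,k)\in\mathbf{R}} \frac{a_{j} b_{k}}{S}$, bounding each repeating-block factor by $p_{max}$. As in Theorem \ref{thm:cov}, I would sum over the number $k_{i}$ of repeating edge blocks of length $i$, their arrangement around the cycle (the multinomial coefficient), the free choices of new-edge-interior nodes (each summing to $\frac{\mathbf{a}\cdot\mathbf{b}}{S}$, hence $(\frac{\mathbf{a}\cdot\mathbf{b}}{S})^{|\mathbf{N}|}$ with $|\mathbf{N}| = r - \sum_{i}(i+1)k_{i}$), and the placements of the repeating blocks. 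The one genuinely new step is the block placement count: since both $y$ and $z$ are simple, the edges of a repeating block are consecutive in $z$ and all lie in $y$, and within the simple cycle $y$ each node has a unique outgoing edge, so the block must be a contiguous arc of $y$. Hence each block admits at most $r$ placements (its starting node in $y$), yielding a factor $r^{\sum_{i} k_{i}}$ rather than the $(2r)^{\sum_{i} i k_{i}}$ used in the general case. Collecting the per-block weight as $p_{max}\, r\, (\frac{S}{\mathbf{a}\cdot\mathbf{b}})^{i+1}$ and invoking Lemma \ref{lem:expineq} with $l = 1$, $\alpha = p_{max} r (\frac{S}{\mathbf{a}\cdot\mathbf{b}})^{2}$, and $\beta = \frac{S}{\mathbf{a}\cdot\mathbf{b}}$ gives the factor $(\frac{\mathbf{a}\cdot\mathbf{b}}{S})^{r}\bigl(\exp(\frac{p_{max}(\frac{rS}{\mathbf{a}\cdot\mathbf{b}})^{2}}{1 - \frac{S}{\mathbf{a}\cdot\mathbf{b}}}) - 1\bigr)$, the $-1$ arising from adding back and subtracting the edge-disjoint configuration $k_{0} = r$.

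The main obstacle is the bookkeeping in this last step: rigorously establishing that the repeating edges of a simple cycle decompose into contiguous arcs of $y$ (so the placement count is $r^{\sum_{i} k_{i}}$ and no larger), and verifying that the exponents of $\frac{\mathbf{a}\cdot\mathbf{b}}{S}$, $p_{max}$, and $r$ recombine into exactly the $\alpha$ and $\beta$ demanded by Lemma \ref{lem:expineq}. Assembling the rotation-plus-diagonal contribution $r \cdot E(SC_{r})$ with the proper-overlap estimate $E(SC_{r})(\frac{\mathbf{a}\cdot\mathbf{b}}{S})^{r}(\exp(\cdots)-1)$ then yields the claimed inequality, where the hypothesis $\frac{\mathbf{a}\cdot\mathbf{b}}{S} > 1$ guarantees $\beta < 1$ as required by Lemma \ref{lem:expineq}.
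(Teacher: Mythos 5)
Your proposal is correct and follows essentially the same route as the paper's proof: the same repeating-edge-block decomposition via Lemma \ref{lem:probpathcycle}, the same key observation that simplicity of $y$ limits each block to at most $r$ placements (contiguous arcs of $y$), and the same application of Lemma \ref{lem:expineq} with $l=1$, $\alpha = p_{max}r(\frac{S}{\mathbf{a}\cdot\mathbf{b}})^{2}$, $\beta = \frac{S}{\mathbf{a}\cdot\mathbf{b}}$. Your only deviation is bookkeeping: you index cycles with their $r$ cyclic rotations and absorb the rotation pairs into the covariance sum (yielding the $r\cdot E(SC_r)$ term directly), whereas the paper writes $SC_{r} = r\sum_{y\in\mathbf{Y}}\mathbf{1}_{y}$ over rotation-distinct representatives and corrects the enumeration by a factor of $\frac{1}{r}$ — the two devices are equivalent and produce the identical bound.
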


\begin{proof}
The proof is similar to Theorem \ref{thm:cov}.  In particular, we can express 

\begin{equation}\label{eq:SC}
SC_{r} = r\sum_{y\in \mathbf{Y}}\mathbf{1}_y,
\end{equation} 

where $\mathbf{1}_y$ is an indicator random variable corresponding to the existence of the cycle $y$ of length $r$ and all of the cycles in $\mathbf{Y}$ are {\em distinct} under cyclic permutations.  

Analogous to (\ref{eq:covcrit}), we have that 

\begin{equation}
\sum_{y\neq z} Cov(\mathbf{1}_y,\mathbf{1}_z) \leq \sum_{\substack{y \\ z \in D(y)}} E(\mathbf{1}_{y}\mathbf{1}_{z}) = \sum_{\substack{y\\  z\in D(y)}} Pr(\mathbf{1}_{z}=1|\mathbf{1}_{y}=1)Pr(\mathbf{1}_{y}=1),
\end{equation}

where $z \in D(y)$ if the simple cycles $y,z\in\mathbf{Y}$ share an edge and $z \neq y$.  Following the argument from Theorem \ref{thm:cov}, we will argue that

\begin{multline}
\sum_{\substack{y\in\mathbf{Y} \\ z \in D(y)}} Cov(\mathbf{1}_y,\mathbf{1}_z)\leq \\ \frac{1}{r}\sum_{y}Pr(\mathbf{1}_{y}=1)\sum_{\substack{k_{0}+\sum_{i=1}^{r-2} ik_{i}=r\\k_{0}\in[0...r-1]}}\binom{\sum k_{i}}{k_{0},...,k_{r-2}}\sum_{\mathbf{N},\mathbf{R}_*}\Pi_{j\in\mathbf{N}}\frac{b_{j}a_{j}}{S}p_{max}^{|\mathbf{R}|},
\end{multline}
where $\mathbf{N}$ is a list of all of the nodes in the cycle $z$ that are part of the new edge interior, $\mathbf{R}_*$ identifies the repeating edge blocks that appear in $z$ after observing the edges in the cycle $y$ and $k_{0} < r$ as the cycles $y$ and $z$ share at least one edge.  Since we sum over all possible choices for $\mathbf{N}$ and $\mathbf{R}_{*}$, we are counting the same simple cycle (under cyclic permutation) $r$ times.  To correct this issue, we multiply the result by $\frac{1}{r}$.  $k_{0}$ indicates the number of new edges and for $i\geq 1$, $k_{i}$ denotes the number of repeating edge blocks in cycle $z$ of length $i$.  Note that since $\mathbf{Y}$ consists of simple cycles that are unique under cyclic permutations, a repeating edge block cannot consist of lengths equal to $r-1$ and $r$ as the cycle $z$ is simple.

We will now show that for simple cycles, 
\begin{equation}|\mathbf{R}_*| = r^{\sum_{i=1}^{r-2}k_{i}}. 
\end{equation}  
Since $z$ is a simple cycle, all repeating edge blocks must come from edges observed from the cycle $y$. Furthermore, for a repeating edge block of length $i$, as $y$ is simple, there are at most $r$ ways of constructing a repeating edge block.  (For example, if we are selecting a repeating edge block of length $2$ using edges from a simple cycle $y = (i_1,i_2,...,i_r,i_1)$, then there are only $r$ choices for choosing repeating edge block.  It could be, $(i_1,i_2,i_3),(i_2,i_3,i_4),...$,etc.  Other choices like $(i_1,i_7,i_8)$ would be impossible as we know there is an edge from $i_6$ to $i_7$ in $y$ and if there was an edge from $i_1$ to $i_7$ in $y$ as well, then this would imply that $y$ is not simple.)  Hence, to compute $|\mathbf{R}_*| $, we multiply by $r$ for every repeating edge block.  It then follows that,

\begin{multline}
\sum_{\substack{y\in\mathbf{Y} \\ z \in D(y)}} Cov(\mathbf{1}_y,\mathbf{1}_z)\leq \\ \frac{1}{r}\sum_{y}Pr(\mathbf{1}_{y}=1)\sum_{\substack{k_{0}+\sum_{i=1}^{r-2} ik_{i}=r\\k_{0}\in[0...r-1]}}\binom{\sum k_{i}}{k_{0},...,k_{r-2}}\sum_{\mathbf{N}}\Pi_{j\in\mathbf{N}}\frac{b_{j}a_{j}}{S}(rp_{max})^{\sum_{i=1}^{r-2}k_{i}}.
\end{multline}

Using the fact that from Lemma \ref{lem:probpathcycle}, $|\mathbf{N}| = r - \sum_{i=1}^{r-2}(i+1)k_{i}$, summing over all possible choices for nodes to appear in $\mathbf{N}$, allowing $k_{0}$ to equal $r$, and subtracting off the contribution when $k_{0}=r$, we get that

\begin{multline}\label{eq:simplecov1}
\sum_{\substack{y\in\mathbf{Y} \\ z \in D(y)}} Cov(\mathbf{1}_y,\mathbf{1}_z)\leq \\ \frac{1}{r}\sum_{y}Pr(\mathbf{1}_{y}=1)(\frac{\mathbf{a}\cdot\mathbf{b}}{S})^{r}([\sum_{\substack{k_{0}+\sum_{i=1}^{r-2} ik_{i}=r\\k_{0}\in[0...r]}}\binom{\sum k_{i}}{k_{0},...,k_{r-2}}\Pi_{i=1}^{r-2}(r(\frac{S}{\mathbf{a}\cdot\mathbf{b}})^{i+1}p_{max})^{k_{i}}]-1).
\end{multline}

By invoking Lemma \ref{lem:expineq}, where $\alpha = \frac{rp_{max}S}{\mathbf{a}\cdot\mathbf{b}}$, $\beta = \frac{S}{\mathbf{a}\cdot\mathbf{b}}$ and $l = 1$, we bound (\ref{eq:simplecov1}) by

\begin{equation}
\label{eq:simplecov4}
\frac{1}{r}\sum_{y}Pr(\mathbf{1}_{y}=1)(\frac{\mathbf{a}\cdot\mathbf{b}}{S})^{r}(exp(\frac{r^{2}(\frac{S}{\mathbf{a}\cdot\mathbf{b}})^{2}p_{max}}{1-\frac{S}{\mathbf{a}\cdot\mathbf{b}}}) - 1).
\end{equation}

To finish off the proof, note that $\sum_{y\in\mathbf{Y}}Pr(\mathbf{1}_y = 1) = \frac{E(SC_{r})}{r},$
as $\mathbf{Y}$ only consists of simple cycles that are unique under cyclic permutation.  Using the fact that 
$Var(SC_{r}) = Var(r\sum_{y\in\mathbf{Y}}\mathbf{1}_y ) = r^{2}\sum_{y\in\mathbf{Y}}Pr(\mathbf{1}_y = 1)+r^{2}\sum_{\substack{y\in\mathbf{Y} \\ z \in D(y)}} Cov(\mathbf{1}_y,\mathbf{1}_z)$ proves the result.
\end{proof}

Consequently, we have the following concentration result.

\begin{cor}
Denote A as a realization of a random Chung-Lu graph with expected degree sequence $\mathbf{d}=(\mathbf{a},\mathbf{b})\in\mathbb{Z}^{N\times 2}$.  Furthermore let $p_{max} =\frac{a_{max}b_{max}}{S}$ and $S=\sum_i a_i =\sum_i b_i$.  Then for every $\epsilon\in (0,1)$ there exists  a $\delta_{1}$,$\delta_{2}$ such that if for some choice of $r\in\mathbb{N}$,
$p_{max}r^{2}<\delta_{1}$, $\frac{\mathbf{a}\cdot\mathbf{b}}{S}>1$ and $\frac{\log N}{r}<\delta_{2}$  then 
$$Pr((1-\epsilon)\frac{\mathbf{a}\cdot\mathbf{b}}{S}\leq \rho(A))\geq 1-\epsilon.$$
\end{cor}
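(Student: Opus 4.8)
The plan is to mirror the proof of Theorem \ref{thm:SpectralLower}, but since we are now in the regime $p_{max}\to 0$ we count \emph{simple} cycles rather than all cycles, so that the expectation estimate of Lemma \ref{lem:lower2} replaces Corollary \ref{cor:bounds} and the variance estimate of Lemma \ref{lem:lowercov} replaces Theorem \ref{thm:cov}. The reduction is immediate: every simple cycle of length $r$ is in particular a cycle of length $r$, so $SC_r\leq trace(A^r)$, and combining this with the left inequality of Lemma \ref{lem:theory} gives $\left(\frac{SC_r}{N}\right)^{1/r}\leq\rho(A)$. Hence it suffices to show that with probability at least $1-\epsilon$ the count $SC_r$ is within a constant factor of its mean, which by Lemma \ref{lem:lower2} is at least $\left(\frac{\mathbf{a}\cdot\mathbf{b}}{S}\right)^{r}$ up to a lower-order correction.

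First I would fix $\epsilon\in(0,1)$ and impose $p_{max}r^{2}<\delta_{1}$ and $\frac{\log N}{r}<\delta_{2}$, with $\delta_{1},\delta_{2}$ pinned down at the end. Writing $L=\frac{\mathbf{a}\cdot\mathbf{b}}{S}$ and using $L>1$ together with $\binom{r}{2}\leq\frac{r^{2}}{2}$, Lemma \ref{lem:lower2} yields $E(SC_{r})\geq L^{r}\bigl(1-\binom{r}{2}p_{max}\tfrac{1}{L}\bigr)\geq L^{r}\bigl(1-\tfrac{1}{2}r^{2}p_{max}\bigr)\geq L^{r}(1-\delta_{1}/2)$, so the mean is within a factor $1-\delta_{1}/2$ of $L^{r}$. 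Next I would apply Janson's inequality (Theorem \ref{thm:Janson}) to $T=\sum_{y\in\mathbf{Y}}\mathbf{1}_{y}=SC_{r}/r$ with $\beta=\tfrac12$; here $E[T]=E(SC_{r})/r$ and the covariance sum $Cov=\sum_{y,\,z\in D(y)}Cov(\mathbf{1}_{y},\mathbf{1}_{z})$ is exactly the quantity bounded inside the proof of Lemma \ref{lem:lowercov}, namely $Cov\leq\frac{1}{r^{2}}E(SC_{r})\,L^{r}(e^{\theta}-1)$ with $\theta=\frac{p_{max}\left(\frac{rS}{\mathbf{a}\cdot\mathbf{b}}\right)^{2}}{1-\frac{S}{\mathbf{a}\cdot\mathbf{b}}}$. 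This gives
\[
Pr\!\left(\tfrac12 E(SC_{r})\leq SC_{r}\right)\;\geq\;1-\exp\!\left(-\tfrac18\,\frac{E[T]^{2}}{E[T]+Cov}\right).
\]

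The concluding step converts the cycle concentration into the spectral bound exactly as at the end of Theorem \ref{thm:SpectralLower}: on the event $SC_{r}\geq\tfrac12 E(SC_{r})\geq\tfrac12 L^{r}(1-\delta_{1}/2)$ one has
\[
\rho(A)\;\geq\;\left(\frac{SC_{r}}{N}\right)^{1/r}\;\geq\;\frac{\mathbf{a}\cdot\mathbf{b}}{S}\left(\frac{1-\delta_{1}/2}{2N}\right)^{1/r},
\]
and since $\frac{\log(2N)}{r}\leq\frac{\log 2}{r}+\delta_{2}$ the trailing factor equals $\exp\!\left(-\frac{\log(2N)}{r}\right)\geq 1-O(\delta_{2})$ (recall $r\geq\log N/\delta_{2}$ is large). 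Choosing $\delta_{1},\delta_{2}$ small enough in terms of $\epsilon$ then yields $\rho(A)\geq(1-\epsilon)\frac{\mathbf{a}\cdot\mathbf{b}}{S}$ on an event of probability at least $1-\epsilon$.

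The main obstacle is making the Janson exponent $\tfrac18\,E[T]^{2}/(E[T]+Cov)$ exceed $8\log(1/\epsilon)$. Substituting the bounds above, this exponent is of order $\frac{L^{r}/r}{\,1+r^{-1}L^{r}(e^{\theta}-1)\,}$, and controlling it requires two things at once: that $e^{\theta}-1$ be small, which follows from $p_{max}r^{2}<\delta_{1}$ once $L$ is bounded away from $1$; and that $L^{r}/r$ be large, which follows from $L>1$ together with $r>\log N/\delta_{2}$ (so that $L^{r}\geq N^{(\log L)/\delta_{2}}$ grows). The delicate regime is $L\downarrow 1$, where the denominator $1-\frac{S}{\mathbf{a}\cdot\mathbf{b}}$ appearing in $\theta$ degenerates and the growth of $L^{r}$ slows; there one must verify that the admissible window $\frac{\log N}{\delta_{2}}<r<\sqrt{\delta_{1}/p_{max}}$ still leaves $L^{r}/r$ above the threshold of order $\log(1/\epsilon)$ while keeping $e^{\theta}-1$ below it. This is precisely where the sparsity hypothesis $p_{max}\to 0$ does the essential work, since it is what opens up a nonempty window for $r$ and thereby lets $L^{r}$ outrun both $r$ and the covariance correction.
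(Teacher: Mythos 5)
Your proposal matches the paper's own (sketched) proof essentially step for step: the paper likewise substitutes Lemma \ref{lem:lower2} for Corollary \ref{cor:bounds} and Lemma \ref{lem:lowercov} for Theorem \ref{thm:cov}, reruns the concentration argument of Theorem \ref{thm:SpectralLower} to get $SC_r \geq \frac{1}{2}E(SC_r)$ with high probability, and converts via $\frac{E(SC_{r})}{2N} \leq \frac{SC_{r}}{N} \leq \frac{trace(A^{r})}{N} \leq \rho(A)^{r}$ followed by taking $r$th roots. If anything you are more explicit than the paper, whose proof is only a sketch and which does not discuss the delicate regime $\frac{\mathbf{a}\cdot\mathbf{b}}{S}\downarrow 1$ that you flag in your final paragraph.
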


\begin{proof}
The proof is analogous to Theorem \ref{thm:SpectralLower} and as such we only provide a sketch. From Lemmas \ref{lem:lowercov} and \ref{lem:lower2}, since $r^{2}p_{max}\rightarrow 0$, we have that $Var(SC_{r})\ll E(SC_{r})^{2}$.  It then follows that with high probability the number of simple cycles of length $r$ cannot be less than half the expected number of simple cycles.  
 
 $$ \frac{E(SC_{r})}{2N} \leq \frac{SC_{r}}{N} \leq \frac{trace(A^{r})}{N} \leq \rho(A)^{r}. $$
 
 As $\frac{\frac{\mathbf{a}\cdot\mathbf{b}}{S}^{r}(1-r^{2}p_{max})}{2N}\leq \frac{E(SC_{r})}{2N}$, this would impliy that,
 
 $$(\frac{1}{2N})^{\frac{1}{r}}\frac{\mathbf{a}\cdot\mathbf{b}}{S}(1-r^{2}p_{max})^{\frac{1}{r}}\leq \rho(A).$$
 
 Since $(\frac{1}{2N})^{\frac{1}{r}} = exp(\frac{-\log 2N }{r})\rightarrow 1$ and $r^{2}p_{max}\rightarrow 0$ by assumption, we conclude that with probability of at least $(1-\epsilon)$,
 
 $$(1-\epsilon)\frac{\mathbf{a}\cdot\mathbf{b}}{S}\leq \rho(A).$$
 
\end{proof}

Constructing a meaningful upperbound on the number of paths of length $r$ when $p_{max} \rightarrow 0$ is more challenging.   We cannot simply follow the proof strategy used in Theorem \ref{thm:upper} as we have to worry about the contribution from repeating edge blocks.  (Note that in the proof of Theorem \ref{thm:upper}, we were able to ignore this complication altogether due to the assumption that $\frac{\mathbf{a}\cdot\mathbf{b}}{S}\rightarrow \infty$ sufficiently fast.)    In particular, consider the (unlikely) event that a subgraph of $k$ nodes exists where each node has bidirectional edges with each of the other $k$ nodes and $\frac{\mathbf{a}\cdot\mathbf{b}}{S}\ll k$.   To circumvent this issue, we will illustrate that with high probability when $p_{max}\rightarrow 0$, two cycles of modest length cannot be close together in distance in the graph.  Subsequently, we will employ this fact to construct a more precise bound on the number of choices for constructing repeating edge blocks in a path, to prove the desired asymptotics.  For this purpose, we will need the following machinery.

\begin{defn}
Define the minimal edge list of a path $P$ to be an ordered list of new edges (of minimal size) required for the entire path to exist.  Note that since $P$ can only contain new edges, we cannot have the same edge appear twice in the minimal edge list.  By convention when constructing a minimal edge list, as we observe edges in a path, we simply add the edge to the minimal edge list if we have never observed that particular edge before.  
\end{defn}
\noindent \textbf{Example:} Consider the path $P = (1,2,3,1,2,4)$.  For the path $P$ to exist, we only need the following edges to exist: $1\rightarrow 2$, $2\rightarrow 3$, $3\rightarrow 1$ and $2\rightarrow 4$.  These edges $\{(1,2),(2,3),(3,1),(2,4)\}$ would then form the minimal edge list of path $P$.

\begin{defn}
Given an list edges, we say that the $mth$ edge in the list is a cycle inducing edge if we can construct a simple cycle, where the cycle contains the $mth$ edge and can only consist of the first $m$ edges in $E$.  Analogously, we can say an edge is a cycle inducing edge in a path if the edge is a cycle inducing edge in the minimal edge list of the path.
\end{defn}

Essentially, we will seek a result that says that paths cannot have too many cycle inducing edges.  More precisely, it will be burdensome to consider the minimal edge list of a path $P$ with excessively many cycle inducing edges.  Instead, we will want to construct a smaller list (of edges) from the minimal edge list with a smaller number of cycle inducing edges.  The following lemma formalizes this claim and the proof explains how to construct such a list.

\begin{lem} \label{lem:reduce} Consider a path $P$ with $k>1$ cycle inducing edges and its corresponding minimal edge list $M$.  Then for any $j \leq k$, we can construct a \textbf{reduced edge list} $E \subset M$ with the following properties:
\begin{itemize} \item $E$ has exactly $j$ cycle inducing edges.
\item The first node in the first edge in $E$ and the last node in the last edge in $E$ must belong to a simple cycle that can be formed from the edges in $E$.
\end{itemize}
\end{lem}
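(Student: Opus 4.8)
The plan is to analyze the sequential process of revealing the edges of $M$ in their minimal--edge--list order and to track cycle inducing edges as they appear. First I would record the basic reformulation: the $m$th edge $(u,v)$ of a list is cycle inducing precisely when $v$ can reach $u$ through the earlier edges of the list, since a simple cycle through $(u,v)$ consists of $(u,v)$ together with a simple directed path from $v$ to $u$ drawn from the first $m-1$ edges. Two consequences drive everything. Within any prefix $M[1..q]$ the cycle inducing edges are exactly those cycle inducing edges of $M$ occurring at positions $\le q$, so the prefix ending at the $j$th cycle inducing edge carries exactly $j$ of them. Moreover there is a monotonicity principle: passing to a sublist can never create a cycle inducing edge, because fewer available edges can only destroy the simple cycles witnessing the property. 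In particular, deleting edges from the front of a window can only decrease the number of cycle inducing edges, while extending the window forward can only increase it.

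Next I would isolate the mechanism that produces the required simple cycle at both ends of $E$. Suppose the last edge of $E$ is a cycle inducing edge $e=(u,v)$; by definition it closes a simple cycle $C$ using only edges of $M$ up to $e$, and $v$, the last node of the last edge, lies on $C$. Let $p$ be the earliest position in $M$ of an edge of $C$, and let the window begin at position $p$. Then every edge of $C$ lies in the window, so $C$ is realizable inside $E$, and the source of the first edge of $E$, being the tail of an edge of $C$, lies on $C$ as well. Thus choosing $E$ to run from the earliest edge of the cycle induced by its last (cycle inducing) edge up to that edge secures the second bullet automatically: both distinguished endpoints sit on the single simple cycle $C \subseteq E$.

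The delicate point, and the step I expect to be the main obstacle, is to realize \emph{exactly} $j$ cycle inducing edges while retaining the boundary structure above. Fixing the last edge at the $j$th cycle inducing edge $e_j$ and trimming the front to the earliest edge of its cycle $C_j$ delivers the boundary condition, but only guarantees \emph{at most} $j$ cycle inducing edges, since trimming the front may simultaneously destroy several earlier ones. To control the count I would fix the right endpoint and let the left endpoint $s$ sweep, studying the number $N(s)$ of cycle inducing edges in $M[s\,..\,q]$; by the monotonicity above $N$ is nonincreasing in $s$, equals the full count at $s=1$, and drops to $0$ once the window is too short to contain a cycle. Each cycle inducing edge therefore has a threshold value of $s$ at which it switches off, and reading these thresholds in order exhibits windows attaining the intermediate counts.

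The remaining difficulty is that two cycle inducing edges may share a threshold, so the swept counts could skip a value. Here I would exploit the extra freedom of choosing which cycle inducing edge serves as the right endpoint, together with the option of deleting individual non--cycle--inducing front edges (an operation that changes the count by at most one), to fill in any skipped value. Combining the sweep with this fine adjustment yields, for every $j \le k$, a sublist $E$ with exactly $j$ cycle inducing edges whose first and last distinguished nodes lie on a common simple cycle. I would organize the final write--up as a downward induction on the cycle inducing count, peeling off one such edge at a time while re--establishing the boundary cycle through the mechanism of the second paragraph, until the count reaches the prescribed $j$.
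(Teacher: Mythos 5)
There is a genuine gap, and it starts with a misreading of the second bullet. You interpret it as demanding that the first node of the first edge and the last node of the last edge lie on a \emph{common} simple cycle $C\subseteq E$, and your whole construction (trimming the window to the earliest edge of the cycle closed by the $j$th cycle inducing edge) is built to secure that. Under that strong reading the lemma is actually false: take $M = \{(a,b),(b,a),(c,d),(d,c)\}$ with $j=2$. The only sublist with two cycle inducing edges is $M$ itself, its first node is $a$ and its last node is $c$, and no simple cycle formable from these edges contains both $a$ and $c$. The intended (and provable) reading, which is what the paper's proof establishes and what is actually used downstream in Lemma \ref{lem:represent} and Lemma \ref{lem:pmax}, is the weaker one: each of the two distinguished nodes lies on \emph{some} simple cycle formable from $E$, not necessarily the same cycle. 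Since you aimed at the false strong statement, no amount of patching of the sweep can succeed.

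Even setting the reading aside, your repair mechanism for the count contains a false step: you claim that deleting an individual non--cycle--inducing front edge "changes the count by at most one." It does not. In $M = \{(a,b),(b,a),(a,c)? \}$ -- more simply, with $M = \{(a,b),(b,a),(b,c),(c,a)\}$, the front edge $(a,b)$ is not cycle inducing, yet deleting it destroys the witnessing cycles of both $(b,a)$ and $(c,a)$, dropping the count from $2$ to $0$ in one step; so the swept count $N(s)$ can skip values in ways your fill-in cannot control. The missing idea, which makes the whole sweep unnecessary, is the paper's gentler trimming rule: take the prefix of $M$ ending at the $j$th cycle inducing edge (this prefix has \emph{exactly} $j$ such edges, since cycle-inducing status only depends on earlier edges), then repeatedly delete the front edge \emph{only if it lies on no simple cycle formable from the current list}, stopping at the first edge that does. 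Such a deleted edge cannot belong to any witnessing cycle of any cycle inducing edge (those witnesses are simple cycles in the current list), so all $j$ cycle inducing edges survive every deletion; at termination the first edge lies on a simple cycle and the last edge, being cycle inducing, lies on one too -- possibly a different one, which is exactly why the weak reading is the right target.
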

Before we provide the proof of the Lemma, we first provide an example.
\textbf{Example:} Consider the path $P = (1,2,2,3,4,3,3)$.  For the path $P$ to exist, we only need the following edges to exist: $\{(1,2),(2,2),(2,3),(3,4),(4,3),(3,3)\}$, which would then form the minimal edge list for the path $P$.

\begin{itemize}
	\item  First, starting with the empty list we add edges from our minimal edge list $M$ to our reduced edge list $E$ until we find $j$ cycle inducing edges.  For this example, consider $j = 2$. This yields the edge list $$E=\{(1,2),(2,2),(2,3),(3,4),(4,3)\}.$$
	\item Then, we remove edges from the beginning of the edge list $E$ until we reach an edge such that its removal would decrease the number of simple cycles that we can form our current edge list.  This yields the list $$E  = \{(2,2),(2,3),(3,4),(4,3)\}.$$
	\item Observe that if any of the edges in the edge list $E$ do not exist, then the path $P$ cannot exist.  Furthermore by construction, the first and last edge in our new edge list must belong to a simple cycle that we can construct from $E$.  
	\end{itemize}

\begin{proof} 
\textbf{Step 1:} Initialize a list $E = \emptyset$.  \textbf{Step 2:} Starting with the first edge in $M$ we proceed by inductively adding edges to $E$ until $E$ contains $j$ cycle inducing edges.  Then in \textbf{Step 3} starting from the first edge in $E$, we remove edges from $E$ if the edge is not a part of a simple cycle that can be formed from the edges in $E$.  Once we reach an edge that is part of a simple cycle, we stop removing edges in $E$.  It follows that all cycle inducing edges in $E$ are still cycle inducing edges as we only removed edges in Step 3 that are not part of a simple cycle.  Hence $E$ has precisely $j$ cycle inducing edges.  Furthermore by construction, the first edge in $E$ is part of a cycle, as otherwise it would have been deleted from $E$ and the last edge in $E$ is also part of a cycle as it is a cycle inducing edge.

\end{proof}

With Lemma \ref{lem:reduce} at hand, we seek one more lemma so that we can prove the desired result that there cannot exist a path with many cycle inducing edges with high probability.

\begin{lem} \label{lem:represent}
Suppose $E$ is a reduced edge list with $t$ cycle inducing edges, as we constructed in Lemma \ref{lem:reduce}.  We can map the reduced edge list $E$ to a union of lists $\cup_{i=1}^{t}M_{i}$ where each $M_{i}$ consists of a list of nodes.  Furthermore, the $M_{i}$ lists have the following properties:
\begin{itemize} 
\item The first node in $M_{1}$ and the last node in $M_{t}$ all belong to a simple cycle that can be constructed from the edges in $E$ (provided that the edges in $E$ exist).
\item The last node in $M_{j}$ for each $j$ is a node  that has appeared earlier either in the same list $M_{j}$ or in a list $M_{i}$ where $i<j$.
\item The first node in $M_{j}$ for each $j>1$ is a node that has appeared in some list $M_{i}$ where $i<j$.
\item And finally, let $x_{k,i}$ be the $kth$ node in $M_{i}$.  Then the probability that all edges in the reduced edge list $E$ exist equals \begin{equation}
\Pi_{i=1}^{t}\Pi_{k=1}^{|M_{i}|-1} \frac{b_{x_{k,i}}a_{x_{k+1,i}}}{S}. 
\end{equation}
\end{itemize}
\end{lem}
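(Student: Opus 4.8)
The plan is to \textbf{construct} the lists $M_1,\dots,M_t$ explicitly by cutting the reduced edge list $E$ at its cycle inducing edges, and then to read off the four properties from this construction. Write $E=(e_1,\dots,e_{|E|})$ in the order inherited from Lemma~\ref{lem:reduce}, and let $c_1<\dots<c_t$ be the positions of its $t$ cycle inducing edges; by the construction in Lemma~\ref{lem:reduce} the last edge $e_{|E|}$ is itself a cycle inducing edge, so $c_t=|E|$. Setting $c_0=0$, I would let block $i$ consist of the edges $e_{c_{i-1}+1},\dots,e_{c_i}$ and define $M_i$ to be the list of nodes visited by reading these edges tail-to-head in order. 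This immediately produces exactly $t$ lists whose blocks partition the edges of $E$.

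The crux is to show that each block is a genuine \emph{walk}, i.e. that within a block every edge ends at the node where the next edge begins, so that the consecutive pairs $(x_{k,i},x_{k+1,i})$ of $M_i$ are exactly the edges of block $i$. The key structural claim is: \textbf{whenever two consecutive edges of $E$ fail to chain (a ``jump''), the earlier edge is a cycle inducing edge.} I would prove this by returning to the path from which $E$ was extracted: a jump between a new edge $e=(u,v)$ and the next listed edge occurs only because the path traverses one or more already-observed (repeating) edges before reaching the next new edge, and the node at which it resumes is an earlier occurrence of $v$. The path segment from that earlier occurrence of $v$ up to and including $e$ is then a closed walk through $v$ that uses $e$ as its final edge and otherwise only previously observed edges; extracting a simple path from $v$ back to $u$ inside this closed walk and appending $e=(u,v)$ yields a simple cycle through $e$ built from edges preceding $e$, so $e$ is cycle inducing. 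Consequently jumps can occur only immediately after cycle inducing edges, i.e. only at block boundaries, and each block is a single walk. Property~4 follows at once: since the edges of $E$ are distinct and their existence is independent, $\Pr[\text{all of }E\text{ exists}]=\prod_{(u,v)\in E}\tfrac{b_u a_v}{S}$, and grouping this product block by block gives exactly $\prod_{i=1}^{t}\prod_{k=1}^{|M_i|-1}\tfrac{b_{x_{k,i}}a_{x_{k+1,i}}}{S}$.

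The three remaining properties are then read off from the construction. The first node of $M_1$ is the tail of $e_1$, which Lemma~\ref{lem:reduce} guarantees lies on a simple cycle formed from $E$, and the last node of $M_t$ is the head of the final cycle inducing edge, which by definition lies on the simple cycle that edge induces; this gives the first property. For the second property, the last node of each $M_j$ is the head of the cycle inducing edge $e_{c_j}$; since a cycle inducing edge closes a simple cycle using earlier edges, its head already appears among the earlier nodes, hence earlier in $M_j$ or in a previous list. For the third property, the first node of $M_j$ with $j>1$ is where block $j$ begins: if there is no jump it equals the head of $e_{c_{j-1}}$ and so lies in $M_{j-1}$, while if there is a jump it is the landing node reached through previously observed edges and therefore also appeared in an earlier list.

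The step I expect to be the main obstacle is the structural claim in the second paragraph: making precise, for the suffix of the minimal edge list that survives the front-truncation in Lemma~\ref{lem:reduce}, the correspondence between jumps, repeating edges of the underlying path, and cycle inducing edges of $E$, and in particular extracting an honest \emph{simple} cycle from the closed walk created by a repeated edge while keeping careful track of which edges count as ``already observed'' versus which actually belong to $E$. Once this correspondence is established, the remaining verifications are purely bookkeeping.
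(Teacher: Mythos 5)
Your proposal is correct and takes essentially the same route as the paper: you cut the reduced edge list $E$ at its $t$ cycle inducing edges, define each $M_i$ as the node sequence of the resulting block, and verify the four bulleted properties by returning to the underlying path and using the fact that edges truncated in Lemma \ref{lem:reduce} lie on no simple cycle. Your one genuine addition is to state and prove explicitly the ``jump'' claim (consecutive edges of $E$ fail to chain only immediately after a cycle inducing edge), which the paper's verification of the fourth bullet uses only implicitly when it identifies the $k$th edge of $E_j$ with the pair $(x_{k,j},x_{k+1,j})$, so your writeup is, if anything, more complete on that point.
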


\begin{proof}
\textbf{Proof of Bullet 1:} We first illustrate the mapping of the reduced edge list $E$ with $t$ cycle inducing edges to the union of lists $\cup_{i=1}^{t}M_{i}$ where each $M_{i}$ consists of a list of nodes.  We start by decomposing our reduced edge list $E$ as a union of edge lists $\cup_{i=1}^{t} E_{i}$, where we add the edges from $E$ to $E_{1}$ and stop once $E_{1}$ has precisely one cycle inducing edge.  Then starting where we left off, we add edges to $E_{2}$ and stop once $E_{1}\cup E_{2}$ have precisely two cycle inducing edges.  Note that we can express $E = \cup_{i=1}^{t} E_{i}$ such that the list $\cup_{i=1}^{k} E_{i}$ has precisely $k$ cycle inducing edges for all $k\leq t$.
\newline\newline
\textbf{Example:} We clarify the above procedure with the following example.  Consider the reduced edge list $E = \{(2,2),(2,3),(3,4),(4,3)\}$.  Then since $E$ has two cycle inducing edges, we can write $E_{1} = \{(2,2)\}$, since $E_{1}$ already has one cycle inducing edge we stop here and then let $E_{2} = \{(2,3),(3,4),(4,3)\}$.  
\newline\newline
Now given $E_{i}$, we can construct $M_{i}$ as follows.  Consider the first node of each edge in $E_{i}$ and add those nodes to $M_{i}$ in that order.  Then add the last node of the last edge of $E_{i}$ to $M_{i}$.  This completes the construction of the $M_{i}$.
\newline\newline
\textbf{Example:} As before consider $E = E_{1} \cup E_{2} = \{(2,2)\} \cup \{(2,3),(3,4),(4,3)\}$.  It then follows that $M_{1} = \{2,2\}$, where we added the first node of each edge of $E_{1}$ and then added the last node of the last edge of $E_{1}$ to $M_{1}$ .  Similarly, $M_{2} = \{2,3,4,3\}$. 
\newline\newline
By construction, the first node of the first edge and the last node of the last edge in $E$ belong to a simple cycle that can be constructed from the edges of $E$.  Furthermore, these nodes are precisely the first and last nodes of $M_{1}$ and $M_{t}$ respectively.  Hence the first bulleted statement holds. 
\newline\newline
\textbf{Proof of Bullet 2:} By construction of $\cup_{i=1}^{k} E_{i}$, the last edge of $E_{k}$ is a cycle inducing edge.  Consequently, the last node of the last edge appears as the last node in $M_{k}$ and must have appeared elsewhere in the list $\cup_{i=1}^{k} M_{i}$.
\newline\newline
\textbf{Proof of Bullet 3:} Suppose that we have a contradiction, that is the first node for some $M_{j}$, $j>1$, does not appear in an earlier $M_{i}$, where $i < j$.   As we demonstrated earlier in the proof, we can express $E$ as $\cup_{i=1}^{t}E_{i}$. Now denote the first edge in $E_{j}$ as $(x,y_{1})$ and the edge that precedes $(x,y_{1})$ in the path as $(w,x)$.  If $(w,x)$ is the first edge in $E_{1}$ or the first appearance of the edge $(w,x)$ is after the first edge in $E_{1}$, then $x$ would necessarily appear in an earlier $M_{i}$.
\newline\newline
So suppose that is not the case and consider the path that corresponds to the reduced edge list; it must have the form $(...,w,x,...m_{1},...,w,x,y_{1},...)$, where $m_{1}$ is the first node in $M_{1}$.  Now consider the node that follows the first appearance of the edge $(w,x)$; call it $y_{2}$.  It then follows that the original path has the form, $(...,w,x,y_{2},...m_{1},...,w,x,y_{1})$.  Now if we consider the graph formed by the edges of this path, it follows that there exists a simple cycle containing the node $x$.  As such, we must be able to construct this simple cycle with the node $x$ from the reduced edge list $\cup_{i=1}^{j-1}E_{i}$ as we only omit edges from the minimal edge list if they do not belong to cycles.  Furthermore, since $x$ appears in a cycle, it needs an incoming and an outgoing edge and hence, the node $x$ must appear in an earlier $E_{i}$. 
 \newline
\noindent \textbf{Proof of Bullet 4:} Denote the probability that all edges in $E$ exist by $Pr(E)$.  It follows that since $E$ is a only contains edges from the minimal edge list of a path $P$, all of the edges are new edges and that
\begin{equation}Pr(E) = \Pi_{(x,y)\in E}\frac{b_{x}a_{y}}{S}=\Pi_{i=1}^{t}\Pi_{(x,y)\in E_{i}}\frac{b_{x}a_{y}}{S}. 
\end{equation}

Now consider the $kth$ edge in $E_j$ for some particular choice of $j$. Denote this edge as $(x,y)$. It follows from construction of $M_j$ that $x$ is precisely the $kth$ element in $M_j$ and $y$ is the $k+1st$ element in $M_j$.  Consequently, we conclude that

\begin{equation}Pr(E) = \Pi_{i=1}^{t}\Pi_{(x,y)\in E_{i}}\frac{b_{x}a_{y}}{S} = \Pi_{i=1}^{t}\Pi_{k=1}^{|M_{i}|-1} \frac{b_{x_{k,i}}a_{x_{k+1,i}}}{S},
\end{equation}

 where $x_{k,i}$ is the $kth$ node in $M_{i}$.
\end{proof}

The strength of Lemma \ref{lem:represent} lies in the fact that it helps us identify which nodes repeat in a reduced edge list.  For example  consider some arbitrary bounded function $f:\mathbb{N}\rightarrow \mathbb{R}$ and define $f_{*}$ to be the smallest upperbound of $f$.  Then it follows that, $\sum_{i=1,j=1}^{N}[f(i)]^{2}f(j)\leq f_{*}\sum_{i=1,j=1}^{N}f(i)f(j)$; if $f_{*}\rightarrow 0$ and $\sum_{i=1}^{N}f(i)=O(1)$, then this upperbound converges to $0$.  To summarize, we will want to identify which indices (nodes) repeat in the reduced edge list as if for example we chose the wrong index and bounded $\sum_{i=1,j=1}^{N}[f(i)]^{2}f(j)$ by $f_{*}\sum_{i=1,j=1}^{N}[f(i)]^{2}\leq Nf_{*}\sum_{i=1}^{N}[f(i)]^{2}$, the presence of the $N$ would lead us to a potentially useless upperbound as it is possible that $f_{*}\rightarrow 0$ and $\frac{1}{f_{*}^{2}}\ll N$. At this juncture, we present the following result that restricts the number of cycle inducing edges that can appear in a reduced edge list.

\begin{lem} \label{lem:pmax}  
Consider a sequence of (expected) degree sequences $\mathbf{d} = (\mathbf{a},\mathbf{b})$ where $p_{max} \leq \frac{R}{N^{\tau}}$, $R$ is a fixed constant, $\tau > 0$ and $\frac{\mathbf{a}\cdot\mathbf{b}}{S} > 1$.  Then with probability at least $p_{*} = 1 - \epsilon$, all paths of length not exceeding   $L=\frac{(t-1)\tau}{2}*log_{\frac{\mathbf{a}\cdot\mathbf{b}}{S}}(N)$, have less than  $t$ cycle inducing edges, where $\epsilon = \frac{tR^{t-1}(L+1)^{3t-2}}{N^{\frac{(t-1)\tau}{2}}}$.  \newline
\end{lem}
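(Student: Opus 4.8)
The plan is to control, by a union bound, the probability that \emph{some} path of length at most $L$ carries at least $t$ cycle inducing edges, and then to show this probability is at most $\epsilon$. First I would feed such a path through Lemma \ref{lem:reduce}: it produces a reduced edge list $E$, a sub-list of the path's minimal edge list, having \emph{exactly} $t$ cycle inducing edges, all of whose edges must be present for the path to exist. Since $E\subseteq M$ consists only of new edges, it contains no repeated edge. Hence the event that a bad path exists is contained in the event that some structurally admissible reduced edge list has all of its edges present, so that
\[
Pr(\exists\text{ path of length}\le L\text{ with}\ge t\text{ cycle inducing edges})\le \sum_{E} Pr(\text{all edges of }E\text{ exist}),
\]
the sum ranging over all reduced edge lists that can arise from paths of length at most $L$. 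Applying Lemma \ref{lem:represent}, I would rewrite each summand as $\Pi_{i=1}^{t}\Pi_{k=1}^{|M_i|-1}\tfrac{b_{x_{k,i}}a_{x_{k+1,i}}}{S}$ and, crucially, read off from its bullets exactly which node positions are forced to duplicate earlier positions.

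Next I would organize the sum by the \emph{shape} of $E$. A shape records the lengths $|M_1|,\dots,|M_t|$ (equivalently, the locations of the $t$ cycle inducing edges), together with, for each of the $2t-1$ constrained positions (the last node of every $M_i$ and the first node of every $M_j$ with $j>1$, per the second and third bullets of Lemma \ref{lem:represent}), a choice of which earlier position it equals. Because every position index is bounded by the number of edges, which is at most $L$, the decomposition contributes at most $(L+1)^{t-1}$ choices and the coincidence assignment at most $(L+1)^{2t-1}$ choices, so the number of admissible shapes is at most $(L+1)^{3t-2}$, with a further factor of $t$ absorbed from the enumeration (e.g.\ the two possibilities, \emph{same list} versus \emph{earlier list}, for where each closing coincidence lands).

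For a fixed shape I would then sum the product over the free node values. Viewing $E$ as a directed multigraph $G_E$ on its distinct nodes, connectedness follows since $E$ comes from a contiguous portion of a path, and the $t$ cycle inducing edges are precisely the edges that raise the cycle rank, so $G_E$ has cycle rank $t$; writing $s$ for its number of edges and $V$ for its number of distinct vertices gives $V=s-t+1$. Moreover the coincidence structure guarantees that every vertex has in-degree and out-degree at least one, with $\sum_v(d^-_v-1)=\sum_v(d^+_v-1)=s-V=t-1$. Grouping $\Pi_{(x,y)\in E}\tfrac{b_xa_y}{S}$ by vertex, using $\sum_v a_vb_v=\mathbf{a}\cdot\mathbf{b}$ for each unit of degree and bounding each of the $t-1$ excess in- and out-degrees by $a_{max}$, $b_{max}$, I would obtain
\[
\sum_{\text{free values}} Pr(\text{all edges of }E\text{ exist})\le \Bigl(\tfrac{\mathbf{a}\cdot\mathbf{b}}{S}\Bigr)^{V}p_{max}^{\,t-1}.
\]

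Finally, since $V\le s\le L$ and $\tfrac{\mathbf{a}\cdot\mathbf{b}}{S}>1$, the definition $L=\tfrac{(t-1)\tau}{2}\log_{\mathbf{a}\cdot\mathbf{b}/S}N$ yields $\bigl(\tfrac{\mathbf{a}\cdot\mathbf{b}}{S}\bigr)^{V}\le N^{(t-1)\tau/2}$, while $p_{max}\le R/N^{\tau}$ gives $p_{max}^{\,t-1}\le R^{t-1}/N^{(t-1)\tau}$; multiplying by the shape count $t(L+1)^{3t-2}$ produces exactly $\epsilon=\tfrac{tR^{t-1}(L+1)^{3t-2}}{N^{(t-1)\tau/2}}$. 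I expect the combinatorial middle steps to be the main obstacle: namely, showing rigorously that $t$ cycle inducing edges force precisely $t-1$ factors of $p_{max}$ — equivalently, that $G_E$ has cycle rank $t$, so $V=s-t+1$, the ``$+1$'' reflecting the single free base node of the first cycle (the unconstrained first node of $M_1$) — and that no vertex of $G_E$ is a source or sink, so the telescoping over free nodes genuinely yields powers of $\tfrac{\mathbf{a}\cdot\mathbf{b}}{S}$ rather than the useless factor $S$. Bounding the number of coincidence patterns by $(L+1)^{3t-2}$ is the remaining delicate point; the substitutions in the last step are then routine.
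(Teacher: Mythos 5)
Your proposal is correct and follows essentially the same route as the paper's proof: reduce a bad path to a reduced edge list via Lemma \ref{lem:reduce}, apply the probability formula of Lemma \ref{lem:represent}, enumerate shapes to get the factor $t(L+1)^{3t-2}$, and bound each shape's contribution by $p_{max}^{t-1}(\frac{\mathbf{a}\cdot\mathbf{b}}{S})^{L}\leq R^{t-1}N^{-(t-1)\tau}\cdot N^{\frac{(t-1)\tau}{2}}$. The one divergence is cosmetic: where you repackage the per-shape summation as a multigraph cycle-rank and degree-excess computation (the ``main obstacle'' you flag), the paper instead works directly with constraints 1--3 already established in Lemma \ref{lem:represent} --- bounding the $2t-2$ boundary factors $a_{x_{|M_j|,j}}$, $b_{x_{1,j+1}}$ by $a_{max}$, $b_{max}$ and pairing $x_{1,1}$ with $x_{|M_\gamma|,\gamma}$ --- so the facts you defer (no sources or sinks, $V=s-t+1$, excess degree $t-1$) are exactly what that lemma's bullets deliver, and your argument closes once you invoke them in place of the unproven graph-theoretic claims.
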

\noindent \textbf{Remark}: Note that asymptotically we are guaranteed that $\epsilon \rightarrow 0$ in Lemma \ref{lem:pmax} as for fixed $t$, $p_{max} = O(N^{-\tau})$, $L = O(log(N))$ and consequently $\epsilon = O((N^{\frac{-\tau}{2}}(log N)^{3})^{t})\rightarrow 0$.

\begin{proof}
To bound the likelihood of the existence of any path of length at most $L$ that consists of at least $t$ cycle inducing edges, we will instead consider the likelihood of the existence of a reduced edge list, $E_{*}$, with $t$ cycle inducing edges containing no more than $L+1-t$ distinct nodes. Denote $Pr(E_*)$ as the probability all edges in the list $E_*$ exist. It follows from Lemma \ref{lem:represent} that for a given reduced edge list $E_*$ with $t$ cycle inducing edges that

\begin{equation}\label{eq:crit}
Pr(E_*)=\Pi_{i=1}^{t}\Pi_{k=1}^{|M_{i}|-1} \frac{b_{x_{k,i}}a_{x_{k+1,i}}}{S} = \Pi_{i=1}^{t}\frac{b_{x_{1,i}}a_{x_{|M_{i}|,i}}}{S}\Pi_{k=2}^{|M_{i}|-1}\frac{b_{x_{k,i}}a_{x_{k,i}}}{S}=
\end{equation}
\begin{equation} \frac{b_{x_{1,1}}a_{x_{|M_{t}|,t}}}{S} \Pi_{j=1}^{t-1}{\color{black}\frac{a_{x_{|M_{j}|,j}}b_{x_{1,j+1}}}{S}}\Pi_{i=1}^{t}\Pi_{k=2}^{|M_{i}|-1}{\color{black}\frac{b_{x_{k,i}}a_{x_{k,i}}}{S}}=
\end{equation}
\begin{equation}
\frac{b_{x_{1,1}}}{S^{t}}\Pi_{j=1}^{t}a_{x_{|M_{j}|,j}}\Pi_{h=1}^{t-1}b_{x_{1,h+1}}\Pi_{i=1}^{t}\Pi_{k=2}^{|M_{i}|-1}{\color{black}\frac{b_{x_{k,i}}a_{x_{k,i}}}{S}},
\end{equation}
where $x_{k,i}$ is the $kth$ node in $M_{i}$. 

But by Lemma \ref{lem:represent} we have constraints on some of the nodes in the lists $M_{i}$.

More precisely for some functions $\alpha, \beta, \delta$ and $\eta$, we have that 
\begin{enumerate}
	\item \label{constraint1} For all $j>1$, $x_{1,j} = x_{\alpha(j),\beta(j)}$, where $\beta(j) < j$.
	\item \label{constraint2} For all $j$, $x_{|M_{j}|,j} = x_{\delta(j),\eta(j)}$, where $\eta(j) \leq j$ and if $\eta(j) = j \implies \delta(j) < |M_{j}|$. 
	\item \label{constraint3} And finally, there exists a $\gamma$ such that $x_{|M_{\gamma}|,\gamma}=x_{1,1}$.	
\end{enumerate}

The last bulleted claim comes from the fact that by construction of our reduced edge list, the first node $x_{1,1}$ is part of a simple cycle that can be formed using the edges in our reduced edge list; furthermore, this node is also part of a cycle inducing edge.  

Let $\mathbf{X}_z$ denote a particular list whose elements, $\mathbf{x}$, are vectors that satisfy the constraints \ref{constraint1}, \ref{constraint2} and \ref{constraint3}, such that $z$ uniquely defines the functions $\alpha,\beta,\delta,\eta$, the sizes of the lists $M_{i}$, and the parameter $\gamma$ above.

Now define $\mathbf{E}_t$ to be the event that there exists a reduced edge lists $E_{*}$ with $t$ cycle inducing edges that contains no more than $L+1-t$ distinct nodes, where $t$ is a parameter with the constraint that $t\geq 2$.

By summing over all possible choices of $z$ and $\mathbf{x} \in \mathbf{X}_{z}$, we have that
\begin{equation}\label{eq:pmaxpr}
Pr(\mathbf{E}_t)\leq \sum_{z}\sum_{\mathbf{x}\in \mathbf{X}_{z}}\frac{b_{x_{1,1}}}{S^{t}}\Pi_{j=1}^{t}a_{x_{|M_{j}|,j}}\Pi_{h=1}^{t-1}b_{x_{1,h+1}}\Pi_{i=1}^{t}\Pi_{k=2}^{|M_{i}|-1}{\color{black}\frac{b_{x_{k,i}}a_{x_{k,i}}}{S}}.
\end{equation}

But for a fixed $\mathbf{X}_z$, we know that $x_{|M_{\gamma(z)}|,\gamma(z)}=x_{1,1}$, (\ref{eq:pmaxpr}) simplifies to,

\begin{equation}
 \sum_{z}\sum_{\mathbf{x} \in \mathbf{X}_{z}}\frac{b_{x_{1,1}}a_{x_{1,1}}}{S^{t}}\Pi_{j=1,j\neq \gamma(z)}^{t}a_{x_{|M_{j}|,j}}\Pi_{h=1}^{t-1}b_{x_{1,h+1}}\Pi_{i=1}^{t}\Pi_{k=2}^{|M_{i}|-1}{\color{black}\frac{b_{x_{k,i}}a_{x_{k,i}}}{S}}.
\end{equation}
Furthermore, since for a fixed choice of $\mathbf{X}_{z}$ $x_{|M_{j}|,j}$ and $x_{1,j}$ are uniquely determined by $\alpha, \beta, \delta$ and $\eta$, this yields that,

\begin{equation}
Pr(\mathbf{E}_t)\leq\sum_{z}\sum_{\mathbf{x}\in \mathbf{X}_{z}}\frac{b_{x_{1,1}}a_{x_{1,1}}}{S^{t}}a_{x_{max}}^{t-1}b_{max}^{t-1}\Pi_{i=1}^{t}\Pi_{k=2}^{|M_{i}|-1}{\color{black}\frac{b_{x_{k,i}}a_{x_{k,i}}}{S}}=
\end{equation}

\begin{equation}
\sum_{z}\sum_{\mathbf{x}\in \mathbf{X}_{z}}\frac{b_{x_{1,1}}a_{x_{1,1}}}{S}p_{max}^{t-1}\Pi_{i=1}^{t}\Pi_{k=2}^{|M_{i}|-1}{\color{black}\frac{b_{x_{k,i}}a_{x_{k,i}}}{S}}.
\end{equation}

Now we know that for any choice of $\mathbf{X}_{z}$, $[\sum_{i=1}^{t} (|M_{i}|-1) -1] + 1\leq L$.  As a result,

\begin{equation} \label{eq:starpmax}
Pr(\mathbf{E}_t)\leq\sum_{z}p_{max}^{t-1}(\frac{\mathbf{a}\cdot\mathbf{b}}{S})^{L}.
\end{equation}

At this juncture, we want to identify the number of possible of lists $\mathbf{X}_{z}$ that we can construct to simplify (\ref{eq:starpmax}).

Notice that since there are at most $L+1$ nodes in the path, all of the possible sizes for each of the $t$ $'M_{i}'$'lists is bounded above by $(L+1)^{t}$.  We also have $t$ choices for choosing which $x_{|M_{\gamma}|,\gamma}$ must equal $x_{1,1}$.  And finally, we have that at most $(L+1)^{2t-2}$ choices for requiring that $x_{1,j} = x_{\alpha(j),\beta(j)}$, where $j\neq 1$ and $x_{|M_{j}|,j} = x_{\delta(j),\eta(j)}$, where $j\neq \gamma$.  Hence there are at most $t(L+1)^{3t-2}$ choices for constructing the list $\mathbf{X}_{z}$. 

So we conclude that 

\begin{equation}
Pr(\mathbf{E}_t)\leq t(L+1)^{3t-2}p_{max}^{t-1}(\frac{\mathbf{a}\cdot\mathbf{b}}{S})^{L}.
\end{equation}

Choose $L \leq \frac{(t-1)\tau}{2}log_{\frac{\mathbf{a}\cdot\mathbf{b}}{S}}(N)$.  Note that $(\frac{\mathbf{a}\cdot\mathbf{b}}{S})^{L} \leq N^{\frac{(t-1)\tau}{2}}$.

Hence it then follows that
\begin{equation}
Pr(\mathbf{E}_t) \leq \frac{t(L+1)^{3t-2}N^{\frac{(t-1)\tau}{2}}R^{t-1}}{N^{(t-1)\tau}}=\frac{t(L+1)^{3t-2}R^{t-1}}{N^{(\frac{t-1}{2})\tau}}.
\end{equation}

where we invoked the fact that $p_{max} \leq \frac{R}{N^{\tau}}$.  
\end{proof}

Lemma \ref{lem:pmax} asymptotically guarantees that with high probability any path in a realization of a Chung-Lu graph of sufficiently small length cannot contain more than one cycle inducing edge.  Such a restriction will help us in counting the number of repeating edge blocks.  More specifically, we would like to relate the number of distinct simple cycles we can construct from a reduced edge list with $t$ cycle inducing edges. 

\begin{lem} \label{lem:cyclelength}
Consider a graph where no path of length less than  $l_{*}= \frac{\tau}{2}log_{\frac{\mathbf{a}\cdot\mathbf{b}}{S}}(N)$ has more than one cycle inducing edge.  Then from the minimal edge list of any path in the graph with precisely $t$ cycle inducing edges, we can construct at most $t$ simple cycles of length less than $\frac{l_{*}}{2}$ that are distinct under cyclic permutation.  
\end{lem}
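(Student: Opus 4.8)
The plan is to build an injection from the set of short simple cycles (those of length less than $l_{*}/2$, counted up to cyclic permutation) that can be assembled from the minimal edge list $M$ into the set of $t$ cycle inducing edges of $M$; injectivity then immediately forces the number of such cycles to be at most $t$.

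First I would observe that every simple cycle $C$ built from the edges of $M$ automatically contains a cycle inducing edge of $M$. Among the edges of $C$, let $e$ be the one occurring latest in the ordering of $M$, say at position $m$. Every edge of $C$ then lies among the first $m$ edges of $M$, and $C$ is a simple cycle through $e$ using only those edges, so by definition the $m$th edge is cycle inducing. This yields a map $\phi$ sending each short simple cycle $C$ to its latest edge $\phi(C)=e$. Since $e$ depends only on the (unordered) edge set of $C$, the map is well defined on cyclic equivalence classes, and its image sits inside the $t$ cycle inducing edges of $M$.

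The heart of the argument is injectivity of $\phi$, and here I would invoke the standing hypothesis that no path of length less than $l_{*}$ carries more than one cycle inducing edge. Suppose two distinct short simple cycles $C_1\neq C_2$ satisfy $\phi(C_1)=\phi(C_2)=e$ with $e=(u,v)$. Writing each cycle starting at $u$ as $u\to v\to\cdots\to u$, I would concatenate them into one walk $W$ that first traverses $C_1$ from $u$ back to $u$ and then traverses $C_2$ from $u$ back to $u$; this is a legitimate path (in the paper's sense) of length $|C_1|+|C_2|<l_{*}/2+l_{*}/2=l_{*}$ using only edges present in the graph. In the minimal edge list of $W$ the edges of $C_1$ all appear first, and the last edge of $C_1$ is cycle inducing since it completes $C_1$. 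Because $C_1\neq C_2$ are simple cycles sharing $e$, the cycle $C_2$ must contain an edge absent from $C_1$ (a proper subset of a simple directed cycle's edges is a union of paths and cannot itself close up), so $C_2$ is completed only after some edge occurring strictly later than every edge of $C_1$ in $W$'s minimal edge list; that completing edge is a second cycle inducing edge. Thus $W$ is a path of length less than $l_{*}$ with at least two cycle inducing edges, contradicting the hypothesis, so $\phi$ is injective.

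The hard part will be this injectivity step, and within it the bookkeeping for cycles that overlap in more than the single edge $e$: one must confirm that the edge completing $C_2$ in $W$'s minimal edge list genuinely occurs at a position strictly beyond all edges of $C_1$, so that it is a second cycle inducing edge and is not identified with the one that closes $C_1$. Once injectivity holds, the conclusion is immediate, namely the number of simple cycles of length less than $l_{*}/2$ that are distinct under cyclic permutation is at most the number of cycle inducing edges of $M$, which is $t$.
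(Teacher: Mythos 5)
Your proof is correct and takes essentially the same route as the paper: both arguments reduce the claim to the observation that two distinct short simple cycles sharing a cycle inducing edge could be concatenated into a single path of length less than $l_{*}$ carrying two cycle inducing edges, contradicting the hypothesis. Your ``latest edge'' map $\phi$ is just an explicit form of the paper's pigeonhole ($t+1$ cycles versus $t$ cycle inducing edges), and your check that the edge completing $C_2$ lies strictly beyond all edges of $C_1$ in the concatenated walk's minimal edge list supplies the detail the paper's proof asserts as immediate.
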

\begin{proof}
	Suppose there exists a path with $t$ cycle inducing edges, where we can construct $t+1$ distinct simple cycles with length less than $\frac{l_{*}}{2}$ from the minimal edge list.  It follows that there exists at least one cycle inducing edge that could be used to construct two distinct simple cycles, both of length less than  $\frac{l_{*}}{2}$.  Now, consider an auxilliary path formed by adjoining the two simple cycles together (as they share a common edge).  It follows immediately that for this auxilliary path, there are at least two cycle inducing edges in the corresponding minimal edge list.  Since the total length of the two cycles is less than $l_{*}$, we arrive at a contradiction, as we assumed that our graph cannot have a path of length less than $l_{*}$ with more than one cycle inducing edge.
\end{proof}

Lemmas \ref{lem:pmax} and \ref{lem:cyclelength} have both practical and theoretical significance.  In particular, while an assortment of results pertaining to the diameter of various undirected random graphs demonstrate that the diameter is $O(\log N)$ \cite{molloy1998size,chung2002connected,chung2004average,boccaletti2006complex}, relatively little work has focused on the diameter for realizations of directed random graph models \cite{karp1990transitive,cooper2004size,flaxman2004diameter,newman2001random}.  As Lemma \ref{lem:pmax} suggests, if the diameter of a graph is $O(\log N)$, then paths connecting two distinct simple cycles must appear at $O(\log N)$ length.   \newline

In addition, Lemmas \ref{lem:pmax} and \ref{lem:cyclelength} add clarity to the adage that realizations of certain random graph models are locally tree like, as simple cycles of small length must be relatively far apart in distance.  As a side remark, it is notworthy that while real world networks are not locally tree like, there is evidence that supports that dynamical processes that occur on locally tree like graphs approximate the dynamics on real world networks \cite{melnik2011unreasonable}.   \newline

We now sketch the remainder of our proof strategy for showing asymptotic convergence of the spectral radius.  The basic idea is to partition a repeating edge block into subblocks of length $O(\log N)$.   Then, Lemmas \ref{lem:pmax} and \ref{lem:cyclelength} bound the number of distinct simple cycles that can appear in the repeating edge subblock, which is at most $1$.  In particular we claim (and will prove later on) that for any such repeating edge subblock, there exists two {\bf simple} paths, $P_{1},P_{2}$ and a {\bf simple} cycle $C$, such that the repeating edge subblock is a concatenation of paths of the form: $P_{1},C,...,C,P_{2}$, where the same cycle $C$ can appear any number of times in between the two simple paths.   \newline

From Lemma \ref{lem:cyclelength}, we know that the number of simple cycles $C$ that we can choose from in the repeating edge subblock is bounded by the number of cycle inducing edges.  Analogously, we want to find a bound for the number of choices of simple paths, $P_{1},P_{2}$ in the repeating edge subblock. 

\begin{defn}
Given a path with $t$ cycle inducing edges, we call the subgraph formed by these edges a $t-path$ graph.
\end{defn}

\begin{lem}\label{lem:tpath}
For any $t-path$ graph, the number of simple paths of length $l$ from a fixed node is bounded by $(1+\frac{t}{l})^{l}$.  Furthermore, the number of simple paths from a fixed node of any prescribed length is bounded by $exp(t)$.
\end{lem}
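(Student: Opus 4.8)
The plan is to reduce the count of simple paths to an out-degree ``branching budget'' controlled by $t$, and then run an induction on the path length $l$. The key structural observation is that a $t$-path graph $G$ is the graph of the distinct (new) edges traced by a single walk $P$ whose minimal edge list contains exactly $t$ cycle inducing edges. Writing $n$ for the number of distinct nodes visited, every node except the first is reached for the first time by exactly one tree edge (a new edge into a previously unvisited node), whereas each cycle inducing edge is a new edge into an already-visited node; hence $|E(G)| = (n-1) + t$. I would then establish the crucial fact that $G$ has \emph{at most one sink}: any node that is not the terminal node of the walk is exited at least once by $P$, hence has positive out-degree in $G$, so only the final node of $P$ can have out-degree $0$.

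From these two facts I obtain the global branching bound
$$\sum_{v} (d^{+}(v)-1)^{+} \;=\; |E(G)| - |\{v : d^{+}(v)\ge 1\}| \;\le\; (n-1+t) - (n-1) \;=\; t,$$
where $d^{+}$ is out-degree and $(x)^{+}=\max(x,0)$; the middle equality is just $\sum_{v:\,d^{+}(v)\ge 1}(d^{+}(v)-1)$, and the inequality uses the at-most-one-sink property. Call this quantity $B$, so that $B\le t$.

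The main work is then to show, for \emph{any} digraph with branching budget $B$, that the number $\Phi(G,v,l)$ of simple paths of length $l$ from a fixed node $v$ satisfies $\Phi(G,v,l)\le (1+B/l)^{l}$, by induction on $l$ (with base case $\Phi(G,v,1)=d^{+}(v)=1+s\le 1+B$, writing $s:=d^{+}(v)-1\le B$; the case $d^{+}(v)=0$ is trivial). Conditioning on the first step, $\Phi(G,v,l)=\sum_{w:\,(v,w)\in E}\Phi(G-v,w,l-1)$, and deleting $v$ removes $v$'s own contribution $s$ to the budget while never increasing any other node's out-degree, so the budget of $G-v$ is at most $B-s$. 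Since $v$ has $1+s$ out-neighbours and the bound $(1+x/(l-1))^{l-1}$ is increasing in the budget $x$, the inductive hypothesis yields $\Phi(G,v,l)\le (1+s)\,(1+\tfrac{B-s}{l-1})^{l-1}$. The decisive elementary step is that this right-hand side, viewed as a function of $s\in[0,B]$, is maximized at $s=B/l$ with value exactly $(1+B/l)^{l}$: differentiating its logarithm gives the stationary condition $1+s = 1+(B-s)/(l-1)$, i.e. $s=B/l$, and substituting back produces $(1+B/l)(1+B/l)^{l-1}=(1+B/l)^{l}$. As $(1+B/l)^{l}$ is increasing in $B$ and $B\le t$, this gives $\Phi(G,v,l)\le (1+t/l)^{l}$. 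The ``furthermore'' is then immediate from $\ln(1+t/l)\le t/l$, which gives $(1+t/l)^{l}\le \exp(t)$ for every $l$.

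I expect the main obstacle to be the structural out-degree budget rather than the induction. The inequality $\sum_v (d^{+}(v)-1)^{+}\le t$ is \emph{false} for arbitrary digraphs of cycle rank $t$ (for instance a binary out-tree has cycle rank $0$ but unbounded total out-branching), so the argument genuinely depends on the at-most-one-sink property that is special to graphs generated by a single walk; pinning this down carefully is the crux. Once the budget is secured, the path count follows from the self-contained induction above, whose only nontrivial ingredient is the one-variable optimization showing that extremal branching is spread evenly across the $l$ steps, which is precisely what produces the bound $(1+t/l)^{l}$.
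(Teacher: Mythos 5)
Your proof is correct, and its inductive engine is the same as the paper's: condition on the first step, delete the start node, let the budget drop by that node's excess out-degree, and bound the resulting product $\Pi_{i}(1+x_{i})$ subject to $\sum_{i} x_{i}\leq t$ by $(1+t/l)^{l}$ via even spreading --- your stationary-point computation for $(1+s)(1+\frac{B-s}{l-1})^{l-1}$ is exactly the one-step form of the paper's inductive AM--GM bound $\max_{\sum x_{i}\leq t}\Pi_{i=1}^{l}(1+x_{i})\leq (1+t/l)^{l}$. Where you genuinely diverge is in how the budget $\sum_{v}(d^{+}(v)-1)^{+}\leq t$ is secured. The paper asserts a constructive characterization of $t$-path graphs (each node given one incoming and one outgoing edge, plus up to $t$ additional edges, with the verification left to a claimed quick induction), and its recursion $f_{d}(t)\leq \max_{x_{1}}(1+x_{1})f_{d-1}(t-x_{1})$ implicitly assumes the class is closed under deleting the initial node; your version --- proving the bound for \emph{every} digraph with branching budget $B$ --- sidesteps that closure issue, since $G-v$ need only be some digraph of budget at most $B-s$, which deletion clearly guarantees. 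Your counting derivation ($|E(G)|=(n-1)+t$ from tree edges plus cycle inducing edges, combined with the at-most-one-sink property of a graph traced by a single walk) is more explicit than the paper's sketch; the one substep you should spell out is the direction you use but only assert, namely that every new edge $(u,v)$ into an already-visited node is cycle inducing: the walk segment from the earlier visit of $v$ to the current position $u$ contains a simple path from $v$ to $u$ among previously observed edges, and appending $(u,v)$ closes a simple cycle through that edge, which is precisely what bounds the number of non-tree new edges by $t$ and yields $|E(G)|\leq (n-1)+t$. In short: same theorem and same extremal mechanism, but your budget lemma is both cleaner and more careful than the paper's, at the modest cost of one structural verification the paper never needed to state.
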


\textbf{Remark:} Care should be taken when invoking Lemma \ref{lem:tpath}, as for $t$ sufficiently large, the bound in Lemma \ref{lem:tpath} could exceed the number of simple paths in the entire graph.

\begin{proof}
Since the edges in our $t-path$ graph comes from a path with $t$ cycle inducing edges, we can construct our $t-path$ graph using the following procedure.  Start by fixing the number of nodes in the $t-path$ graph, $N_{*}$, and having each node possess one incoming and one outgoing edge.  Then we can add up to $t$ additional edges to the graph based on  the $t$ cycle inducing edges in the path.  (Using induction leads to quick proof that any $t-path$ graph can be constructed in this manner).  \newline\newline Consequently, we define $f_{d}(t)$ to be an upperbound for the number of simple paths of length $d$ from any fixed node in any  $t-path$ graph.  Suppose the initial node that maximizes this quantity has out-degree $x_{1}+1$.  It follows that 
$$f_{d}(t) \leq \max_{x_{1}\in [0..t]}\sum_{i=1}^{x_{1}+1}f_{d-1}(t-x_{1}) =\max_{x_{1}\in [0..t]}\ (1+x_{1})f_{d-1}(t-x_{1}),$$
where the first inequality comes from the fact that we are only considering simple paths.  More specifically by considering the particular node, with out-degree $1+x_{1}$, and graph that maximizes $f_{d}(t)$, we can instead bound this quantity by the number of paths of length $d-1$ coming from this node's neighbors, where we essentially delete the node as we do not consider non-simple paths. Note that if we have a $t-path$ graph with no cycle inducing edges, then we can let $f_{d_*}(0)=1$ for any $d_{*}$.  Furthermore, since the maximum number of paths of length $0$ from a given node is bounded by $1$, we get that  $f_{0}(t_{*})=1$ for any $t_{*}$.  Proceeding inductvely we conclude that,

$$f_{d}(t)\leq \max_{\substack{\sum_{i=1}^{d} x_{i} \leq t \\ \forall i\in[1..d],x_{i}\geq 0 }}\Pi_{i=1}^{d} (1+x_{i})\leq (1+\frac{t}{d})^{d}\leq exp(t),$$

where the second to last inequality can be proven by induction on $d$. 
\end{proof}
We now verify the claim that any path with at most one cycle inducing edge has the following special decomposition.

\begin{lem} \label{lem:cycledecomp}
Consider a graph, where no path of length less than $l_{*}$ has more than one cycle inducing edge.  Then for any path $P$ of length less than $\frac{l_{*}}{2}$, $P$ has the decomposition that $$P = (P_{1},C,...,C,P_{2}),$$ where $P_{1}$ and $P_{2}$ are simple paths and $C$ is a simple cycle.
\end{lem}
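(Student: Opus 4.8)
The plan is to exploit the hypothesis together with Lemma \ref{lem:cyclelength} to pin down the cycle structure of $P$ before extracting the decomposition. Write $P = (v_0,\dots,v_L)$ with $L < \tfrac{l_*}{2}$, and let $G$ be the directed graph spanned by the minimal edge list of $P$. Since $L < \tfrac{l_*}{2} < l_*$, the standing assumption forces $P$ to have at most one cycle inducing edge, so $P$ has precisely $t$ cycle inducing edges with $t \le 1$. Every simple cycle that can be built from the edges of $P$ has length at most $L < \tfrac{l_*}{2}$, so Lemma \ref{lem:cyclelength} applies and tells us that $G$ contains at most $t \le 1$ simple cycles, distinct under cyclic permutation. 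If $G$ has no simple cycle, then $G$ is acyclic and $P$ cannot repeat a node (a repeat would close a cycle), so $P$ is itself a simple path and the asserted form holds trivially with zero copies of $C$. Hence I would reduce to the case where $G$ has a \emph{unique} simple cycle $C$.

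Next I would isolate $P_1$ and establish the decisive structural fact. Let $s$ be the first index at which $P$ visits a node of $C$, set $c_0 := v_s$, and put $P_1 = (v_0,\dots,v_s)$. Before position $s$ the path stays off $C$; any repeated node among $v_0,\dots,v_{s-1}$ would yield a simple cycle built from vertices disjoint from $C$, contradicting the uniqueness of $C$, so $P_1$ is simple. The heart of the proof is the claim that, once the path reaches $C$ at $c_0$, it can never leave $C$ and later return to it. To see this I would work with $H := G \setminus E(C)$, which is acyclic since $C$ is the only simple cycle of $G$. Suppose toward a contradiction that the path exits $C$ at a node $c_j$ along an $H$-edge and first rejoins $C$ at some node $c_i$; the intervening subpath $\pi$ has its interior off $C$ and is simple (a repeat would produce a cycle inside the acyclic $H$). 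Splicing $\pi$ together with the arc of $C$ running from $c_i$ around to $c_j$ produces a closed path whose vertices are distinct apart from its common endpoint, i.e.\ a simple cycle; since it uses at least one $H$-edge it differs from $C$, contradicting uniqueness. The degenerate possibilities $c_i = c_j$ and $c_i = c_{j+1}$ collapse to either a cycle inside $H$ or a second loop, both impossible.

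With re-entry excluded, assembling the decomposition is routine. From $c_0$ the path can only traverse the edges of $C$, winding around some integer number $q \ge 0$ of complete loops, until at some node $c_j$ it leaves along an $H$-edge and stays within $H$ until termination, never touching $C$ again. I would then define $P_2$ to be the partial arc $c_0 \to \cdots \to c_j$ followed by this terminal $H$-excursion: $P_2$ is simple because the arc consists of distinct vertices of $C$, the tail is a simple path in the acyclic $H$, and the no-return property keeps the tail disjoint from $C$. Concatenating yields exactly $P = (P_1, C, \dots, C, P_2)$ with $q$ copies of $C$, as desired. I expect the main obstacle to be making the no-return claim airtight — namely enumerating all the ways an excursion could rejoin $C$ and checking in each case that the spliced closed path genuinely contains a simple cycle other than $C$, which is precisely what the uniqueness furnished by Lemma \ref{lem:cyclelength} rules out.
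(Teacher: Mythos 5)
Your proposal is correct and follows essentially the same route as the paper: both use the one-cycle-inducing-edge hypothesis together with Lemma \ref{lem:cyclelength} to rule out a second simple cycle, and both assemble the decomposition as a simple prefix $P_{1}$, repeated traversals of the unique simple cycle $C$, and a final piece $P_{2}$ shown simple by splicing a putative excursion with an arc of $C$ to manufacture a forbidden second cycle. The main difference is one of rigor rather than strategy: where the paper identifies $C$ via the cycle inducing edge and disposes of the ``$P_{2}$ rejoins $C$'' case with a pictorial sketch (Figure \ref{fig:cycledecomp}), you extract the unique cycle from the minimal edge list, observe that $H = G \setminus E(C)$ is acyclic, and carry out the splicing case analysis (including the degenerate re-entry points) explicitly, which makes airtight precisely the step the paper leaves to the reader.
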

\begin{proof}
By the assumption in the lemma, there can be only one cycle inducing edge in the path in  $P$, which we will identify as $(c_{l},c_{0})$.  Denote the portion of the path up to the cycle inducing edge as $(x_{0},....,c_{l},c_{0})$.
Furthermore, since $(c_{l},c_{0})$ is an edge that can be used to form a cycle from the edges in the path $(x_{0},....,c_{l},c_{0})$, this implies that $c_{0}$ appears earlier in the path; hence we can express this initial portion of the path $P$ as $(x_{0},...,c_{0},....,c_{l},c_{0})$.    \newline

Note that if we remove the node $c_{0}$ at the end of the path, we are guaranteed that the path $(x_{0},...,c_{l})$ is simple as there are no cycle inducing edges.  Hence we are guaranteed that if we decompose the path $(x_{0},...,c_{0},....,c_{l},c_{0})$ into a path $P_{1} = (x_{0},...,c_{0})$ and the cycle $C = (c_{0},...c_{l},c_{0})$, that both $P_{1}$ and $C$ are simple.   \newline

We can continue to traverse the edges in the simple cycle $C$ as the path $(P_{1},C,...,C)$ still would only have one cycle inducing edge.  Eventually we may traverse an edge that is not in $C$, so we can define $P = (P_{1},C,...,C,P_{2})$.  As we want this representation of $P$ to be unique, we require that $P_{2}\neq (C,P_{3})$, for some path $P_{3}$.  To prove the desired claim, at this juncture we just need to show that $P_{2}$ is also simple. \newline

Now suppose $P_{2}$ is not simple. If we revisit a node in $P_{2}$ that is not in the simple cycle $C$ then this implies we can construct two distinct cycles from a path with one cycle inducing edge, a contradiction according to Lemma \ref{lem:cyclelength}.  \newline

\begin{figure}
\centering
\includegraphics[scale=.3]{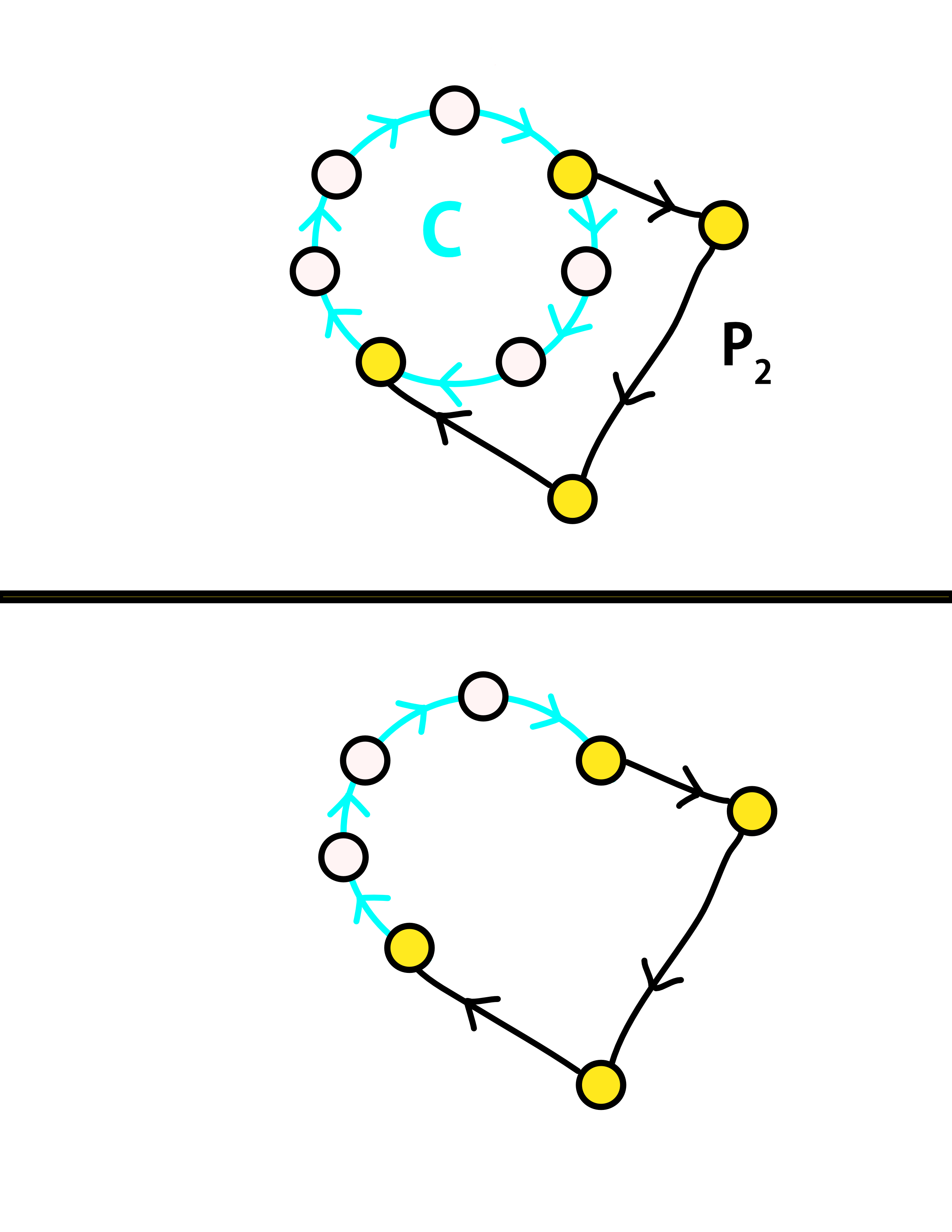}
\caption[Hi]{An illustration that after departing from the cycle $C$, $P_{2}$ cannot revisit a node in $C$ as then we would be able to construct two distinct simple cycles, $C$ and the cycle illustrated in the bottom panel.  To distinguish between the edges in $C$ and the nodes/edges after departing from the cycle $C$, the edges from $C$ are highlighted in turquoise. In contrast, the nodes in the path $P_{2}$ after departing from the cycle $C$ are highlighted in yellow, while the edges are in black.}
\label{fig:cycledecomp}
\end{figure}

In the case that we do revisit a node in $P_{2}$ and that node also appears in the simple cycle $C$, we provide a sketch of the proof (literally) and leave the details to the reader.  See the top panel in Figure \ref{fig:cycledecomp}.  We highlighted the cycle $C$ with blue edges and consider the edges from $P_{2}$ starting with the first edge that deviates from the cycle until we return to a node in the cycle $C$, denoted with black edges.  As noted in the bottom panel, if we do return to a node in the cycle $C$, we can use the edges in $C$ and $P_{2}$ to construct another distinct simple cycle.  But this would imply we could construct two distinct simple cycles from a path $P$ with only one cycle inducing edge, a contradiction according to Lemma \ref{lem:cyclelength}. 

\end{proof}

At this juncture, we can present an upperbound for the number of paths that is especially helpful when $p_{max}$ is small.

\begin{thm} \label{thm:pmaxtrick}
Suppose $\frac{\mathbf{a}\cdot\mathbf{b}}{S} > 1$.  For some parameter $m$, let $r$, the length of the path, satisfy the inequality  $\frac{(m-1)\tau}{2} \log_{\frac{\mathbf{a}\cdot\mathbf{b}}{S}}(N)\leq r \leq  \frac{m\tau}{2} \log_{\frac{\mathbf{a}\cdot\mathbf{b}}{S}}(N)$.  Define a collection of graphs $\mathbf{G}$, such that for any graph $G \in \mathbf{G}$, there is no path of length $r$ that has more than $m+1$ cycle inducing edges and there is no path in $\mathbf{G}$ of length less than $\frac{\tau}{2}\log_{\frac{\mathbf{a}\cdot\mathbf{b}}{S}}(N)$ that has more than $1$ cycle inducing edge.  Now let $P_{r}(y|G\in \mathbf{G})$ be the number of paths of length $r$ starting at node $y$ given that the randomly generated graph $G$ is in $\mathbf{G}$. Define $$\eta =  3(m+1)[\frac{\tau}{4}\log_{\frac{\mathbf{a}\cdot\mathbf{b}}{S}}(N)]^{2}exp(2m),$$ and suppose that  $$\frac{\tau}{4}\log_{\frac{\mathbf{a}\cdot\mathbf{b}}{S}}(N)> 1$$ and $$(\eta)^{\frac{4}{\tau\log_{\frac{\mathbf{a}\cdot\mathbf{b}}{S}}(N)}}<\frac{\mathbf{a}\cdot\mathbf{b}}{S}.$$  Furthermore denote the bound on the probability that there exists a path of length $L$ with $t$ cycle inducing edges, from Lemma \ref{lem:pmax} as $$p_{*}(L,t) = 1 - \frac{tR^{t-1}(L+1)^{3t-2}}{N^{\frac{(t-1)\tau}{2}}},$$ where $p_{max} \leq \frac{R}{N^{\tau}}$, $R$ is a fixed constant and $\tau > 0$. Then

$$E(P_{r}(y|G\in \mathbf{G}))\leq \frac{b_{y}}{Pr(G\in\mathbf{G})(1-\frac{S}{\mathbf{a}\cdot\mathbf{b}})}(\frac{\mathbf{a}\cdot\mathbf{b}}{S})^{r-1}exp(\frac{r\eta p_{max}(\frac{S}{\mathbf{a}\cdot\mathbf{b}})^{2}(\eta)^{\frac{4}{\tau\log_{\frac{\mathbf{a}\cdot\mathbf{b}}{S}}(N)}}}{1-\frac{S}{\mathbf{a}\cdot\mathbf{b}}(\eta)^{\frac{4}{\tau\log_{\frac{\mathbf{a}\cdot\mathbf{b}}{S}}(N)}}}),$$

where \begin{equation}\label{eq:probbound}Pr(G\in\mathbf{G}) \geq p_{*}(\frac{\tau \log_{\frac{\mathbf{a}\cdot\mathbf{b}}{S}}N}{2},2)+p_{*}(r,m+1)-1.
\end{equation}
\end{thm}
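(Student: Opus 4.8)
The plan is to follow the architecture of Theorem \ref{thm:upper} verbatim, except that the crude factor $k_{0}^{\sum i k_{i}}$ counting the ways to fill in repeating edge blocks is replaced by a much smaller count that exploits sparsity through Lemmas \ref{lem:cyclelength}, \ref{lem:tpath} and \ref{lem:cycledecomp}. To lighten notation I would set $\mu := \eta^{4/(\tau\log_{\frac{\mathbf{a}\cdot\mathbf{b}}{S}}(N))}$, so that the target exponential is $\exp(\frac{r\,\eta\, p_{max}(\frac{S}{\mathbf{a}\cdot\mathbf{b}})^{2}\mu}{1-\frac{S}{\mathbf{a}\cdot\mathbf{b}}\mu})$ and the standing hypothesis $\mu<\frac{\mathbf{a}\cdot\mathbf{b}}{S}$ is exactly the condition $\frac{S}{\mathbf{a}\cdot\mathbf{b}}\mu<1$. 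First I would dispatch the probability estimate (\ref{eq:probbound}). The event $\{G\in\mathbf{G}\}$ is the intersection of the event that no path of length $r$ has more than $m+1$ cycle inducing edges with the event that no path of length below $\frac{\tau}{2}\log_{\frac{\mathbf{a}\cdot\mathbf{b}}{S}}(N)$ has more than one cycle inducing edge. I apply Lemma \ref{lem:pmax} twice, once with $t=m+1,\ L=r$ (legal since the hypothesis forces $r\le\frac{m\tau}{2}\log_{\frac{\mathbf{a}\cdot\mathbf{b}}{S}}(N)=\frac{((m+1)-1)\tau}{2}\log_{\frac{\mathbf{a}\cdot\mathbf{b}}{S}}(N)$) and once with $t=2,\ L=\frac{\tau}{2}\log_{\frac{\mathbf{a}\cdot\mathbf{b}}{S}}(N)$, and combine the two via a Bonferroni bound to obtain (\ref{eq:probbound}).

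Next I would reduce the conditional expectation to a restricted path sum. If a path $p$ of length $r$ exists in some $G\in\mathbf{G}$, then $p$ itself must obey the two structural constraints defining $\mathbf{G}$; hence $Pr(\mathbf{1}_{p}=1,\,G\in\mathbf{G})=0$ for every path violating them, and for the remaining (\emph{admissible}) paths I bound $Pr(\mathbf{1}_{p}=1,\,G\in\mathbf{G})\le Pr(\mathbf{1}_{p}=1)$. This gives $E(P_{r}(y\mid G\in\mathbf{G}))\le \frac{1}{Pr(G\in\mathbf{G})}\sum_{p}Pr(\mathbf{1}_{p}=1)$ with the sum over admissible length-$r$ paths from $y$. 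I then reuse the last-edge-new decomposition (\ref{eq:recurrence})--(\ref{eq:recurrencegoal}), which applies unchanged to admissible paths, so that it suffices to bound the admissible analogue of $E(P_{m}^{L}(y))$ and the resulting geometric series $\sum_{m}(\frac{S}{\mathbf{a}\cdot\mathbf{b}})^{r-m}$ produces the factor $\frac{1}{1-\frac{S}{\mathbf{a}\cdot\mathbf{b}}}$.

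The heart of the argument is the refined count of repeating edge blocks. As in (\ref{eq:E1})--(\ref{eq:E5}), I fix the numbers $k_{i}$ of repeating edge blocks of each length $i$, sum over the new edge interior $\mathbf{N}$ (using $|\mathbf{N}|=r-1-\sum_{i}(i+1)k_{i}$ from Lemma \ref{lem:probpath}) to produce $(\frac{\mathbf{a}\cdot\mathbf{b}}{S})^{-\sum_{i}(i+1)k_{i}}$ after factoring out $(\frac{\mathbf{a}\cdot\mathbf{b}}{S})^{r-1}$, and collect $p_{max}^{\sum_{i}k_{i}}$ from the endpoints of the blocks. To count the internal arrangements of a single repeating edge block of length $i$, I partition it into at most $\lceil\frac{4i}{\tau\log_{\frac{\mathbf{a}\cdot\mathbf{b}}{S}}(N)}\rceil$ consecutive subblocks each of length at most $\frac{\tau}{4}\log_{\frac{\mathbf{a}\cdot\mathbf{b}}{S}}(N)$; since $G\in\mathbf{G}$ forbids more than one cycle inducing edge on any path shorter than $\frac{\tau}{2}\log_{\frac{\mathbf{a}\cdot\mathbf{b}}{S}}(N)$, Lemma \ref{lem:cycledecomp} applies to each subblock and forces the shape $(P_{1},C,\dots,C,P_{2})$. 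I then count a single subblock: at most $m+1$ simple cycles $C$ (Lemma \ref{lem:cyclelength}, as the whole path carries at most $m+1$ cycle inducing edges, so $C$ is one of at most $m+1$ global simple cycles), at most $\exp(2m)$ pairs of simple paths $P_{1},P_{2}$ (two applications of Lemma \ref{lem:tpath} to the associated $(m+1)$-path graph), and at most $(\frac{\tau}{4}\log_{\frac{\mathbf{a}\cdot\mathbf{b}}{S}}(N))^{2}$ choices of their lengths (which then fix the number of copies of $C$); after absorbing absolute constants into the factor $3$, this is at most $\eta$ per subblock, and hence at most $\eta\cdot\mu^{i}$ arrangements per block of length $i$.

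Finally I assemble and invoke Lemma \ref{lem:expineq}. With the per-block count $\eta\mu^{i}$, the probability factor $p_{max}$, and the $(\frac{\mathbf{a}\cdot\mathbf{b}}{S})^{-(i+1)}$ from summing $\mathbf{N}$, the weight attached to $k_{i}$ becomes $[\,p_{max}\,\eta\,(\tfrac{S}{\mathbf{a}\cdot\mathbf{b}})\,(\mu\tfrac{S}{\mathbf{a}\cdot\mathbf{b}})^{i}\,]^{k_{i}}$, which matches the hypothesis of Lemma \ref{lem:expineq} with $l=1$, $\beta=\frac{S}{\mathbf{a}\cdot\mathbf{b}}\mu$, and $\alpha=\eta\,p_{max}(\frac{S}{\mathbf{a}\cdot\mathbf{b}})^{2}\mu$. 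The assumption $\mu<\frac{\mathbf{a}\cdot\mathbf{b}}{S}$ gives $\beta<1$, so the lemma yields precisely the claimed exponential $\exp(\frac{r\alpha}{1-\beta})$, and the geometric sum from the second step supplies $\frac{1}{1-\frac{S}{\mathbf{a}\cdot\mathbf{b}}}$ together with the prefactor $b_{y}(\frac{\mathbf{a}\cdot\mathbf{b}}{S})^{r-1}$; dividing by $Pr(G\in\mathbf{G})$ completes the bound. I expect the main obstacle to be the third step: making the subblock bookkeeping airtight. One must check that the partition of a repeating edge block is compatible with the global path decomposition, that the cycle realized in any subblock is genuinely one of the $\le m+1$ global simple cycles so that Lemma \ref{lem:cyclelength} actually caps the choices, and that the junctions between consecutive subblocks and the endpoint nodes feeding into $\mathbf{R}$ are each counted exactly once. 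Verifying that the composite bound $\eta\mu^{i}$ dominates uniformly over $i$—including the degenerate cases $i=1$ and blocks occupying less than a full subblock—is precisely where the constant $3$ and the hypothesis $\frac{\tau}{4}\log_{\frac{\mathbf{a}\cdot\mathbf{b}}{S}}(N)>1$ are consumed.
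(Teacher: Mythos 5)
Your proposal reproduces the paper's own proof essentially step for step: the same Bonferroni combination of two applications of Lemma \ref{lem:pmax} for (\ref{eq:probbound}), the same conditioning bound $Pr(\mathbf{1}_{\mathbf{E}}=1\,|\,G\in\mathbf{G})\leq Pr(\mathbf{1}_{\mathbf{E}}=1)/Pr(G\in\mathbf{G})$, the same last-edge-new recursion from Theorem \ref{thm:upper}, the same subdivision of each repeating edge block into subblocks of length at most $\frac{\tau}{4}\log_{\frac{\mathbf{a}\cdot\mathbf{b}}{S}}(N)$ counted via Lemmas \ref{lem:cycledecomp}, \ref{lem:cyclelength} and \ref{lem:tpath} to obtain the per-block bound $\eta^{1+\frac{4i}{\tau\log_{\frac{\mathbf{a}\cdot\mathbf{b}}{S}}(N)}}$, and the same invocation of Lemma \ref{lem:expineq} with $l=1$, $\alpha=\eta p_{max}(\frac{S}{\mathbf{a}\cdot\mathbf{b}})^{2}\eta^{\frac{4}{\tau\log_{\frac{\mathbf{a}\cdot\mathbf{b}}{S}}(N)}}$ and $\beta=\frac{S}{\mathbf{a}\cdot\mathbf{b}}\eta^{\frac{4}{\tau\log_{\frac{\mathbf{a}\cdot\mathbf{b}}{S}}(N)}}$. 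The only deviations are cosmetic bookkeeping choices inside the subblock count (you parametrize by the lengths of $P_{1},P_{2}$ where the paper parametrizes by the placement and number of traversals of $C$, and the paper handles the cycle-free block separately as Case 2 with bound $r\,e^{m}$, absorbed by the factor $3$ and the hypothesis $\frac{\tau}{4}\log_{\frac{\mathbf{a}\cdot\mathbf{b}}{S}}(N)>1$, exactly as you anticipate), so the approach is the same and correct at the paper's own level of rigor.
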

\noindent{\bf Remark:} Theorem \ref{thm:pmaxtrick} proves asymptotically that the spectral radius is bounded above by $\frac{\mathbf{a}\cdot\mathbf{b}}{S}$, assuming that $\frac{\mathbf{a}\cdot\mathbf{b}}{S}>1$.  More specifically if $p_{max} = O(N^{-\tau})$ and $\log N \ll r \ll (\log N)^{2}$ then it follows that $m \ll \log N$, $\lim_{N\rightarrow \infty}Pr(G\in\mathbf{G})= 1$ by (\ref{eq:probbound}),$\lim_{N\rightarrow \infty} p_{max}r\eta = 0$ and $\lim_{N\rightarrow \infty} \eta^{\frac{4}{\tau*log_{\frac{\mathbf{a}\cdot\mathbf{b}}{S}}(N)}} = 1$.  Hence an application of Markov's Inequality, as used in Theorem \ref{thm:Markov}, would prove the result.

\begin{proof}

The proof is nearly identical to Theorem \ref{thm:upper}. 

To construct a lower bound for the $Pr(G \in \mathbf{G})$, denote $A$ as the event that there does not exist a path of length at most $r$ with $m+2$ cycle inducing edges and $B$ as the event that there does not exist a path of length at most $\frac{\tau log_{\frac{\mathbf{a}\cdot\mathbf{b}}{S}}N}{2}$ with $2$ cycle inducing edges.  It follows from Lemma \ref{lem:pmax} that the $Pr(G \in \mathbf{G}) = Pr(A \cap B) = Pr(A) + Pr(B) - Pr(A \cup B) \geq p_{*}(r,m+1)+p_{*}(\frac{\tau log_{\frac{\mathbf{a}\cdot\mathbf{b}}{S}}N}{2},2) - 1$, where the last inequality holds as we bound the $Pr(A \cup B)$ above by $1$. \newline\newline

But before we can proceed, since we are restricting the number of cycle inducing edges for paths of certain lengths in $G$, the probability an edge exists changes.  Let $\mathbf{1}_{\mathbf{E}}$ be the indicator random variable, where $\mathbf{1}_{\mathbf{E}} = 1$ if all of the edges in $\mathbf{E}$ appear in our random realization $G$.   Then we have that,

\begin{equation} \label{eq:probG} Pr(\mathbf{1}_{\mathbf{E}} = 1| G \in \mathbf{G}) \leq \frac{Pr(\mathbf{1}_{\mathbf{E}} = 1)}{Pr(G \in \mathbf{G})}.
\end{equation}

So (\ref{eq:probG}) allows us to proceed with constructing an upperbound on the number of paths with length $r$ for a graph $G \in \mathbf{G}$, where we do not have to worry about the influence that $G$ belongs to $\mathbf{G}$ on the probability that a path exists.  \newline\newline

Now let $P_{r}^{L}(y|G\in \mathbf{G})$ be the number of paths starting from node $y$ of length $r$, where we require the last edge in the path to not be a repeating edge, we condition on the fact that  $G\in\mathbf{G}$.  Denote the $k_{i}$ for $i>0$ as the number of repeating edge blocks of length $i$.   Similar to equation (\ref{eq:E5}), we will argue that 

\footnotesize
\begin{equation} \label{eq:star111}  E(P_{r}^{L}(y|G\in \mathbf{G}))\leq \hspace{350pt}
\end{equation}
$$\frac{b_{y}}{Pr(G\in\mathbf{G})}\sum_{\substack{k_{0}+\sum_{i=1}^{r} ik_{i}=r \\ k_{0} \in [1..r]}}\binom{\sum k_{i}}{k_{0},...,k_{r}}(\frac{\mathbf{a}\cdot\mathbf{b}}{S})^{r-1}\Pi_{i\geq 1}(\eta p_{max}[\frac{S}{\mathbf{a}\cdot\mathbf{b}}]^{i+1})^{k_{i}} (\eta)^{\frac{4ik_{i}}{\tau*log_{\frac{\mathbf{a}\cdot\mathbf{b}}{S}}(N)}},
$$
\normalsize
where  $\eta =3(m+1)[\frac{\tau}{4}\log_{\frac{\mathbf{a}\cdot\mathbf{b}}{S}}(N)]^{2}exp(2m)$ and $r \leq \frac{m\tau}{2}log_{\frac{\mathbf{a}\cdot\mathbf{b}}{S}}(N)$.   The key difference from equation (\ref{eq:E5}) is by letting $p_{max}\rightarrow 0$, instead of having $r^{ik_{i}}$ possible choices for each of the $k_{i}$ repeating edge blocks of length $i$, we claim that we have at most \newline $\eta^{1+\frac{4i}{\tau\log_{\frac{\mathbf{a}\cdot\mathbf{b}}{S}}(N)}}$ choices for each repeating edge block of length $i$.  We will temporarily assume that (\ref{eq:star111}) holds. Applying Lemma \ref{lem:expineq}, where $\alpha = \eta p_{max}(\frac{S}{\mathbf{a}\cdot\mathbf{b}})^{2}(\eta)^{1+\frac{4}{\tau\log_{\frac{\mathbf{a}\cdot\mathbf{b}}{S}}(N)}}$, $\beta = \frac{S}{\mathbf{a}\cdot\mathbf{b}}(\eta)^{\frac{4}{\tau\log_{\frac{\mathbf{a}\cdot\mathbf{b}}{S}}(N)}}$ and $l = 1$, yields the upperbound,

\begin{equation}
\frac{b_{y}}{Pr(G\in\mathbf{G})}(\frac{\mathbf{a}\cdot\mathbf{b}}{S})^{r-1}exp(\frac{rp_{max}(\frac{S}{\mathbf{a}\cdot\mathbf{b}})^{2}(\eta)^{1+\frac{4}{\tau*log_{\frac{\mathbf{a}\cdot\mathbf{b}}{S}}(N)}}}{1-\frac{S}{\mathbf{a}\cdot\mathbf{b}}(\eta)^{\frac{4}{\tau*log_{\frac{\mathbf{a}\cdot\mathbf{b}}{S}}(N)}}}).
\end{equation}

Invoking the fact that $E(P_{r}(y|G\in \mathbf{G})) \leq \sum_{j=1}^{r}E(P_{j}^{L}(y|G\in\mathbf{G}))$ yields the inequality in the theorem. \newline
To prove that (\ref{eq:star111}) holds, we first consider the number of ways for filling in a repeating edge block of length less than $\frac{\tau}{4}log_{\frac{\mathbf{a}\cdot\mathbf{b}}{S}}(N)$.  (In the theorem statement we assume this quantity to be greater than $1$).  By Lemma \ref{lem:cycledecomp}, we know that the repeating edge block must be of the form $(P_{1},C,...,C,P_{2})$, where $P_{1}, P_{2}$ are simple paths and $C$ is a simple cycle. \newline

\textbf{Case 1:} Suppose that there is a simple cycle $C$ in the repeating edge block.
Fix a particular choice for the simple cycle $C$ and assume that it has length $l$. 
\begin{itemize}
\item Since the repeating edge block has length at most  $\frac{\tau}{4}\log_{\frac{\mathbf{a}\cdot\mathbf{b}}{S}}(N)$, there are {\color{red} $\frac{\tau}{4}\log_{\frac{\mathbf{a}\cdot\mathbf{b}}{S}}(N)$} places to put the first cycle $C$ in the repeating edge block $(P_{1},C,...,C,P_{2})$.
\item We can continue traversing the cycle $C$ in the repeating edge block up to {\color{red}$\frac{\tau}{4l}\log_{\frac{\mathbf{a}\cdot\mathbf{b}}{S}}(N)$} times.
\item Because $C$ has length $l$, the the first node in the cycle $C$ can be any one of the {\color{red}$l$} nodes in the cycle.
\item Since we identified the first and last nodes in $C$ from the prior step,Lemma \ref{lem:tpath} tells us that there are at most {\color{red}$exp(2m)$} choices for choosing the simple paths $P_{1}$ and $P_{2}$.
\end{itemize}
Multiplying all of the red terms together, we have that for a fixed cycle $C$, there are 
$$ exp(2m)[\frac{\tau}{4}\log_{\frac{\mathbf{a}\cdot\mathbf{b}}{S}}(N)]^{2}$$ choices for filling in the repeating edge block.  Now by Lemma \ref{lem:cyclelength} and the assumptions in the theorem, we know that there are at most $m+1$ such cycles to choose from.  Hence we get that Case 1 gives us at most 
\begin{equation}
 (m+1)exp(2m)[\frac{\tau}{4}\log_{\frac{\mathbf{a}\cdot\mathbf{b}}{S}}(N)]^{2}
\end{equation}
choices for filling in a repeating edge block of length at most  $\frac{\tau}{4}\log_{\frac{\mathbf{a}\cdot\mathbf{b}}{S}}(N)$.

\textbf{Case 2:}There is no simple cycle $C$ in the repeating edge block.  In this case, Lemma \ref{lem:cycledecomp} implies that we can represent the repeating edge block as a single simple path. As we have at most $r$ possible nodes that could be at the start of the repeating edge block, this implies that from Lemma \ref{lem:tpath} there are at most $$rexp(m)$$ choices for filling in the repeating edge block.

Combining the brounds from Case 1 and Case 2 and employing the inequality relating $r$ and $m$ in the theorem statement, yield that we have at most

\begin{equation}
 rexp(m) + (m+1)[\frac{\tau}{4}\log_{\frac{\mathbf{a}\cdot\mathbf{b}}{S}}(N)]^{2}exp(2m) \leq 3(m+1)[\frac{\tau}{4}\log_{\frac{\mathbf{a}\cdot\mathbf{b}}{S}}(N)]^{2}exp(2m) = \eta
\end{equation}

choices for filling in a repeating edge block of length at most $\frac{\tau}{4}\log_{\frac{\mathbf{a}\cdot\mathbf{b}}{S}}(N)$.

To apply a similar bound to a repeating edge block of any length, we divide the edge block into subblocks each of length approximately $\frac{\tau}{4}\log_{\frac{\mathbf{a}\cdot\mathbf{b}}{S}}(N)$ and possibly one edge block that is smaller than the others.  The number of choices for this larger repeating edge block is then bounded by $(\eta)^{1+\frac{4i}{\tau\log_{\frac{\mathbf{a}\cdot\mathbf{b}}{S}}(N)}}$ by considering the number of possible choices for each smaller repeating edge subblock of length bounded by $\frac{\tau}{4}\log_{\frac{\mathbf{a}\cdot\mathbf{b}}{S}}(N)$.   
\end{proof}

\section{Partitioned Chung-Lu Model}
Since real world networks exhibit community structure, we want to consider a random graph model that allows for this feature.  But we also want a random graph model that is easily amenable to analysis and hence emulates many of the features of the Chung-Lu random graph model.  We achieve this goal as follows.  In the special case where there are two communities we can envision partitioning our adjacency matrix into four submatrices as illustrated in Figure \ref{fig:partition}.  For each submatrix we have expected row sums and column sums, given by $b^{(x,y)}$ and $a^{(x,y)}$, where the superscript identifies the submatrix under consideration.  With this information, we assign edges in each of the submatrices based on the values of these expected row sums and column sums.
\begin{figure}
\centering
\includegraphics[scale=.6]{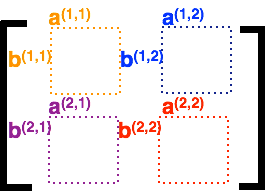}
\caption[An Illustration of the Partitioned Chung-Lu model.]{ We partition our adjacency matrix with submatrices each possessing expected column sums,  $\mathbf{a}^{(x,y)}$, and row sums, $\mathbf{b}^{(x,y)}$.  The probability that the ijth entry in the submatrix (x,y) equals $\frac{\mathbf{b}_{i}^{(x,y)}\mathbf{a}_{j}^{(x,y)}}{S_{xy}}$, where $S_{xy}$ is the expected sum of the entries in the x,y submatrix.}
\label{fig:partition}
\end{figure}

More formally, we have the following definition.
\begin{defn}
We define the Directed Partitioned Chung-Lu Random Graph Model such that we are given a collection of expected degree sequences for the subgraphs corresponding to the partitioned submatrices in the adjacency matrix.  We construct a directed edge from node $i$ to $j$ by means of an independent Bernoulli random variable $p_{ij}$ where $p_{ij}$ is proportional to the product of the expected out-degree of node $i$ and expected in-degree of node $j$ in the corresponding subgraph.
\end{defn}

Closely related models to the Directed Partitioned Chung-Lu model, have been considered in application to the community detection for undirected graphs \cite{chaudhuri2012spectral,karrer2011stochastic,nadakuditi2013spectra,peixoto2013eigenvalue}.   In contrast, \cite{chauhan2009spectral} studies a similar model that generates directed graphs, but their analysis assumes that the off-diagonal submatrices are very sparse, which would allow for block diagonal approximations of the adjacency matrix.  

The first result we prove holds in considerable generality.  Therefore, we introduce the following  (more general) definition.
\begin{defn}
In the K-Partitioned Random Graph Model, we assign each node to one of $K$ groups (or communities), denoted by the function $G(\cdot)$.   We then construct an edge from node $i$ to $j$ by means of an independent Bernoulli random variable $p_{ij}$ where $p_{ij}$ depends on $G(i)$ and $G(j)$ . 
\end{defn}

We will find that the following norm will be helpful in proving bounds for the dominating eigenvalue in the Chung-Lu Partitioned Random Graph model.  

\begin{defn}Consider a vector $\mathbf{x}\in\mathbb{R}^{N\times 1}$.  Denote $|\mathbf{x}|$ as the l1 norm (or taxicab norm for the vector).  That is $|\mathbf{x}| = \sum_{i=1}^{N} |x_i|$.  Furthermore for a matrix $B\in\mathbb{R}^{N\times N}$, we can also define $|B|$ to be the l1 norm of the matrix where $|B| = \sum_{i,j} |b_{ij}| $.
\end{defn}

For simplicity we will consider the case where there are two communities (analogous results hold when there are more than two communities). Unsurprisingly, computing the number of paths and cycles becomes much more challenging when we  incorporate partitions (communities) into our 
random graph model.  We therefore introduce the following lemma that will facilitate the computation of otherwise unweildy expressions.

\begin{lem} \label{lem:generalized}
Consider the 2-Partitioned Random Graph Model.
Denote the number of paths from node $i_0$ to node $i_r$ of length $r$ as $P_{r}[i_0\rightarrow i_r]$.
Define \[   p_{ij}(x,y) = \left\{
\begin{array}{ll}
      p_{ij} & \text{if}\hspace{3pt}  G(i) = x \hspace{3pt} \text{and} \hspace{3pt} G(j) = y \\
      0 & \text{otherwise} \\
\end{array} 
\right. \]  and let $$\mathbf{p} = [p_{i_{0}i_{1}}(1,1),p_{i_{0}i_{1}}(2,1),p_{i_{0}i_{1}}(1,2),p_{i_{0}i_{1}}(2,2)]^{T}.  $$
Then,
\begin{equation} 
E(P_{r}[i_0\rightarrow i_r])= |\sum_{i_1,...,i_{r-1}} [\Pi_{k=1}^{r-1}\mathbf{A}(i_k,i_{k+1},\mathbf{i}_{k})]\mathbf{p}|
\end{equation}

where $| \cdot |$ denotes the taxicab norm, $\mathbf{i}_{k} = [ (i_1,i_2),(i_2,i_3),...,(i_{k-1},i_{k}) ] $

and if $(i,j)\notin \mathbf{i}$ then,

$$ \mathbf{A}(i,j,\mathbf{i}) = \begin{psmallmatrix}
p_{ij}(1,1) & p_{ij}(1,1) & 0 & 0 \\
0 & 0 & p_{ij}(2,1) & p_{ij}(2,1)  \\
p_{ij}(1,2) & p_{ij}(1,2)& 0 & 0 \\
0 & 0 & p_{ij}(2,2) & p_{ij}(2,2)
\end{psmallmatrix},$$

else if $(i,j) \in \mathbf{i}$, 

$$\mathbf{A}(i,j,\mathbf{i}) = \mathbf{G}_{ij} = \begin{psmallmatrix}
G_{ij}(1,1) & G_{ij}(1,1) & 0 & 0 \\
0 & 0 & G_{ij}(2,1) & G_{ij}(2,1)  \\
G_{ij}(1,2) & G_{ij}(1,2)& 0 & 0 \\
0 & 0 & G_{ij}(2,2) & G_{ij}(2,2)
\end{psmallmatrix},$$

where

\[  G_{ij}(c,d) = \left\{
\begin{array}{ll}
      1 & \text{if}\hspace{3pt}  G(i) = c \hspace{3pt} \text{and} \hspace{3pt} G(j) = d \\
      0 & \text{otherwise} \\
\end{array}\right. \]

\end{lem}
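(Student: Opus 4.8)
The plan is to reduce the matrix identity to a statement about a single node sequence and then exploit nonnegativity. First I would write the expectation as a sum over the intermediate nodes,
$$E(P_{r}[i_0\rightarrow i_r]) = \sum_{i_1,\dots,i_{r-1}} Pr[(i_0,i_1,\dots,i_r)\text{ exists}],$$
and recall, exactly as in Lemma \ref{lem:lower} and Lemma \ref{lem:probpath}, that since distinct edges are independent Bernoulli variables while a repeated edge contributes the same indicator, the probability a fixed walk exists is the product of $p_{ij}$ taken only over the \emph{distinct} edges it traverses. The goal is therefore to prove that, for each fixed node sequence, the taxicab norm of the vector $\mathbf{v}_{(i_1,\dots,i_{r-1})} := \big[\Pi_{k=1}^{r-1}\mathbf{A}(i_k,i_{k+1},\mathbf{i}_k)\big]\mathbf{p}$ equals that product of distinct-edge probabilities.

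Since every entry of $\mathbf{p}$, of each $\mathbf{A}$, and of each $\mathbf{G}_{ij}$ is nonnegative, each vector $\mathbf{v}_{(i_1,\dots,i_{r-1})}$ is entrywise nonnegative, so the $\ell_1$ norm is additive over the sum, giving $|\sum \mathbf{v}| = \sum |\mathbf{v}|$. This lets me pull the norm inside the sum and reduce the whole lemma to the per-sequence claim above.

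For the per-sequence claim I would induct on the number of processed edges, tracking the state vector after each edge is appended. Index the four coordinates by ordered community pairs $(a,b)$ in the order $(1,1),(2,1),(1,2),(2,2)$. The base vector $\mathbf{p}$ is supported on the single coordinate $(G(i_0),G(i_1))$ with value $p_{i_0 i_1}$. The inductive hypothesis is that after processing edge $(i_{k-1},i_k)$ the state is supported on the single coordinate $(G(i_{k-1}),G(i_k))$, with value equal to the product of $p$ over the distinct edges seen so far. Reading the matrices, $\mathbf{A}(i_k,i_{k+1},\mathbf{i}_k)$ has nonzero entries only in the row indexed by $(G(i_k),G(i_{k+1}))$, and within that row only in the two columns whose second coordinate equals $G(i_k)$; the shared node $i_k$ forces the unique consistent column, so the state advances to $(G(i_k),G(i_{k+1}))$ and remains one-hot. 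The multiplicative factor it picks up is $p_{i_k i_{k+1}}$ when $(i_k,i_{k+1})$ is a new edge (the $p$-form of $\mathbf{A}$) and $1$ when the edge has already been observed (the indicator form $\mathbf{G}_{i_k i_{k+1}}$), which is exactly the distinct-edge bookkeeping. Hence the final vector is one-hot at $(G(i_{r-1}),G(i_r))$ with value equal to the product over distinct edges, so $|\mathbf{v}_{(i_1,\dots,i_{r-1})}| = Pr[(i_0,\dots,i_r)\text{ exists}]$, completing the reduction.

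The main obstacle I expect is making the inductive step fully rigorous: one must check that the block pattern of $\mathbf{A}$ (two nonzero entries per active row, selecting the target community of the incoming edge) really collapses to a single consistent term once a concrete node sequence fixes all community memberships, and that the switch between the $p$-form and the indicator form $\mathbf{G}_{ij}$ faithfully reproduces ``each distinct edge's probability is multiplied in exactly once.'' Some care is also needed with the bookkeeping of which prior edges the list $\mathbf{i}_k$ records, so that the first edge, which is carried inside $\mathbf{p}$, is handled consistently and not double counted; but this is a finite, mechanical verification once the state-tracking interpretation above is established.
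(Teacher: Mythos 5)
Your proposal is correct and takes essentially the same route as the paper's proof: an induction maintaining a one-hot state vector supported on the coordinate $(G(i_{k-1}),G(i_k))$ whose value is the product of probabilities over distinct edges seen so far, with the $p$-form matrix multiplying in each new edge's probability and $\mathbf{G}_{i_k i_{k+1}}$ contributing a factor of $1$ for repeated edges, followed by taking the taxicab norm of the sum of the nonnegative per-sequence vectors. Your explicit appeal to $\ell_1$-additivity for nonnegative vectors and your flag about the bookkeeping of the first edge (carried in $\mathbf{p}$ but omitted from the list $\mathbf{i}_k$ as stated) merely make explicit what the paper handles implicitly in its base case.
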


\noindent \textbf{Remark:} We can also express the number of paths as the norm of a matrix (as opposed to a vector) by noting that,
\begin{equation} 
E(P_{r}[i_0\rightarrow i_r])= |\frac{1}{2}\sum_{i_1,...,i_{r-1}} [\Pi_{k=0}^{r-1}\mathbf{A}(i_k,i_{k+1},\mathbf{i}_{k})]|,
\end{equation}
where $\mathbf{i}_{0}=\emptyset$.
\begin{proof}
We proceed by induction starting with a base case $r=2$, to compute the probability that a path $(i_{0},i_{1},i_{2})$, exists where $(i_{0},i_{1})\neq (i_{1},i_{2})$.  

So consider the matrix vector product,
$$\begin{pmatrix}
p_{i_1i_2}(1,1) & p_{i_1i_2}(1,1) & 0 & 0 \\
0 & 0 & p_{i_1i_2}(2,1) & p_{i_1i_2}(2,1)  \\
p_{i_1i_2}(1,2) & p_{i_1i_2}(1,2)& 0 & 0 \\
0 & 0 & p_{i_1i_2}(2,2) & p_{i_1i_2}(2,2)
\end{pmatrix} \begin{pmatrix} p_{i_{0}i_{1}}(1,1)\\p_{i_{0}i_{1}}(2,1)\\p_{i_{0}i_{1}}(1,2)\\p_{i_{0}i_{1}}(2,2)\end{pmatrix}=$$

$$\begin{pmatrix}
 p_{i_{0}i_{1}}(1,1)p_{i_1i_2}(1,1)+p_{i_{0}i_{1}}(2,1)p_{i_1i_2}(1,1)\\p_{i_{0}i_{1}}(1,2)p_{i_1i_2}(2,1)+p_{i_{0}i_{1}}(2,2)p_{i_1i_2}(2,1)\\p_{i_{0}i_{1}}(1,1)p_{i_1i_2}(1,2)+p_{i_{0}i_{1}}(2,1)p_{i_1i_2}(1,2)\\p_{i_{0}i_{1}}(1,2)p_{i_1i_2}(2,2)+p_{i_{0}i_{1}}(2,2)p_{i_1i_2}(2,2)\end{pmatrix}.$$

From the definition of $p_{ij}(x,y)$, the first entry in the output vector equals the probability that the path $P = (i_{0},i_{1},i_{2})$ exists if  $i_{1}$ is in group $1$ and $i_{2}$ is in group $2$ (and $0$ otherwise).  Similarly, the second entry equals the probability that $P$ exists if $i_{1}$ is in group $2$ and $i_{2}$ is group $1$.  The third entry specifies the probability $P$ exists if $i_{1}$ is in group $1$ and $i_{2}$ is in group $2$.  And finally, the fourth entry specifies the probability $P$ exists if $i_{1}$ and $i_{2}$ are both in group $2$.  In the case $(i_{0},i_{1}) = (i_{1},i_{2})$, then since we already accounted for the probability that the edge $(i_{1},i_{2})$ exists, we instead multiply the vector $\mathbf{p}$ by the matrix $\mathbf{G}_{i_{1}i_{2}}$, as defined in the statement of Lemma \ref{lem:generalized}.  Consequently, since for every path $P$, there is precisely one non-zero entry in the output vector that equals the probability that path $P$ exists, taking the taxicab norm of the sum of such vectors (where we sum over all possible choces of paths that start with $i_{0}$ and end with $i_{2}$) will be the expected number of paths from $i_{0}$ to $i_{2}$. 
 \newline\newline

\textbf{Inductive Step}: 
Suppose we are given a vector with the probability of the existence of a path of length $k$ (consisting of nodes $i_{0},...,i_{k}$) where each component in the vector equals the probability that the path exists if $i_{k}$ belongs to group $y$ and $i_{k-1}$ belongs to group $x$ and $0$ otherwise.  We  denote this quantity as $p_{k}(x,y)$.
Now to compute the probability of the existence of a path of length $k+1$, if $(i_k,i_{k+1})$ is not a repeating edge, we then consider

$$\begin{pmatrix}
p_{i_k i_{k+1}}(1,1) & p_{i_k i_{k+1}}(1,1) & 0 & 0 \\
0 & 0 & p_{i_k i_{k+1}}(2,1) & p_{i_k i_{k+1}}(2,1)  \\
p_{i_k i_{k+1}}(1,2) & p_{i_k i_{k+1}}(1,2)& 0 & 0 \\
0 & 0 & p_{i_k i_{k+1}}(2,2) & p_{i_k i_{k+1}}(2,2)
\end{pmatrix} \begin{pmatrix} p_{k}(1,1)\\p_{k}(2,1)\\p_{k}(1,2)\\p_{k}(2,2)\end{pmatrix}.$$

The output of this matrix vector product will yield a vector, where there will be precisely one non-zero entry equal to the probability the path exists.

Alternatively, if the edge $(i_k,i_{k+1})$ has been already visited earlier in the path, since we already accounted for the probability that the edge exists, we multiply the vector by the matrix $\mathbf{G}_{i_{k}i_{k+1}}$. 

By taking the taxicab norm of the sum of vectors, where each vector has precisely one non-zero entry that equals the probability that the path $(i_{0},...,i_{k+1})$ exists, will yield the expected number of paths from $i_{0}$ to $i_{k+1}$.

\end{proof}
To bound the expected number of paths of length $r$ using Lemma \ref{lem:generalized}, we will want to express the bounds of the norm of a matrix vector product in terms of the dominating eigenvalue of the matrix. Hence, we have the following result proved in the appendix.

\begin{cor} \label{cor:matrixmultiply} Let $B$ be an (entry-wise)  non-negative matrix, $B^{2}$ be an entry-wise positive matrix and let $x$ be the eigenvector corresponding to the dominating eigenvalue.  Furthermore denote $b_{ij}^{(m)}$ as the i,jth entry of $B^{m}$.  Assign $c_{max}$ to be the maximum row sum of $B^{2}\in\mathbb{R}^{n\times n}$ and suppose every entry is at least equal to 1, (hence $r_{max}\geq n$). Then

$$[\sum_{j=1}^{n}b_{ij}^{(m)}]^{\frac{1}{m}}\leq {c_{max}}^{\frac{1}{m}}\rho(B)$$
and $$(c_{max})^{-\frac{1}{m}}\rho(B)\leq [\sum_{j=1}^{n}b_{ij}^{(m)}]^{\frac{1}{m}}.$$
\end{cor}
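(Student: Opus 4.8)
The plan is to invoke the Perron--Frobenius theorem and then control the spread of the Perron eigenvector by the quantity $c_{max}$. Since $B^{2}$ is entrywise positive, $B$ is primitive, so $\rho(B) = \lambda > 0$ is a simple eigenvalue with a strictly positive eigenvector $x$, the vector $x$ from the statement. The key observation is that the row sum $\sum_{j=1}^{n} b_{ij}^{(m)}$ is exactly the $i$-th entry of $B^{m}\mathbf{1}$, so it suffices to sandwich $B^{m}\mathbf{1}$ between multiples of $B^{m}x = \lambda^{m}x$.

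First I would write $x_{min} = \min_{j} x_{j}$ and $x_{max} = \max_{j} x_{j}$, both strictly positive. Entrywise we have $\tfrac{1}{x_{max}} x \le \mathbf{1} \le \tfrac{1}{x_{min}} x$. Applying the nonnegative matrix $B^{m}$ preserves these inequalities, and using $B^{m}x = \lambda^{m}x$ gives, for each $i$, the bounds $\tfrac{\lambda^{m} x_{i}}{x_{max}} \le \sum_{j} b_{ij}^{(m)} \le \tfrac{\lambda^{m} x_{i}}{x_{min}}$. Bounding $x_{i}$ above by $x_{max}$ and below by $x_{min}$ turns this into $\lambda^{m}\kappa^{-1} \le \sum_{j} b_{ij}^{(m)} \le \lambda^{m}\kappa$, where $\kappa = x_{max}/x_{min}$ is the ratio I must now control.

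The heart of the argument, and the step I expect to be the main obstacle, is proving $\kappa \le c_{max}$; this is where both positivity hypotheses enter. Working from $B^{2}x = \lambda^{2}x$, evaluating at an index $i^{*}$ where $x_{i^{*}} = x_{max}$ gives $\lambda^{2} x_{max} = \sum_{k} b_{i^{*}k}^{(2)} x_{k} \le x_{max}\sum_{k} b_{i^{*}k}^{(2)} \le x_{max}\, c_{max}$, hence $\lambda^{2} \le c_{max}$ (the classical bound of the spectral radius by the maximum row sum of $B^{2}$). Evaluating instead at an index $j^{*}$ where $x_{j^{*}} = x_{min}$ and keeping only the $k = i^{*}$ term gives $\lambda^{2} x_{min} = \sum_{k} b_{j^{*}k}^{(2)} x_{k} \ge b_{j^{*}i^{*}}^{(2)} x_{max} \ge x_{max}$, where the last inequality uses the hypothesis that every entry of $B^{2}$ is at least $1$. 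Combining the two yields $\kappa = x_{max}/x_{min} \le \lambda^{2} \le c_{max}$.

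Finally I would substitute $\kappa \le c_{max}$ into the sandwich to get $\lambda^{m}/c_{max} \le \sum_{j} b_{ij}^{(m)} \le c_{max}\lambda^{m}$, and take $m$-th roots to recover exactly $c_{max}^{-1/m}\rho(B) \le \bigl[\sum_{j} b_{ij}^{(m)}\bigr]^{1/m} \le c_{max}^{1/m}\rho(B)$. The only points requiring care are confirming primitivity (so that $x > 0$ and $x_{min} > 0$) and the correct reading of the hypothesis ``every entry is at least $1$'' as a statement about the entries of $B^{2}$, which is precisely what powers the lower estimate $\lambda^{2} x_{min} \ge x_{max}$.
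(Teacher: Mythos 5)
Your proposal is correct and follows essentially the same route as the paper: sandwich the row sums of $B^{m}$ between $\kappa^{-1}\rho(B)^{m}$ and $\kappa\,\rho(B)^{m}$ where $\kappa = \max_{k}x_{k}/\min_{k}x_{k}$ (the paper cites this sandwich from Horn and Johnson, while you prove it directly from $\tfrac{1}{x_{max}}x \le \mathbf{1} \le \tfrac{1}{x_{min}}x$), and then bound $\kappa \le c_{max}$ using the entrywise lower bound of $1$ on $B^{2}$ together with the row-sum bound on the spectral radius. Your derivation $\kappa \le \lambda^{2} \le c_{max}$, obtained by evaluating the eigen-equation for $B^{2}$ at the argmax and argmin indices, is a slightly different and marginally weaker bookkeeping than the paper's normalization argument (which gives $\kappa \le \lambda^{2}-(n-1)$ via $\sum_{j}x_{j}=1$ and Gershgorin), but it suffices for the stated conclusion.
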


We can now prove our desired result regarding the expected number of paths of length $r$.

\begin{thm} \label{thm:upperpar}
Consider a realization of a graph in the $2$-Partitioned Chung-Lu random graph model with prescribed expected row and column sums, $\mathbf{b}^{(x,y)},\mathbf{a}^{(x,y)}$ for each of the submatrices as illustrated in Figure \ref{fig:partition}.  Suppose that $S_{11},S_{12},S_{21}$ and $S_{22}$, the expected number of edges in each of the submatrices, are bounded below by $1$. Denote $P_{r}$ as the number of paths of length $r$.  Define $$\mathbf{P} = \begin{pmatrix}\frac{\mathbf{a}^{(1,1)}\cdot\mathbf{b}^{(1,1)}}{S_{11}} &\frac{\mathbf{a}^{(2,1)}\cdot\mathbf{b}^{(1,1)}}{S_{21}} &0 & 0 \\ 0 & 0 & \frac{\mathbf{a}^{(1,2)}\cdot\mathbf{b}^{(2,1)}}{S_{12}} & \frac{\mathbf{a}^{(2,2)}\cdot\mathbf{b}^{(2,1)}}{S_{22}} \\ \frac{\mathbf{a}^{(1,1)}\cdot\mathbf{b}^{(1,2)}}{S_{11}} & \frac{\mathbf{a}^{(2,1)}\cdot\mathbf{b}^{(1,2)}}{S_{21}} & 0 & 0 \\ 0 & 0 & \frac{\mathbf{a}^{(1,2)}\cdot\mathbf{b}^{(2,2)}}{S_{12}}&\frac{\mathbf{a}^{(2,2)}\cdot\mathbf{b}^{(2,2)}}{S_{22}} \end{pmatrix},$$

and assume that all of the $8$ entries in $\mathbf{P}$ that depend on the expected row and column sums of our submatrices are positive, so that $\mathbf{P}^{2}$ is entrywise positive.   For fixed $x$ and $y$, denote the maximum of the vectors $\mathbf{a}^{(x,y)}, \mathbf{b}^{(x,y)}$ as $a_{max}^{(x,y)}$ and $b_{max}^{(x,y)}$ respectively. Furthermore suppose for all choices of $m$ and $i_{1},...,i_{m+1}$, that there exists an $\alpha$ such that  \begin{equation} \label{eq:pmaxmatrix} \begin{psmallmatrix}
b_{max}^{(1,1)} & b_{max}^{(1,1)} & 0 & 0 \\
0 & 0 & b_{max}^{(2,1) }& b_{max}^{(2,1)}  \\
b_{max}^{(1,2)} & b_{max}^{(1,2)}& 0 & 0 \\
0 & 0 & b_{max}^{(2,2)} & b_{max}^{(2,2)}\end{psmallmatrix}
\Pi_{k=1}^{m}\mathbf{G}_{i_{k}i_{k+1}}
\begin{psmallmatrix}
a_{max}^{(1,1)}/S_{11} & 0& 0 & 0 \\
0 & a_{max}^{(2,1)}/S_{21} & 0 & 0 \\
0 & 0& a_{max}^{(1,2)}/S_{12} & 0 \\
0 & 0 & 0 & a_{max}^{(2,2)}/S_{22}\end{psmallmatrix}\leq \alpha \mathbf{P},
\end{equation}

where $\mathbf{G}_{i_{k}i_{k+1}}$ is defined in Lemma \ref{lem:generalized} and the inequality holds entry-wise.  Also suppose that $\rho(\mathbf{P})>2$.

Then $$E(P_{r}) \leq 8Sc_{max}\rho(\mathbf{P})^{r-1}exp(\frac{r^{2}\alpha\rho(\mathbf{P})^{-1}}{1-r\rho(\mathbf{P})^{-1}}), $$

where $c_{max}$ is the maximum column sum of $\mathbf{P}^{2}$.

\end{thm}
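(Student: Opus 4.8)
The plan is to mirror the proof of Theorem \ref{thm:upper} essentially line by line, replacing every scalar quantity by its matrix analogue: the scalar multiplier $\frac{\mathbf{a}\cdot\mathbf{b}}{S}$ is replaced by the matrix $\mathbf{P}$ (with $\rho(\mathbf{P})$ playing the role of its magnitude), the edge-probability ceiling $p_{max}$ is replaced by the constant $\alpha$ of hypothesis (\ref{eq:pmaxmatrix}), and all scalar products of probabilities become taxicab norms of matrix products furnished by Lemma \ref{lem:generalized}. First I would run the same last-edge recurrence as in (\ref{eq:recurrence})--(\ref{eq:recurrencegoal}): a path whose final edge repeats must close into a cycle whose missing edge coincides with the first edge, so $E(P_r)\le \sum_{m=1}^{r} E(P_m^{L})$, where $P_m^{L}$ counts paths of length $m$ whose last edge is new (the first edge is automatically new, nothing preceding it). This step is purely combinatorial and transfers verbatim, since the edge probabilities never enter it, and it puts us in the situation where Lemma \ref{lem:probpath} (first and last edges new) applies.

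For fixed $m$ I would expand $E(P_m^{L})$ via Lemma \ref{lem:generalized} as the taxicab norm of a sum over intermediate nodes of products of the matrices $\mathbf{A}(i_k,i_{k+1},\mathbf{i}_k)$, where a new edge contributes a probability matrix and a repeating edge contributes the indicator matrix $\mathbf{G}_{i_k i_{k+1}}$. Exactly as in Theorem \ref{thm:upper}, I would fix the repeating-block profile $(k_i)$ (with $k_0$ new edges and $k_i$ repeating blocks of length $i$), count their arrangements by the multinomial coefficient $\binom{\sum k_i}{k_0,\dots}$, and then bound the matrix product. The crucial device is entrywise monotonicity: since every matrix involved is entrywise nonnegative, the ordering $0\le M\le M'$ is preserved under left- and right-multiplication by nonnegative matrices. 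This lets me (i) collapse the summation over each new-edge-interior node into one factor of $\mathbf{P}$, just as the scalar sum $\sum_j a_j b_j=\mathbf{a}\cdot\mathbf{b}$ produced a factor $\frac{\mathbf{a}\cdot\mathbf{b}}{S}$, and (ii) replace each repeating block's product $\Pi_k \mathbf{G}_{i_k i_{k+1}}$, flanked by the maximal out-degree and in-degree matrices appearing in (\ref{eq:pmaxmatrix}), by $\alpha\mathbf{P}$. A block of length $i$ thus deletes $i+1$ new-edge-interior factors of $\mathbf{P}$ (Lemma \ref{lem:probpath}) but reinstates one through $\alpha\mathbf{P}$, so the total power of $\mathbf{P}$ is $r-1-\sum_i i k_i$; combined with at most $r^{i}$ choices of which earlier edges each block reuses and the coefficient $\alpha$ per block, each block of length $i$ contributes a factor $\alpha\,(r\rho(\mathbf{P})^{-1})^{i}$ relative to the all-new base $\mathbf{P}^{r-1}$.

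At this stage I would apply Corollary \ref{cor:matrixmultiply} to convert the taxicab norm of $\mathbf{P}^{r-1}$ into $c_{max}\,\rho(\mathbf{P})^{r-1}$ (this is where the $c_{max}$ prefactor and the dimensional constant enter), and then invoke Lemma \ref{lem:expineq} with $l=1$, $\alpha_{\text{Lem}}=\alpha r\rho(\mathbf{P})^{-1}$ and $\beta=r\rho(\mathbf{P})^{-1}$. Since each length-$i$ block contributes $\alpha(r\rho(\mathbf{P})^{-1})^{i}=\alpha_{\text{Lem}}\beta^{\,i-1}$, the lemma bounds the profile sum by $\exp\!\big(\frac{r\alpha_{\text{Lem}}}{1-\beta}\big)=\exp\!\big(\frac{r^{2}\alpha\rho(\mathbf{P})^{-1}}{1-r\rho(\mathbf{P})^{-1}}\big)$, which is precisely the stated exponential factor. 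Summing the resulting geometric series over $m=1,\dots,r$ and using $\rho(\mathbf{P})>2$ to bound $\frac{1}{1-\rho(\mathbf{P})^{-1}}\le 2$ yields the prefactor $8Sc_{max}$ (the $8$ collecting the factor $2$ from the geometric sum together with the factor coming from the four-dimensional vector structure, and the $S$ from summing the initial out-degree weights over all start nodes).

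The main obstacle I anticipate is the noncommutative bookkeeping of the second paragraph: unlike the scalar argument I cannot simply multiply ceilings, so I must argue carefully that the entrywise inequalities survive the matrix products in the correct order, and that the sandwiched product in hypothesis (\ref{eq:pmaxmatrix}) really is the object produced by an arbitrary repeating block once its adjacent new edges are absorbed into the max-degree matrices. Verifying that summing over a new-edge-interior node yields (an entrywise upper bound by) exactly one factor of $\mathbf{P}$ — i.e. that the block structure of the matrices $\mathbf{A}$ routes the community labels correctly — is the delicate point; once entrywise monotonicity is established, the remainder is a faithful transcription of Theorem \ref{thm:upper}.
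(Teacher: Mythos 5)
Your proposal is correct and takes essentially the same route as the paper's own proof: the paper likewise expands $E(P_r^L)$ via Lemma \ref{lem:generalized}, collapses each sum over a new-edge-interior node into a factor of $\mathbf{P}$ (using $\sum_{i_k}\mathbf{B}_{i_k}\mathbf{A}_{i_k}=\mathbf{P}$), replaces each repeating edge block sandwiched by the max-degree matrices with $\alpha\mathbf{P}$ via hypothesis (\ref{eq:pmaxmatrix}), converts the taxicab norm through Corollary \ref{cor:matrixmultiply}, and applies Lemma \ref{lem:expineq} with exactly your parameters before summing the geometric series using $\rho(\mathbf{P})>2$. Your power counting ($\mathbf{P}^{r-1-\sum_i ik_i}$ with $r^i$ choices per block) and the decomposition of the prefactor $8Sc_{max}$ also match the paper's bookkeeping, so there is nothing to correct.
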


\textbf{Remark}.  While condition (\ref{eq:pmaxmatrix}) at first look may appear like a difficult condition to satisfy, this in fact is not so.
Upon careful observation of Lemma \ref{lem:generalized}, $\mathbf{G}_{ij}$ consists of two columns that are from the standard unit basis and two columns that are zero.  Consequently, if $\rho(\mathbf{P})\rightarrow \infty$, and since $\mathbf{G}_{ij}$ has the property that $|\mathbf{G}_{ij}\mathbf{v}| \leq |\mathbf{v}|$, we can often satisfy condition (\ref{eq:pmaxmatrix}) with ease.  It is worth mentioning that even if we cannot satisfy (\ref{eq:pmaxmatrix}), we could still prove a useful generalization of Theorem \ref{thm:upperpar} by requiring that for each $m$ we can find an $\alpha_{m}$ such that
\begin{equation} \begin{psmallmatrix}
b_{max}^{(1,1)} & b_{max}^{(1,1)} & 0 & 0 \\
0 & 0 & b_{max}^{(2,1) }& b_{max}^{(2,1)}  \\
b_{max}^{(1,2)} & b_{max}^{(1,2)}& 0 & 0 \\
0 & 0 & b_{max}^{(2,2)} & b_{max}^{(2,2)}\end{psmallmatrix}
\Pi_{k=1}^{m}\mathbf{G}_{i_{k}i_{k+1}}
\begin{psmallmatrix}
a_{max}^{(1,1)}/S_{11} & 0& 0 & 0 \\
0 & a_{max}^{(2,1)}/S_{21} & 0 & 0 \\
0 & 0& a_{max}^{(1,2)}/S_{12} & 0 \\
0 & 0 & 0 & a_{max}^{(2,2)}/S_{22}\end{psmallmatrix}\leq \alpha_{m} \mathbf{P}^{m}.
\end{equation}

Alternatively, we can also satisfy (\ref{eq:pmaxmatrix}), if the product of the norms of the left and right matrices on the left hand side of (\ref{eq:pmaxmatrix}), are sufficiently small, analogous to the case where $p_{max} \rightarrow 0$.  With this in mind, we now provide the proof, which is similar to Theorem \ref{thm:upper}.

\begin{proof}
Recall that $A = \begin{pmatrix}A_{11} & A_{12} \\ A_{21} & A_{22}   \end{pmatrix}$.  We define $a_{i_k}^{(x,y)}$ to be $0$ if $i_k$ does not belong to group $y$.  If $i_k$ does belong to group $y$, then $a_{i_k}^{(x,y)}$ will be the expected column sum corresponding to node $i_k$ in the submatrix $A_{xy}$.  Analogously, we define  $b_{i_k}^{(x,y)}$ to be $0$ if $i_k$ does not belong to group $x$.  If $i_k$ does belong to group $x$, then $b_{i_k}^{(x,y)}$ will be the expected row sum corresponding to node $i_k$ in the submatrix $A_{xy}$. Let $S_{xy} = \sum_{i_k = 1}^{N}b_{i_k}^{(x,y)}$ be the expected sum of the entries in the submatrix  $A_{xy}$.
Consequently by the definition of the Partitioned Chung-Lu random graph model, we have that 

\begin{align}\label{eq:CLumatrix}
\begin{psmallmatrix} 
p_{i_k i_{k+1}}(1,1) & p_{i_k i_{k+1}}(1,1) & 0 & 0 \\
0 & 0 & p_{i_k i_{k+1}}(2,1) & p_{i_k i_{k+1}}(2,1)  \\
p_{i_k i_{k+1}}(1,2) & p_{i_k i_{k+1}}(1,2)& 0 & 0 \\
0 & 0 & p_{i_k i_{k+1}}(2,2) & p_{i_k i_{k+1}}(2,2)
\end{psmallmatrix} = \mspace{120mu} \notag\\
\begin{psmallmatrix} a_{i_{k+1}}^{(1,1)}/S_{11} & 0& 0 & 0 \\
0 & a_{i_{k+1}}^{(2,1)}/S_{21} & 0 & 0 \\
0 & 0& a_{i_{k+1}}^{(1,2)}/S_{12} & 0 \\
0 & 0 & 0 & a_{i_{k+1}}^{(2,2)}/S_{22}
\end{psmallmatrix} \begin{psmallmatrix}
b_{i_{k}}^{(1,1)} & b_{i_{k}}^{(1,1)} & 0 & 0 \\
0 & 0 & b_{i_{k}}^{(2,1) }& b_{i_{k}}^{(2,1)}  \\
b_{i_{k}}^{(1,2)} & b_{i_{k}}^{(1,2)}& 0 & 0 \\
0 & 0 & b_{i_{k}}^{(2,2)} & b_{i_{k}}^{(2,2)}
\end{psmallmatrix} .
\end{align}

Because this is rather unweildy, we will denote
\begin{equation}\label{eq:notation1}\mathbf{A}_{i_{k+1}} = \begin{psmallmatrix} a_{i_{k+1}}^{(1,1)}/S_{11} & 0& 0 & 0 \\
0 & a_{i_{k+1}}^{(2,1)}/S_{21} & 0 & 0 \\
0 & 0& a_{i_{k+1}}^{(1,2)}/S_{12} & 0 \\
0 & 0 & 0 & a_{i_{k+1}}^{(2,2)}/S_{22}
\end{psmallmatrix} and  
\end{equation}
\begin{equation} \label{eq:notation2}
\mathbf{B}_{i_k}=\begin{psmallmatrix}
b_{i_{k}}^{(1,1)} & b_{i_{k}}^{(1,1)} & 0 & 0 \\
0 & 0 & b_{i_{k}}^{(2,1) }& b_{i_{k}}^{(2,1)}  \\
b_{i_{k}}^{(1,2)} & b_{i_{k}}^{(1,2)}& 0 & 0 \\
0 & 0 & b_{i_{k}}^{(2,2)} & b_{i_{k}}^{(2,2)}
\end{psmallmatrix} .
\end{equation}

To derive the upperbound, we consider paths where the last edge is not a repeating edge.  Furthermore, since we are dealing with matrix multiplication and no longer have commutativity, it will be more helpful to denote the locations of the nodes in the path that are part of the new edge interior as opposed to the nodes themselves.   To construct an arbitrary path, we consider the set $\mathbf{L}_{N}$, which identifies the locations of the nodes in the new edge interior.  Furthermore, we also construct $\mathbf{L}_{R}$, which identifies both the locations of the nodes in repeating edge blocks and the function that assigns nodes in a repeating edge block to equal nodes in earlier positions in the path.   By considering all possible choices for $\mathbf{L}_{N}$, $\mathbf{L}_{R}$, summing over all possible node choices and invoking Lemma \ref{lem:generalized}, we have the following upperbound for the expected number of paths of length $r$, where the last edge is not a repeating edge,

\begin{equation}\label{eq:paths}   
E(P_{r}^{L})\leq  \sum_{\mathbf{L}_{R},\mathbf{L}_{N}}\sum_{\substack{n_j\in [1..N]\\ \forall j \in \mathbf{L}_{N}}}\sum_{n_0 = 1,n_r = 1}^{N}[\Pi_{t=1}^{r-1}\mathbf{A}(n_t,n_{t+1},\mathbf{n}_{t})]\mathbf{p}|.
\end{equation}

Once we fix a particular choice for $\mathbf{L}_{R}$ we know the positions of the first and last nodes in a repeating edge block. Define $$\mathbf{Z}_{n_{k}} = \begin{cases}
       \mathbf{B}_{n_{k}}\mathbf{A}_{n_{k}} &\quad\text{if}\hspace{3pt} n_{k} \in \mathbf{L}_{N}\\
       \alpha\mathbf{P} &\quad\text{if}\hspace{3pt} n_{k} \hspace{3pt}\text{is in the beginning of a repeating edge block}  \\
       \mathbf{I} &\quad\text{otherwise.} \\ 
     \end{cases}, $$ where $\mathbf{I}$ is the identity matrix and let \begin{equation} \mathbf{b}_{n_{0}} = \begin{pmatrix} b_{n_{0}}^{(1,1)} & b_{n_{0}}^{(2,1)}& b_{n_{0}}^{(1,2)} &b_{n_{0}}^{(2,2)}\end{pmatrix}^{T}.
     \end{equation}
  With these definitions, we can simplify (\ref{eq:paths})  by invoking (\ref{eq:pmaxmatrix}).(\ref{eq:notation1}),(\ref{eq:notation2}) to get that,

\begin{equation}\label{eq:paths2}   
E(P_{r}^{L})\leq  \sum_{\mathbf{L}_{R},\mathbf{L}_{N}}\sum_{\substack{n_j\in [1..N]\\ \forall j \in \mathbf{L}_{N}}}\sum_{n_0 = 1,n_r = 1}^{N}[\mathbf{A}_{n_{r}}\Pi_{t=1}^{r-1}\mathbf{Z}_{n_{t}}\mathbf{b}_{n_{0}}|,
\end{equation}

Recall from (\ref{eq:notation1}) and (\ref{eq:notation2}) that
\begin{equation}\mathbf{B}_{i_k}\mathbf{A}_{i_k} = \begin{psmallmatrix}
b_{i_{k}}^{(1,1)}a_{i_{k}}^{(1,1)}/S_{11} & b_{i_{k}}^{(1,1)} a_{i_{k}}^{(2,1)}/S_{21} & 0 & 0 \\
0 & 0 & b_{i_{k}}^{(2,1) }a_{i_{k}}^{(1,2)}/S_{12}& b_{i_{k}}^{(2,1)}a_{i_{k}}^{(2,2)}/S_{22}  \\
b_{i_{k}}^{(1,2)}a_{i_{k}}^{(1,1)}/S_{11} & b_{i_{k}}^{(1,2)} a_{i_{k}}^{(2,1)}/S_{21}& 0 & 0 \\
0 & 0 & b_{i_{k}}^{(2,2)}a_{i_{k}}^{(1,2)}/S_{12} & b_{i_{k}}^{(2,2)}a_{i_{k}}^{(2,2)}/S_{22}\end{psmallmatrix}.
\end{equation}

And from the statement of this Theorem, Theorem \ref{thm:upperpar}, recall the definition of $\mathbf{P}$.
It follows that by summing over all possible choices of nodes for $i_k$ that, 

\begin{equation}\mathbf{P} = \sum_{i_k=1}^{N}\mathbf{B}_{i_k}\mathbf{A}_{i_k}.
\end{equation}

So from (\ref{eq:paths2}) by summing over all possible nodes in the new edge interior, we get that,

\begin{equation}\label{eq:paths3}   
E(P_{r}^{L})\leq  \sum_{\mathbf{L}_{R},\mathbf{L}_{N}}\sum_{n_0 = 1,n_r = 1}^{N}[\mathbf{A}_{n_{r}}\mathbf{P}^{|\mathbf{L}_{N}|}(\alpha \mathbf{P})^{\sum_{i=1}^{r}k_{i}}\mathbf{b}_{n_{0}}|,
\end{equation} 
where for $i>0$, $k_{i}$ is the number of repeating edge blocks of length $i$, which is determined by $\mathbf{L}_{R}$.  

Now instead of summing over all of the possible locations of the new edges and repeating edges (and which edges they must equal to), we sum over the prescribed number of repeating edge blocks of various lengths, consider all possible arrangements for the positions of the repeating edge blocks and multiply this quantity by the number of ways for filling in the repeating edge blocks.

So using the fact that $|\mathbf{L}_{N}| = r - 1 - \sum_{i=1}^{r-2}(i+1)k_{i}$, we conclude that, 

\begin{equation}\label{eq:paths4}   
E(P_{r}^{L})\leq  \sum_{\substack{k_{0}+\sum_{i=1}^{r-2}ik_{i}\\ \forall i, k_{i}\in [0..r] }}\sum_{n_0 = 1,n_r = 1}^{N}\binom{\sum k_{i}}{k_{0},...,k_{r-2}}r^{\sum_{i\geq 1}ik_{i}}[\mathbf{A}_{n_{r}}\mathbf{P}^{r - 1 - \sum_{i=1}^{r}(i+1)k_{i}}(\alpha \mathbf{P})^{\sum_{i=1}^{r}k_{i}}\mathbf{b}_{n_{0}}|.
\end{equation} 

Summing over all possible choices for the first and last nodes yield,

\begin{equation}\label{eq:paths5}   
E(P_{r}^{L})\leq  |\sum_{\substack{k_{0}+\sum_{i=1}^{r-2}ik_{i}\\ \forall i, k_{i}\in [0..r] }}\binom{\sum k_{i}}{k_{0},...,k_{r-2}}r^{\sum_{i\geq 1}ik_{i}}\mathbf{I}\cdot \mathbf{P}^{r - 1 - \sum_{i=1}^{r-2}ik_{i}}(\alpha)^{\sum_{i=1}^{r}k_{i}}S\mathbf{1}|,
\end{equation} 
where  $\mathbf{1}$ is a vector of one's and $S = S_{11}+S_{12}+S_{21}+S_{22}$.
, where $S_{ij}$ denotes the expected sum of entries in the submatrix $A_{ij}$.  Now by definition of $\mathbf{P}$, $\mathbf{P}^{2}$ is an entry-wise positive matrix where each entry is bounded below by $1$.  Define $c_{max}$ to be the maximum column sum of $\mathbf{P}^{2}$.  It follows by Corollary \ref{cor:matrixmultiply}, that each column sum of  $\mathbf{P}^{r-1}$ is bounded above by $c_{max}\rho(\mathbf{P}^{r-1})$.  Hence we conclude that 

\begin{equation}\label{eq:paths6}   
E(P_{r}^{L})\leq  \sum_{\substack{k_{0}+\sum_{i=1}^{r-2}ik_{i}\\ \forall i, k_{i}\in [0..r] }}\binom{\sum k_{i}}{k_{0},...,k_{r-2}}r^{\sum_{i\geq 1}ik_{i}}4c_{max}\rho(\mathbf{P})^{r - 1 - \sum_{i=1}^{r-2}ik_{i}}(\alpha)^{\sum_{i=1}^{r}k_{i}}S.
\end{equation} 

Since we have replaced the norm of a matrix product with scalar multiplication, we can directly appeal to the strategy we used in Theorem \ref{thm:upper} to get an upperbound on (\ref{eq:paths6}), where we will use the facts that  $E(P_{r}) \leq \sum_{i=1}^{r}E(P_{i}^{L})$, $\rho(\mathbf{P}) > 2$ and invoke Lemma \ref{lem:expineq}.
\end{proof}

We now have the desired concentration result.

\begin{cor}\label{cor:markovpar}
Consider a realization of a graph $A$ in the $2$-Partitioned Chung-Lu random graph model with prescribed expected row and column sums, $\mathbf{b}^{(x,y)},\mathbf{a}^{(x,y)}$ for each of the submatrices as illustrated in Figure \ref{fig:partition}.  Suppose that $S_{11},S_{12},S_{21}$ and $S_{22}$ are all positive.  Denote $P_{r}$ as the number of paths of length $r$.
Define $$\mathbf{P} = \begin{pmatrix}\frac{\mathbf{a}^{(1,1)}\cdot\mathbf{b}^{(1,1)}}{S_{11}} &\frac{\mathbf{a}^{(2,1)}\cdot\mathbf{b}^{(1,1)}}{S_{21}} &0 & 0 \\ 0 & 0 & \frac{\mathbf{a}^{(1,2)}\cdot\mathbf{b}^{(2,1)}}{S_{12}} & \frac{\mathbf{a}^{(2,2)}\cdot\mathbf{b}^{(2,1)}}{S_{22}} \\ \frac{\mathbf{a}^{(1,1)}\cdot\mathbf{b}^{(1,2)}}{S_{11}} & \frac{\mathbf{a}^{(2,1)}\cdot\mathbf{b}^{(1,2)}}{S_{21}} & 0 & 0 \\ 0 & 0 & \frac{\mathbf{a}^{(1,2)}\cdot\mathbf{b}^{(2,2)}}{S_{12}}&\frac{\mathbf{a}^{(2,2)}\cdot\mathbf{b}^{(2,2)}}{S_{22}} \end{pmatrix}.$$

Furthermore suppose that he eight non-zero entries in$\mathbf{P}$ are bounded below by $1$ and that condition (\ref{eq:pmaxmatrix}) from Theorem \ref{thm:upperpar} holds.

Then for every $\epsilon > 0$, there exists $\delta_1$,$\delta_2$ such that if there exists $r \in \mathbb{N}$, where $\frac{\log N}{r}<\delta_1$ and $\alpha r^{2}\rho(\mathbf{P})^{-1}<\delta_2$, then

$$Pr(\rho(A)\leq (1+\epsilon)\rho(\mathbf{P}))\geq 1 - \epsilon. $$
\end{cor}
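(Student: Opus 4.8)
The plan is to transcribe the proof of Theorem \ref{thm:Markov} almost verbatim, with $\rho(\mathbf{P})$ taking over the role of $\frac{\mathbf{a}\cdot\mathbf{b}}{S}$ and with Theorem \ref{thm:upperpar} supplying the bound on $E(P_r)$ in place of Theorem \ref{thm:upper}. The hypotheses of the corollary are precisely those that Theorem \ref{thm:upperpar} demands (the eight nonzero entries of $\mathbf{P}$ at least $1$, condition (\ref{eq:pmaxmatrix}), and $\rho(\mathbf{P})>2$), so I may invoke it directly to get
$$E(P_r)\le 8Sc_{max}\,\rho(\mathbf{P})^{r-1}\exp\!\Big(\frac{r^{2}\alpha\rho(\mathbf{P})^{-1}}{1-r\rho(\mathbf{P})^{-1}}\Big),$$
where $c_{max}$ is the maximum column sum of $\mathbf{P}^{2}$.

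First I would apply Markov's inequality with threshold $Z=N\cdot E(P_r)$, giving $Pr(P_r>Z)\le \frac1N$. On the complementary event, Lemma \ref{lem:theory} supplies $\rho(A)^{r}=\rho(A^{r})\le \mathbf{1}^{T}A^{r}\mathbf{1}=P_r\le N\,E(P_r)$, so after taking $r$-th roots
$$\rho(A)\le (8NSc_{max})^{1/r}\,\rho(\mathbf{P})\,\exp\!\Big(\frac{r\alpha\rho(\mathbf{P})^{-1}}{1-r\rho(\mathbf{P})^{-1}}\Big)$$
holds with probability at least $1-\frac1N$. It remains to force the polynomial prefactor $(8NSc_{max})^{1/r}$ and the exponential factor each below $\sqrt{1+\epsilon}$ by shrinking $\delta_1,\delta_2$ and enlarging $N$.

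For the prefactor I would pass to logarithms: $\frac1r\log(8NSc_{max})$. Since $S\le N^{2}$ and the entries of $\mathbf{P}$ are ratios of degree dot-products that are at worst polynomial in $N$, the column sum $c_{max}$ of the fixed-size matrix $\mathbf{P}^{2}$ is also polynomial in $N$; hence $\log(8NSc_{max})=O(\log N)$ and the prefactor is $\exp(O(\log N)/r)$, which the assumption $\frac{\log N}{r}<\delta_1$ drives to $1+O(\delta_1)$, exactly as the $(2N^{3})^{1/r}$ term was handled in Theorem \ref{thm:Markov}. For the exponential factor, $r\ge 1$ gives $r\alpha\rho(\mathbf{P})^{-1}\le \alpha r^{2}\rho(\mathbf{P})^{-1}<\delta_2$, which controls the numerator; provided $r\rho(\mathbf{P})^{-1}$ stays bounded below $1$ (the analogue of the condition $r\frac{S}{\mathbf{a}\cdot\mathbf{b}}<\frac12$ that is already needed for the bound of Theorem \ref{thm:upperpar} to be finite), the whole exponent is $O(\delta_2)$, and $\exp(x)\le 1+3x$ makes this factor $1+O(\delta_2)$. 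Combining the two estimates and taking $N$ large enough that $\frac1N<\epsilon$ yields $\rho(A)\le(1+\epsilon)\rho(\mathbf{P})$ on an event of probability at least $1-\frac1N\ge1-\epsilon$.

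The main obstacle is the control of the denominator $1-r\rho(\mathbf{P})^{-1}$: the single stated smallness condition $\alpha r^{2}\rho(\mathbf{P})^{-1}<\delta_2$ bounds the numerator of the exponent but does not by itself keep $r\rho(\mathbf{P})^{-1}$ away from $1$, so one must either read a constraint $r\rho(\mathbf{P})^{-1}<\frac12$ out of the regime (it is forced whenever $\alpha$ is bounded below, and it is in any case required for the Theorem \ref{thm:upperpar} bound to be meaningful) or track the cancellation in $\frac{r\alpha\rho(\mathbf{P})^{-1}}{1-r\rho(\mathbf{P})^{-1}}$ directly. The only other point needing care is confirming that $c_{max}$ and $S$ are polynomial in $N$, so that their logarithms are absorbed by $\frac{\log N}{r}<\delta_1$; everything else is a line-by-line copy of the scalar computation in Theorem \ref{thm:Markov}.
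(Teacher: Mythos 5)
Your proposal follows exactly the route the paper takes: its proof of Corollary \ref{cor:markovpar} is a one-line remark invoking Theorem \ref{thm:upperpar}, Lemma \ref{lem:theory}, and Markov's Inequality in direct analogy with Theorem \ref{thm:Markov}, which is precisely your transcription. Your flagged caveats (keeping $r\rho(\mathbf{P})^{-1}$ bounded away from $1$, and $c_{max}$, $S$ being polynomial in $N$) are legitimate details the paper also leaves implicit, so they do not constitute a deviation from its argument.
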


\begin{proof}
The proof is analogous to Theorem \ref{thm:Markov}.  Invoking Theorem \ref{thm:upperpar}, Lemma \ref{lem:theory} and Markov's Inequality prove the result.
\end{proof}

Counting paths using matrix products presents a major challenge for generalizing our results from the Chung-Lu model to the Partitioned Chung-Lu model. More specifically, when counting cycles of length $2$ in the Chung-Lu model, we used commutativity to argue that $\sum_{i,j} p_{ji}p_{ij}=\sum_{i,j} \frac{b_{j}a_{i}}{S}\frac{b_{i}a_{j}}{S} = \sum_{i} \frac{a_{i}b_{i}}{S}\sum_{j}\frac{a_{j}b_{j}}{S}=(\frac{\mathbf{a}\cdot\mathbf{b}}{S})^{2}$.  Unfortunately, we cannot assume that our matrices will commute.  Consequently, we seek a lower bound that retains the desired commutativity property. Define the matrix 
\begin{equation} \label{eq:trCr} \mathbf{E}(i,j,G) = \begin{psmallmatrix}
e_{ij}(1,1) & e_{ij}(1,1) & 0 & 0 \\
0 & 0 & e_{ij}(2,1) & e_{ij}(2,1)  \\
e_{ij}(1,2) & e_{ij}(1,2)& 0 & 0 \\
0 & 0 & e_{ij}(2,2) & e_{ij}(2,2)
\end{psmallmatrix}, 
\end{equation}

where $e_{ij}(m,n)$ is $1$ if in the graph $G$ there is an edge from node $i$ to node $j$, node $i$ belongs to group $m$ and node $j$ belongs to group $n$; if that is not the case then  $e_{ij}(m,n)=0$. \textbf{It follows that for a given graph $G$ the number of paths of length $r-1$ is precisely}     $|\sum_{i_1,...,i_{r}} \frac{1}{2}[\Pi_{k=1}^{r-1}\mathbf{E}(i_k,i_{k+1},G)]|.$  (Compare this with the remark after Lemma \ref{lem:generalized}.)  We will want to consider the random variable  

\begin{equation} \label{eq:trCr2}
trace(C_r):= \newline trace(\sum_{i_1=i_{r+1},i_2...,i_{r}} \frac{1}{2}\Pi_{k=1}^{r}\mathbf{E}(i_k,i_{k+1},G))
\end{equation} and show that with high probability the $trace(C_r)$ is heavily concentrated around its mean.  As suggested earlier, we can easily prove interesting concentration regarding $trace(C_{r})$ as taking the trace of a matrix product has quasi-commutatitive properties.  We stress that $trace(C_{r})$ is a lower bound for the number of cycles of length $r$ and hence a lowerbound for the rth power of the spectral radius.  The following lemma relates the $trace(C_r)$ to the $trace(\mathbf{P}^{r})$.

\begin{lem}\label{lem:explowermtrx}
	Recall that we define the random variable $trace(C_r):= \newline trace(\sum_{i_1=i_{r+1},i_2...,i_{r}} [\frac{1}{2}\Pi_{k=1}^{r}\mathbf{E}(i_k,i_{k+1},G))$, which represents the number of a particular subset of cycles of length $r$ in our graph.  Then it follows that,
	
	$$\frac{1}{2}trace(\mathbf{P}^{r})\leq E(trace(C_{r})).$$
\end{lem}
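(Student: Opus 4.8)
The plan is to move to expectations inside the trace, replace each random matrix $\mathbf{E}(i_k,i_{k+1},G)$ by its mean, and then recognise the resulting deterministic sum as $\frac{1}{2}trace(\mathbf{P}^{r})$ after regrouping the matrix factors. First I would use linearity of the trace and of the expectation to write
$$E(trace(C_r)) = \tfrac{1}{2}\sum_{i_1=i_{r+1},i_2,\dots,i_r} trace\big(E[\Pi_{k=1}^{r}\mathbf{E}(i_k,i_{k+1},G)]\big).$$
Each entry of $\mathbf{E}(i,j,G)$ is a deterministic group-membership indicator times the Bernoulli edge indicator $\mathbf{1}_{(i,j)}$, so $E[\mathbf{E}(i,j,G)]$ is exactly the matrix $\mathbf{A}(i,j,\emptyset)$ of Lemma \ref{lem:generalized} (with $p_{ij}(\cdot,\cdot)$ in place of $e_{ij}(\cdot,\cdot)$).

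Next I would establish the entrywise inequality $E[\Pi_{k}\mathbf{E}(i_k,i_{k+1},G)] \ge \Pi_{k}\mathbf{A}(i_k,i_{k+1},\emptyset)$. Every entry of the product $\Pi_k\mathbf{E}(i_k,i_{k+1},G)$ is a sum over intermediate indices of products of entries, and each such summand is a (deterministic) group factor times $\mathbf{1}[\text{all traversed edges present}]$; all entries are nonnegative. Because the edge indicators are independent with $\mathbf{1}_{(i,j)}^{m}=\mathbf{1}_{(i,j)}$ and every $p_{ij}\le 1$, the probability that all edges of the walk occur is at least the product of the individual edge probabilities, with equality precisely when no edge repeats — the same mechanism used in Lemma \ref{lem:lower}. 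Summing the expanded entries and then taking the diagonal sum (which preserves $\ge$ because the entries are nonnegative) yields
$$E(trace(C_r)) \ge \tfrac{1}{2}\sum_{i_1=i_{r+1},i_2,\dots,i_r} trace\big(\Pi_{k=1}^{r}\mathbf{A}(i_k,i_{k+1},\emptyset)\big).$$

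It then remains to prove the algebraic identity $\sum_{i_1=i_{r+1},\dots,i_r} trace(\Pi_{k=1}^{r}\mathbf{A}(i_k,i_{k+1},\emptyset)) = trace(\mathbf{P}^{r})$. Here I would substitute the factorization $\mathbf{A}(i,j,\emptyset)=\mathbf{A}_{j}\mathbf{B}_{i}$ from (\ref{eq:CLumatrix}), keeping the convention under which the leftmost factor corresponds to the last edge of the closed walk. With this ordering the blocks $\mathbf{B}_{i_k}\mathbf{A}_{i_k}$, which share the node index $i_k$, emerge as adjacent pairs, leaving a single $\mathbf{A}_{i_1}$ at the front and a single $\mathbf{B}_{i_1}$ at the back; cyclicity of the trace brings these two together, producing $trace(\Pi_{k}\mathbf{B}_{i_k}\mathbf{A}_{i_k})$. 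Summing over all node tuples and relabelling the dummy indices then gives $trace\big((\sum_i \mathbf{B}_i\mathbf{A}_i)^{r}\big)=trace(\mathbf{P}^{r})$, since $\mathbf{P}=\sum_i \mathbf{B}_i\mathbf{A}_i$. Combining the two displays delivers $\tfrac{1}{2}trace(\mathbf{P}^{r}) \le E(trace(C_r))$.

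The hard part will be this last identity: because $\mathbf{A}_i$ and $\mathbf{B}_i$ do not commute, the regrouping of $\Pi_k \mathbf{A}_{i_{k+1}}\mathbf{B}_{i_k}$ into $\Pi_k \mathbf{B}_{i_k}\mathbf{A}_{i_k}$ cannot be done factorwise and must be justified solely through the cyclic invariance of the trace together with a careful relabelling of the summation indices, while maintaining the product ordering consistent with the walk. This is exactly the ``quasi-commutative'' property of the trace mentioned before the statement, and it is what makes $trace(C_r)$, rather than a one-sided quantity like $\mathbf{1}^{T}A^{r}\mathbf{1}$, tractable in the noncommutative, community-structured setting. The entrywise Bernoulli bound of the second step is the conceptual analogue of the repeated-edge argument in the single-partition case, and it is what turns the identity into the desired one-sided inequality.
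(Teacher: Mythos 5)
Your proposal is correct and follows essentially the same route as the paper's proof: the paper likewise passes to the cycle probabilities via the matrices of Lemma \ref{lem:generalized}, bounds $\frac{1}{2}\Pi_{k=1}^{r}\mathbf{A}(i_k,i_{k+1},\mathbf{i}_{k})$ entrywise from below by $\Pi_{k=1}^{r}\mathbf{A}_{i_{k+1}}\mathbf{B}_{i_{k}}$ (valid precisely because re-using an edge's probability when $p_{ij}\leq 1$ can only decrease the product), and then uses cyclic invariance of the trace to regroup into $\Pi_{k=1}^{r}\mathbf{B}_{i_{k}}\mathbf{A}_{i_{k}}$ and sums over the node indices to obtain $\frac{1}{2}trace(\mathbf{P}^{r})$. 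The only cosmetic difference is that you phrase the first step as computing $E[\Pi_{k}\mathbf{E}(i_k,i_{k+1},G)]$ directly, whereas the paper cites the path-probability interpretation of Lemma \ref{lem:generalized}; these are the same mechanism.
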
 

\begin{proof}
	Firstly, by definition,
	\begin{equation} E(trace(C_r)):= E(trace(\sum_{\substack{i_1=1,i_2=1...,i_{r}=1\\ i_1 = i_{r+1}}}^{N} [\frac{1}{2}\Pi_{k=1}^{r}\mathbf{E}(i_k,i_{k+1},G)]) =
	\end{equation}
	\begin{equation}\label{eq:edgeprob}
	 \sum_{\substack{i_1=1,i_2=1...,i_{r}=1\\ i_1 = i_{r+1}}}^{N}trace([\frac{1}{2}\Pi_{k=1}^{r}\mathbf{A}(i_k,i_{k+1},\mathbf{i}_{k})]),
	 \end{equation}
	 where (\ref{eq:edgeprob}) is precisely the sum of the probabilities that each cycle that contributes to the $trace(C_{r})$ exists. 
	 We can then entrywise bound the matrix product, $\frac{1}{2}\Pi_{k=1}^{r}\mathbf{A}(i_k,i_{k+1},\mathbf{i}_{k})$ below by $\Pi_{k=1}^{r}\mathbf{A}_{i_{k+1}}\mathbf{B}_{i_{k}}$, as the latter matrix product computes the product of the probabilities that each edge exists, even if we have already visited a particular edge earlier in the path.  We then have that,
	\begin{equation}
	E(trace(C_r)) \geq \sum_{\substack{i_1=1,i_2=1...,i_{r}=1\\ i_1 = i_{r+1}}}^{N}\frac{1}{2} trace(\Pi_{k=1}^{r}\mathbf{A}_{i_{k+1}}\mathbf{B}_{i_{k}})
	 = 
	 \end{equation}
	 \begin{equation} \label{eq:cyclemtrx}
	\sum_{\substack{i_1,...,i_r}} \frac{1}{2}trace(\mathbf{A}_{i_1}(\Pi_{k=2}^{r}\mathbf{B}_{i_{k}}\mathbf{A}_{i_{k}})\mathbf{B}_{i_1})
	 =  \sum_{i_1,...,i_r}\frac{1}{2}trace((\Pi_{k=2}^{r}\mathbf{B}_{i_{k}}\mathbf{A}_{i_{k}})\mathbf{B}_{i_1}\mathbf{A}_{i_1}),
	\end{equation}
	
	,where in the last equality we used the commutativity property that $trace(AB) = trace(BA)$.  It then follows that (\ref{eq:cyclemtrx}) equals
	
	\begin{equation}
	\sum_{i_1,i_2,...,i_{r}} \frac{1}{2}trace(\Pi_{k=1}^{r}\mathbf{B}_{i_{k}}\mathbf{A}_{i_{k}})=\frac{1}{2}trace(\sum_{i_1,i_2,...,i_{r}}\Pi_{k=1}^{r}\mathbf{B}_{i_{k}}\mathbf{A}_{i_{k}})=\frac{1}{2}trace(\mathbf{P}^{r}).
	\end{equation}
	
\end{proof}

In order to show that the standard deviation is much smaller than the mean, we want to show that the contribution for pairs of paths with many repeating edge blocks decreases exponentially as we increase the number and size of the repeating edge blocks.  More precisely, the presence of repeating edge blocks results in a smaller power $r$, of the matrix product $\mathbf{P}^{r}$.  But showing that $|\mathbf{P}^{r}|$ increases at an exponential rate in terms of $r$ is a non-trivial problem as we do not want to make any assumptions about the eigenbasis of $\mathbf{P}$.  

We also note that the expected value of the $trace(C_{r})$ in Lemma  \ref{lem:explowermtrx} is stated in terms of $trace(\mathbf{P}^{r})$.  Since we want to show that asymptotically with high probability $\rho(\mathbf{P}) \leq trace(C_{r})^{\frac{1}{r}}$, we would like to find a lowerbound for $trace(\mathbf{P}^{r})$ in terms of the spectral radius.   The following corollary, proved in the appendix, addresses both of these issues.

\begin{cor}\label{cor:matrixbound}
	Suppose each entry of a matrix $B^{w}\in\mathbb{R}^{k\times k}$ is bounded below by $1$.  Then for all $u \in \mathbb{N}\cup 0$ and all $v\in\mathbb{N}$,
	
	$$\rho(B)^{v}B^{u}\leq B^{u+v+2w}.$$
	
	Furthermore, $$k\rho(B)^{v}\leq trace(B^{v+2w}).$$
\end{cor}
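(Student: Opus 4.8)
The plan is to exploit the hypothesis that every entry of $B^{w}$ is at least $1$ in order to sandwich an arbitrary power $B^{v}$ between two copies of $B^{w}$, and thereby extract a clean lower bound in terms of $\mathbf{1}^{T}B^{v}\mathbf{1}$, which Lemma \ref{lem:theory} already controls by $\rho(B)^{v}$. Throughout, all matrix inequalities are read entry-wise, and I use that $B$ is entry-wise non-negative (as holds for the matrices $\mathbf{P}$ of interest). Since $B^{w}$ is strictly positive, $B$ is primitive, so its dominant eigenvalue $\rho(B)$ is real and positive and Lemma \ref{lem:theory} applies, giving $\mathbf{1}^{T}B^{v}\mathbf{1}\geq \rho(B)^{v}$.

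The key step is to bound each entry of $B^{w}B^{v}B^{w}$ from below. I would expand $(B^{w}B^{v}B^{w})_{ij}=\sum_{a,b}(B^{w})_{ia}(B^{v})_{ab}(B^{w})_{bj}$; since each factor $(B^{w})_{ia}$ and $(B^{w})_{bj}$ is at least $1$ while $(B^{v})_{ab}\geq 0$, this sum is at least $\sum_{a,b}(B^{v})_{ab}=\mathbf{1}^{T}B^{v}\mathbf{1}\geq \rho(B)^{v}$. Hence, writing $J=\mathbf{1}\mathbf{1}^{T}$ for the all-ones matrix and using $B^{a}B^{b}=B^{a+b}$ to recombine powers, we obtain $B^{v+2w}=B^{w}B^{v}B^{w}\geq \rho(B)^{v}J$ entry-wise.

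To finish the first inequality I would group $B^{u+v+2w}=(B^{w}B^{v}B^{w})B^{u}$ and invoke monotonicity of entry-wise matrix multiplication by non-negative matrices: since $B^{w}B^{v}B^{w}\geq \rho(B)^{v}J$ and $B^{u}\geq 0$, it follows that $B^{u+v+2w}\geq \rho(B)^{v}JB^{u}$. Finally $(JB^{u})_{ij}=\sum_{k}(B^{u})_{kj}\geq (B^{u})_{ij}$ because $B^{u}$ is non-negative, so $\rho(B)^{v}JB^{u}\geq \rho(B)^{v}B^{u}$, yielding $\rho(B)^{v}B^{u}\leq B^{u+v+2w}$ (the case $u=0$ reads $\rho(B)^{v}I\leq B^{v+2w}$, immediate from $J\geq I$). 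The trace bound is then a one-line consequence: taking $i=j$ in the estimate $(B^{v+2w})_{ij}\geq \rho(B)^{v}$ gives $(B^{v+2w})_{ii}\geq \rho(B)^{v}$ for each of the $k$ diagonal entries, so $trace(B^{v+2w})=\sum_{i=1}^{k}(B^{v+2w})_{ii}\geq k\rho(B)^{v}$.

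The one genuinely load-bearing idea — which I would flag as the crux rather than an obstacle — is the regrouping $B^{u+v+2w}=(B^{w}B^{v}B^{w})B^{u}$: the two outer copies of $B^{w}$ are exactly what allow every contraction against $B^{v}$ to be replaced by a contraction against the all-ones vector, converting an otherwise entrywise-intractable matrix product into the scalar $\mathbf{1}^{T}B^{v}\mathbf{1}$ that Lemma \ref{lem:theory} bounds below by $\rho(B)^{v}$. Everything else is routine non-negativity and monotonicity bookkeeping.
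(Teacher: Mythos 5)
Your proof is correct and takes essentially the same approach as the paper: both exploit the regrouping of $B^{u+v+2w}$ so that the two copies of $B^{w}$ (entrywise $\geq 1$) flank $B^{v}$, reducing the entrywise bound to a scalar lower bound of $\rho(B)^{v}$ on the mass of $B^{v}$. The only cosmetic difference is that you extract that scalar as $\mathbf{1}^{T}B^{v}\mathbf{1}\geq\rho(B)^{v}$ via Lemma \ref{lem:theory}, whereas the paper selects a single row $m$ of $B^{v}$ with row sum at least $\rho(B)^{v}$ and routes through it.
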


Before we proceed with our upperbound on the variance, we will need one more inequality.

\begin{lem}\label{lem:find}
Suppose that $B^{w}\in\mathbb{R}^{k\times k}$ is entrywise bounded below by $1$, where $w$ is a positive integer and that $B$ is an entrywise non-negative matrix.  Then for every integer $r$ greater than $w$, there exists a non-negative integer $m$ that satisfies the inequality, $0 \leq m \leq w-1$, such that

\begin{equation}\label{eq:find} trace(B^{q}) \leq trace(B^{r-m}) 
\end{equation}

for all non-negative integers $q$ that satisfy the inequality $q \leq r - m$.  

\end{lem}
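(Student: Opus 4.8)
The plan is to study the integer sequence $f(n) := trace(B^{n})$ and to locate, within the window of $w$ consecutive indices $\{r-w+1,\dots,r\}$, an index at which $f$ is a running maximum. First I would establish the single structural fact that drives everything: because $B$ is entrywise non-negative and $B^{w}$ is entrywise at least $1$, the trace cannot decrease when we multiply by $B^{w}$. Concretely, writing $B^{n+w}=B^{n}B^{w}$ and expanding the trace gives
$$trace(B^{n+w})=\sum_{i,j}(B^{n})_{ij}(B^{w})_{ji}\ge \sum_{i,j}(B^{n})_{ij}\ge \sum_{i}(B^{n})_{ii}=trace(B^{n}),$$
where the first inequality uses $(B^{w})_{ji}\ge 1$ together with $(B^{n})_{ij}\ge 0$ (note $B^{n}$ is entrywise non-negative as a product of non-negative matrices), and the second uses non-negativity of the off-diagonal entries. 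Hence $f(n)\le f(n+w)$ for every $n\ge 0$; that is, $f$ is non-decreasing along each residue class modulo $w$.

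With this monotonicity in hand, I would let $q^{*}$ denote the \emph{largest} index in $\{0,1,\dots,r\}$ at which $f$ attains its maximum over that set, and set $m := r-q^{*}$. The claim is that $q^{*}\ge r-w+1$, so that $m\in\{0,\dots,w-1\}$ as required. Suppose instead that $q^{*}\le r-w$. Then $q^{*}+w\le r$, and by the monotonicity step $f(q^{*}+w)\ge f(q^{*})=\max_{0\le q\le r}f(q)$, forcing $f(q^{*}+w)$ to equal the maximum as well. But $q^{*}+w$ is then a strictly larger maximizing index in $\{0,\dots,r\}$, contradicting the choice of $q^{*}$. Therefore $q^{*}\ge r-w+1$ and $0\le m\le w-1$.

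Finally, since $q^{*}=r-m$ maximizes $f$ over all of $\{0,\dots,r\}$, in particular $f(r-m)=f(q^{*})\ge f(q)$ for every non-negative integer $q\le q^{*}=r-m$, which is exactly the assertion $trace(B^{q})\le trace(B^{r-m})$. The hypothesis $r>w$ only serves to guarantee that the window $\{r-w+1,\dots,r\}$ consists of non-negative indices. I do not anticipate a genuine obstacle here: the entire content is the monotonicity inequality $f(n)\le f(n+w)$, and the remainder is a ``the last maximizer must land in the final window'' pigeonhole argument. The one point requiring care is to take the \emph{largest} maximizing index rather than an arbitrary one, since that choice is precisely what makes the contradiction with monotonicity go through.
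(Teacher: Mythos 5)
Your proof is correct and takes essentially the same approach as the paper's: both rest on the step-$w$ monotonicity $trace(B^{n})\leq trace(B^{n+w})$ (which you obtain by expanding the trace directly, while the paper notes the entrywise inequality $B^{n}\leq B^{n+w}$) and then conclude that the maximum of $trace(B^{t})$ over $\{0,\dots,r\}$ must be attained in the final window of $w$ consecutive indices. Your device of taking the \emph{largest} maximizing index is a clean way to make the pigeonhole step explicit, and it is slightly tidier than the paper's terse statement (which even writes the window as $[z-w-1..z]$ where $[z-w+1..z]$ is meant), but the underlying argument is the same.
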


From Lemma \ref{lem:find}, it is easy to identify values of $r$ that satisfy the inequality $trace(\mathbf{P}^{r-k})\leq trace(\mathbf{P}^{r})$ for all non-negative integers $k$.  In the case where $\mathbf{P}^{2}$ is entry-wise bounded below by $1$, for any choice of $r \geq 3$, it follows that either $trace(\mathbf{P}^{r-k})\leq trace(\mathbf{P}^{r})$ or  $trace(\mathbf{P}^{r-1-k})\leq trace(\mathbf{P}^{r-1})$ for all $k\in[0..r-1]$.  Hence we can easily find (large) values of $r$ that satisfy (\ref{eq:find}).  We are now ready to present the following result bounding the variance for $trace(C_{r})$.

\begin{thm} \label{thm:generallower}
	Recall that we define the random variable $trace(C_r):= \newline trace(\sum_{i_1=i_{r+1},i_2...,i_{r}} [\frac{1}{2}\Pi_{k=1}^{r}\mathbf{E}(i_k,i_{k+1},G))$, which represents the number of a particular subset of cycles of length $r$ in our graph.  Assume that (\ref{eq:pmaxmatrix}) holds and that $\mathbf{P}$ has eight entries that are bounded below by $1$.  Furthermore, we consider $r$ according to Lemma \ref{lem:find} such that $trace(\mathbf{P}^{r-k})\leq trace(\mathbf{P}^{r})$ for all non-negative integers $k \leq r$.  Then it follows that, 
	
	\begin{equation}
	var(trace(C_{r})) \leq E(trace(C_{r}))\cdot [1 + r^{r} + trace(\mathbf{P}^{r}) [exp(\frac{64\alpha r^{5}}{1-2r\rho(\mathbf{P})^{-1}})-1]].
	\end{equation}		
	where $\alpha$ is defined in (\ref{eq:pmaxmatrix}).
\end{thm}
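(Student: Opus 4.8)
The plan is to mirror the argument of Theorem~\ref{thm:cov}, replacing the scalar weight $(\frac{\mathbf{a}\cdot\mathbf{b}}{S})^{r}$ by the matrix quantity $trace(\mathbf{P}^{r})$ and replacing the scalar products by the matrix machinery of Lemma~\ref{lem:generalized}. Writing $trace(C_{r}) = \sum_{z}\mathbf{1}_{z}$ as a sum of indicators over the relevant cycles, decompose
$$Var(trace(C_{r})) = \sum_{z} Var(\mathbf{1}_{z}) + \sum_{y\neq z} Cov(\mathbf{1}_{y},\mathbf{1}_{z}).$$
Since each $\mathbf{1}_{z}$ is Bernoulli, $\sum_{z} Var(\mathbf{1}_{z}) \leq \sum_{z} E(\mathbf{1}_{z}) = E(trace(C_{r}))$, which produces the leading $1$ inside the bracket. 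It therefore suffices to bound the covariance sum by $E(trace(C_{r}))\cdot[r^{r} + trace(\mathbf{P}^{r})(exp(\cdots)-1)]$.

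For the covariances, only pairs $y,z$ that share at least one edge contribute, so exactly as in (\ref{eq:covcrit}) I would bound $Cov(\mathbf{1}_{y},\mathbf{1}_{z}) \leq E(\mathbf{1}_{y}\mathbf{1}_{z}) = Pr(\mathbf{1}_{z}=1\mid \mathbf{1}_{y}=1)Pr(\mathbf{1}_{y}=1)$ and sum over $y$ and $z\in D(y)$. Conditioning on $y$ means its edges have been observed, so they act as repeating edges for $z$; invoking the cycle form of Lemma~\ref{lem:generalized}, the conditional probability becomes a norm of a matrix product in which each repeating edge block is governed by a $\mathbf{G}$-type matrix and, via condition (\ref{eq:pmaxmatrix}), is bounded entrywise by $\alpha\mathbf{P}$. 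Splitting into two cases as in Theorem~\ref{thm:cov}: when the \emph{entire} cycle $z$ is assembled from edges already in $y$, there are at most $r^{r}$ ways to choose it, and $\sum_{y}Pr(\mathbf{1}_{y}=1) = E(trace(C_{r}))$ yields the $r^{r}\,E(trace(C_{r}))$ contribution.

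In the remaining case $z$ has at least one new edge, and I would organize the sum by the numbers $k_{i}$ of repeating edge blocks of each length $i$, as in (\ref{eq:cov1})--(\ref{eq:cov11}). Each block of length $i$ reduces the effective power of $\mathbf{P}$ (fewer new-edge-interior nodes) while contributing an $\alpha$ and a bounded number of node choices ($r^{\,O(ik_{i})}$). The key new ingredient is to factor the common $trace(\mathbf{P}^{r})$ out of every term: I would use Corollary~\ref{cor:matrixbound}, whose inequality $\rho(\mathbf{P})^{v}\mathbf{P}^{u}\leq \mathbf{P}^{u+v+2w}$ (with $w=2$, since $\mathbf{P}^{2}$ is entrywise at least $1$) converts a power reduction by $v$ into a factor $\rho(\mathbf{P})^{-v}$, and the trace-monotonicity from Lemma~\ref{lem:find} ($trace(\mathbf{P}^{r-k})\leq trace(\mathbf{P}^{r})$ for all $0\leq k\leq r$) dominates every reduced-power trace by $trace(\mathbf{P}^{r})$. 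Collecting the per-block factors into the form required by Lemma~\ref{lem:expineq} and applying it with $l=1$, $\beta = 2r\rho(\mathbf{P})^{-1}$, and $\alpha$ absorbing the $\alpha r^{O(1)}$ node-choice constants, produces the $exp(\frac{64\alpha r^{5}}{1-2r\rho(\mathbf{P})^{-1}})-1$ factor, where the $-1$ again comes from adding back and subtracting the all-new-edges term $k_{0}=r$ as in (\ref{eq:cov11}). Summing the three contributions gives the stated bound.

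The main obstacle is the loss of commutativity. In Theorem~\ref{thm:cov} the scalar weights could be rearranged freely, so a repeating edge block transparently divided the product by a power of $\frac{\mathbf{a}\cdot\mathbf{b}}{S}$; here the analogous step requires controlling products of the non-negative matrices $\mathbf{P}$ and $\mathbf{G}$ whose eigenbasis is unknown. The delicate point is to show that each reduced power $trace(\mathbf{P}^{r-k})$ is genuinely dominated by $\rho(\mathbf{P})^{-k}\,trace(\mathbf{P}^{r})$ up to the fixed $+2w$ shift, which is exactly what Corollary~\ref{cor:matrixbound} and Lemma~\ref{lem:find} are engineered to supply; keeping the bookkeeping of these shifts consistent so that the final exponent collapses to the clean form $\frac{64\alpha r^{5}}{1-2r\rho(\mathbf{P})^{-1}}$ is the technically demanding part of the argument.
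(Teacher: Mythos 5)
Your proposal reproduces the paper's argument almost step for step: the variance decomposition with the Bernoulli bound supplying the leading $1$; the covariance bound via conditioning on $y$ so that the edges of $y$ act as repeating edges for $z$; the isolation of the all-repeating case ($k_{r}=1$), worth at most $r^{r}E(trace(C_{r}))$; the organization of the remaining case by the block counts $k_{i}$, with each repeating edge block replaced by $\alpha\mathbf{P}$ via (\ref{eq:pmaxmatrix}); the cyclic-permutation trace identity to reassemble powers of $\mathbf{P}$; and the combination of Corollary \ref{cor:matrixbound} (with $w=2$) and the monotonicity $trace(\mathbf{P}^{r-k})\leq trace(\mathbf{P}^{r})$ from Lemma \ref{lem:find} to factor $trace(\mathbf{P}^{r})$ out of every term, with the $-1$ arising from adding and subtracting the $k_{0}=r$ term. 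You also correctly single out the $+2w$ shift as the delicate point.

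The one step that fails as specified is the final application of Lemma \ref{lem:expineq} with $l=1$. Because Corollary \ref{cor:matrixbound} converts a power reduction of $v$ into a factor $\rho(\mathbf{P})^{-v}$ only after paying the $+2w=+4$ shift, repeating edge blocks of length $i\in\{1,2,3,4\}$ yield \emph{no} $\rho(\mathbf{P})^{-1}$ decay at all: their per-block factor is $\alpha(2r)^{i}\leq\alpha(2r)^{4}$, and the geometric factor $\beta^{i-4}$ with $\beta=2r\rho(\mathbf{P})^{-1}$ only begins at $i=5$ (this is exactly the split in the paper's display (\ref{eq:goal04})--(\ref{eq:goal05})). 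With $l=1$, Lemma \ref{lem:expineq} requires the length-$i$ factor to be dominated by $\bm{\alpha}\beta^{i-1}$ already at $i=2$, which forces $\bm{\alpha}\geq 2r\alpha\rho(\mathbf{P})^{i-1}$ for $i$ up to $4$; since $\rho(\mathbf{P})$ is unbounded in the regime of interest, this cannot be absorbed into an ``$\alpha r^{O(1)}$'' constant, and the resulting exponent would blow up rather than collapse to $64\alpha r^{5}$. The correct instantiation is $l=4$, $\bm{\alpha}=\alpha(2r)^{4}$, $\beta=2r\rho(\mathbf{P})^{-1}$; note that $4\cdot(2r)^{4}\cdot r=64r^{5}$, so the constant in the theorem statement itself certifies the choice $l=4$. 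With that single correction your outline coincides with the paper's proof.
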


\begin{proof}
The proof is in the same spirit as Theorem \ref{thm:cov}.  First we express each possible cycle that can contribute to $trace(C_{r})$ as an indicator random variable $\mathbf{1}_{y}$ and define the set $D(y)$ to include all of the indices of the indicator random variables that are dependent with $\mathbf{1}_{y}$, except for $y$. .
	
Consequently,

\begin{equation}
\sum_{y \neq z} Cov(\mathbf{1}_y,\mathbf{1}_z) \leq \sum_{\substack{y\\z\in D(y)}}E(\mathbf{1}_{y}\mathbf{1}_{z}) = \sum_{\substack{y\\z\in D(y)}} Pr(\mathbf{1}_y = 1)Pr(\mathbf{1}_z=1|\mathbf{1}_y = 1).
\end{equation}

Now we invoke Lemma \ref{lem:generalized} to bound the $Pr(\mathbf{1}_z=1|\mathbf{1}_y = 1)$.  This yields,

\begin{equation} \label{eq:covpar}
\sum_{\substack{y}} Pr(\mathbf{1}_y = 1)\sum_{\substack{\mathbf{L}_{R},\mathbf{L}_{N}\\ \mathbf{L}_{R} \neq \emptyset}}\sum_{\substack{n_{j}\in[1..N] \\ \forall j\in \mathbf{L}_{N}}}trace(\Pi_{t=0}^{r} \mathbf{A}(n_{t},n_{t+1},\mathbf{n}_{t})),
\end{equation}

where $n_{r+1} = n_{0}$,  $\mathbf{L}_{N}$ denotes the locations of the nodes in the new edge interior and $\mathbf{L}_{R}$ assigns nodes found in the repeating edge blocks to equal specified nodes that are part of a new edge. Note that $\mathbf{n}_{t}$ also includes edges from the cycle $y$.  The constraint $\mathbf{L}_{R}\neq \emptyset$ ensures that the cycles (y and z) are dependent.  We will consider two cases.

\textbf{Case 1:} We consider sets $\mathbf{L}_{R}$ such that $k_{r} = 1$ (which implies that $\mathbf{L}_{N} = \emptyset$).  

\begin{equation} \label{eq:covparCASE1}
\sum_{\substack{y}} Pr(\mathbf{1}_y = 1)\sum_{\substack{\mathbf{L}_{R}: k_{r}=1}}trace(\Pi_{t=0}^{r} \mathbf{A}(n_{t},n_{t+1},\mathbf{n}_{t})) \leq   \sum_{y}Pr(\mathbf{1}_{y} = 1)r^{r},
\end{equation}

as for each edge in a cycle $z\in D(y)$, there are at most $r$ choices to choose from.

\textbf{Case 2:} Suppose $k_{r} = 0$, it follows that $|\mathbf{L}_{N}| = r  - \sum_{i=1}^{r-1}(i+1)k_{i}$. (Here, since we are only considering cycles, the first node, which equals the last node can be part of the new edge interior.)

\textbf{Case 2a:} Suppose there is a node in the new edge interior, we can without loss of generality assume that the first node in the cycle $z$ is a node in the new edge interior.  Similar to Theorem \ref{thm:upperpar}, express the contribution of this case to (\ref{eq:covpar}) as,

\begin{equation}\label{eq:parcov2}   
  \sum_{y}Pr(\mathbf{1}_{y} = 1) \sum_{\substack{\mathbf{L}_{R},\mathbf{L}_{N}\\\mathbf{L}_{N}\neq \emptyset}}\sum_{\substack{n_{j}\in [1..N]\\ \forall j \in \mathbf{L}_{N}}}\sum_{n_0 = 1}^{N}trace(\mathbf{A}_{n_{0}}\Pi_{t=1}^{r}\mathbf{Z}_{n_{t}}\mathbf{B}_{n_{0}}),
\end{equation} 

where $$\mathbf{Z}_{n_{k}} = \begin{cases}
       \mathbf{B}_{n_{k}}\mathbf{A}_{n_{k}} &\quad\text{if}\hspace{3pt} n_{k} \in \mathbf{L}_{N}\\
       \alpha\mathbf{P} &\quad\text{if}\hspace{3pt} n_{k} \hspace{3pt}\text{is in the beginning of a repeating edge block}  \\
       \mathbf{I} &\quad\text{otherwise.} \\ 
     \end{cases} $$

Using the fact that the trace of a matrix product does not change under cyclic permutations, we get that, 

\begin{equation}\label{eq:parcov3}   
  \sum_{y}Pr(\mathbf{1}_{y} = 1) \sum_{\substack{\mathbf{L}_{R},\mathbf{L}_{N}\\\mathbf{L}_{N}\neq \emptyset}}\sum_{\substack{n_{j}\in [1..N]\\ \forall j \in \mathbf{L}_{N}}}\sum_{n_0 = 1}^{N}trace(\Pi_{t=1}^{r}\mathbf{Z}_{n_{t}}\mathbf{B}_{n_{0}}\mathbf{A}_{n_{0}}).
\end{equation} 

Summing over all possible node choices for nodes in the new edge interior gives us an upperbound of, 

\begin{equation}\label{eq:parcov4}   
   \sum_{y}Pr(\mathbf{1}_{y} = 1)\sum_{\substack{\mathbf{L}_{R},\mathbf{L}_{N}\\\mathbf{L}_{N}\neq \emptyset}}trace(\mathbf{P}^{r-\sum_{i=1}^{r-2}(i+1)k_{i}}(\alpha \mathbf{P})^{\sum_{i=1}^{r-2}k_{i}}).
\end{equation} 

\textbf{Case 2b:} If in fact there are no nodes in the interior of a new edge block, from (\ref{eq:parcov2}), we get that 
\begin{equation}\label{eq:parcov9}   
   \sum_{y}Pr(\mathbf{1}_{y} = 1)\sum_{\mathbf{L}_{R}}(2r)^{\sum ik_{i}}trace((\alpha \mathbf{P})^{\sum_{i=1}^{r-1}k_{i}}),
\end{equation} 
 But then we can still use the general expression found in (\ref{eq:parcov4}) ignoring the constraint that $\mathbf{L}_{N} \neq \emptyset$  as if $\mathbf{L}_{N} = \emptyset$, then $r-\sum_{i=1}^{r-1}(i+1)k_{i} = 0$.

So we can combine Cases 2a and 2b, where we re-express(\ref{eq:parcov4}) and (\ref{eq:parcov9}) by specifying the values of $k_{i}$ and considering all possible choices for each edge (or node) in a repeating edge block.   This gives us the upperbound,

\begin{equation}\label{eq:parcov41}   
   \sum_{y}Pr(\mathbf{1}_{y} = 1)\sum_{\substack{k_{0} + \sum_{i=1}^{r} ik_{i} = r\\ k_{0} < r}}\binom{\sum k_{i}}{k_{0},...,k_{r-1}}(2r)^{\sum ik_{i}}trace(\mathbf{P}^{r-\sum_{i=1}^{r-1}(i+1)k_{i}}(\alpha \mathbf{P})^{\sum_{i=1}^{r-2}k_{i}}),
\end{equation}

 where $k_{0} < r$ since we require that there cannot be $r$ edges in the cycle as the one of the edges must appear in the cycle $y$.   So by considering the contribution from Cases 1, 2a and 2b, we get the upperbound, 
 
 \begin{equation}\label{eq:goal01}
 \sum_{y \neq z} Cov(\mathbf{1}_y,\mathbf{1}_z) \leq 
   \sum_{y}Pr(\mathbf{1}_{y} = 1)[ r^{r} + \sum_{\substack{k_{0} + \sum_{i=1}^{r} ik_{i} = r\\ k_{0} < r}}\binom{\sum k_{i}}{k_{0},...,k_{r-1}}(2r)^{\sum ik_{i}}trace(\mathbf{P}^{r-\sum_{i=1}^{r-1}(i+1)k_{i}}(\alpha \mathbf{P})^{\sum_{i=1}^{r-2}k_{i}})]
 \end{equation}

We can then simplify (\ref{eq:goal01}) by invoking Corollary \ref{cor:matrixmultiply} to substitute the inequality, $tr(\mathbf{P}^{r-w-4})\leq \frac{tr(\mathbf{P}^{r})}{\rho(\mathbf{P})^{w}}$ and employing the assumption that $tr(\mathbf{P}^{r-w}) \leq tr(\mathbf{P}^{r})$ for all non-negative integers $w$.  Hence,

 \begin{equation}\label{eq:goal02}
  \sum_{y}Pr(\mathbf{1}_{y} = 1)[r^{r} + \sum_{\substack{k_{0} + \sum_{i=1}^{r} ik_{i} = r\\ k_{0}<r}}\binom{\sum k_{i}}{k_{0},...,k_{r-1}}\Pi_{i=1}^{r-1}\frac{\alpha^{k_{i}}(2r)^{ik_{i}}}{\rho(\mathbf{P}^{\max(ik_{i}-4,0)})}trace(\mathbf{P}^{r})]\leq 
 \end{equation}
 
  \begin{multline}\label{eq:goal04}
   \sum_{y}Pr(\mathbf{1}_{y} = 1)[r^{r} + \sum_{\substack{k_{0} + \sum_{i=1}^{r} ik_{i} = r\\ k_{0}<r}}\binom{\sum k_{i}}{k_{0},...,k_{r-1}}\Pi_{i=1}^{4}\alpha^{k_{i}}(2r)^{ik_{i}}\LargerCdot\\\Pi_{i=5}^{r-1}\frac{\alpha^{k_{i}}(2r)^{ik_{i}}}{\rho(\mathbf{P}^{(i-4)k_{i}})}trace(\mathbf{P}^{r})]\leq 
 \end{multline} 
 
   \begin{multline}\label{eq:goal05}
   \sum_{y}Pr(\mathbf{1}_{y} = 1)[r^{r} + [\sum_{\substack{k_{0} + \sum_{i=1}^{r} ik_{i} = r\\}}\binom{\sum k_{i}}{k_{0},...,k_{r-1}}\Pi_{i=1}^{4}\alpha^{k_{i}}(2r)^{4k_{i}}\LargerCdot\\\Pi_{i=5}^{r-1}\frac{([2r]^{4}\alpha)^{k_{i}}(2r)^{(i-4)k_{i}}}{\rho(\mathbf{P}^{(i-4)k_{i}})}trace(\mathbf{P}^{r})] - trace(\mathbf{P}^{r})],
 \end{multline} 

where for the last inequality we allow $k_{0}$ to equal $r$, but then subtract off the contribution when $k_{0} = r$.    At this juncture, we can apply Lemma \ref{lem:expineq} to complete the result, where to avoid an abuse of notation, we denote the parameters in Lemma \ref{lem:expineq} as $\bm{\alpha},\beta$ and $l$.  In particular, letting $l = 4$, $\bm{\alpha} = \alpha(2r)^{4}$, and $\beta = \frac{2r}{\rho(\mathbf{P})}$ yields the upperbound that  
 
  \begin{equation}\label{eq:goal051}
E(trace(C_{r}))\cdot [r^{r} + trace(\mathbf{P}^{r}) (exp(\frac{64\alpha r^{5}}{1-2r\rho(\mathbf{P})^{-1}})-1)].
 \end{equation}

\end{proof}
It follows immediately from Theorem \ref{thm:generallower} that if $\alpha(\log N)^{5} \rightarrow 0$ and $\rho(\mathbf{P})/r\rightarrow \infty$ that the standard deviation will be much smaller than the mean.  An application of Corollary \ref{cor:matrixbound} yields that if $\mathbf{P}^{2}\in\mathbb{R}^{k\times k}$ is entrywise bounded below by $1$, then $k\rho(\mathbf{P})^{r-4}\leq trace(\mathbf{P}^{r})$.  Consequently, we can show that with high probability that, $\frac{k\rho(\mathbf{P})^{r-4}}{8}\leq \frac{trace(\mathbf{P})^{r}}{8}\leq trace(C_{r}) \leq \rho(A^{r})$.  By taking the $rth$ root on both sides we get that in the limit $\rho(\mathbf{P})\leq \rho(A)$.   As such, Theorems \ref{thm:upperpar} and  \ref{thm:generallower}  provide asymptotic conditions that demonstrate that with high probability that $\rho(A)$ approaches $\rho(\mathbf{P})$.

We would also like to emphasize that while Theorem \ref{thm:generallower} demonstrates asymptotic convergence of the spectral radius, the speed of convergence appears rather slow as we avoided making assumptions regarding $\mathbf{P}$ in order to keep Theorem \ref{thm:generallower} as general as possible.  If however we knew that $trace(\mathbf{P}^{r-1}) \ll trace(\mathbf{P}^{r})$, then the appropriate extention of Theorem \ref{thm:generallower} would yield practical bounds for the distribution of the spectral radius for networks of finite size similar to Theorem \ref{thm:cov}.

At this juncture, we now consider the problem of extending our proof to the case where $p_{max} = \max_{i,j} \max_{m} \frac{a_{m}^{(i,j)}b_{m}^{(j,k)}}{S_{ij}}\rightarrow 0$.  Define $trace(SC_{r})$ to be the number of all of the simple cycles of length $r$ that contribute to $trace(C_{r})$, as defined in (\ref{eq:trCr2}).  We then have the following theorem,

\begin{thm}\label{thm:mtrxsc}
Let $trace(SC_{r})$ be the all of the simple cycles of length $r$ that contribute to $trace(C_{r})$.
Also let $p_{max} = \max_{i,j} \max_{m} \frac{a_{m}^{(i,j)}b_{m}^{(j,k)}}{S_{ij}}$ and assume that all eight non-zero terms of $\mathbf{P}$ are bounded below by $1$.
Then
	$$\frac{1}{2}(1-rp_{max})^{r}trace(\mathbf{P}^{r})\leq E(trace(SC(r)))$$
	
	Furthermore, we have that
	
	$$var(trace(SC_{r}))\leq E(trace(SC_{r}))\cdot [r+ trace(\mathbf{P}^{r}) [exp(\frac{4\alpha r^{2}}{1-r\rho(\mathbf{P})^{-1}})-1]].$$
\end{thm}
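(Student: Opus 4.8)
The plan is to establish the two displayed inequalities separately, in each case grafting the simple-cycle counting arguments of Lemma~\ref{lem:lower2} and Lemma~\ref{lem:lowercov} onto the matrix-trace machinery built for the partitioned model in Lemma~\ref{lem:explowermtrx} and Theorem~\ref{thm:generallower}. Throughout I would exploit that every matrix in sight ($\mathbf{P}$, $\mathbf{B}_{i}\mathbf{A}_{i}$, $\mathbf{G}_{ij}$, $\alpha\mathbf{P}$) is entrywise nonnegative, so that entrywise inequalities survive both matrix multiplication and the trace.

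For the lower bound, I would first note that, restricting the sum defining $trace(C_r)$ in Lemma~\ref{lem:explowermtrx} to cycles whose $r$ nodes are distinct (the simple cycles) and using that a simple cycle has no repeating edges, the computation of that lemma gives
\begin{equation*}
E(trace(SC_r)) = \tfrac{1}{2}\sum_{\substack{i_1,\dots,i_r\\ \text{distinct}}} trace\Big(\Pi_{k=1}^{r}\mathbf{B}_{i_k}\mathbf{A}_{i_k}\Big),
\end{equation*}
whereas dropping the distinctness constraint recovers $\tfrac12 trace(\mathbf{P}^r)$ with $\mathbf{P}=\sum_i \mathbf{B}_i\mathbf{A}_i$. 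The key elementary fact is that every entry of $\mathbf{B}_i\mathbf{A}_i$ is at most $p_{max}$ (this is precisely how $p_{max}$ is defined in the theorem), while every nonzero entry of $\mathbf{P}$ is at least $1$; hence for any set $S$ of at most $r$ forbidden vertices, $\sum_{i\in S}\mathbf{B}_i\mathbf{A}_i \le r\,p_{max}\,\mathbf{P}$ and therefore $\sum_{i\notin S}\mathbf{B}_i\mathbf{A}_i \ge (1-rp_{max})\mathbf{P}$ entrywise. I would then peel the distinct-vertex sum off one position at a time — summing out $i_r$, then $i_{r-1}$, and so on — each time replacing the constrained single-vertex sum by $(1-rp_{max})\mathbf{P}$; since at each stage at most $r$ vertices are excluded and all remaining factors are nonnegative, the inequality propagates through the product and the trace, yielding $E(trace(SC_r)) \ge \tfrac12 (1-rp_{max})^r trace(\mathbf{P}^r)$.

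For the variance bound I would follow Lemma~\ref{lem:lowercov} and Theorem~\ref{thm:generallower} in tandem. Writing $trace(SC_r)$ as $r$ times a sum of indicators $\mathbf{1}_y$ over simple cycles unique up to cyclic rotation, the decomposition $var = r^2\sum_y Var(\mathbf{1}_y) + r^2\sum_{y\neq z}Cov(\mathbf{1}_y,\mathbf{1}_z)$ supplies the leading ``$r\cdot E(trace(SC_r))$'' term from the diagonal, exactly as in Lemma~\ref{lem:lowercov}. For the covariance sum I would condition on a cycle $y$ and bound $\sum_{z\in D(y)}Pr(\mathbf{1}_z=1\mid \mathbf{1}_y=1)$ via the matrix formulation of Lemma~\ref{lem:generalized}, classifying the overlap of $z$ with $y$ by the repeating-edge-block profile $(k_i)$. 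Because $z$ is a \emph{simple} cycle whose repeating blocks must be traced out of the simple cycle $y$, each block admits at most $r$ realizations (the per-block bound from Lemma~\ref{lem:lowercov}) and a block of length $r-1$ or $r$ is impossible, so the case $k_r=1$ that produced the $r^r$ term in Theorem~\ref{thm:generallower} simply does not arise. Summing the new-edge-interior nodes turns each surviving factor into a power of $\mathbf{P}$; I would then use Corollary~\ref{cor:matrixbound} (with $w=2$, whence the shift by $4$) together with the trace-monotonicity hypothesis supplied by Lemma~\ref{lem:find} to bound each reduced-power trace $trace(\mathbf{P}^{\,r-\sum_i(i+1)k_i})$ by $trace(\mathbf{P}^r)$ divided by a suitable power of $\rho(\mathbf{P})$, and finally invoke Lemma~\ref{lem:expineq} with $l=1$, $\bm{\alpha}$ of order $\alpha r$ and $\beta = r\rho(\mathbf{P})^{-1}$ to collapse the sum into $trace(\mathbf{P}^r)\big(exp(\tfrac{4\alpha r^2}{1-r\rho(\mathbf{P})^{-1}})-1\big)$.

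I expect the main obstacle to be the same non-commutativity that complicates Theorem~\ref{thm:generallower}: the repeating blocks force products of the form $\mathbf{P}^{a}(\alpha\mathbf{P})^{b}$ with ``missing'' powers, and without assumptions on the eigenbasis of $\mathbf{P}$ one cannot simply factor out $\rho(\mathbf{P})$. Converting these matrix traces into a clean scalar bound of the shape $trace(\mathbf{P}^r)\rho(\mathbf{P})^{-m}$ — so that the exponent emerges as the advertised $4\alpha r^2/(1-r\rho(\mathbf{P})^{-1})$ rather than the much larger $64\alpha r^5$ of Theorem~\ref{thm:generallower} — hinges on pairing Corollary~\ref{cor:matrixbound} with the correct choice of $r$ from Lemma~\ref{lem:find}, and on carrying the simple-cycle per-block counting (at most $r$ choices per block) faithfully into the partitioned setting. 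Verifying that this tighter counting remains legitimate once edges are tagged by community membership is the delicate point of the argument.
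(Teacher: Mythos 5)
Your proposal is correct and follows essentially the same route as the paper's own proof: the same restriction of the trace sum of Lemma \ref{lem:explowermtrx} to distinct indices with the cyclic-permutation trick and iterative entrywise peeling $\sum_{i\notin S}\mathbf{B}_i\mathbf{A}_i\geq(1-rp_{max})\mathbf{P}$ (using $\mathbf{P}_{ij}>0\Rightarrow\mathbf{P}_{ij}\geq 1$) for the lower bound, and the same indicator/covariance decomposition with the at-most-$r$ per-block count from Lemma \ref{lem:lowercov}, the trace shift from Corollary \ref{cor:matrixbound} with the monotonicity choice of $r$ from Lemma \ref{lem:find}, and Lemma \ref{lem:expineq} for the variance, including the correct observation that the $k_r=1$ case (the $r^r$ term of Theorem \ref{thm:generallower}) cannot occur for simple cycles distinct under cyclic rotation. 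The one parameter slip is your final application of Lemma \ref{lem:expineq} with $l=1$: the shift $2w=4$ from Corollary \ref{cor:matrixbound} exempts the first four powers of $\rho(\mathbf{P})^{-1}$ from any discount, so one must take $l=4$ (with $\bm{\alpha}$ of order $\alpha r$), which is precisely the origin of the factor $4$ in the advertised exponent $4\alpha r^{2}/(1-r\rho(\mathbf{P})^{-1})$.
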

\begin{proof}
To sum over all possible simple cycles, we define the list $D$, where $(i_{1},...,i_{r},i_{r+1})\in D$ if the nodes $i_{1},...,i_{r}$ are distinct and $i_{1} = i_{r+1}$.
	\begin{equation}\label{eq:traceSCE} E(trace(SC_{r})) = trace(\sum_{(i_{1},...,i_{r},i_{r+1})\in D} \frac{1}{2}\mathbf{A}_{i_1}(\Pi_{k=2}^{r}\mathbf{B}_{i_{k}}\mathbf{A}_{i_{k}})\mathbf{B}_{i_1}). 
	\end{equation}
	Now since the trace is invariant under cyclic permutation, (\ref{eq:traceSCE}) equals,
	
	$$ trace(\sum_{(i_{1},...,i_{r},i_{r+1})\in D} \frac{1}{2}(\Pi_{k=2}^{r}\mathbf{B}_{i_{k}}\mathbf{A}_{i_{k}})\mathbf{B}_{i_1}\mathbf{A}_{i_1})=$$
	
	$$ trace(\sum_{(i_{1},...,i_{r},i_{r+1})\in D} \frac{1}{2}(\Pi_{k=1}^{r}\mathbf{B}_{i_{k}}\mathbf{A}_{i_{k}})\geq $$
	
	$$\frac{1}{2}(1-rp_{max})^{r}trace(\mathbf{P}^{r}),$$
	
	where the last inequality holds as for any particular fixed node, the largest entry in $\mathbf{B}_{i_k}\mathbf{A}_{i_k}$ is bounded above by $p_{max}$.  By fixing the first $r-1$ entries in the list, and summing over all possible choices for the $rth$ entry in the list, $i_{r}$, we get a matrix whose $ijth$ entry is $0$ if $\mathbf{P}_{ij} = 0$ or $(\mathbf{P}_{ij} - rp_{max})$ otherwise.  Furthermore, we know that $\mathbf{P}_{ij}-rp_{max}\geq \mathbf{P}_{ij}(1-rp_{max}) $,since $\mathbf{P}_{ij}>0 \implies \mathbf{P}_{ij}\geq 1$ by assumption.  Hence, by applying this argument iteratively to each node, we get the desired result.  
	
	 To prove the (co)variance result, we first define a collection of indicator random variables, $\mathbf{Y}$,  such that all cycles in $\mathbf{Y}$ are distinct under cyclic permutations, where
	 
	 \begin{equation} trace(SC_{r}) = r\sum_{y \in \mathbf{Y}}\mathbf{1}_{y}.
	 \end{equation}
	 
	 To compute the covariance term, we define a set $D(y)$, which includes all indices $z\neq y$, such that $\mathbf{1}_{y}$ and $\mathbf{1}_{z}$ are dependent random variables.  Consequently, 
	\begin{equation} \sum_{y \neq z} Cov(\mathbf{1}_y,\mathbf{1}_z) \leq \sum_{\substack{y\\z\in D(y)}}E(\mathbf{1}_{y}\mathbf{1}_{z}) = \sum_{\substack{y\\z\in D(y)}} Pr(\mathbf{1}_y = 1)Pr(\mathbf{1}_z=1|\mathbf{1}_y = 1).
	\end{equation}
	
	Similar to the argument from Theorem \ref{thm:generallower}, we get the upperbound that, \begin{equation} 
\sum_{\substack{y}} Pr(\mathbf{1}_y = 1)\sum_{\substack{\mathbf{L}_{R},\mathbf{L}_{N}\\ \mathbf{L}_{R} \neq \emptyset}}\sum_{\substack{n_{j}\in[1..N] \\ \forall j\in \mathbf{L}_{N}}}trace(\Pi_{t=1}^{r} \mathbf{A}(n_{t},n_{t+1},\mathbf{n}_{t})\mathbf{p}),
\end{equation}

where $n_{r+1} = n_{0}$,  $\mathbf{L}_{N}$ denotes the locations of the nodes in the interior of a new edge block and $\mathbf{L}_{R}$ assigns nodes found in the repeating edge blocks to equal specified nodes that are part of a new edge.  But note that since we are considering simple cycles that are distinct under cyclic permutation, the number of ways of filling in the repeating edge blocks is restricted.  From Lemma \ref{lem:lowercov}, we know that there are at most $r$ ways for deciding nodes in a given repeating edge block of any length.  Continuing the argument from Theorem \ref{thm:generallower}, we have that 

\begin{multline}\label{eq:parcovsimple}   
 \sum_{y \neq z} Cov(\mathbf{1}_y,\mathbf{1}_z) \leq \\ \sum_{y\in\mathbf{Y}}Pr(\mathbf{1}_{y}=1) \frac{1}{r}\sum_{\substack{k_{0} + \sum_{i=1}^{r} ik_{i} = r\\ k_{0} \in [1..r-1]}}\binom{\sum k_{i}}{k_{0},...,k_{r-1}}(r)^{\sum k_{i}}trace(\mathbf{P}^{r-\sum_{i=1}^{r-1}(i+1)k_{i}}(\alpha \mathbf{P})^{\sum_{i=1}^{r-2}k_{i}}),
\end{multline} 

where we must multiply our answer by a factor of $\frac{1}{r}$ as we overcounted, since we are considering a collection of cycles that are distinct under cyclic permutation.  

We can then simplify (\ref{eq:parcovsimple}) by bounding the multinomial coefficient by $\Pi_{i=1}^{r}\frac{r^{k_{i}}}{k_{i}!}$, invoking Corollary \ref{cor:matrixmultiply} to substitute the inequality, $tr(\mathbf{P}^{r-w-4})\leq \frac{tr(\mathbf{P}^{r})}{\rho(\mathbf{P})^{w}}$ and employing the assumption that $tr(\mathbf{P}^{r-w}) \leq tr(\mathbf{P}^{r})$ for all non-negative integers $w$.  Hence,

 \begin{equation}\label{eq:goalSC02}
  \sum_{y}Pr(\mathbf{1}_{y} = 1)\frac{1}{r}[\sum_{\substack{k_{0} + \sum_{i=1}^{r} ik_{i} = r\\ k_{0} \in [1..r-1]}}\Pi_{i=1}^{r-1}\frac{\alpha^{k_{i}}(r)^{2k_{i}}}{k_{i}!\cdot \rho(\mathbf{P}^{\max(ik_{i}-4,0)})}trace(\mathbf{P}^{r})]
 \end{equation}
	
Proceeding as in Theorem \ref{thm:generallower} will give us that, 	

 \begin{multline}\label{eq:goalSC04}
  \sum_{\substack{y \neq z\\ y,z \in \mathbf{Y}}} Cov(\mathbf{1}_y,\mathbf{1}_z) \leq \\
    \frac{1}{r}\sum_{y\in\mathbf{Y}}Pr(\mathbf{1}_{y} = 1)[trace(\mathbf{P}^{r}) [exp(\alpha[r^{2}+r^{2}+r^{2}+r^{2}+r^{2}\rho(\mathbf{P})^{-1}+r^{2}\rho(\mathbf{P}^{-1})+...-1]]\leq \\
     \frac{1}{r}\sum_{y\in\mathbf{Y}}Pr(\mathbf{1}_{y} = 1)[trace(\mathbf{P}^{r}) [exp(\frac{4\alpha r^{2}}{1-r\rho(\mathbf{P})^{-1}})-1]]\leq \\
       \frac{E(trace(SC_{r}))}{r^{2}}[trace(\mathbf{P}^{r}) [exp(\frac{4\alpha r^{2}}{1-r\rho(\mathbf{P})^{-1}})-1]].
 \end{multline} 
 
 Since  $trace(SC_{r}) = r\sum_{y \in \mathbf{Y}}\mathbf{1}_{y}$ and we define $var(trace(SC_{r}))= r^{2}\sum_{y \in \mathbf{Y}}Pr(\mathbf{1}_{y} = 1) + r^{2}\sum_{\substack{y \neq z\\ y,z \in \mathbf{Y}}} Cov(\mathbf{1}_y,\mathbf{1}_z)$, we are done.
 	
\end{proof}
We emphasize that Theorem \ref{thm:mtrxsc} tells us that the standard deviation of the $trace(SC_{r})$, a subset of simple cycles of length $r$, is much smaller than its expected value if $\lim_{N\rightarrow\infty}r^{2}\alpha=0$.  Consequently with high probability, the $trace(SC_{r})$ must be concentrated around its expected value.  Furthermore, if $\lim_{N\rightarrow\infty} r^{2}p_{max}=0$, it then follows from Corollary \ref{cor:matrixbound} that the lowerbound for expected value of the $trace(SC_{r})$ is $\frac{trace(\mathbf{P}^{r})}{2}$ and that $\frac{\rho(\mathbf{P}^{r-2})}{2}$ is an asymptotic lowerbound for the $trace(SC_{r})$; by Lemma \ref{lem:theory}, this proves the desired lowerbound on the spectral radius of the adjacency matrix.

To prove the upperbound, we now aim to generalize our prior results, arguing that when $\rho(\mathbf{P})$ is finite, then the likelihood of seeing multiple cycles together in a short path is really small.  Fortunately, many of the results in Section 3 hold for any (random) graph model.  The theorem below is a generalization of Lemma \ref{lem:pmax} applied to the partitioned Chung-Lu model.

\begin{thm}\label{thm:reducededge}
Recall the definition of $a_{x}^{(i,j)}$ and $b_{y}^{(k,l)}$ from the beginning of Theorem \ref{thm:upperpar}. Consider a sequence of (expected) partitioned degree sequences, where $p_{max}:= \max_{i,j,k,l,x,y} \frac{a_{x}^{(i,j)}b_{y}^{(k,l)}}{S_{ij}} \leq \frac{R}{N^{\tau}}$, $R$ is a fixed constant, $\tau > 0$ ,$\max_{i,j,k}\frac{\mathbf{a}^{(ij)}\cdot\mathbf{b}^{(j,k)}}{S(i,j)} > 1$, and the matrix $\mathbf{P}$ as defined earlier.  Then with probability at least $p_{*} = 1 - \delta$, all paths of length not exceeding   $L=\frac{k\tau}{2}*log_{|\mathbf{P}|}(N)$, have less than  $k+1$ cycle inducing edges, where $\delta = \frac{R^{k}L^{3k-2}}{N^{\frac{k\tau}{2}}}$. 
\end{thm}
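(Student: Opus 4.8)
The plan is to transcribe the proof of Lemma \ref{lem:pmax} almost verbatim, substituting the matrix-product machinery of Lemma \ref{lem:generalized} for the scalar Chung-Lu edge probabilities and replacing the growth rate $\frac{\mathbf{a}\cdot\mathbf{b}}{S}$ by the taxicab norm $|\mathbf{P}|$. Exactly as there, rather than directly bounding the probability that some path of length at most $L$ carries at least $k+1$ cycle inducing edges, I would bound the probability $Pr(\mathbf{E}_{k+1})$ that there exists a reduced edge list $E_*$ (in the sense of Lemma \ref{lem:reduce}) with exactly $k+1$ cycle inducing edges and at most $L+1-(k+1)$ distinct nodes. The combinatorial scaffolding is model-independent: Lemmas \ref{lem:reduce} and \ref{lem:represent}, together with the three repetition constraints on the node lists $M_i$ (including that the first node closes a simple cycle), depend only on the path topology and carry over unchanged. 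Only the evaluation of $Pr(E_*)$ changes.

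First I would invoke the topological content of Lemma \ref{lem:represent} to write $Pr(E_*)$ as a product over the lists $M_i$, but now with each node's contribution depending on its community. The device of Lemma \ref{lem:generalized} and Theorem \ref{thm:upperpar} tracks precisely this: a free node $v$ in the new edge interior inserts the block factor $\mathbf{B}_v\mathbf{A}_v$, and summing over all its admissible values gives $\sum_v \mathbf{B}_v\mathbf{A}_v = \mathbf{P}$, while each of the $k$ repeating constraints contributes a factor bounded by $a_{\max}b_{\max}/S_{\cdot}\le p_{\max}$. As in Lemma \ref{lem:pmax} (with $t-1=k$) this extracts $p_{\max}^{k}$, times the taxicab norm of a product of $\mathbf{P}$'s and fixed bounding matrices.

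The key analytic step is to bound that remaining matrix product. Here I would use that the entrywise $\ell^1$ norm is submultiplicative on nonnegative matrices: writing $\alpha_j$ for the $j$th column sum of $A$ and $\beta_j$ for the $j$th row sum of $B$, one has $|AB| = \sum_j \alpha_j\beta_j \le (\sum_j\alpha_j)(\sum_j\beta_j) = |A|\,|B|$, since all cross terms are nonnegative. Consequently the sum over the at most $L$ free interior nodes is dominated by $|\mathbf{P}|^{|\mathbf{N}|}\le |\mathbf{P}|^{L}$, and the hypothesis $\max_{i,j,k}\frac{\mathbf{a}^{(ij)}\cdot\mathbf{b}^{(j,k)}}{S_{ij}}>1$ guarantees $|\mathbf{P}|>1$, so the base of $\log_{|\mathbf{P}|}$ is admissible. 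Combined with the same configuration count as in Lemma \ref{lem:pmax} — the number of admissible structures $\mathbf{X}_z$ (choices of the list sizes $|M_i|$, of the cycle-closing index $\gamma$, and of the repetition maps $\alpha,\beta,\delta,\eta$) is polynomial in $L$ of degree $O(k)$, yielding the stated $L^{3k-2}$ factor — this gives $Pr(\mathbf{E}_{k+1}) \le L^{3k-2}\,p_{\max}^{k}\,|\mathbf{P}|^{L}$. Finally, taking $L = \frac{k\tau}{2}\log_{|\mathbf{P}|}N$ makes $|\mathbf{P}|^{L}=N^{k\tau/2}$, and substituting $p_{\max}\le R/N^{\tau}$ produces $Pr(\mathbf{E}_{k+1}) \le \frac{R^{k}L^{3k-2}}{N^{k\tau/2}}=\delta$, which is the claim.

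The step I expect to be the main obstacle is the second one: justifying that, despite the non-commutativity of the block matrices $\mathbf{A}_v,\mathbf{B}_v,\mathbf{G}_{ij}$, summing over the free interior nodes is genuinely controlled by $|\mathbf{P}|^{|\mathbf{N}|}$, and that all the community bookkeeping (which node lies in which group) is absorbed by the taxicab norm of the matrix product rather than spawning extra multiplicative factors. This is where submultiplicativity of the entrywise $\ell^1$ norm, together with the precise placement of each block matrix dictated by the repetition constraints of Lemma \ref{lem:represent}, must be applied with care; once that reduction to a scalar bound in $|\mathbf{P}|$ is secured, the remainder is a direct transcription of the Chung-Lu argument.
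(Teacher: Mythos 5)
Your proposal is correct and shares the paper's skeleton exactly — reduce to the existence of a reduced edge list via Lemmas \ref{lem:reduce} and \ref{lem:represent}, count the configurations $\mathbf{X}_z$ polynomially in $L$, extract $p_{max}^{k}$ from the repetition constraints, and choose $L=\frac{k\tau}{2}\log_{|\mathbf{P}|}(N)$ so that the growth term becomes $N^{k\tau/2}$ — but it diverges at precisely the step you flag as the main obstacle, and the paper's resolution is different from (and simpler than) yours. Rather than routing through the matrix formalism of Lemma \ref{lem:generalized} and submultiplicativity of the taxicab norm, the paper never forms a matrix product at all: for each interior node $x_{k,i}$ it defines the scalar $\frac{b_{x_{k,i}}^{(*)}a_{x_{k,i}}^{(*)}}{S_*} = \max_{c,d}\frac{a_{x_{k,i}}^{(c,k,i)}b_{x_{k,i}}^{(k,d,i)}}{S(c,k,i)}$, i.e.\ it bounds each node's contribution by its worst case over group labels, which decouples the community bookkeeping node by node, and then uses the single inequality $\sum_{x_{k,i}=1}^{N}\frac{b_{x_{k,i}}^{(*)}a_{x_{k,i}}^{(*)}}{S_*}\leq |\mathbf{P}|$ to recover $|\mathbf{P}|^{L}$ directly, with the boundary junctions between the lists $M_i$ bounded by $p_{max}$ exactly as in Lemma \ref{lem:pmax}. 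This scalarization sidesteps the non-commutativity issue entirely, which is why the paper can transcribe Lemma \ref{lem:pmax} almost mechanically. Your route also works: since all block matrices are entrywise non-negative, $|AB|\leq |A|\,|B|$ holds as you argue, and the taxicab norm legitimately dominates the true probability because it over-sums the group-label assignments at the chain boundaries (all cross terms are non-negative, and the consistent assignment is one of them). The price is that each repeating-edge junction is then bounded by the norm of a fixed bounding matrix with up to eight nonzero entries each at most $p_{max}$, so your bound acquires constant factors of order $8^{k}$ that the paper's per-node maximum avoids; given that the paper's own constant-tracking in $\delta$ is loose (it drops the $t$ and $(L+1)$-versus-$L$ factors present in Lemma \ref{lem:pmax}), this is a benign discrepancy. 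What each approach buys: the paper's device is shorter and stays entirely scalar, while yours keeps the argument uniform with the machinery of Theorem \ref{thm:upperpar} and makes the appearance of $|\mathbf{P}|$ as the growth base conceptually transparent rather than an ad hoc summation identity.
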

\begin{proof}
The proof of the Theorem is analogous to the proofs of Lemmas \ref{lem:represent} and \ref{lem:pmax}.  

First define the function $G(k,i)$ to denote the group membership of the node $x_{k,i}$. 

From Lemma \ref{lem:represent}, it is not hard to show that in the Partitioned Chung-Lu random graph model, the probability that all edges in a reduced edge list exist will be,

\begin{equation} \label{eq:pmax01}
\Pi_{i=1}^{t}\Pi_{k=1}^{|M_{i}|-1} \frac{b_{x_{k,i}}^{(G(k,i),G(k+1,i))}a_{x_{k+1,i}}^{(G(k,i),G(k+1,i))}}{S_{G(k,i),G(k+1,i)}}.
\end{equation}

As the subscripts and superscripts become unweildy, we just denote the inputs and omit $G$.  This yields the following expression,

\begin{equation} \label{eq:pmax1}
\Pi_{i=1}^{t}\Pi_{k=1}^{|M_{i}|-1} \frac{b_{x_{k,i}}^{(k,k+1,i)}a_{x_{k+1,i}}^{(k,k+1,i)}}{S_{(k,k+1,i)}}, 
\end{equation}

As in the Proof of Lemma \ref{lem:pmax}, we can rewrite (\ref{eq:pmax1}) as follows,

\begin{equation} \label{eq:pmax2}
\frac{b_{x_{1,1}}^{(1,2,1)}a_{x_{|M_{t}|,t}}^{(|M_{t}|-1,|M_{t}|,t)}}{S_{(|M_{t}|-1,|M_{t}|,t)}}\Pi_{j=1}^{t-1}\frac{a_{x_{|M_{j}|,j}}^{(|M_{j}|-1,|M_{j}|,j)}b_{x_{1,j+1}}^{(1,2,j+1)}}{S_{(|M_{j}|-1,|M_{j}|,j)}}\Pi_{i=1}^{t}\Pi_{k=2}^{|M_{i}|-1} \frac{a_{x_{k,i}}^{(k-1,k,i)}b_{x_{k,i}}^{(k,k+1,i)}}{S_{(k-1,k,i)}}, 
\end{equation}

Now we can bound (\ref{eq:pmax2}) by defining 

$$\frac{b_{x_{k,i}}^{(*)}a_{x_{k,i}}^{(*)}}{S_*} = \max_{c,d}\frac{a_{x_{k,i}}^{(c,k,i)}b_{x_{k,i}}^{(k,d,i)}}{S(c,k,i)},$$

where we can choose any nodes $x_{c,i}$ and $x_{d,i}$ that maximize the aforementioned quantity.  Furthermore, we also have the property that if we sum over all possible choices of nodes $x_{k,i}$, 

\begin{equation} \label{eq:pmaxupperb}
\sum_{x_{k,i}=1}^{N}\frac{b_{x_{k,i}}^{(*)}a_{x_{k,i}}^{(*)}}{S_*} \leq |\mathbf{P}|.
\end{equation}

Hence we now have the upperbound,

\begin{equation} \label{eq:pmax3}
\frac{b_{x_{1,1}}^{(1,2,1)}a_{x_{|M_{t}|,t}}^{(|M_{t}|-1,|M_{t}|,t)}}{S_{(|M_{t}|-1,|M_{t}|,t)}}\Pi_{j=1}^{t-1}\frac{a_{x_{|M_{j}|,j}}^{(|M_{j}|-1,|M_{j}|,j)}b_{x_{1,j+1}}^{(1,2,j+1)}}{S_{(|M_{j}|-1,|M_{j}|,j)}}\Pi_{i=1}^{t}\Pi_{k=2}^{|M_{i}|-1} \frac{b_{x_{k,i}}^{(*)}a_{x_{k,i}}^{(*)}}{S_*}.
\end{equation}

Then using the argument from Lemma \ref{lem:pmax}, we can bound 
(\ref{eq:pmax3}) by

\begin{equation} \label{eq:pmax4}
p_{max}^{t-1}\frac{b_{x_{1,1}}^{(*)}a_{x_{1,1}}^{(*)}}{S_*}\Pi_{i=1}^{t}\Pi_{k=2}^{|M_{i}|-1} \frac{b_{x_{k,i}}^{(*)}a_{x_{k,i}}^{(*)}}{S_*}, 
\end{equation}

where we bound the terms in (\ref{eq:pmax3}) by $p_{max}$ only if the nodes corresponding to those terms also appear as 
$\frac{b_{x_{k,i}}^{(*)}a_{x_{k,i}}^{(*)}}{S_*}$.  

Now that (\ref{eq:pmax4}) is easy to add up (the is no longer a  dependence relationship between the terms) with inequality (\ref{eq:pmaxupperb}), the proof proceeds analogously as in Lemma \ref{lem:pmax}.

Though for simplicity, we omitted some of the details of the proof in Theorem \ref{thm:reducededge}, from (\ref{eq:pmaxupperb}) and (\ref{eq:pmax4}), it should be clear that the probability that $t$ cycle inducing edges appear in a reduced edge list with $L$ edges should be roughly $O(|\mathbf{P}|^{L}p_{max}^{t-1})$.  Since $p_{max} = O(N^{-\tau})$, we would need to consider paths of length $L = O(\tau log_{|\mathbf{P}|}(N))$ for this event to occur.
\end{proof}

As Lemmas \ref{lem:cyclelength} and \ref{lem:tpath} hold under for any random graph model, we have the desired upperbound. 

\begin{thm}\label{thm:upperpmaxpartition}
Consider a sequence of (expected) partitioned degree sequences, where $p_{max}:= \max_{i,j,k,l,x,y} \frac{a_{x}^{(i,j)}b_{y}^{(k,l)}}{S(i,j)} \leq \frac{R}{N^{\tau}}$, $R$ is a fixed constant, $\tau > 0$  and $\rho(\mathbf{P}) > 1$.   Let $r$ satisfy the inequality  $\frac{(m-1)\tau}{2} log_{|\mathbf{P}|}(N)\leq r \leq  \frac{m\tau}{2} log_{|\mathbf{P}|}(N)$, for some parameter $m$.

Define $P_{r}(\mathbf{G})$ to be the number of paths of length $r$ for a given graph $G\in\mathbf{G}$, where no path of length $r$ in $\mathbf{G}$ has more than $m+1$ cycle inducing edges and no path in $\mathbf{G}$ of length less than $ \frac{\tau}{2}*log_{|\mathbf{P}|}(N)$ has more than $1$ cycle inducing edge.  (Recall Theorem \ref{thm:reducededge}, where it follows that $\lim_{N\rightarrow \infty} Pr(G\in \mathbf{G})=1$.)   For notational simplicity define $$\eta =  [\frac{\tau}{4}\hspace{3pt}log_{|\mathbf{P}|}(N)]^{3}(m+1)exp(2m)$$ and suppose that  $$(\eta)^{\frac{4}{\tau*log_{|\mathbf{P}|}(N)}}<\rho(\mathbf{P}),$$ then,

$$E(P_{r}(\mathbf{G}))\leq \frac{4Sc_{max}\rho(\mathbf{P})^{r-1}}{(1-\rho(\mathbf{P})^{-1})Pr(G\in\mathbf{G})}exp(\frac{r\alpha\rho(\mathbf{P})^{-1}\eta^{1+\frac{4}{\tau*log_{|\mathbf{P}|}(N)})}}{1-\rho(\mathbf{P})^{-1}(\eta)^{\frac{4}{\tau*log_{|\mathbf{P}|}(N)})}}),
$$

where we define $\alpha$ such that inequality (\ref{eq:pmaxmatrix}) holds ,$c_{max}$ is the maximum column sum of the matrix $\mathbf{P}^{2}$ and we assume that the (eight) non-zero entries of $\mathbf{P}$ are at least $1$.
\end{thm}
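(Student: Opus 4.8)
The plan is to prove Theorem \ref{thm:upperpmaxpartition} as the partitioned, sparse-regime analogue of Theorem \ref{thm:pmaxtrick}, by grafting the refined repeating-edge-block counting onto the matrix-product bookkeeping established in Theorem \ref{thm:upperpar}. As in Theorem \ref{thm:pmaxtrick}, I would first condition on the event $G \in \mathbf{G}$ and lower bound $Pr(G\in\mathbf{G})$. Here the relevant input is Theorem \ref{thm:reducededge} (the partitioned counterpart of Lemma \ref{lem:pmax}): intersecting the event that no path of length at most $r$ has more than $m+1$ cycle inducing edges with the event that no path of length at most $\frac{\tau}{2}\log_{|\mathbf{P}|}(N)$ has more than one, and bounding the probability of the union by $1$, gives $\lim_{N\to\infty} Pr(G\in\mathbf{G})=1$. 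Conditioning inflates each edge-existence probability by at most $1/Pr(G\in\mathbf{G})$, exactly as in inequality (\ref{eq:probG}), so this factor can be pulled out front and the remaining estimates carried out as if unconditioned.

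I would then reproduce the derivation of Theorem \ref{thm:upperpar} up through the bound on $E(P_r^L(\mathbf{G}))$, the expected number of paths of length $r$ whose last edge is new. Writing $k_i$ for the number of repeating edge blocks of length $i$ and $k_0$ for the number of new edges, the framework of Lemma \ref{lem:generalized} together with condition (\ref{eq:pmaxmatrix}) produces an expression of the shape of (\ref{eq:paths4}): a sum over $\{k_i\}$ of a multinomial coefficient, a factor counting the ways of filling in the repeating edge blocks, and a norm of the matrix product $\mathbf{A}_{n_r}\mathbf{P}^{\,r-1-\sum(i+1)k_i}(\alpha\mathbf{P})^{\sum k_i}\mathbf{b}_{n_0}$. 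Invoking Corollary \ref{cor:matrixmultiply} (with $c_{max}$ the maximal column sum of $\mathbf{P}^2$) collapses this norm to the scalar $4Sc_{max}\rho(\mathbf{P})^{\,r-1-\sum i k_i}$, matching (\ref{eq:paths6}). The only place the sparsity assumption enters is the fill-in factor, which in Theorem \ref{thm:upperpar} was the crude $r^{\sum i k_i}$.

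The crucial step is to replace this $r^{\sum i k_i}$ by a far smaller quantity. Since Lemmas \ref{lem:cyclelength}, \ref{lem:cycledecomp} and \ref{lem:tpath} are stated for an arbitrary random graph model, and Theorem \ref{thm:reducededge} supplies the partitioned version of Lemma \ref{lem:pmax}, the entire subblock argument of Theorem \ref{thm:pmaxtrick} goes through with $\log_{\mathbf{a}\cdot\mathbf{b}/S}$ replaced by $\log_{|\mathbf{P}|}$. Concretely, I would partition each repeating edge block of length $i$ into $O(i/\log_{|\mathbf{P}|}(N))$ subblocks of length at most $\frac{\tau}{4}\log_{|\mathbf{P}|}(N)$; by Lemma \ref{lem:cycledecomp} each subblock has the form $(P_1,C,\dots,C,P_2)$, Lemma \ref{lem:cyclelength} bounds the number of admissible simple cycles $C$ by $m+1$, and Lemma \ref{lem:tpath} bounds the number of simple paths $P_1,P_2$ by $\exp(2m)$. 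Counting the placements then bounds the number of fillings of one subblock by $\eta = [\frac{\tau}{4}\log_{|\mathbf{P}|}(N)]^{3}(m+1)\exp(2m)$, and hence a block of length $i$ by $\eta^{\,1+4i/(\tau\log_{|\mathbf{P}|}(N))}$, provided $\frac{\tau}{4}\log_{|\mathbf{P}|}(N)>1$.

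With the fill-in factor $\prod_{i\geq 1}\big(\eta^{\,1+4i/(\tau\log_{|\mathbf{P}|}(N))}\big)^{k_i}$ in hand, each repeating edge block of length $i$ contributes $\alpha\eta\big(\eta^{4/(\tau\log_{|\mathbf{P}|}(N))}\rho(\mathbf{P})^{-1}\big)^{i}$ to the summand, combining the $(\alpha\mathbf{P})$ factor, the loss of $i+1$ powers of $\rho(\mathbf{P})$ from the shortened new edge interior, and the $\eta$-count. Applying Lemma \ref{lem:expineq} with $l=1$, $\bm{\alpha}=\alpha\rho(\mathbf{P})^{-1}\eta^{1+4/(\tau\log_{|\mathbf{P}|}(N))}$ and $\beta=\rho(\mathbf{P})^{-1}\eta^{4/(\tau\log_{|\mathbf{P}|}(N))}$ (the hypothesis $\eta^{4/(\tau\log_{|\mathbf{P}|}(N))}<\rho(\mathbf{P})$ guarantees $\beta<1$) yields precisely the stated exponential for $E(P_r^L(\mathbf{G}))$. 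Finally, summing over path lengths via $E(P_r(\mathbf{G}))\leq\sum_{j=1}^{r}E(P_j^L(\mathbf{G}))$ and bounding the resulting geometric series by $1/(1-\rho(\mathbf{P})^{-1})$ (using $\rho(\mathbf{P})>1$) produces the prefactor $4Sc_{max}\rho(\mathbf{P})^{r-1}/[(1-\rho(\mathbf{P})^{-1})Pr(G\in\mathbf{G})]$ and completes the argument. The main obstacle is bookkeeping rather than conceptual: one must verify that the purely combinatorial fill-in count of Theorem \ref{thm:pmaxtrick} detaches cleanly from the non-commutative matrix product, i.e. that the $\eta$-count multiplies the matrix factor $(\alpha\mathbf{P})^{\sum k_i}$ independently of which nodes are selected, so that Corollary \ref{cor:matrixmultiply} can still reduce the norm to a power of $\rho(\mathbf{P})$ after the substitution.
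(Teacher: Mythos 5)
Your proposal matches the paper's proof essentially step for step: conditioning on $G\in\mathbf{G}$ via the analogue of (\ref{eq:probG}), reproducing the matrix-product bound of Theorem \ref{thm:upperpar} with Corollary \ref{cor:matrixmultiply} collapsing the norm to $4Sc_{max}\rho(\mathbf{P})^{r-1-\sum ik_{i}}$, replacing the crude fill-in count by the subblock argument of Theorem \ref{thm:pmaxtrick} (via Lemmas \ref{lem:cyclelength}, \ref{lem:cycledecomp}, \ref{lem:tpath} and Theorem \ref{thm:reducededge}) to get the factor $\eta^{1+4i/(\tau\log_{|\mathbf{P}|}(N))}$ per block, then summing the series (your invocation of Lemma \ref{lem:expineq} is the same computation the paper carries out by hand) and finishing with $E(P_{r}(\mathbf{G}))\leq\sum_{j}E(P_{j}^{L}(\mathbf{G}))$ and the geometric factor $1/(1-\rho(\mathbf{P})^{-1})$. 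The proposal is correct, and your closing caveat about the fill-in count detaching from the non-commutative product is exactly the bookkeeping point the paper's proof implicitly relies on.
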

\begin{proof}
The proof follows the arguments from Theorems \ref{thm:pmaxtrick} and \ref{thm:upperpar}.

Let $E(P_{r}^{L}(\mathbf{G}))$ denote the expected number of paths of length $r$ where the first and last edge cannot be repeating edges for all graphs $G \in \mathbf{G}$.  

Denote  $\mathbf{1}_{\mathbf{E}}$ as an indicator variable that all of the edges in $\mathbf{E}$ exist, then

\begin{equation} \label{eq:probpmaxpath2}
Pr(\mathbf{1}_{\mathbf{E}} = 1|G\in\mathbf{G}) \leq \frac{Pr(\mathbf{1}_{\mathbf{E}} = 1)}{Pr(G\in \mathbf{G})}
\end{equation}

Repeating the argument from Theorem \ref{thm:upperpar} and invoking (\ref{eq:probpmaxpath2}) to compute the probability that edges exist yields the bound,

\begin{equation}\label{eq:pmaxpath}   
E(P_{r}^{L}(\mathbf{G}))\leq  \frac{1}{Pr(G\in\mathbf{G})}\sum_{\mathbf{L}_{R},\mathbf{L}_{N}}\sum_{n_0 = 1,n_r = 1}^{N}|\mathbf{A}_{n_{r}}\mathbf{P}^{|\mathbf{L}_{N}|}(\alpha \mathbf{P})^{\sum_{i=1}^{r}k_{i}}\mathbf{b}|,
\end{equation} 

where $\mathbf{L}_{N}$ identifies the nodes that are in the interior of a new edge block and $\mathbf{L}_{R}$ specifies the how nodes in the repeating edge blocks correspond to the nodes that are part of new edges in the path.  By carefully counting the number of ways we can construct repeating edge blocks (as in Theorem \ref{thm:pmaxtrick}) we have that, 

\begin{multline}
E(P_{r}^{L}(\mathbf{G}))\leq  \\ \sum_{\substack{k_{0}+\sum_{i=1}^{r-2}ik_{i}\\ \forall i, k_{i}\in [0..r] }}\sum_{n_0 = 1,n_r = 1}^{N}\binom{\sum k_{i}}{k_{0},...,k_{r-2}}(\eta)^{\sum_{i\geq 1} k_{i}(1+\frac{4i}{\tau*log_{|\mathbf{P}|}(N)})}|\mathbf{A}_{n_{r}}\mathbf{P}^{r - 1 - \sum_{i=1}^{r}(i+1)k_{i}}(\alpha \mathbf{P})^{\sum_{i=1}^{r}k_{i}}\mathbf{b}|.
\end{multline}

Summing over all initial nodes $n_{0}$ and final nodes $n_{r}$ and invoking the fact that $\binom{\sum k_{i}}{k_{0},...,k_{r-2}}\leq \Pi_{i=1}^{r-2}\frac{r^{k_{i}}}{k_{i}!}$, tells us that

\begin{multline}
E(P_{r}^{L}(\mathbf{G}))\leq  \\ \sum_{\substack{k_{0}+\sum_{i=1}^{r-2}ik_{i}\\ \forall i, k_{i}\in [0..r] }}|\mathbf{P}^{r - 1 - \sum_{i=1}^{r}(i+1)k_{i}}(\alpha \mathbf{P})^{\sum_{i=1}^{r}k_{i}}S\mathbf{1}|\Pi_{i=1}^{r-2}\frac{r^{k_{i}}}{k_{i}!}(\eta)^{k_{i}(1+\frac{4i}{\tau*log_{|\mathbf{P}|}(N)})}\leq 
\end{multline}

\begin{multline} \label{eq:pmaxparpath3}
 \sum_{\substack{k_{0}+\sum_{i=1}^{r-2}ik_{i}\\ \forall i, k_{i}\in [0..r] }}4Sc_{max}\rho(\mathbf{P})^{r - 1 - \sum_{i=1}^{r}ik_{i}}\Pi_{i=1}^{r-2}\frac{(\alpha r)^{k_{i}}}{k_{i}!}(\eta)^{k_{i}(1+\frac{4i}{\tau*log_{|\mathbf{P}|}(N)})},
\end{multline}

where we bounded the taxicab norm of a matrix using the spectral radius.  

We can then finish off the proof by noting that (\ref{eq:pmaxparpath3}) is bounded above by,

\begin{multline}
 4Sc_{max}\rho(\mathbf{P})^{r-1}\sum_{k_{1}=0,...,k_{r-2}=0}^{\infty}\Pi_{i=1}^{r-2}\frac{(\alpha r)^{k_{i}}\rho(\mathbf{P})^{-ik_{i}}(\eta)^{k_{i}(1+\frac{4i}{\tau*log_{|\mathbf{P}|}(N)})}}{k_{i}!}\leq 
 \\ 
  4Sc_{max}\rho(\mathbf{P})^{r-1}\Pi_{i=1}^{r-2}exp(\alpha r\rho(\mathbf{P})^{-i}(\eta)^{(1+\frac{4i}{\tau*log_{|\mathbf{P}|}(N)})})\leq \\
    4Sc_{max}\rho(\mathbf{P})^{r-1}exp(\frac{\alpha r\rho(\mathbf{P})^{-1}\eta^{1+\frac{4}{\tau*log_{|\mathbf{P}|}(N)}}}{1-\rho(\mathbf{P})^{-1}\eta^{\frac{4}{\tau*log_{|\mathbf{P}|}(N)}}})
  \end{multline}
  
  Invoking the fact from Theorem \ref{thm:upper} that $E(P_{r}(\mathbf{G})) \leq \sum_{m=1}^{r} E(P_{r}^{L}(\mathbf{G}))$, finishes the proof.

\end{proof}
We conclude this section by elucidating how Theorem \ref{thm:upperpmaxpartition} yields the desired asymptotic result. By requiring that $r$ satisfy the constraint $O(log(N)) \ll r \ll O(log(N)^{2})$ and $\alpha$ scale like $O(N^{-\tau})$, it then follows that $\lim_{N\rightarrow\infty}\eta^{\frac{4}{\tau\log_{|\mathbf{P}|}(N)}}=1$ and $\lim_{N\rightarrow \infty} r\alpha\eta^{1+\frac{4}{\tau*log_{|\mathbf{P}|}(N)})}=0$.   Similarly, $\lim_{N\rightarrow \infty} Pr(G\in\mathbf{G}) = 1$ and we get that asymptotically, $E(P_{r}(\mathbf{G})) \leq \frac{4Sc_{max}\rho(\mathbf{P})^{r-1}}{1-\rho(\mathbf{P})^{-1}}$.  Consequently, an application of Markov's Inequality demonstrates that $\rho(\mathbf{P})$ is also an asymptotic upperbound for the spectral radius.  Similarly, Theorem \ref{thm:mtrxsc} and Corollary \ref{cor:matrixbound}  show that $\rho(\mathbf{P})$ is also an aymptotic lowerbound for the spectral radius, hence the spectral radius must converge to $\rho(\mathbf{P})$. 

\section{Applications}
Determining the dominating eigenvalue of the adjacency matrix can have a profound effect on the underlying dynamics of the network.  For example consider a susceptible-infected-susceptible (SIS) epidemiological model, where at each step an infected node infects a neighbor with probability $\beta \Delta t$ and recovers (from sick to healthy) with probability $\Delta t$, where $\Delta t$ denotes the length of the time step.  This leads us to the following result,

\begin{thm}[Ganesh, Massoulie, Towsley \cite{ganesh2005effect}] \label{thm:Ganesh}
Consider an SIS epidemiological model, where infected nodes infect neighbors with probability $\beta \Delta t$ at each time step and recover with probability $\Delta t$. Furthermore, suppose our adjacency matrix $A\in\mathbb{R}^{N\times N}$ is symmetric. Then for $\Delta t$ sufficiently small, if $\rho(A) < \frac{1}{\beta}$, then the expected (stopping) time for the network to be infection free from any initial condition is $O(log(N))$.
\end{thm}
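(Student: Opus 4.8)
The plan is to realize the SIS process as a continuous-time Markov chain on $\{0,1\}^N$ (letting $\Delta t\to 0$ is precisely what turns the per-step infection and recovery probabilities into Poissonian clocks of rate $\beta$ per edge and rate $1$ per node), and then to track only the first-moment vector of infection probabilities. Write $X_i(t)\in\{0,1\}$ for the status of node $i$ and set $p_i(t)=\Pr(X_i(t)=1)$. Recovery at rate $1$ and infection across each edge at rate $\beta$ give the exact moment evolution
$$\frac{d}{dt}p_i(t) = -p_i(t) + \beta\sum_{j} A_{ij}\,\Pr\bigl(X_i(t)=0,\,X_j(t)=1\bigr).$$

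First I would linearize. Since $\Pr(X_i=0,X_j=1)\le \Pr(X_j=1)=p_j$, the vector $p(t)=(p_1(t),\dots,p_N(t))^T$ satisfies the entrywise differential inequality $\dot p \le (\beta A - I)\,p$. Because $A$ is entrywise nonnegative, $\beta A - I$ is a Metzler matrix (nonnegative off-diagonal entries), so the associated linear flow is monotone and the Kamke comparison principle integrates the inequality to $p(t)\le e^{(\beta A - I)t}\,p(0)$ entrywise. This domination of the nonlinear mean dynamics by a linear system is the heart of the argument.

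Next I would exploit symmetry. Since $A$ is a symmetric nonnegative adjacency matrix, Perron--Frobenius gives $\lambda_{\max}(A)=\rho(A)$, so the top eigenvalue of the symmetric matrix $\beta A - I$ is $\beta\rho(A)-1<0$ by hypothesis; hence $\|e^{(\beta A - I)t}\|_2 = e^{(\beta\rho(A)-1)t}$. Letting $V(t)=\sum_i X_i(t)$ be the number of infected nodes and using that the deterministic initial set gives $\|p(0)\|_2\le\sqrt{N}$ and $\|\mathbf{1}\|_2=\sqrt{N}$, Cauchy--Schwarz yields
$$\mathbb{E}[V(t)] = \mathbf{1}^T p(t) \le \|\mathbf{1}\|_2\,\|e^{(\beta A - I)t}\|_2\,\|p(0)\|_2 \le N\,e^{-(1-\beta\rho(A))t}.$$

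Finally I would convert this into a bound on the extinction time $T=\inf\{t: V(t)=0\}$. As $V(t)$ is integer-valued, $\{T>t\}=\{V(t)\ge 1\}$, so Markov's inequality gives $\Pr(T>t)\le \mathbb{E}[V(t)]\le N\,e^{-(1-\beta\rho(A))t}$. Writing $\mathbb{E}[T]=\int_0^\infty \Pr(T>t)\,dt$ and splitting the integral at $t_0=\tfrac{\log N}{1-\beta\rho(A)}$ — bounding $\Pr(T>t)\le 1$ for $t\le t_0$ and using the exponential tail for $t>t_0$ — gives $\mathbb{E}[T]\le t_0 + \tfrac{1}{1-\beta\rho(A)}=O(\log N)$, uniformly over the initial infected set. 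The main obstacle is precisely the combination of linearization and symmetry: dropping the susceptibility constraint $X_i=0$ is what renders the mean dynamics linear and upper-boundable, while symmetry is exactly what rules out transient (non-normal) growth of $e^{(\beta A - I)t}$, so that the decay is governed by the top eigenvalue alone; for a general nonsymmetric $A$ the operator norm need not equal $e^{(\beta\rho(A)-1)t}$ and the clean $O(\log N)$ bound can fail.
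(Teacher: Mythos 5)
The paper does not prove this theorem at all: it is imported verbatim from Ganesh, Massouli\'e and Towsley \cite{ganesh2005effect} as background for the SIS simulations in Section 5, so there is no in-paper proof to compare against. Your argument is a correct reconstruction of the original proof in that reference: the exact first-moment equation, the linearization $\Pr(X_i=0,X_j=1)\le p_j$ giving $\dot p \le (\beta A - I)p$, the Metzler/monotonicity step yielding $p(t)\le e^{(\beta A - I)t}p(0)$ entrywise, the use of symmetry so that $\|e^{(\beta A-I)t}\|_2 = e^{(\beta\rho(A)-1)t}$, and the integration of the tail bound $\Pr(T>t)\le \mathbb{E}[V(t)]\le N e^{-(1-\beta\rho(A))t}$ to get $\mathbb{E}[T]\le \frac{\log N + 1}{1-\beta\rho(A)} = O(\log N)$ are all exactly the ingredients of the cited result. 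The only step taken on faith is the passage from the paper's discrete-time formulation to the continuous-time Markov chain as $\Delta t\to 0$, which is what the hypothesis ``for $\Delta t$ sufficiently small'' is implicitly licensing; your closing observation that symmetry is what excludes non-normal transient growth, and that the operator-norm identity can fail for the directed matrices this paper actually studies, is also the correct reading of why the theorem is stated only for symmetric $A$.
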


While our adjacency matrices are not symmetric, Theorem \ref{thm:Ganesh} relates the dominating eigenvalue of the adjacency matrix to the stability of the healthy state and provides a framework for constructing cases where differences in the spectral radius of the adjacency matrix between the Chung-Lu and Partitioned Chung-Lu model could have severe repercussions on the dynamics.  

\begin{figure}
\includegraphics[scale=.6]{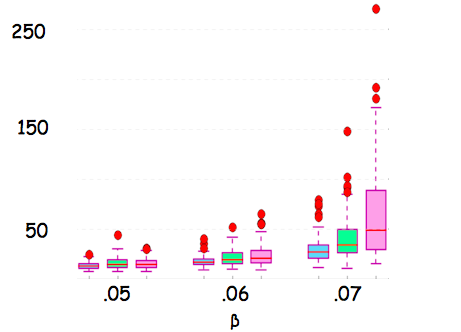}
\centering
\caption[Distribution of Stopping Times in the SIS Model for Networks with Community Structure]{Distribution of Stopping Times in the SIS Model for Networks with Community Structure. The three colored box and whisker plots correspond to the three different networks ordered increasingly in terms of the spectral radius of the adjacency matrix. The $x$ axis indicates the value for the paramater $\beta$ and the $y$ axis indicates the stopping time when nodes can no longer be infectious.}
\label{fig:SIS}
\end{figure}

Consequently in Figure \ref{fig:SIS}, we generated three realizations from the 2-Partitioned Chung-Lu random graph model of $500$ nodes, all with approximately the same value for $\frac{\mathbf{a}\cdot\mathbf{b}}{S}$, but different values for $\rho(\mathbf{P})$. We then simulated $100$ trials of the SIS epidemiolgocial stochastic process for each choice of $\beta \in \{.05,.06,.07\}$ with the initial condition that half of our network starts out infected.  As expected $\rho(\mathbf{P})$ accurately predicted the network resilience to the pathogen; in contrast, the predictor $\frac{\mathbf{a}\cdot\mathbf{b}}{S}$,  could not effectively discern differences among the networks.

\appendix
\section{Appendix}
We now provide a proof for an inequality that we employed frequently throughout this work.
\begin{lem}\label{lem:expineq}
For parameters, $m, l, r \in \mathbb{N}$, where $l < m$  and $\alpha, \beta \in \mathbb{R}$, where $\beta < 1$, we have that
\begin{equation} \label{eq:expcrit}
\sum_{\substack{k_{0}+\sum_{i=1}^{m}ik_{i} = r \\ \forall i \in [0..m], k_{i}\in [0..r]}}\binom{\sum_{i=0}^{m}k_{i}}{k_{0},...,k_{m}}\Pi_{i=1}^{l}\alpha^{k_{i}}\Pi_{i=l+1}^{m}\alpha^{k_{i}}\beta^{(i-l)k_{i}} \leq exp(\frac{lr\alpha}{1-\beta}).
\end{equation}
\end{lem}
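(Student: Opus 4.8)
The plan is to eliminate the single linear constraint $k_0+\sum_{i\ge 1}ik_i=r$ by solving for $k_0$, replace the multinomial coefficient by a bound that depends only on $k_1,\dots,k_m$, and then recognize the resulting \emph{unconstrained} sum as a product of exponential power series. Throughout I assume $\alpha,\beta\ge 0$ with $0\le\beta<1$ and $l\ge 1$, which is the regime in which the lemma is invoked.

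First I would record the key structural fact. On the summation domain $k_0$ is determined by the constraint, $k_0=r-\sum_{i\ge 1}ik_i$, so that the total block count $n:=\sum_{i=0}^m k_i = r-\sum_{i\ge1}(i-1)k_i\le r$. Writing $\binom{n}{k_0,\dots,k_m}=\frac{n!}{k_0!\,\prod_{i\ge1}k_i!}$ and using $\frac{n!}{k_0!}=n(n-1)\cdots(k_0+1)\le n^{\,n-k_0}\le r^{\sum_{i\ge1}k_i}$ (the last step because $n\le r$ and $n-k_0=\sum_{i\ge1}k_i$), I obtain the clean bound $\binom{n}{k_0,\dots,k_m}\le \frac{r^{\sum_{i\ge1}k_i}}{\prod_{i\ge1}k_i!}$, in which $k_0$ no longer appears.

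Next, since every summand is non-negative, I would discard the constraint entirely: each admissible tuple corresponds to a unique $(k_1,\dots,k_m)$ with $\sum_{i\ge1}ik_i\le r$, a subset of all non-negative tuples, so letting $k_1,\dots,k_m$ range freely over $\mathbb{N}\cup\{0\}$ can only enlarge the sum. Distributing $r^{\sum_{i\ge1}k_i}=\prod_{i\ge1}r^{k_i}$ across the weights makes the sum factor as $\prod_{i=1}^{l}\big(\sum_{k_i\ge0}\frac{(r\alpha)^{k_i}}{k_i!}\big)\prod_{i=l+1}^{m}\big(\sum_{k_i\ge0}\frac{(r\alpha\beta^{i-l})^{k_i}}{k_i!}\big)=\exp(lr\alpha)\exp\big(r\alpha\sum_{i=l+1}^m\beta^{i-l}\big)$. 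Bounding the geometric tail by $\sum_{i=l+1}^m\beta^{i-l}\le\sum_{j\ge1}\beta^j=\frac{\beta}{1-\beta}$ yields the exponent $r\alpha\big(l+\frac{\beta}{1-\beta}\big)$.

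Finally I would reconcile this with the stated bound $\exp(\frac{lr\alpha}{1-\beta})$: since $\frac{l}{1-\beta}=l+l\frac{\beta}{1-\beta}$ and $l\ge 1$, we have $l+\frac{\beta}{1-\beta}\le \frac{l}{1-\beta}$, which closes the estimate. The only genuinely delicate point is the multinomial estimate $\frac{n!}{k_0!}\le r^{\sum_{i\ge1}k_i}$, which is what decouples the indices $k_1,\dots,k_m$ and allows the constraint to be discarded; it rests on the observation that the total number of blocks $n$ is at most $r$ rather than merely finite. Everything after that is a routine geometric-series computation.
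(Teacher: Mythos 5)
Your proposal is correct and follows essentially the same route as the paper's proof: bound the multinomial coefficient by $\Pi_{i=1}^{m}\frac{r^{k_{i}}}{k_{i}!}$, drop the constraint so the sum factors into exponential series, and absorb the geometric tail via $l+\frac{\beta}{1-\beta}\leq \frac{l}{1-\beta}$ for $l\geq 1$. The only difference is cosmetic: you supply a careful justification of the multinomial estimate (via $\frac{n!}{k_{0}!}\leq r^{\sum_{i\geq 1}k_{i}}$, using $n\leq r$) and make explicit the nonnegativity assumptions on $\alpha,\beta$, both of which the paper leaves implicit.
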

\begin{proof}
First, we have the bound on the multinomial coefficient that $\binom{\sum_{i=0}^{m}k_{i}}{k_{0},...,k_{m}}\leq \Pi_{i=1}^{m}\frac{r^{k_{i}}}{k_{i}!}.$  Substituting this into the left hand side of (\ref{eq:expcrit}) we have the upperbound that,
\begin{equation}\label{eq:expcrit2}
\sum_{\substack{k_{0}+\sum_{i=1}^{m}ik_{i} = r \\ \forall i \in [0..m], k_{i}\in [0..r]}}\Pi_{i=1}^{l}\frac{(r\alpha)^{k_{i}}}{k_{i}!}\Pi_{i=l+1}^{m}\frac{(r\alpha)^{k_{i}}\beta^{(i-l)k_{i}}}{k_{i}!}.
\end{equation}
We can further bound (\ref{eq:expcrit2}) from above by removing the constraint that $k_{0}+\sum ik_{i} = r$, where we let $k_{1},..,k_{m}$ take on any non-negative integer value (and require that $k_{0} = r - \sum_{i=1}^{m} ik_{i}$).  This yields the bound,

\begin{equation}\label{eq:expcrit3}
\sum_{k_{1}=0,...,k_{m}=0}^{\infty}\Pi_{i=1}^{l}\frac{(r\alpha)^{k_{i}}}{k_{i}!}\Pi_{i=l+1}^{m}\frac{(r\alpha)^{k_{i}}\beta^{(i-l)k_{i}}}{k_{i}!} = \Pi_{i=1}^{l}exp(r\alpha)\Pi_{i=l+1}^{m}exp(r\alpha\beta^{i-l}).
\end{equation}

By adding the exponents, we get that,
\begin{equation}\label{eq:expcrit4}
\Pi_{i=1}^{l}exp(r\alpha)\Pi_{i=l+1}^{m}exp(r\alpha\beta^{i-l})\leq exp(rl\alpha +\sum_{i=1}^{\infty} \alpha\beta^{i})\leq exp(\frac{rl\alpha}{1-\beta}).	
\end{equation}

\end{proof}
At this juncture, we provide proofs to the linear algebra results necessary to derive our desired inequalities on the moments for the number of paths of length $r$.

\begin{cor} \label{cor:matrixmultiply} Let $B$ be an (entry-wise)  non-negative matrix, $B^{2}$ be an entry-wise positive matrix and let $x$ be the eigenvector corresponding to the dominating eigenvalue.  Furthermore let $b_{ij}^{(m)}$ denote the i,jth entry of $B^{m}$ .Let $c_{max}$ be the maximum row sum of $B^{2}\in\mathbb{R}^{n\times n}$ and suppose every entry is at least equal to 1, (hence $r_{max}\geq n$). Then

$$[\sum_{j=1}^{n}b_{ij}^{(m)}]^{\frac{1}{m}}\leq {c_{max}}^{\frac{1}{m}}\rho(B)$$
and $$(c_{max})^{-\frac{1}{m}}\rho(B)\leq [\sum_{j=1}^{n}b_{ij}^{(m)}]^{\frac{1}{m}}.$$
\end{cor}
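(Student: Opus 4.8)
The plan is to exploit the Perron--Frobenius structure forced by the hypothesis that $B^2$ is entrywise positive. Since $B$ is entrywise nonnegative and $B^2 > 0$, the matrix $B$ is primitive, so its dominating eigenvalue $\rho(B)$ is real, strictly positive, and simple, and the associated eigenvector $x$ may be taken strictly positive; moreover $x$ is also the Perron eigenvector of $B^2$, with $B^2 x = \rho(B)^2 x$. Writing $x_{\min} = \min_i x_i > 0$ and $x_{\max} = \max_i x_i$, the central observation is that the all-ones vector can be sandwiched between multiples of $x$ entrywise: because $x_{\min} \leq x_i \leq x_{\max}$, we have $\tfrac{1}{x_{\max}} x \leq \mathbf{1} \leq \tfrac{1}{x_{\min}} x$. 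Since $B^m$ is entrywise nonnegative it preserves these inequalities, and using $B^m x = \rho(B)^m x$ I would obtain, reading off the $i$-th coordinate,
$$\frac{\rho(B)^m x_i}{x_{\max}} \leq \sum_{j=1}^n b_{ij}^{(m)} \leq \frac{\rho(B)^m x_i}{x_{\min}}.$$

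At this stage the bounds still involve the eigenvector, so the key step is to show that the oscillation ratio $x_{\max}/x_{\min}$ is controlled by $c_{\max}$. I would extract this from the relation $\rho(B)^2 x_i = \sum_j (B^2)_{ij} x_j$. On one hand, since every entry of $B^2$ is at least $1$, the right-hand side is at least $\sum_j x_j \geq x_{\max}$, so $\rho(B)^2 x_{\min} \geq x_{\max}$, giving $x_{\max}/x_{\min} \leq \rho(B)^2$. On the other hand, evaluating the relation at an index $i^\ast$ with $x_{i^\ast} = x_{\max}$ and bounding $x_j \leq x_{\max}$ gives $\rho(B)^2 x_{\max} \leq x_{\max}\sum_j (B^2)_{i^\ast j} \leq x_{\max} c_{\max}$, hence $\rho(B)^2 \leq c_{\max}$. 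Chaining these yields $x_{\max}/x_{\min} \leq \rho(B)^2 \leq c_{\max}$.

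Finally I would feed $x_{\max}/x_{\min} \leq c_{\max}$ back into the sandwich. For the upper bound, $\sum_j b_{ij}^{(m)} \leq \rho(B)^m x_i / x_{\min} \leq \rho(B)^m x_{\max}/x_{\min} \leq c_{\max}\,\rho(B)^m$; for the lower bound, $\sum_j b_{ij}^{(m)} \geq \rho(B)^m x_i / x_{\max} \geq \rho(B)^m x_{\min}/x_{\max} \geq \rho(B)^m / c_{\max}$. Taking $m$-th roots of these positive quantities gives precisely the two claimed inequalities. I expect the main obstacle to be the bookkeeping around positivity and applicability of Perron--Frobenius, namely confirming $x > 0$ and $\rho(B) > 0$ from $B^2 > 0$, together with the ratio estimate $x_{\max}/x_{\min} \leq c_{\max}$; the latter is the one place where both structural hypotheses (the entries of $B^2$ being at least $1$, and the maximum row sum being $c_{\max}$) are genuinely used, and it is the crux of the argument.
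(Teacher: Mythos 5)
Your proposal is correct and follows essentially the same route as the paper: the paper's appendix proof sandwiches the row sums of $B^{m}$ between $\rho(B)^{m}\frac{\min_{k}x_{k}}{\max_{k}x_{k}}$ and $\rho(B)^{m}\frac{\max_{k}x_{k}}{\min_{k}x_{k}}$ (citing the Horn--Johnson lemma you prove inline via the sandwich $\frac{1}{x_{\max}}x \leq \mathbf{1}\leq \frac{1}{x_{\min}}x$) and then bounds the oscillation ratio by $c_{max}$ using the entrywise lower bound $1$ on $B^{2}$. Your ratio estimate, the chain $x_{\max}/x_{\min}\leq \rho(B)^{2}\leq c_{max}$, is just a streamlined variant of the paper's second appendix lemma, which normalizes $\sum_{j}x_{j}=1$ and invokes Gershgorin to reach the same conclusion.
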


Corollary \ref{cor:matrixmultiply} follows immediately from the following two lemmas, the first of which can be found in \cite{horn2012matrix}, page 494.

\begin{lem}Let $B\in\mathbb{R}^{N\times N}$ be a (entry-wise) nonnegative matrix and let $x$ be the eigenvector corresponding to the dominating eigenvalue. Assume that the eigenvector $x$ is strictly positive.
Furthermore let $b_{ij}^{(m)}$ denote the i,jth entry of $B^{m}$. Then for all integers $m$ and integers $j$ such that $1\leq j \leq N$, we have that $$\sum_{i=1}^{n}b_{ij}^{(m)}\leq \frac{\max_{k} x_{k}}{\min_{k} x_{k}}\rho(B)^{m}$$
and $$\frac{\min_{k} x_{k}}{\max_{k} x_{k}}\rho(B)^{m} \leq \sum_{j=1}^{n}b_{ij}^{(m)}.$$
\end{lem}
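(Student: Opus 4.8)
The plan is to derive everything from the single identity $B^{m}x=\rho(B)^{m}x$ and then sandwich the weights. First I would verify that the dominating eigenvalue is attained on $x$ and equals $\rho(B)$: since $B$ is entrywise nonnegative and $x$ is strictly positive, writing $Bx=\lambda x$ in components gives $\lambda x_{i}=\sum_{j}b_{ij}x_{j}\ge 0$ with $x_{i}>0$, so $\lambda\ge 0$; because $x$ corresponds to the dominating eigenvalue, $|\lambda|=\rho(B)$, and hence $\lambda=\rho(B)$. Iterating yields $B^{m}x=\rho(B)^{m}x$, i.e.
\[
\sum_{j=1}^{n}b_{ij}^{(m)}x_{j}=\rho(B)^{m}x_{i}\qquad\text{for every }i.
\]

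Next I would fix a row index $i$ and bound the weights $x_{j}$ appearing in this identity by their extremes. Since each $b_{ij}^{(m)}\ge 0$, replacing $x_{j}$ by $\min_{k}x_{k}$ in one direction and by $\max_{k}x_{k}$ in the other gives
\[
(\min_{k}x_{k})\sum_{j=1}^{n}b_{ij}^{(m)}\le \rho(B)^{m}x_{i}\le (\max_{k}x_{k})\sum_{j=1}^{n}b_{ij}^{(m)}.
\]
Dividing through and using $\min_{k}x_{k}\le x_{i}\le\max_{k}x_{k}$ then produces both bounds at once,
\[
\frac{\min_{k}x_{k}}{\max_{k}x_{k}}\rho(B)^{m}\le\sum_{j=1}^{n}b_{ij}^{(m)}\le\frac{\max_{k}x_{k}}{\min_{k}x_{k}}\rho(B)^{m},
\]
which is exactly the pair of displayed inequalities. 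Note that the weighted row-sum identity is the only ingredient, so no spectral decomposition of $B$ nor any knowledge of the other eigenvalues is required.

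Finally, if one insists on reading the first inequality as a genuine column sum $\sum_{i}b_{ij}^{(m)}$, it follows by running the identical argument on $B^{T}$: column sums of $B^{m}$ are the row sums of $(B^{T})^{m}$, and the dominating right eigenvector of $B^{T}$ is the left Perron eigenvector of $B$, which is strictly positive since $B$ is primitive (in the setting of Corollary \ref{cor:matrixmultiply}, $B^{2}$ is entrywise positive). I expect the only real subtlety to be this passage between row and column sums; the estimate itself is just the two-sided squeeze above, and it is the strict positivity of the relevant Perron vector that keeps the ratio $\max_{k}x_{k}/\min_{k}x_{k}$ finite and makes the argument go through.
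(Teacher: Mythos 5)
Your proof is correct, and there is nothing in the paper to compare it against: the paper states this lemma without proof, citing Horn and Johnson, \emph{Matrix Analysis} (p.~494), and your argument is exactly the standard one behind that citation — note that $\lambda=\rho(B)$ holds as you argue, and then squeeze the weights in $\sum_{j}b_{ij}^{(m)}x_{j}=\rho(B)^{m}x_{i}$ between $\min_{k}x_{k}$ and $\max_{k}x_{k}$. You were also right to treat the mismatched indices in the statement as a typo for the row-sum bound; in the only place the lemma is used (Corollary \ref{cor:matrixmultiply}, where $B^{2}$ is entrywise positive) your transpose argument legitimately covers the column-sum reading, since primitivity gives a strictly positive left Perron vector.
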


To make the most use of the aforementioned lemma, we need to bound the entries $\max_{k} x_{k}$ and $\min_{k} x_{k}$ in the dominating eigenvector.  

\begin{lem}Let $r_{max},c_{max}$ be the maximum row sum and column sum of $B\in\mathbb{R}^{n\times n}$. In addition, suppose every entry is at least equal to $m>0$.  Denote $\rho(B) = \lambda_{max}$.  Since by the Gresgorin Disc Theorem $\lambda_{max}\leq \min(r_{max},c_{max})$.  Then $$\frac{m}{\min(c_{max},r_{max})-m(n-1)} \leq \frac{m}{\lambda_{max}-m(n-1)}\leq \frac{\min_{k}x_{k}}{\max_{k}x_{k}}$$
and $$\frac{\max_{k}x_{k}}{\min_{k}x_{k}}\leq \frac{\lambda_{max} - m(n-1)}{m} \leq \frac{\min(c_{max},r_{max})- m(n-1)}{m}.$$
\end{lem}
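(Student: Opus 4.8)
The plan is to extract everything from the single eigenvalue identity $Bx = \lambda_{max} x$, evaluated at the coordinate where $x$ is smallest, combined with the entrywise lower bound $B_{ij} \ge m$. Since every entry of $B$ is at least $m>0$, the matrix is strictly positive, so by Perron--Frobenius its dominating eigenvector $x$ may be taken strictly positive; in particular $\min_k x_k > 0$ and the ratios in the statement are well defined. First I would record the elementary positivity fact that $\lambda_{max} - m(n-1) > 0$: for a nonnegative matrix $\rho(B)$ is bounded below by the minimum row sum, and every row sum is at least $nm$ because each of the $n$ entries is at least $m$; hence $\lambda_{max} \ge nm$, so $\lambda_{max} - m(n-1) \ge m > 0$. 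This guarantees that every denominator appearing in the statement is positive, legitimizing the divisions to follow.

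Next, let $p$ be an index with $x_p = \max_k x_k$ and $q$ an index with $x_q = \min_k x_k$. Writing the $q$-th coordinate of $Bx = \lambda_{max} x$ gives $\lambda_{max} x_q = \sum_{j} B_{qj} x_j$. I would bound the right-hand side from below by separating off the single term $j=p$, for which $B_{qp} x_p \ge m \max_k x_k$, and bounding each of the remaining $n-1$ terms by $B_{qj} x_j \ge m \min_k x_k$ (valid since $B_{qj}\ge m$ and $x_j \ge \min_k x_k$). This yields $\lambda_{max} \min_k x_k \ge m \max_k x_k + (n-1)m \min_k x_k$, which rearranges to $(\lambda_{max} - (n-1)m)\min_k x_k \ge m \max_k x_k$. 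Dividing by the positive quantities then gives the central inequality $\frac{\min_k x_k}{\max_k x_k} \ge \frac{m}{\lambda_{max} - m(n-1)}$, and reciprocating gives $\frac{\max_k x_k}{\min_k x_k} \le \frac{\lambda_{max} - m(n-1)}{m}$.

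Finally, to obtain the outer inequalities I would invoke the bound $\lambda_{max} \le \min(r_{max}, c_{max})$ quoted in the statement. Since the scalar map $t \mapsto \frac{m}{t - m(n-1)}$ is decreasing on $t > m(n-1)$, replacing $\lambda_{max}$ by the larger quantity $\min(r_{max},c_{max})$ only decreases the fraction, giving $\frac{m}{\min(c_{max},r_{max}) - m(n-1)} \le \frac{m}{\lambda_{max} - m(n-1)}$; the analogous monotonicity of $t \mapsto \frac{t - m(n-1)}{m}$ handles the upper chain for $\frac{\max_k x_k}{\min_k x_k}$, completing both displayed inequalities.

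The computation is short, and the only genuine care needed is the positivity of $\lambda_{max} - m(n-1)$, so that both the divisions and the monotonicity argument are valid, together with the bookkeeping that exactly one term of the coordinate sum is separated off while the remaining $n-1$ terms are each bounded below by $m\min_k x_k$; I would note that this index-separation remains correct even in the degenerate case where the maximizing and minimizing indices coincide (the eigenvector is then constant and all inequalities reduce to $\lambda_{max}\ge nm$). I expect this separation step to be the only place demanding attention, as everything else is monotonicity of an explicit scalar function.
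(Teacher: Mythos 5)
Your proof is correct and takes essentially the same route as the paper: both arguments read off the ratio bound from the coordinate form of the eigenvalue equation using the entrywise bound $b_{ij}\geq m$, then get the outer inequalities from the Gershgorin bound $\lambda_{max}\leq\min(r_{max},c_{max})$ and monotonicity. The only cosmetic difference is that the paper normalizes $\sum_j x_j = 1$ to bound $\min_k x_k \geq m/\lambda_{max}$ and $\max_k x_k \leq 1 - m(n-1)/\lambda_{max}$ separately, whereas you evaluate the equation at the minimizing coordinate and split off the maximizing term (and you make the denominator positivity $\lambda_{max}\geq nm$ explicit, which the paper leaves implicit).
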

\begin{proof}
Consider the eigenvector $x$ and require that $\sum_{j=1}^{n}x_{j}=1$ where we are guaranteed that each entry in the eigenvector is non-negative by the Perron-Frobenius Theorem.  Then we have for all k,$$m=\sum_{j=1}^{n} mx_{j}\leq\sum_{j=1}^{n} b_{jk}x_{j}=\lambda_{max}x_{k}.$$
It then follows that for all k $$ \frac{m}{\lambda_{max}}\leq x_{k}.$$
Consequently, $$\frac{m}{\lambda_{max}}\leq \min_{k} x_{k}.$$
Furthermore, since $\sum_{j} x_{j}=1$, we have that $$\max_{k} x_{k}\leq 1-\frac{m*(n-1)}{\lambda_{max}}.$$
This implies that $$\frac{m}{\lambda_{max}-m(n-1)}\leq \frac{\min_{k}x_{k}}{\max_{k}x_{k}}$$ and
$$\frac{\max_{k}x_{k}}{\min_{k}x_{k}}\leq \frac{\lambda_{max} - m(n-1)}{m}.$$ 
\end{proof}

\begin{cor}\label{cor:matrixbound}
	Suppose each entry of a matrix $B^{t}\in\mathbb{R}^{k\times k}$ is bounded below by $1$.  Then for all $u \in \mathbb{N}\cup 0$ and all $v\in\mathbb{N}$,
	
	$$\rho(B)^{v}B^{u}\leq B^{u+v+2t}.$$
	
	Furthermore, $$k\rho(B)^{v}\leq tr(B^{v+2t}).$$
\end{cor}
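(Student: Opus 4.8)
The plan is to exploit the two extra factors of $B^{t}$ to sandwich the awkward middle factor $B^{v}$ and replace it by the scalar $\rho(B)^{v}$, invoking the path-counting bound of Lemma \ref{lem:theory}. Throughout I assume, as holds in all our applications (where $B=\mathbf{P}$), that $B$ is entrywise non-negative; then $\rho(B)$ is a genuine eigenvalue by Perron--Frobenius, so Lemma \ref{lem:theory} applies, and every power $B^{m}$ is entrywise non-negative, which is what lets me manipulate entrywise inequalities freely.

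First I would factor $B^{u+v+2t}=B^{t}\,B^{v}\,B^{t}\,B^{u}$ and bound the central block $B^{t}B^{v}B^{t}$ from below entrywise. Since every entry of $B^{t}$ is at least $1$ and $B^{v}$ is entrywise non-negative, for each pair $(i,j)$ I would write
\[
(B^{t}B^{v}B^{t})_{ij}=\sum_{a,b}(B^{t})_{ia}(B^{v})_{ab}(B^{t})_{bj}\geq\sum_{a,b}(B^{v})_{ab}=\mathbf{1}^{T}B^{v}\mathbf{1}.
\]
By Lemma \ref{lem:theory} we have $\mathbf{1}^{T}B^{v}\mathbf{1}\geq\rho(B^{v})=\rho(B)^{v}$, so every entry of $B^{t}B^{v}B^{t}$ is at least $\rho(B)^{v}$; that is, $B^{t}B^{v}B^{t}\geq\rho(B)^{v}J$ entrywise, where $J$ is the all-ones matrix. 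Multiplying on the right by $B^{u}\geq 0$ preserves the inequality, giving
\[
B^{u+v+2t}=(B^{t}B^{v}B^{t})B^{u}\geq\rho(B)^{v}\,JB^{u}\geq\rho(B)^{v}\,B^{u},
\]
where the last step uses $(JB^{u})_{ij}=\sum_{a}(B^{u})_{aj}\geq(B^{u})_{ij}$, valid because each $(B^{u})_{aj}\geq 0$. This establishes the first inequality. For the trace statement I would specialize to $u=0$, so that $B^{v+2t}\geq\rho(B)^{v}I$ entrywise; in particular each diagonal entry satisfies $(B^{v+2t})_{ii}\geq\rho(B)^{v}$, and summing over the $k$ diagonal entries yields $tr(B^{v+2t})\geq k\,\rho(B)^{v}$.

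The only genuinely nontrivial step is the middle one: manufacturing the factor $\rho(B)^{v}$ as a \emph{uniform entrywise} lower bound, since $B^{v}$ itself need not have any useful entrywise structure. The role of the hypothesis ``$B^{t}$ has all entries $\geq 1$'' is precisely to let me collapse the matrix $B^{v}$ sandwiched between two copies of $B^{t}$ into the scalar $\mathbf{1}^{T}B^{v}\mathbf{1}$, which Lemma \ref{lem:theory} then controls from below by $\rho(B)^{v}$; this is exactly why the statement pays for the two extra powers $2t$. I expect no further obstacles, as the remaining manipulations are just entrywise monotonicity of multiplication by non-negative matrices.
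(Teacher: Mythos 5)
Your proof is correct and follows essentially the same route as the paper: both use the two spare factors of $B^{t}$, whose entries are all at least $1$, to collapse the middle factor $B^{v}$ into the scalar $\rho(B)^{v}$ --- the paper by picking a single row $m$ of $B^{v}$ with row sum at least $\rho(B)^{v}$ and restricting the index sum in $(B^{u+v+2t})_{ij}$ to paths through $j$ and $m$, you by bounding the entire sandwiched block below by $\mathbf{1}^{T}B^{v}\mathbf{1}\geq\rho(B)^{v}$ via Lemma \ref{lem:theory}. The difference is cosmetic; your version even records the slightly stronger intermediate inequality $B^{t}B^{v}B^{t}\geq\rho(B)^{v}J$ entrywise (with $J$ the all-ones matrix), from which both claims follow at once.
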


\begin{proof}
Without loss of generality, assume that the index $m$ satisfies the inequality that, $\sum_{n=1}^{k} b_{mn}^{(s)}\geq \rho(B)^{s}$. Succinctly, 
	
$$b_{ij}^{(r+s+2t)}\geq \sum_{n=1}^{k} b_{ij}^{(r)}b_{jm}^{(t)}b_{mn}^{(s)}b_{nj}^{(t)}\geq b_{ij}^{(r)}\sum_{n=1}^{k}b_{mn}^{(s)}\geq \rho(B)^{s}b_{ij}^{(r)}.$$	
\end{proof}

\begin{lem}
Suppose that $B^{w}\in\mathbb{R}^{k\times k}$ is entrywise bounded below by $1$, where $w$ is a positive integer and that $B$ is an entrywise non-negative matrix.  Then for every integer $r$ greater than $w$, there exists a non-negative integer $m$ that satisfies the inequality, $0 \leq m \leq w-1$, such that

\begin{equation} tr(B^{q}) \leq tr(B^{r-m}) 
\end{equation}

for all non-negative integers $q$ that satisfy the inequality $q \leq r - m$.  

\end{lem}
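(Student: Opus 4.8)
The plan is to reduce the statement to a single monotonicity property of the function $g(n) := tr(B^{n})$, namely that $g$ is non-decreasing along steps of size $w$, and then to locate a ``record'' of $g$ inside the terminal window $\{r-w+1,\dots,r\}$ by walking up an arithmetic progression starting from a global maximizer.

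First I would establish the key inequality
\begin{equation}
g(n+w) \ge g(n) \qquad \text{for all } n \ge 0. \label{eq:stepw}
\end{equation}
This follows by writing $g(n+w) = tr(B^{n}B^{w}) = \sum_{i,j} b_{ij}^{(n)} b_{ji}^{(w)}$, using that every entry of $B^{w}$ is at least $1$ to discard the factors $b_{ji}^{(w)}$, and then using that $B^{n}$ is entrywise non-negative so that the full entry sum dominates the diagonal sum: $\sum_{i,j} b_{ij}^{(n)} b_{ji}^{(w)} \ge \sum_{i,j} b_{ij}^{(n)} \ge \sum_{i} b_{ii}^{(n)} = g(n)$.

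With \eqref{eq:stepw} in hand, let $q_{0}$ be an index maximizing $g$ over $\{0,1,\dots,r\}$ and set $M = g(q_{0})$. Travel up the progression $q_{0}, q_{0}+w, q_{0}+2w,\dots$, on which $g$ is non-decreasing by \eqref{eq:stepw}, and let $q_{1}$ be its largest term not exceeding $r$. Since the next term $q_{1}+w$ exceeds $r$, we have $r - q_{1} < w$, so $q_{1}$ lies in $\{r-w+1,\dots,r\}$; moreover $g(q_{1}) \ge g(q_{0}) = M$, and since $M$ is the maximum of $g$ over $\{0,\dots,r\}$ while $q_{1}\le r$, in fact $g(q_{1}) = M$. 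Setting $m = r - q_{1} \in [0..w-1]$ then gives $tr(B^{q}) = g(q) \le M = g(q_{1}) = tr(B^{r-m})$ for every non-negative integer $q \le r-m = q_{1}$, which is exactly the claim.

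The step I expect to be the main obstacle --- or rather the main temptation to be avoided --- is trying to run the argument through Corollary \ref{cor:matrixbound}. That corollary yields only $k\rho(B)^{v} \le tr(B^{v+2w})$, i.e.\ monotonicity along steps of size $2w$, which localizes the record only to a window of length $2w$ and is therefore too weak to force $m \le w-1$. The resolution is to bypass the spectral radius entirely and prove the sharper estimate \eqref{eq:stepw} directly from the trace-of-a-product identity, after which the progression-walk pins the record down to the required window of length $w$.
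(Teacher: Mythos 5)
Your proposal is correct and follows essentially the same route as the paper: both rest on the step-$w$ monotonicity $tr(B^{n}) \leq tr(B^{n+w})$ (the paper proves the slightly stronger entrywise bound $B^{n} \leq B^{n+w}$, while you prove the trace inequality directly, which is all that is needed) and then locate a maximizer of the trace inside the terminal window of width $w$. Your progression-walk argument merely makes explicit the paper's one-line claim that $\max_{t\in[1..z]} tr(B^{t})$ is attained in the last window, and your remark that Corollary \ref{cor:matrixbound} would only localize to a window of width $2w$ correctly identifies why the sharper step-$w$ estimate is the right tool.
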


\begin{proof}
The key observation is that since $B^{w}$ is entrywise bounded below by $1$, that for any choice of $r$, $B^{r} \leq B^{r+w}$.  Consequently, it follows that for $z \geq w$,  $$\max_{t\in [1..z]} tr(B^{t}) = \max_{t\in[z-w-1..z]}tr(B^{t}).$$  Once we identify the $t$ bounded from $z-w-1$ to $z$ that optimizes the $tr(B^{t})$, the result follows immediately.  
\end{proof}

\bibliographystyle{plain} 
\bibliography{thesis}{}

\end{document}